\newtheorem{theorem}{Theorem}[section]
\newtheorem{lemma}[theorem]{Lemma}
\newtheorem{corollary}[theorem]{Corollary}
\newtheorem{proposition}[theorem]{Proposition}
\theoremstyle{definition}
\newcounter{assum}
\newtheorem{assumption}[assum]{Assumption}
\newtheorem{remark}[theorem]{Remark}
\newtheorem{definition}[theorem]{Definition}
\renewcommand{\appendix}{\par
	\setcounter{section}{0}%
	\setcounter{subsection}{0}%
	\setcounter{subsubsection}{0}%
	\gdef\thesection{\@Alph\c@section}%
	\gdef\thesubsection{\@Alph\c@section.\@arabic\c@subsection}%
	\gdef\theHsection{\@Alph\c@section.}%
	\gd\theHsubsection{\@Alph\c@section.\@arabic\c@subsection}%
	\csname appendixmore\endcsname
}
\newtheorem{letterthm}{Theorem}
\numberwithin{equation}{section}
\begin{document}

\title{\bf\Large Fractional Heat Semigroup Characterization of Distances from
Functions in Lipschitz Spaces to Their Subspaces
\footnotetext{\hspace{-0.35cm} 2020 {\it
Mathematics Subject Classification}. Primary 46E35;
Secondary 26A16, 35K08,
42C40, 42E35.
\endgraf {\it Key words and phrases.} fractional heat semigroup,
distance,
Daubechies wavelet,
difference, Lipschitz Space, Besov--Triebel–-Lizorkin space.
\endgraf This project is supported by the National Key Research
and Development Program of China
(Grant No. 2020YFA0712900), the National Natural Science
Foundation of China
(Grant Nos. 12431006 and 12371093),
and the Fundamental Research Funds for the Central
Universities (Grant Nos. 2253200028 and 2233300008).
The first author is also supported by
NSERC of Canada Discovery grant RGPIN-2020-03909.}}
\author{Feng Dai, Eero Saksman\footnote{Corresponding author, E-mail: \texttt{eero.saksman@helsinki.fi}/{\color{red}\today}/Final version.},\ \
Dachun Yang,
Wen Yuan and Yangyang Zhang}
\date{}
\maketitle

\vspace{-0.7cm}

\begin{center}
\begin{minipage}{13cm}
{\small {\bf Abstract}\quad
Let $\Lambda_s$ denote
the inhomogeneous Lipschitz space of order $s\in(0,\infty)$ on $\mathbb{R}^n$. This article characterizes the distance
$d(f, V)_{\Lambda_s}: =
\inf_{g\in V} \|f-g\|_{\Lambda_s}$ from a function $f\in \Lambda_s$ to 
a non-dense subspace $V\subset \Lambda_s$ via the fractional semigroup
$\{T_{\alpha, t}: =e^{-t (-\Delta)^{\alpha/2}}: t\in (0, \infty)\}$ for 
any $\alpha\in(0,\infty)$.
 Given an integer $ r >s/\alpha$, a uniformly bounded continuous function
$f$ on $\mathbb{R}^n$
belongs to the space $\Lambda_s$
if and only if there exists a constant
$\lambda\in(0,\infty)$ such that
	\begin{align*}		 \left|(-\Delta)^{\frac {\alpha r}2} 
(T_{\alpha, t^\alpha } f)(x) \right|\leq \lambda t^{s -r\alpha }\ 
\ \text{for any $x\in\mathbb{R}^n$ and $t\in (0, 1]$}.
	\end{align*}
The least such constant
 is denoted by $\lambda_{ \alpha, r, s}(f)$.
 For each $f\in \Lambda_s$ and $0<\varepsilon<
\lambda_{\alpha,r, s}(f)$, let
$$ D_{\alpha, r}(s,f,\varepsilon):=\left\{ (x,t)\in
\mathbb{R}^n\times (0,1]:\
\left| (-\Delta)^{\frac {\alpha r}2} (T_{\alpha, t^\alpha} f)(x)
\right|> \varepsilon t^{s -r \alpha }\right\}$$
 be the set of ``bad'' points. To quantify its size,
we introduce a class of extended nonnegative
\emph{admissible set functions}
$\nu$ on the Borel $\sigma$-algebra
$\mathcal{B}(\mathbb{R}^n\times [0, 1])$
and define, for any admissible function $\nu$, the
\emph{critical index}
$ \varepsilon_{\alpha, r, s,\nu}(f):=\inf\{\varepsilon\in(0,\infty):\
\nu(D_{\alpha, r}(s,f,\varepsilon))<\infty\}.$
Our result shows that, for a broad class of subspaces $V\subset \Lambda_s$,
including intersections of $\Lambda_s$ with Sobolev, Besov, Triebel--Lizorkin, and
Besov-type spaces, there exists an admissible function $\nu$ depending on $V$
such that
$\varepsilon_{\alpha, r, s,\nu}(f)\sim \mathrm{dist}(f, V)_{\Lambda_s}.$
}
\end{minipage}
\end{center}

\vspace{0.2cm}

\tableofcontents

\vspace{0.2cm}

\section{Introduction}

Let $\mathop\mathrm{\,BMO\,}$ denote the space of all locally
integrable functions $f$ on $\mathbb{R}^n$ with bounded mean
oscillation
$$
\|f\|_{\mathop\mathrm{\,BMO\,}}:=\sup _B \frac{1}{|B|}
\int_B\left|f(x)-f_B\right|\, d x<\infty,
$$
where the supremum is taken over all Euclidean balls $B$ in
$\mathbb{R}^n$ and
$
f_B:=\frac{1}{|B|} \int_B f(x)\, d x$.
As is well known, $L^{\infty}\subsetneqq \mathrm{BMO}$.
In 1978, Garnett and Jones \cite{gj78} characterized
 the distance
$$ d \left(f,
L^{\infty}\right)_{\mathop\mathrm{\,BMO\,}}:=\inf _{g \in
L^{\infty}}\|f-g\|_{\mathop\mathrm{\,BMO\,}}\ \text{for}\ \
f\in \mathop\mathrm{\,BMO\,}$$
by the infimum of the constant $\delta\in(0,\infty)$ in the
 John--Nirenberg inequality (see \cite{JN}),
\begin{equation}\label{1-1-b}
\sup _B \frac{\mid\left\{x \in
B:\left|f(x)-f_B\right|>\lambda\} |\right.}{|B|} \leq e^{-
\frac{\lambda}{\delta}} \quad \text { for large} \ \lambda,
\end{equation}
proving that
\begin{equation*} d \left(f,
L^{\infty}\right)_{\mathop\mathrm{\,BMO\,}}\sim_n
\delta(f):=\inf\left \{\delta\in(0,\infty): \eqref{1-1-b} \
\text {holds}\right\}. \end{equation*}
Here and throughout the article, $|E|$ denotes the Lebesgue
measure of $E\subset \mathbb{R}^n$, the symbol $A\sim_{p,q,r}
D$ means that there exist positive constants $C_1, C_2$
depending only on the subscripts $p,q,r,\ldots$ such that $C_1
A\leq D\leq C_2 A$.
This remarkable result has several important applications.
First, it shows that, for any $f\in \mathop\mathrm{\,BMO\,}$,
	$$\delta(f)\sim_n\inf
\left\{\sum_{j=1}^n\left\|g_j\right\|_{\infty}: \ g_0,
g_1,\ldots, g_n \in L^{\infty}\ \text{and}\ f=g_0+\sum_{j=1}^
n R_j g_j\right\},$$
	which complements the Fefferman--Stein characterization
that $f\in\mathop\mathrm{\,BMO\,}$ if and only if there exist
$g_0, g_1,\ldots, g_n\in L^\infty$ such that
	$f=g_0+\sum_{j=1}^n R_j g_j$ ( \cite[pp.\,
374--375]{gj78}). Here $R_j$ is the $j$-th Riesz transform.
Second, it can be used to generalize the classical
Helson--Szeg\"o theorem to higher dimensions, involving
weights $w$ such that the Riesz transforms are bounded on
$L^2(wdx)$ (\cite[Corollary 1.2]{gj78}).
 Third, using it, Jones \cite{j} derived
 sharp estimates for solutions to the corona problem for
$H^\infty$ of the upper half-plane or unit disk, and in
\cite{j80} he established the
factorization
of $A_p$ weights. Lastly, Bourgain \cite{b} applied it to
study embedding $L^1$ in $L^1 / H^1$, proving that $L^1$ is
isomorphic to a subspace of $L^1 / H^1$. For more details, we
refer to \cite{cdlsy} and the references therein.

In \cite{ss},
Saksman and Soler i Gibert studied a similar problem for
distances in the nonhomogeneous Lipschitz space
$\Lambda_s=(\Lambda_s, \|\cdot\|_{\Lambda_s})$ of
smooth order $s\in(0,\infty)$ on $\mathbb{R}^n$, seeking to
determine the distance (up to a positive constant
multiple) in $\Lambda_s$ for all $f\in \Lambda_s$ and
$s\in(0,1]$:
\begin{align*}
 d \left(f, \mathrm{J}_s(\mathop\mathrm{\,bmo\,})
\right)_{\Lambda_s}:=&
\inf_{g\in \mathrm{J}_s(\mathop\mathrm{\,bmo\,})}
\|f-g\|_{\Lambda_s}.
\end{align*}
Here, $\mathrm{J}_s(\mathop\mathrm{\,bmo\,})$ is
the image of the space $\mathop\mathrm{\,bmo\,}$ under
the Bessel potential $\mathrm{J}_s:=(I-\Delta)^{-\frac s2}$,
and $\mathop\mathrm{b m o}$
is a nonhomogeneous version of the space
$\mathop\mathrm{\,BMO\,}$ consisting of all functions $f\in
\mathop\mathrm{\,BMO\,}$ with
$$ \|f\|_{\mathop\mathrm{\,bmo\,}}
:=\|f\|_{\mathop\mathrm{\,BMO\,}}
+\sup_{k\in\mathbb{Z}^n}
\left[ \int_{k+[0,1)^n} |f(x)|^2\, dx \right]^{\frac 12}
<\infty.
$$

 To state the results in \cite{ss},
we begin by introducing some necessary symbols.
For any $k\in\mathbb N$ and $h\in \mathbb{R}^n$,
the $k$-th order \emph{symmetric
difference operator} $\Delta_h^k$ is defined
recursively as follows: for any
$f:\mathbb{R}^n\to\mathbb{R}$ and $x\in\mathbb{R}^n$,
\begin{equation}\label{diff-eq}
\Delta_h(f)(x) :=
f\left(x+\frac h2\right)-f\left(x-\frac h2\right),\
\ \Delta^{i+1}_h:
= \Delta^i_h(\Delta_h),\ i\in\mathbb{N}.
\end{equation}
Given $s\in(0,\infty)$,
the \emph{Lipschitz space}
$\Lambda_s$ on $\mathbb{R}^n$
is the Banach space of all continuous
functions $f:\mathbb{R}^n\to\mathbb{R}$ such that
\begin{align*}
\|f\|_{\Lambda_s}:=
\|f\|_{L^\infty}+
\sup_{y\in(0,1]}\sup_{x\in\mathbb{R}^n}
\frac {\Delta_r f(x,y)}{y^s}<\infty,
\end{align*}
where $r=\lfloor s\rfloor+1$, $\lfloor s\rfloor$
denotes the \emph{largest integer not greater than} $s$,
and
\begin{align*}
\Delta_k f(x,y)=\sup_{h\in\mathbb{R}^n, |h|=y}
|\Delta_h^kf(x)|,\ x\in \mathbb{R}^n,\
\ y\in(0,\infty),\ k\in\mathbb{N}.
\end{align*}
It is well known that replacing
$r=\lfloor s\rfloor+1$ with any positive
integer $r>s$ in the definition of
$\|\cdot\|_{\Lambda_s}$ results in an
equivalent norm.
Below we will fix $s\in(0,\infty)$ and an integer $r>s$. Note
that by the definition, $f\in L^\infty$ belongs to the space
$\Lambda_s$ if and only if there exists a constant
$L\in(0,\infty)$ such that
$$ \Delta_r f(x, y)\leq L y^s\ \text{for all}\
x\in\mathbb{R}^n\ \text{and}\ \
y\in(0,1].$$

Given $f\in\Lambda_s$, we define $\varepsilon_{r,s}(f)$ to be
the infimum
of all $\varepsilon\in(0,\infty)$ such that
\begin{equation}
	\sup_{I\in\mathcal D} \frac 1 {|I|} \int_0^{\ell(I)}
 \left|\left\{ x\in I:\ \Delta_r f(x,y)>\varepsilon
 y^s\right\}\right|\frac {dy} y<\infty.\label{1-1-2}
\end{equation}
Here and throughout the article,
$\mathcal D$ denotes the collection of all dyadic
cubes $I$ in $\mathbb{R}^n$ of edge length
$\ell(I)\leq 1$.
For any $f\in\Lambda_s$ and $\varepsilon\in(0,\infty)$, we
also define the set \begin{align*}
S_r(s,f, \varepsilon):&=\left\{ (x, y)\in\mathbb{R}_+^{n+1}
:=\mathbb{R}^n\times(0,\infty):
\Delta_r f(x, y)>\varepsilon y^s,\ x\in\mathbb{R}^n \right\},
\end{align*}
and the measure
$$
d\gamma_{r, s,f,\varepsilon}(x,y)
:= \frac {\mathbf{1}_{S_r(s,f, \varepsilon)}
(x,y)\mathbf{1}_{[0, 1]}(y)\,dx\,dy}y,\
(x,y)\in\mathbb{R}_+^{n+1}.
$$
It is easily seen that
\eqref{1-1-2} holds if and only if
$\gamma_{r, s,f,\varepsilon}$ is a Carleson measure on
$\mathbb{R}^{n+1}_+$.
This means that $\varepsilon_{r,s}(f)$ is the \emph{critical
index} such that $\gamma_{r, s, f,\varepsilon}$ is a Carleson
measure whenever
$\varepsilon\in(\varepsilon_{r,s}(f),\infty)$.
Remarkably, Saksman and Soler i Gibert
\cite{ss} proved that, for any $s\in(0,1]$,
the critical index $\varepsilon_{r,s}(f)$ characterizes
the distance $ d (f,
\mathrm{J}_s(\mathop\mathrm{\,bmo\,}))_{\Lambda_s}$
for any $f\in\Lambda_s$, showing the following theorem.

\begin{letterthm}(\cite{ss})\label{fwfw}
If $s\in(0,1)$ and $r = 1$ or if $s\in(0,1]$
and $r = 2$, then,
for any $f\in\Lambda_s$,
\begin{equation*}
 d (f,\mathrm{J}_s(\mathop\mathrm{\,bmo\,}))_{\Lambda_s}\sim_{s,n,r}
\varepsilon_{r,s}(f):= \inf\left\{ \varepsilon\in(0,\infty):\
\text{\eqref{1-1-2} holds} \right\}.
\end{equation*}
\end{letterthm}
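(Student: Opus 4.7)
The plan is to prove the equivalence $d(f,\mathrm{J}_s(\mathop\mathrm{\,bmo\,}))_{\Lambda_s}\sim_{s,n,r}\varepsilon_{r,s}(f)$ by establishing the two one-sided inequalities separately, the converse construction being substantially harder than the direct direction.

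For the ``easy'' direction $\varepsilon_{r,s}(f)\lesssim d(f,\mathrm{J}_s(\mathop\mathrm{\,bmo\,}))_{\Lambda_s}$, the first step is to show that every $g=\mathrm{J}_s h$ with $h\in\mathop\mathrm{\,bmo\,}$ has \emph{vanishing} critical index, i.e.\ $\varepsilon_{r,s}(g)=0$, so that $\gamma_{r,s,g,\varepsilon}$ is Carleson for every $\varepsilon>0$. This should follow by expressing $y^{-s}\Delta_r g(x)$ as an average of $h$ against a smooth zero-mean kernel localized at scale $y$ (feasible for the low-order exponents permitted here) and then invoking the John--Nirenberg inequality in $\mathop\mathrm{\,bmo\,}$ to obtain the required distributional bound. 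Next, for any $g\in\mathrm{J}_s(\mathop\mathrm{\,bmo\,})$ with $\|f-g\|_{\Lambda_s}\le 2d(f,\mathrm{J}_s(\mathop\mathrm{\,bmo\,}))_{\Lambda_s}$, the trivial estimate $\Delta_r(f-g)(x,y)\le\|f-g\|_{\Lambda_s}y^s$ for $y\in(0,1]$ yields the inclusion
\begin{align*}
S_r(s,f,\varepsilon)\subseteq S_r\bigl(s,g,\varepsilon-\|f-g\|_{\Lambda_s}\bigr)\quad\text{whenever }\varepsilon>\|f-g\|_{\Lambda_s},
\end{align*}
and hence $\varepsilon_{r,s}(f)\le\|f-g\|_{\Lambda_s}\lesssim d(f,\mathrm{J}_s(\mathop\mathrm{\,bmo\,}))_{\Lambda_s}$.

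For the converse $d(f,\mathrm{J}_s(\mathop\mathrm{\,bmo\,}))_{\Lambda_s}\lesssim\varepsilon_{r,s}(f)$, fix $\varepsilon>\varepsilon_{r,s}(f)$ so that $\gamma_{r,s,f,\varepsilon}$ is Carleson. I would expand $f$ in a smooth compactly supported Daubechies wavelet basis $\{\psi_Q^\lambda\}_{Q\in\mathcal{D},\lambda}$ with sufficiently many vanishing moments, and split each coefficient $c_{Q,\lambda}:=\langle f,\psi_Q^\lambda\rangle$ into a \emph{good} part and a \emph{bad} part according to whether the Whitney region $Q\times[\ell(Q)/2,\ell(Q)]$ intersects $S_r(s,f,\varepsilon/C)$ for a suitably large absolute constant $C$. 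Define the candidate approximant $g$ to be the wavelet reconstruction from the good coefficients plus the coarse-scale projection of $f$. Two verifications are then required: (i) $\|f-g\|_{\Lambda_s}\lesssim\varepsilon$, using that outside the bad set one has the pointwise estimate $\Delta_r f(x,y)\le(\varepsilon/C)y^s$, which together with the vanishing moments of the wavelets forces the bad coefficients to satisfy $|c_{Q,\lambda}^{\mathrm{bad}}|\lesssim\varepsilon\,\ell(Q)^{s+n/2}$ in a tree-wise fashion; and (ii) $g\in\mathrm{J}_s(\mathop\mathrm{\,bmo\,})$, which via the Frazier--Jawerth wavelet characterization of $\mathop\mathrm{\,bmo\,}$ reduces to the $\ell^2$-Carleson packing condition $\sum_{Q\subseteq I}|c_{Q,\lambda}^{\mathrm{good}}|^2\ell(Q)^{-2s-n}\lesssim|I|$, which is encoded precisely by the Carleson property of $\gamma_{r,s,f,\varepsilon/C}$ on the complement of the bad set.

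The main obstacle will be step (i) above. The condition \eqref{1-1-2} controls only the \emph{size} of the bad set in an integrated Carleson sense, while $\|\cdot\|_{\Lambda_s}$ demands a pointwise supremum bound on $\Delta_r$, so one has to argue that contributions from all bad wavelets overlapping a single pair $(x,y)$ telescope without a logarithmic loss. This is precisely why the hypothesis is restricted to the low-order regimes $r=1,s\in(0,1)$ and $r=2,s\in(0,1]$: in these endpoint cases the $r$-th order difference admits a clean one-parameter Calder\'on reproducing representation, and the induced tree structure on the bad coefficients compares directly to the Carleson packing of $S_r(s,f,\varepsilon)$; away from this range the corresponding higher-order Calder\'on formula fragments across scales and a naive argument loses a logarithm. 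A secondary technical issue is to keep careful track of constants so that the final estimates depend only on $s$, $n$, $r$ (and the fixed wavelet), as required by the equivalence $\sim_{s,n,r}$.
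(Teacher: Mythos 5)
This statement (Theorem~A) is not proved in the present paper---it is quoted from Saksman and Soler i Gibert~\cite{ss}, and the present article builds a far-reaching generalization of it (Theorems~\ref{thm-2-66} and~\ref{thm-7-11}, via the wavelet framework of~\cite{dsy}). So there is no ``paper's own proof'' here to line you up against; I can only judge your sketch on its internal coherence and against the structure that the sequel papers make visible.

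Your easy direction is sound in outline: showing $\varepsilon_{r,s}(g)=0$ for $g=\mathrm{J}_s h$, $h\in\mathrm{bmo}$, by writing $y^{-s}\Delta^r_h g$ as a pairing of $h$ against a normalized zero-mean kernel at scale $y$ and invoking John--Nirenberg, and then using the inclusion $S_r(s,f,\varepsilon)\subseteq S_r(s,g,\varepsilon-\|f-g\|_{\Lambda_s})$, is the standard and correct route.

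The hard direction, however, has the roles of ``good'' and ``bad'' \emph{reversed}, and as written the construction cannot work. You put the good coefficients (those $Q$ whose Whitney region avoids $S_r(s,f,\varepsilon/C)$) into the approximant $g$ and set $f-g$ equal to the bad part. But the pointwise estimate $\Delta_r f(x,y)\le(\varepsilon/C)y^s$ off the bad set, combined with vanishing moments, yields the smallness bound $|c_{Q,\lambda}|\lesssim\varepsilon\,\ell(Q)^{s+n/2}$ for the \emph{good} coefficients, not the bad ones---the bad ones are precisely where no such bound is available. Your (i) therefore fails: $f-g$ is the bad part and need not have small $\Lambda_s$ norm. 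Worse, your (ii) is inconsistent: a good-part sum with $|c_{Q}|\lesssim\varepsilon\,\ell(Q)^{s+n/2}$ gives $|c_Q|^2\ell(Q)^{-2s-n}\lesssim\varepsilon^2$ per cube, and the sum over all $Q\subseteq I$ is infinite, so the Carleson packing condition for $\mathrm{bmo}$ is violated; moreover the Carleson property of $\gamma_{r,s,f,\varepsilon/C}$ is an assumption on the \emph{bad} set, not on its complement, so it does not ``encode'' a packing of the good coefficients. The correct construction is the opposite: take $g$ to be the reconstruction from the \emph{bad} coefficients plus the coarse-scale part. Then each bad coefficient obeys the trivial bound $|c_Q|\lesssim\|f\|_{\Lambda_s}\ell(Q)^{s+n/2}$, and the Carleson hypothesis controls $\sum_{Q\subseteq I,\,Q\ \text{bad}}|Q|\lesssim|I|$, which is exactly the $\ell^2$-Carleson packing that puts $g$ in $\mathrm{J}_s(\mathrm{bmo})$; and $f-g$ is the good part, each of whose wavelet coefficients is $\lesssim\varepsilon\,\ell(Q)^{s+n/2}$, so $\|f-g\|_{\Lambda_s}\lesssim\varepsilon$ by the wavelet characterization of $\Lambda_s$.

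Separately, the closing explanation you offer for the restriction to $(r=1,\,s\in(0,1))$ and $(r=2,\,s\in(0,1])$---that higher-order Calder\'on formulas ``fragment across scales and lose a logarithm''---is not correct. The later work~\cite{dsy} (and the present paper in the semigroup setting) establish the analogous distance formula for all $s\in(0,\infty)$ and all integers $r>s$; the restriction in Theorem~A is simply the scope of what was proved in~\cite{ss}, not an intrinsic obstruction of the $\Lambda_s$ geometry.
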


Several remarks are in order. First, we point out that Theorem
A was previously established by Nicolau
and Soler i Gibert in
\cite{ns} for $n=s=1$, however with a proof
that can not be extended to higher-dimensional cases.
Second, Theorem A yields the following characterization
of the closure of
$\mathrm{J}_s(\mathop\mathrm{\,bmo\,})$
under the topology of $\Lambda_s$:
$$ f\in\overline{\mathrm{J}_s
(\mathop\mathrm{\,bmo\,})}^{\Lambda_s}\iff
f\in\Lambda_s\ \text{and}\ \varepsilon_{r,s}(f)=0.$$
Here and throughout the article, we denote
the closure of a set $E\subset \Lambda^s$ under the topology
of $\Lambda_s$ by $ \overline{E}^{\Lambda_s}$.
Clearly, for any $E\subset \Lambda_s$,
$$\overline{E}^{\Lambda_s}
=\left\{ f\in\Lambda_s:\
 d (f, E)_{\Lambda_s}
=0\right\}\ \text{with}\ d \left(f,
E\right)_{\Lambda_s}:=
\inf_{g\in E} \|f-g\|_{\Lambda_s}.$$
In the case when $E$ does not lie entirely
in the space $\Lambda_s$, we will still define with a slight abuse of the
symbol
$$d\left(f, E\right)_{\Lambda_s}:=	
 d \left(f, E\cap
\Lambda_s\right)_{\Lambda_s}\ \text{and}\ \
\overline{E}^{\Lambda_s}
:=\overline{E\cap \Lambda_s}^{\Lambda_s}. $$

In a recent article \cite{dsy}, we significantly extended
Theorem A by using higher-order finite differences to
characterize the distances $ d (f, V)_{\Lambda_s}$ from $f\in
\Lambda_s$ to various non-dense subspaces $V\cap \Lambda_s$ of
 $\Lambda_s$
for the full range of $s\in(0,\infty)$.
The spaces $V$ considered in \cite{dsy} include the space
$\mathrm{J}_s(\mathop\mathrm{\,bmo\,})$ for all
$s\in(0,\infty)$,
the Besov spaces $B_{p,q}^s$ and the Triebel--Lizorkin spaces
$F_{p,q}^s$ for all $s\in(0,\infty)$ and $p, q\in (0,
\infty]$, the Besov-type spaces,
and the Triebel--Lizorkin-type spaces.

For reader's convenience, below we summarize the main results
of \cite{dsy} for the
aforementioned specific well-known
function spaces. Assume that $s\in(0,\infty)$ and $r>s$ is a
given integer. Let $p \in(0,\infty)$ and $q \in(0,\infty]$.
Let $\mathcal{C}_0$ denote the space of all
continuous functions $f:\mathbb{R}^n\to\mathbb{R}$
such that $\lim_{|x|\to\infty} f(x)=0$. The following results
 were proved in \cite{dsy}:
\begin{itemize}
\item For the space
$\mathrm{J}_s(\mathop\mathrm{\,bmo\,})$ and $f\in\Lambda_s$,
 the conclusion of Theorem A holds for the full range of
 $s\in(0,\infty)$.

\item For the Sobolev space $W^{1,p}$
and
$f\in \mathcal{C}_0\cap
\Lambda_1$,
\begin{align*}
 d (f, W^{1,p})_{\Lambda_s}
\sim_{s,p,n}
\inf \left\{ \varepsilon\in(0,\infty):\
\left\| \int_{\left\{y\in (0,1]:y^{-1}
\Delta_2 f(\cdot, y)>\varepsilon \right\}}
\frac {dy} y\right\|_{L^{\frac p2}}
<\infty \right\}.
\end{align*}

\item For the Besov space $B_{p,q}^s$
 and
$f\in \mathcal{C}_0\cap
\Lambda_s$,
\begin{align*}
&	
 d
(f, B_{p, q}^s)_{\Lambda_s}
\sim_{r, s,p,q, n}
\inf \left\{ \varepsilon\in(0,\infty):\
B_{p,q,\varepsilon}^s(f) <\infty\right\},
\end{align*}
where
\begin{equation*}
B_{p,q,\varepsilon}^s(f):= \left\|
\left\{ \int_{2^{-j-1}}^{2^{-j}} \left|
\left\{ x\in{\mathbb R}^n:\
y^{-s} \Delta_r f(x, y)>\varepsilon
\right\} \right|\,\frac {dy} y \right\}_{j=0}^\infty
 \right\|_{\ell^{\frac qp}({\mathbb Z}_+)}.
\end{equation*}
\item For the Triebel--Lizorkin space
$F_{p,q}^s$ and
$f\in \mathcal{C}_0\cap
\Lambda_s$,
\begin{align*}
 d (f, F_{p, q}^s)_{\Lambda_s}
\sim_{r, s,p,q, n}
\inf \left\{ \varepsilon\in(0,\infty):\
F_{p, q,\varepsilon}^s(f) <\infty \right\},
\end{align*}
where
$$ F_{p, q,\varepsilon}^s(f):=
\left\| \int_{\left\{y\in [0,1]:\
y^{-s} \Delta_r f(x, y)>\varepsilon \right\}}
\frac {dy} y\right\|_{L^{\frac pq}}.$$
\end{itemize}

This article is highly motivated by the work in \cite{ss} and
serves as a continuation of the article \cite{dsy}. Our main
purpose is to
characterize the distance
 $\mathrm{dist}
(f, V)_{\Lambda_s}: =
\inf_{g\in V} \|f-g\|_{\Lambda_s}$ from $f\in \Lambda_s$ to a
non-dense subspace $V\subset \Lambda_s$ in terms of the
fractional heat semigroup
\begin{align}\label{fdsmf}
T_{\alpha, t}: =e^{- t (-\Delta)^{\alpha/2}}, \ \ t\in (0,
\infty),\ \ \alpha\in(0,\infty),
\end{align}
where $\Delta=\sum_{j=1}^n \partial_j^2$ with
$\partial_j:=\frac{\partial}{\partial x_j}$ for any
$j\in\{1,\ldots,n\}$
denotes the \emph{Laplacian}
on $\mathbb{R}^n$. To proceed, let us recall briefly some
symbols and definitions.

For any $\alpha\in(0,\infty)$,
 let $(-\Delta)^{\alpha/2}$ denote the fractional power of the
 Laplacian $-\Delta$
on $\mathbb{R}^n$ defined,
in a distributional sense, by
setting, for any
$\xi\in\mathbb{R}^n$,
$$ \mathcal{F}\left((-\Delta)^{\alpha/2} f\right)(\xi):=|2\pi
\xi|^{ \alpha} 	\mathcal{F} f(\xi).
$$
Here and throughout the article, $\mathcal{F}$ denotes the
usual \emph{Fourier
transform} on $\mathbb{R}^n$ defined by setting, for any $f\in
L^1 $ and $\xi\in\mathbb{R}^n$,
\begin{align*}
	\mathcal{F}f(\xi)\equiv\widehat{f}(\xi)
:=\int_{\mathbb{R}^n}
	f(x)e^{-2\pi ix\cdot \xi}\,dx.
\end{align*}
Both the Fourier transform $\mathcal{F}$ and its inverse
$\mathcal{F}^{-1}$ extend to the space $\mathcal{S}' $ of
tempered distributions in the standard way.
Equivalently, in terms of the Fourier transform,
the fractional heat semigroup $T_{\alpha, t}$ in \eqref{fdsmf}
satisfies
\begin{align}\label{1-6-1} \mathcal{F}(T_{\alpha, t} f)(\xi)
=e^{-t|2\pi \xi|^{\alpha}} \mathcal{F}f(\xi),\
\xi\in\mathbb{R}^n.
\end{align}
Clearly, if $\alpha=1$, then $T_{\alpha, t}$ is the Poisson
semigroup on $\mathbb{R}^n$, while, for $\alpha=2$,
$T_{\alpha,2}$ is the usual heat semigroup on $\mathbb{R}^n$.

The fractional heat semigroup $T_{\alpha,t}$ can be used to
solve the fractional heat equation,
\begin{align}\label{1-6-0} \partial_t u(x,t)
+(-\Delta)^{\alpha/2} u(x,t)=0, \ \ x\in\mathbb{R}^n,\ \
t\in(0,\infty), \end{align}
which describes diffusion processes that are non-local and
exhibit anomalous behavior such as long-range interactions.
Clearly, the solution of this equation is
$ u(x,t) =T_{\alpha,t} u_0(x)$, where $u_0(x):=u(x,0)$ is the
initial condition.
The fractional heat semigroup has also been used extensively
to solve other fractional partial differential equations that
appear in probability, financial mathematics,
elasticity, and biology. For details, we refer to Section 1.1
of \cite{PS}.

The fractional heat semigroup plays a significant role in
characterizing various function spaces, including Besov,
Triebel--Lizorkin, and Sobolev spaces; see \cite{bb, RBCR1,
RBCR2} and the references therein.
 For example, given $s\in(0,\infty)$ and any integer $ r
 >s/\alpha$, a function $f\in \mathcal{C} \cap L^\infty $
 belongs to the space $\Lambda_s$ if and only if there exists
 a constant $\lambda\in(0,\infty)$ such that the estimate
 \begin{align*} \left|(-\Delta)^{\frac {\alpha r}2}
 (T_{\alpha, t^\alpha } f)(x) \right|\leq \lambda t^{s-\alpha r}
\end{align*}
holds for all $x\in\mathbb{R}^n$ and $0<t\leq 1$, where
$\mathcal{C}$ denotes the space of all continuous functions on
$\mathbb{R}^n$. In this case, we also have
\begin{align}\label{1-6}
	\|f\|_{\Lambda_s}\sim_{r,s,n} \|f\|_{L^\infty }+ \sup_{
t\in (0,\infty)}\sup_{x\in \mathbb{R}^n} t^{\alpha r-s}
\left|(-\Delta)^{\frac {\alpha r}2} (T_{\alpha, t^\alpha }
f)(x) \right|.
\end{align}

Our aim in this article is to establish results analogous to
those in \cite{dsy} with $y^{-s} \Delta_r f(x, y)$ replaced by
$y^{r-\frac s\alpha}| (-\Delta)^{\frac {\alpha r}2}
(T_{\alpha, y} f) (x) |$, within the unified framework
introduced in \cite{dsy}.
The main results will be described in the next section.

The remainder of this article is organized as follows. In
Section~\ref{sec:2}, we summarize the main results of this
article along with necessary background, using minimal
symbols. In particular, we formulate our main results more
precisely for several classical function spaces, including
Sobolev, Besov, Triebel--Lizorkin, and Besov-type spaces.

In Section~\ref{sec:3}, we describe in detail a unified
general framework within which our main results are
established. This framework was first introduced in
\cite{dsy}.
Theorems~\ref{thm-2-66} and \ref{thm-7-11} provide precise
formulations of our main results in this setting, where a
function subspace $V\subset \Lambda_s$ of smooth
order $s\in(0,\infty)$ is referred to as the
\emph{Daubechies $s$-Lipschitz $X$-based space}. This space is
defined via Daubechies wavelet
expansions and a given quasi-normed
lattice $X$ of function sequences.

Sections~\ref{sec:4}--\ref{sec:10} are devoted to the proofs
of our main results within this general framework. Given the
length and complexity of the arguments, the proofs are divided
across several sections. A key technical tool is the family of
higher-order ball average operators. Specifically,
Section~\ref{sec:4} collects several useful results from our
earlier article \cite{dsy}, which will be used repeatedly in
the subsequent analysis.

In Section \ref{sec-5}, we establish pointwise kernel
estimates for the fractional heat semigroup $T_{\alpha, t}$
with $\alpha, t\in (0,\infty)$, and use them to derive uniform
norm estimates for the derivatives $\partial_t^i
\partial_x^\beta [T_{\alpha, t} f(x)]$,
 where $f\in\Lambda_s$, $\beta\in\mathbb{Z}_+^n$ and
 $i\in\mathbb{Z}_+$.
In particular, we characterize the space $\Lambda_s$
 in terms of the uniform norms of these derivatives. These
 results play a crucial role in the proofs of the main
 theorems in later sections.

Section~\ref{oijjl} outlines the main steps of the proof of
Theorem~\ref{thm-2-66}, our first main result. This proof
relies on three major technical propositions, which are also
formulated in Section~\ref{oijjl}, but are proved in
 subsequent sections. A complete proof of
 Theorem~\ref{thm-2-66}, conditional on these propositions, is
  also given here.

 The proofs of the three key propositions are carried out in
 Sections~\ref{sec:7}--\ref{sec:9}. These arguments are quite
 technical, but constitute the core of the proof of
 Theorem~\ref{thm-2-66}.

 Section~\ref{sec:10} presents the proof of
 Theorem~\ref{thm-7-11}, which is deduced from the
 aforementioned
 three propositions established in
 Sections~\ref{sec:7}--\ref{sec:9}.

 Finally, in Section~\ref{sea8}, the last section, we
 illustrate how our general results apply to various classical
 function spaces. In each case, it remains to verify that the
 space in question satisfies the assumptions required by our
 general framework,
 a task largely accomplished in our earlier work \cite{dsy}.

We conclude this section with some
conventions on symbols. Throughout the article,
since we work in $\mathbb{R}^n$,
we omit the underlying space $\mathbb{R}^n$
in all the related symbols whenever
there is no risk of ambiguity.
Let $\mathbb{N}:=\{1,2,\ldots\}$
and $\mathbb{Z}_+:=\mathbb{N}\cup\{0\}$.
We use $\mathbf{0}$ to denote the
\emph{origin} of $\mathbb{R}^n$. All
functions on $\mathbb{R}^n$ and
subsets of $\mathbb{R}^n$ are
assumed to be Lebesgue measurable.
Given a set $E \subset \mathbb{R}^n$, we denote its Lebesgue
measure by $|E|$ and its characteristic function by
$\mathbf{1}_E$.
For any measurable set $E \subset \mathbb{R}^n$ with $|E| <
\infty$ and any locally integrable function $f \in
L^1_{\mathrm{loc}}$, we define the average of $f$ over $E$ by
$$f_E=\fint_Ef(x)\,dx:=
\frac1{|E|}\int_{E}f(x)\,dx.$$
We denote the upper half-space $\mathbb{R}^n \times (0,
\infty)$ by $\mathbb{R}_+^{n+1}$. For any $x \in \mathbb{R}^n$
and $r \in(0,\infty)$, let $B(x, r) := \{y \in \mathbb{R}^n :
|x - y| < r\}$. We write $\mathbb{B}$ for the collection of
all Euclidean balls $B(x, r)$ with $x \in \mathbb{R}^n$ and $r
\in (0, \infty)$. For any $\gamma \in(0,\infty)$ and any ball
$B := B(x_B, r_B)$, we write $\gamma B := B(x_B, \gamma r_B)$.
We use the letter $C$ to denote a general positive constant,
which may vary from line to line but may depend on parameters
indicated by subscripts.
The symbol $f \lesssim g$ means that $f \leq Cg$ for some
constant $C \in(0,\infty)$. If both $f \lesssim g$ and $g
\lesssim f$ hold, we write $f \sim g$. If $f \leq Cg$ and $g =
h$ or $g \leq h$, we write $f \lesssim g = h$ or $f \lesssim g
\leq h$, respectively. For a finite set $A$, we denote its
cardinality by $\sharp A$.
Finally, in all proofs we consistently retain the symbols
introduced in the original theorem (or related statement).

\section{Summary of main results}\label{sec:2}

Our goal is to characterize the distance
 $\mathrm{dist}
(f, V)_{\Lambda_s}: =
\inf_{g\in V} \|f-g\|_{\Lambda_s}$ from a function $f\in
\Lambda_s$ to a non-dense subspace $V\subset \Lambda_s$ in
terms of the fractional heat semigroup $T_{\alpha, t} =e^{-t
(-\Delta)^{\alpha/2}}$, $t\in(0,\infty)$.
Let us describe our problem more precisely as follows. Let
$f\in \mathcal{C}\cap L^\infty$.
For convenience, we let $$u_\alpha(x, t): =T_{\alpha, t}
f(x),\ \ x\in \mathbb{R}^n,\ \ t\in(0,\infty).$$
Let $r\in\mathbb{N}$ be such that $\alpha r>s$.
By definition,
$$ \partial_{n+1}^r u_\alpha(x, t^\alpha)= (-1)^r
(-\Delta)^{\frac {\alpha r}2} (T_{\alpha, t^\alpha} f)(x), $$
here
and
thereafter, $\partial_{n+1}:=\partial_t$.
Thus, by \eqref{1-6}, $f\in\Lambda_s$ if and only if
there exists a constant $\lambda\in(0,\infty)$ such that the
estimate
\begin{equation}\label{2-1-eq} |\partial_{n+1}^r u_\alpha(x,
t^\alpha)|\leq \lambda t^{s-\alpha r} \end{equation}
holds for all $x\in\mathbb{R}^n$ and $t\in (0, 1]$.
We denote by $\lambda_{\alpha, r,s}(f)$ the least such
constant.
 For any $0<\varepsilon<
\lambda_{\alpha,r,s}(f)$, define
\begin{align}\label{2-2:eq} D_{\alpha,
r}(s,f,\varepsilon):=\left\{ (x,t)\in
\mathbb{R}^n\times (0,1]:\
 |\partial_{n+1}^r u_\alpha(x, t^\alpha)|>\varepsilon
 t^{s-\alpha r}\right\}\end{align}
 to be the set of all points $(x,t)\in\mathbb{R}^n\times (0,
 1]$ at which \eqref{2-1-eq} with $\varepsilon$ in place of
 $\lambda$ fails.
 To quantify the size of this set,
we introduce a class of extended nonnegative valued
\emph{admissible set functions}
$\nu$ on the Borel $\sigma$-algebra
$\mathcal{B}(\mathbb{R}^n\times [0, 1])$
such that $\nu(\emptyset)=0$ and
$\nu(E_1)\leq \nu(E_2)$ whenever $E_1,
E_2\in \mathcal{B}(\mathbb{R}^n\times
[0, 1])$ and $E_1\subset E_2$.
Given each admissible function $\nu$, we define
$$ \varepsilon_{\alpha,
r,s,\nu}(f):=\inf\{\varepsilon\in(0,\infty):\
\nu(D_{\alpha, r}(s,f,\varepsilon))<\infty\}.$$
That is, $\varepsilon_{\alpha,r,s,\nu}(f)$ is the
\emph{critical index} for which
$\nu(D_{\alpha,r}(s,f,\varepsilon))<\infty$ for any
$\varepsilon>\varepsilon_{\alpha,r,s,\nu}(f)$.
Our main question is: what conditions should be imposed on a
subspace $V$ of $ \Lambda_s$ to guarantee the
existence of an admissible function $\nu$ such
that, for any $f\in \Lambda_s$, the distance
$\mathrm{dist}(f, V)_{\Lambda_s}$ can be characterized
by the critical index $\varepsilon_{\alpha,r,s,\nu}(f)$?

We will give a partial answer to this question in a unified
framework, showing that, under certain conditions
on a given subspace $V\subset \Lambda_s$, there always
exists an admissible function $\nu$ which can be expressed
explicitly
such that
$\varepsilon_{\alpha,r,s,\nu}(f)\sim \mathrm{dist}(f,
\Lambda_s)_{\Lambda_s}$.
Our general result applies to a broad class of subspaces
$V\subset \Lambda_s$, including intersections of $\Lambda_s$
with Sobolev, Besov, Triebel--Lizorkin, and
Besov-type spaces.
 Our results for these spaces will be in full analogy with the
 corresponding results proved in \cite{dsy} , where we need to
 replace $ y^{-s}\Delta_r f(x,y)$ by $ y^{\alpha r-s} |
 \partial_{n+1}^r u_\alpha (x, y^\alpha) |$. More
 specifically, the following results for these specific
 function spaces can be deduced directly from our general
 result:
Let $r\in\mathbb{N}$ be such that $\alpha r>s+3$, and let
$u_\alpha(x, y): =T_{\alpha, y} f(x)$ for any $f\in \Lambda_s$
and $(x,y)\in\mathbb{R}_+^{n+1}$.
 \begin{itemize}
 	\item For the space
 $\mathrm{J}_s(\mathop\mathrm{\,bmo\,})$ and any $f\in
 \Lambda_s$,
 	\begin{equation} \label{1-8b}	 d (f,
 \mathrm{J}_s(\mathop\mathrm{\,bmo\,}))_{\Lambda_s} \sim
 \inf\left\{ \varepsilon\in(0,\infty):\
 \mathrm{J}_{\alpha,r, s,\varepsilon}(f)
 <\infty\right\},\end{equation}
 where
 \begin{equation*}
\mathrm{J}_{\alpha,r, s,\varepsilon}(f) :=
\sup_{I\in{\mathcal D}} \frac 1 {|I|} \int_0^{\ell(I)}
\left|\left\{ x\in I:\
	 \left| \partial_{n+1}^r u_\alpha (x, y^\alpha) \right|
>\varepsilon y^{s-\alpha r}\right\}\right|\frac {dy}
y.\end{equation*}
 	\item For the Besov space $B_{p,q}^s$ with
 $p\in(0,\infty)$ and $q\in(0,\infty]$ and
 for any
 	$f\in \mathcal{C}_0 \cap \Lambda_s$,
 	\begin{align*}
 		&	 d (f, B_{p, q}^s)_{\Lambda_s}
 		\sim \inf \left\{ \varepsilon\in(0,\infty):\
 B_{\alpha,r, p,q,\varepsilon}^s(f) <\infty\right\},
 	\end{align*}
 	where \begin{equation}\label{1-8aq}
 B_{\alpha,r, p,q,\varepsilon}^s(f):= \left\| \left\{
 \fint_{2^{-j-1}}^{2^{-j}} \left| \left\{ x\in\mathbb{R}^n:\
 \left| \partial_{n+1}^r u_\alpha (x, y^\alpha)
 \right|>\varepsilon y^{s-\alpha r} \right\} \right|dy
 \right\}_{j=0}^\infty \right\|_{\ell^{\frac
 qp}(\mathbb{Z}_+)}.
\end{equation}
 	\item For the Triebel--Lizorkin spaces $F_{p,q}^s$ with
 $p\in(0,\infty)$ and $q\in(0,\infty]$ and
for any $f\in \mathcal{C}_0 \cap \Lambda_s$,
\begin{align*}
 &	 d (f, F_{p, q}^s)_{\Lambda_s} \sim \inf \left\{
 \varepsilon\in(0,\infty):\ F_{\alpha,r, p,
 q,\varepsilon}^s(f) <\infty\right\},
 	\end{align*}
 	where
 	$$ F_{\alpha,r, p, q,\varepsilon}^s(f):= \left\|
 \int_{\left\{y\in (0,1]:\
 | \partial_{n+1}^r u_\alpha (x, y^\alpha) | >\varepsilon
 y^{s-\alpha r}\right\}} \frac {dy} y
 \right\|_{L^{\frac pq}(\mathbb{R}^n, dx)}.$$
 	
 	\end{itemize}

For $\alpha=1$, the operators $T_{\alpha, t}$ reduce to the
Poisson semigroup. In this case, the estimate \eqref{1-8b} for
$0<s\leq 1$ and $r=2$ was previously established in
\cite[Theorem 4]{ss}.
By applying the fractional heat equation \eqref{1-6-0} twice,
one can show that
the function $u(x, t)=T_{1,t} f(x)$ satisfies the Laplace
equation: $$\partial_{t}^2 u(x, t) +\sum_{j=1}^n\partial_j^2
u(x, t)=0,\ \ (x, t)\in\mathbb{R}_+^{n+1}.$$
 This equation plays a crucial role in the proof for
 $\alpha=1$ in \cite{ss}, where harmonic properties of the
 Poisson semigroup are utilized.

For general fractional heat semigroups, however, the situation
is more subtle, and the proofs are technically more involved.
Indeed, when $\alpha\neq 1, 2$, the fractional heat equation
\eqref{1-6-0} no longer corresponds to a local differential
operator and, in particular, the Laplace equation
is unavailable. To address this issue, we develop a new
approach based on higher-order ball average operators and
certain combinatorial identities. This method not only
simplifies the analysis but also enables us to establish the
desired result for the full range
 $s\in(0,\infty)$ and $\alpha\in(0,\infty)$.

As a direct application of our main results, we can describe
  the closures $\overline{V \cap \Lambda_s}^{\Lambda_s}$ of
  various classical function spaces $V$ with respect to the
  topology of $\Lambda_s$.
For instance, in the case of the Besov space $B^s_{p,q}$ with
$s\in(0,\infty)$, $p\in(0,\infty]$ and $q\in(0,\infty]$,
our characterizations can be stated as follows. Let
$r\in\mathbb{N}$ be such that $\alpha r>s+3$, and let
$u_\alpha(x, y) :=T_{\alpha, y} f(x)$ for any $f\in \Lambda_s$
and $(x, y)\in\mathbb{R}_+^{n+1}$.
 	\begin{itemize}
 		\item If $p\in(0,\infty)$, then
 		$f\in\overline{B^s_{p,q}\cap\Lambda_s
 		}^{\Lambda_s}$
 		if and only if
 		 $B_{\alpha,r, p,q,\varepsilon}^s(f)<\infty$ for any
 $\varepsilon\in(0,\infty)$ and
 		$$	
\lim_{k\in\mathbb{Z}^n,\;|k|\rightarrow\infty}
\left|	\int_{\mathbb{R}^n}\varphi(x-k)f(x)\,dx\right|=0,$$
 		where $B_{\alpha,r, p,q,\varepsilon}^s(f)$ is
 defined in \eqref{1-8aq} and $\varphi$ is the father
 wavelet in a Daubechies wavelet system of regularity
 		 $L>\max\{s,n(\max\{\frac 1p,1\}-1)-s\}$.
 	 		
 		\item If $p=\infty$, then
 		$f\in\overline{B^s_{\infty,q}
 		}^{\Lambda_s}$
 		if and only if
 		$$
 		\sharp\{j\in\mathbb Z_+:\ D_{\alpha,r,j}(s, f,
 \varepsilon)\neq\emptyset\}<\infty,\ \forall\,
 \varepsilon\in(0,\infty),
 		$$
where, for any $j\in\mathbb{Z},$
\begin{align}\label{2-6-eq} D_{\alpha,r,j} ( s, f,
 \varepsilon) :=\left\{ (x,t)\in
\mathbb{R}^n\times (2^{-j-1}, 2^{-j}]:\
 |\partial_{n+1}^r u_\alpha(x, t^\alpha)|>\varepsilon
 t^{s-\alpha r}\right\}.
\end{align}
\end{itemize}

This article establishes a more general result within a
unified framework introduced in \cite{dsy}. In this setting,
a function space $V$ of smooth
order $s\in(0,\infty)$, referred to as the
\emph{Daubechies $s$-Lipschitz $X$-based space}, is defined
via Daubechies wavelet
expansions and a given quasi-normed
lattice $X$ of function sequences (see Definitions
\ref{Debqfs} and \ref{asdfs}).
 This framework includes all the classical function spaces
 mentioned earlier, making the previously stated results
 become
 direct corollaries of the general theorem. The details of
 this general framework will be presented in the next section.

Finally, we point out that various characterizations of
function spaces will be used throughout this article. For
wavelet characterizations of Besov and Triebel--Lizorkin
spaces, we refer to \cite{Me92,t06,t08,t10}; for their
difference characterizations, see \cite{T83,t92,T20}. Wavelet
characterizations of Besov-type and Triebel--Lizorkin-type
spaces can be found in
\cite{ysy10,lsuyy12,s011,ht23,syy,FSyy}, while their
difference characterizations are treated in
\cite{s08,ysy10,s011,hs20,h22}. For further extensions of
these spaces, see also
\cite{b5,b6,b7,b8,b9,HS13,HMS16,hl,hms23,hlms}.
In addition to the classical function spaces mentioned above,
our main result applies to a broader class of spaces that
admit wavelet-type characterizations.

\section{Framework and general results}\label{sec:3}

Before proceeding, let us briefly review
the Daubechies wavelet system on
$\mathbb{R}^n$ and related facts.
We denote by $\mathcal{C}_{\mathrm{c}}^r$
the set of all $r$ times continuously
differentiable real-valued functions on
$\mathbb{R}^n$ with compact support. For any
$\gamma:=(\gamma_1,\ldots,\gamma_n)\in\mathbb Z_+^n$ and
$x:=(x_1,\ldots,x_n)\in\mathbb{R}^n$, let
$x^\gamma:=x_1^{\gamma_1}\cdots x_n^{\gamma_n}$.
According to \cite[Sections~3.8 and 3.9]{Me92},
for any integer $L\in(1,\infty)$
there exist functions
$\varphi,\psi_l\in C_{\mathrm{c}}^L$ with
$l\in\{1,\ldots, 2^{n}-1\}=:\mathcal{N}$
such that,
for any $\gamma\in\mathbb{Z}_+^n$ with
$|\gamma|\le L$,
\begin{align*}
\int_{\mathbb{R}}\psi_{l}(x)x^\gamma\,dx=0,\
\forall\, l\in \mathcal{N}
\end{align*}
and the union of both
\begin{align}\label{fanofn}
\Phi:=\left\{\varphi_k(x):=\varphi(x-k):\
k\in \mathbb Z^n\right\}
\end{align}
and $$
\Psi:=\left\{ \psi_{l,j,k}(x):=2^{\frac{jn}2}
\psi_l (2^jx-k):l\in \mathcal{N},\
j\in \mathbb Z_+,\ k\in\mathbb Z^n\right\}
$$
forms an orthonormal basis of $L^2$.
The system $\Phi\cup \Psi$
is called the \emph{Daubechies wavelet system
of regularity $L$}
on $\mathbb{R}^n$.
We index this system as usual via dyadic
cubes as follows.
For any $j\in \mathbb{Z}_+$, let $\mathcal{D}_j$
denote the class of all
dyadic cubes in $\mathcal{D}$ with edge length
$2^{-j}$.
Let $\Omega:=\Omega_0\cup\Omega_1$, where
$\Omega_0:=\{0\}\times \mathcal{D}_0$ and
$\Omega_1:= \mathcal{N}\times \mathcal D$.
For any $\omega=(0, I)\in\Omega_0$ with
$I=k+[0,1)^n\in \mathcal{D}_0$, let
\begin{align*}
\psi_\omega(\cdot):=\varphi(\cdot-k),
\end{align*}
while, for any $\omega=(l_\omega,
I_\omega)\in\Omega_1$
with $l_\omega\in\mathcal{N}$
and $I_\omega=2^{-j} ( k+[0,1)^n) \in\mathcal
D_j$, let
\begin{align*}
\psi_{\omega}(\cdot): =2^{\frac{jn}2}
\psi_{l_\omega}(2^j \cdot-k).
\end{align*}
For any $\omega\in\Omega$ and
$f\in L^1_{\mathrm{loc}}$ (the set of all locally
integrable functions on $\mathbb{R}^n$), let
$$\langle f,\psi_{\omega}\rangle:=
\int_{\mathbb{R}^n} f(x) \overline{\psi_\omega(x)}\, dx.$$
Throughout this article, we will fix a Daubechies wavelet
system
$\Phi\cup \Psi$ with large regularity $L$.

The definition of the function space $V$ in
our framework also requires the concept of
quasi-normed lattice of function sequences,
which we now introduce.
For any given $i,j\in\mathbb{Z}_+$,
let $\delta_{i,j}:=1$ or $0$ depending on
whether $i=j$ or $i\neq j$.
We denote by $\mathscr M$ the
set of all measurable functions $f:
\mathbb{R}^n\to \mathbb{R}\cup\{\pm\infty\}$
that are finite almost everywhere on
$\mathbb{R}^n$ and by $\mathscr{M}_{\mathbb{Z}_+}$
the set of all sequences $	\{f_j\}_{j\in\mathbb{Z}_+}$
of functions in $\mathscr M$. We use
boldface letters $\mathbf G, \mathbf F,\ldots$
to denote elements in $\mathscr{M}_{\mathbb{Z}_+}$.
For any
 $\gamma\in \mathbb{C}$
and $\mathbf F:=\{f_j\}_{j\in\mathbb{Z}_+},\mathbf
G:=\{g_j\}_{j\in\mathbb{Z}_+}
\in \mathscr{M}_{\mathbb{Z}_+}$, define
$|\mathbf{F}|:=\{| f_j|\}_{j\in\mathbb{Z}_+},\
\gamma \mathbf F:=\{\gamma f_j\}_{j\in\mathbb{Z}_+}$,
and
$\mathbf F\pm \mathbf G:=
\{f_j\pm g_j
\}_{j\in\mathbb{Z}_+}.$
Also, we write $\mathbf{F}\leq \mathbf {G}$
if, for any $j\in\mathbb{Z}_+$,
$f_j\leq g_j$ almost everywhere on $\mathbb{R}^n$.
We define the \emph{left shift $\textnormal{Sh}_L$} and
the \emph{right shift} $\textnormal{Sh}_R$ on the space
$\mathscr{M}_{\mathbb{Z}_+}$
as follows: for any $\mathbf{F}:
=\{f_j\}_{j\in\mathbb{Z}_+}
\in\mathscr{M}_{\mathbb{Z}_+}$,
\begin{align*}
\textnormal{Sh}_L\left(\mathbf{F}\right)
:=\{f_{j+1}\}_{j\in\mathbb{Z}_+}
\ \mathrm{and}\
\textnormal{Sh}_R\left(\mathbf{F}\right)
:=\{f_{j-1}\}_{j\in\mathbb{Z}_+}, \
\text{where}\ f_{-1}:=0.
\end{align*}

\begin{definition}\label{Debqfs}
Let $\|\cdot\|_X:
\mathscr{M}_{\mathbb{Z}_+}\to [0,\infty]$ be
an extended-valued quasi-norm
defined on the entire space
$\mathscr{M}_{\mathbb{Z}_+}$ satisfying
that $\|\mathbf F\|_X\leq \|\mathbf{G}\|_X$
whenever $\mathbf{F},\mathbf{G}\in
\mathscr{M}_{\mathbb{Z}_+}$ and
$|\mathbf{F}|\leq |\mathbf{G}|$.
Let $X$ denote the space of all function
sequences $ \mathbf{F}\in
\mathscr{M}_{\mathbb{Z}_+}$ such that
$\|\mathbf{F}\|_X<\infty$.
We call
$X=(X,\|\cdot\|_X)$ a \emph{quasi-normed
lattice
of function
sequences} if it satisfies the following
two conditions:
\begin{itemize}
\item[\rm (i)]
there exists a positive constant $C$ such
that,
for any $\mathbf{F}\in X$,
\begin{align*}
\left\|\textnormal{Sh}_L\left(\mathbf{F}\right)
\right\|_X+ \left\|	\textnormal{Sh}_R\left(\mathbf{F}\right)
\right\|_X\leq C \left\|\mathbf{F}\right\|_X.
\end{align*}
\item[\rm (ii)]
for any bounded function $f:\mathbb{R}^n\to
\mathbb{R}$ with compact support,
$\{f, 0,0,\ldots\}\in X$.
\end{itemize}
\end{definition}

The function space with smooth order $s\in(0,\infty)$ in
our general framework is called the
Daubechies $s$-Lipschitz $X$-based space.
It is defined below through the Daubechies
wavelet expansion and a given quasi-normed
lattice $X$ of function sequences.

\begin{definition}\label{asdfs}
Let $X$ be a quasi-normed lattice
of function sequences. Let $s\in(0,\infty)$. Assume that
 the regularity $L$ of the \emph{Daubechies wavelet
 system} is strictly bigger than $s$. Then the
\emph{Daubechies $s$-Lipschitz $X$-based space}
$\Lambda_X^{s}$ is defined
to be the set of
all functions $f\in \Lambda_{s}$ such that
$$
\|f\|_{\Lambda_X^{s}}:=\left\|\left\{
\sum_{\omega\in \Omega,I_\omega\in\mathcal{D}_j}
|I_\omega|^{-\frac sn-\frac 12}\left|\langle
f,\psi_{\omega}\rangle\right|\mathbf{1}_{I_\omega}
\right\}_{j\in\mathbb{Z}_+}
\right\|_{X}<\infty.
$$
\end{definition}

\begin{remark}\label{rem-3-4} In the above definition, the
space
$\Lambda_X^{s}$ formally depends on a particular choice of the
wavelet
 systems. However, most function spaces in analysis that admit
 wavelet characterizations can also be equivalently defined
 through alternative tools such as finite differences and
 semigroup methods. In such cases, any
 Daubechies wavelet
 system with sufficient regularity may be used in their
 wavelet characterization. Therefore, for the validity of our
 main results for these function spaces, we may always fix a
 wavelet system with sufficiently large regularity $L$.
\end{remark}

Many classical function spaces, including those
mentioned function spaces above, can be viewed as the
\emph{Daubechies s-Lipschitz $X$-based spaces},
associated with a suitable quasi-normed lattice
$X$ of function sequences, according to
the usual wavelet characterizations of these spaces.
For example, the wavelet characterizations of Besov spaces
$B^{s}_{p,q}$ and the Triebel--Lizorkin spaces $F^{s}_{p,q}$
for any $p,q, s\in (0,\infty)$ (see \cite[Proposition 1.11 and
Corollary 2]{T20}) yield the following equivalences (assuming
the wavelet system is sufficiently regular):
\begin{align*}
\|f\|_{B^{s}_{p,q}}&\sim \left\{\sum_{j=0}^{\infty}
\left\|\sum_{\omega\in \Omega,I_\omega\in\mathcal{D}_j}
|I_\omega|^{-\frac sn-\frac 12}\left|\langle
f,\psi_{\omega}\rangle\right|\mathbf{1}_{I_\omega}\right\|_{L^p}
^{q}\right\}^{\frac 1q}
\end{align*}
and
\begin{align*}
\|f\|_{F^{s}_{p,q}}&\sim \left\|\left\{\sum_{j=0}^{\infty}
\left[\sum_{\omega\in \Omega,I_\omega\in\mathcal{D}_j}
|I_\omega|^{-\frac sn-\frac 12}\left|\langle
f,\psi_{\omega}\rangle\right|\mathbf{1}_{I_\omega}\right]^q
\right\}^{\frac 1q}\right\|_{L^p}.
\end{align*}
This means that both $B^{s}_{p,q}$ and $F^{s}_{p,q}$ can be
regarded as Daubechies s-Lipschitz $X$-based spaces, with the
quasi-norm on the underlying function sequence lattice $X$
given, respectively, by
$$
\left\|\left\{f_j\right\}_{j\in\mathbb{Z}_+}\right\|_X=\left(
\sum_{j=0}^\infty \|f_j\|_{L^p}^q\right)^{\frac 1q}$$
and
$$
\left\|\left\{f_j\right\}_{j\in\mathbb{Z}_+}\right\|_X=
\left\|\left(\sum_{j=0}^\infty |f_j|^q\right)^{\frac
 1q}\right\|_{L^p}.$$

We are now in a position to formulate the main results in the
general setting. For simplicity, we adopt the following
symbols for the remainder of this section:
\begin{itemize}
\item $X$ is a quasi-normed lattice
of function sequences, and $\Lambda_X^{s}$ is the
Daubechies $s$-Lipschitz $X$-based space for a fixed
$s\in(0,\infty)$;
\item $L$ denotes the regularity of the Daubechies wavelet
system $\{\psi_\omega\}_{\omega\in\Omega}$;
\item $T_{\alpha, t}$ is the fractional heat semigroup of
order $\alpha\in(0,\infty)$ defined in \eqref{1-6-1}, $r$ is
an integer such that $\alpha r>s+3$, and $u_\alpha(x, t)
:=T_{\alpha, t} f(x)$ for any $f\in\Lambda_s$ and
$(x,t)\in\mathbb{R}^{n+1}_+$.
\end{itemize}

For any $f\in\Lambda_s$ and $\varepsilon\in(0,\infty)$, recall
that the sets $D_{\alpha, r}(s, f, \varepsilon)$ and
$D_{\alpha, r, j}(s, f, \varepsilon)$, $j\in\mathbb{Z}_+$, are
defined in \eqref{2-2:eq} and \eqref{2-6-eq}, respectively. We
also define
\begin{align}
V_0(s, f, \varepsilon):&=
\left\{ I\in\mathcal D_0:\
\left|\langle f, \psi_{(0, I)}\rangle
\right|
>\varepsilon \right\}.\label{3-5-eq}
\end{align}
These assumptions and symbols are understood and will not be
repeated in the following statements.

For simplicity, we refer to $\alpha, s, r, L, n$ and the
positive constant $C$ in Definition \ref{Debqfs} as framework
parameters. The framework parameters also include the
parameter $\theta$ and the implicit positive constant $C$ in
Assumption I below if assumed as well as the general positive
constant $C$ in Definition \ref{Debqf2s} below if Assumption
II below is assumed. Most implicit constants in this article
depend on these framework parameters.

The formulation of our first main result requires the
following
assumption for the given quasi-normed
lattice $X$ of function sequences.

\begin{assumption}\label{a1} (\emph{Doubling condition of
$X$})
There exist positive constants $\theta$ and $C$
such that,
for any sequence $\{B_{k,j}\}_{k\in
\mathbb N,j\in\mathbb Z_+}\subset \mathbb{B}$
of Euclidean balls in $\mathbb{R}^n$, one has
\begin{align*}
\left\|\left\{ \left[\sum_{k\in\mathbb N}
\mathbf{1}_{2 B_{k,j}}\right]^\theta
\right\}_{j\in\mathbb Z_+}\right
\|_{X}
\le C\left\|\left\{ \left[
\sum_{k\in\mathbb N}\mathbf{1}_{B_{k,j}}\right]^\theta
\right\}_{j\in\mathbb Z_+}\right\|_{X}.
\end{align*}
\end{assumption}

\begin{remark}
From Assumption \ref{a1}
(Doubling condition of $X$), it follows that, for
any $\beta\in(1,\infty)$
and any sequence $\{B_{k,j}\}_{k\in
\mathbb N,j\in\mathbb Z_+}$ in $\mathbb{B}$
of Euclidean balls in $\mathbb{R}^n$, one has
\begin{align*}
\left\|\left\{ \left[\sum_{k\in\mathbb N}
\mathbf{1}_{\beta B_{k,j}}\right]^\theta
\right\}_{j\in\mathbb Z_+}\right
\|_{X}
\le C
^{\lceil\log_2\beta\rceil}
\left\|\left\{ \left[
\sum_{k\in\mathbb N}\mathbf{1}_{B_{k,j}}\right]^{\theta}
\right\}_{j\in\mathbb Z_+}\right\|_{X}.
\end{align*}
\end{remark}

Let us also define the following two quantities for any
$f\in\Lambda_s$ under Assumption I:
\begin{align}
\varepsilon_{X}^0f:&=
\inf\left\{\varepsilon\in(0,\infty):\ \left\|\left\{
\sum_{I\in V_0(s, f, \varepsilon)}
\mathbf{1}_{I}\delta_{0,j}
\right\}_{j\in\mathbb{Z}_+}
\right\|_{X}<\infty\right\}\label{4-6-1}
\end{align}
and
\begin{align}
\varepsilon_{X}f:&=
\inf\left\{\varepsilon\in(0,\infty):\
\left\|\left\{ \left[\int_{0}^1
\mathbf{1}_{D_{\alpha,r,j}(s,f, \varepsilon)}
(\cdot,t)\,\frac{dt}{t}
\right]^\theta\right\}_{j\in\mathbb Z_+}
\right\|_{X}<\infty\right\}, \label{ef-1}
\end{align}
where
the set $V_0(s, f,\varepsilon)$ is defined in \eqref{3-5-eq}.

\begin{theorem}\label{thm-2-66}
Assume that $L>\alpha r +1$ and $X$ satisfies Assumption
\ref{a1} for some constant $\theta\in(0,\infty)$.
Then, for any $f\in\Lambda_s$,
\begin{align*}
d \left(f, \Lambda_X^{s}\right)_{\Lambda_s}
\sim \varepsilon_{X}^0f+\varepsilon_{X}f,
\end{align*}
where the positive equivalence constants depend only on the
framework parameters.
\end{theorem}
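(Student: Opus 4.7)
The plan is to establish the two one-sided bounds separately, using the heat-semigroup characterization \eqref{1-6} as the bridge between $\|\cdot\|_{\Lambda_s}$ and the bad-point sets $D_{\alpha,r,j}(s,f,\varepsilon)$ and $V_0(s,f,\varepsilon)$.

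\emph{Lower bound.} I would show $\varepsilon_X^0 f+\varepsilon_X f\lesssim \|f-g\|_{\Lambda_s}$ for any $g\in\Lambda_X^{s}$, then take the infimum over $g$. Fix such a $g$ and write $f=g+(f-g)$. By \eqref{1-6} applied to $f-g$,
$$\bigl|\partial_{n+1}^r u_\alpha(f-g)(x,t^\alpha)\bigr|\lesssim \|f-g\|_{\Lambda_s}\,t^{s-\alpha r},$$
so for any $\varepsilon>C\|f-g\|_{\Lambda_s}$ we have the containment
$$D_{\alpha,r,j}(s,f,2\varepsilon)\subset \bigl\{(x,t)\in\mathbb{R}^n\times(2^{-j-1},2^{-j}]:\bigl|\partial_{n+1}^r u_\alpha(g)(x,t^\alpha)\bigr|>\varepsilon\,t^{s-\alpha r}\bigr\}.$$
Expanding $g$ in the Daubechies basis and applying the pointwise kernel estimates for $(-\Delta)^{\alpha r/2}T_{\alpha,t^\alpha}$ promised in Section \ref{sec-5}, I would dominate $t^{\alpha r-s}|\partial_{n+1}^r u_\alpha(g)(x,t^\alpha)|$ at scale $t\sim 2^{-j}$ by a weighted sum of $|I_\omega|^{-s/n-1/2}|\langle g,\psi_\omega\rangle|\mathbf{1}_{cI_\omega}(x)$ with rapidly decaying tails in $|I_\omega|$. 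This pointwise estimate, combined with Assumption \ref{a1} and the monotonicity of $\|\cdot\|_X$, yields $\varepsilon_X f\lesssim\|f-g\|_{\Lambda_s}$. The bound for $\varepsilon_X^0 f$ is parallel, using only the $\Omega_0$-part of the expansion.

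\emph{Upper bound.} Fix $\varepsilon$ slightly larger than $\varepsilon_X^0 f+\varepsilon_X f$. Call $\omega\in\Omega_1$ with $I_\omega\in\mathcal{D}_j$ \emph{bad} if the Carleson-type tent above $I_\omega$ meets $D_{\alpha,r,j}(s,f,c\varepsilon)$ on a set of positive $dt/t$-measure, and $\omega=(0,I)\in\Omega_0$ \emph{bad} if $I\in V_0(s,f,\varepsilon)$. Let $E$ be the set of bad indices, and set
$$b:=\sum_{\omega\in E}\langle f,\psi_\omega\rangle\psi_\omega,\qquad g:=f-b.$$
One then verifies two claims: (i) $b\in\Lambda_s$ with $\|b\|_{\Lambda_s}\lesssim\varepsilon$, which follows by re-expressing $\langle f,\psi_\omega\rangle$ through the semigroup data on the associated tents (using vanishing moments of $\psi_\omega$) so that \eqref{1-6} applied to $b$ involves only bad tents; and (ii) $g\in\Lambda_X^{s}$, which follows because the wavelet coefficients of $g$ vanish on $E$, so the sequence defining $\|g\|_{\Lambda_X^{s}}$ is pointwise controlled in a sparse sense by indicator sums of bad tents at scale $j$ which, by Assumption \ref{a1} and Definitions \ref{Debqfs}--\ref{asdfs}, have $X$-norm bounded by the finite quantities in \eqref{4-6-1} and \eqref{ef-1}.

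\emph{Main obstacle.} The technical heart is the precise two-sided equivalence between the Daubechies coefficient $|I_\omega|^{-s/n-1/2}|\langle f,\psi_\omega\rangle|$ at scale $j$ and the semigroup quantity $t^{\alpha r-s}|\partial_{n+1}^r u_\alpha(x,t^\alpha)|$ for $(x,t)$ in an admissible neighborhood of $I_\omega$. Obtaining both directions uniformly in $j$, with tails that are summable against the indicator expressions appearing in \eqref{4-6-1} and \eqref{ef-1}, requires the fine kernel estimates of Section \ref{sec-5} combined with a maximal-function/tent-space argument and the combinatorial identities alluded to in the introduction; this is the content of the three propositions announced for Sections \ref{sec:7}--\ref{sec:9}. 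Once these are in hand, the reduction of Theorem \ref{thm-2-66} to careful bookkeeping under Assumption \ref{a1} becomes routine.
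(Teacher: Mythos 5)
Your proposed route is genuinely different from the paper's. The paper does not attempt a direct wavelet decomposition at all; instead it reduces Theorem \ref{thm-2-66} to the already-established finite-difference characterization of $d(f,\Lambda_X^s)_{\Lambda_s}$ from \cite{dsy} (quoted as Lemma \ref{wfawd}). The work then consists of showing that the semigroup bad sets $D_{\alpha,r,j}(s,f,\varepsilon)$ and the finite-difference bad sets $S_{r_1,j}(s,f,\varepsilon)$ are comparable up to bounded hyperbolic enlargements and a change of threshold: Proposition \ref{cor-6-7-1} gives $S_{r_1,j}(s,f,\varepsilon)\subset\bigcup_i\bigl(D_{\alpha,r,i}(s,f,c\varepsilon)\bigr)_R$, Proposition \ref{cor-6-7-0} gives the reverse inclusion with $S_{2\ell,i}$, and Proposition \ref{thm-3adf} together with Lemma \ref{a2.15s} lets one absorb the hyperbolic $R$-neighborhoods back into the original sets. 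This is considerably cheaper than reproving the distance theorem, because the wavelet-coefficient bookkeeping has already been done once in \cite{dsy} and need not be redone for the semigroup.

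Beyond the strategic difference, your upper-bound decomposition has the roles of $b$ and $g$ reversed, and as written the two claims fail. You set $b:=\sum_{\omega\in E}\langle f,\psi_\omega\rangle\psi_\omega$ where $E$ is the set of \emph{bad} indices (those whose tents meet $D_{\alpha,r,j}(s,f,c\varepsilon)$), and $g:=f-b$, so that the wavelet coefficients of $g$ vanish on $E$. But then the sequence defining $\|g\|_{\Lambda_X^s}$ is supported on the \emph{good} tents, which fill essentially all of $\mathbb{R}^n\times\mathbb{Z}_+$; there is no reason its $X$-norm should be finite, and it is certainly not controlled by indicator sums over bad tents as you assert. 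Symmetrically, $b$ is built from coefficients near the set where the semigroup quantity is \emph{large} (exceeds $c\varepsilon\,t^{s-\alpha r}$), so $\|b\|_{\Lambda_s}$ is of size $\|f\|_{\Lambda_s}$, not $\varepsilon$. The correct assignment is the opposite one: the function built from the bad indices (where the coefficients can be large but the set is $X$-finite once $\varepsilon>\varepsilon_X f$) should be the piece placed in $\Lambda_X^s$, and the remainder, supported on good indices where each normalized coefficient is $\lesssim\varepsilon$, is the piece whose $\Lambda_s$-norm you bound by $C\varepsilon$. Even with this fix, you would still need to carry out the two-sided pointwise comparison between wavelet coefficients and $t^{\alpha r-s}|\partial_{n+1}^r u_\alpha(x,t^\alpha)|$ in detail (your ``main obstacle'' paragraph), which is exactly the content Propositions \ref{cor-6-7-1} and \ref{cor-6-7-0} package as set inclusions while bypassing wavelets entirely.
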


\begin{remark} According to \cite[Corollary 4.1]{dsy}, if the
quasi-norm $\|\cdot\|_X$ satisfies
 the additional condition
 \begin{align}\left\|\{
\mathbf{1}_{E}\delta_{0,j}
\}_{j\in\mathbb{Z}_+}\right\|_X=\infty\ \ \text{ whenever
$E\subset \mathbb{R}^n$ and
$|E|=\infty$},\label{3-7}\end{align}
 then the quantity $\varepsilon_X^0 f$ defined in
 \eqref{4-6-1} admits the following equivalent expression in
 terms of the father wavelet $\varphi$ of the Daubechies
 wavelet system given in \eqref{fanofn}:
$$ \varepsilon_X^0 f \sim
\limsup_{k\in\mathbb{Z}^n,\;|k|\rightarrow\infty}
\left|\int_{\mathbb{R}^n}\varphi(x-k)f(x)\,dx\right|.$$
As a result, under this additional assumption \eqref{3-7} on
$\|\cdot\|_X$ and under the assumptions of Theorem
\ref{thm-2-66}, we have
\begin{align*}
 d \left(f,
 \Lambda_X^{s}\right)_{\Lambda_s}&\sim\varepsilon_Xf+
\limsup_{k\in\mathbb{Z}^n,\;|k|\rightarrow\infty}
\left|\int_{\mathbb{R}^n}\varphi(x-k)f(x)\,dx\right|.
\end{align*}
\end{remark}

As a corollary of Theorem~\ref{thm-2-66},
 we immediately obtain the following characterization of the
 closure $\overline{\Lambda_X^{s}}^{\Lambda_s}$ of
 $\Lambda_X^{s}$ under Assumption \ref{a1}.

\begin{corollary}\label{ppqq}
Assume that $L>\alpha r+1$ and $X$ satisfies Assumption
\ref{a1} for some constant $\theta\in(0,\infty)$.
 Then a function
$f\in\Lambda_s$ belongs to
$\overline{\Lambda_X^{s}}^{\Lambda_s}$ if and only if,
for any $\varepsilon\in(0,\infty)$,
$$\left\|\left\{
\sum_{I\in V_0(s, f, \varepsilon)}
\mathbf{1}_{I}\delta_{0,j}
\right\}_{j\in\mathbb{Z}_+}
\right\|_{X}+\left\|\left\{\left[\int_{0}^1
\mathbf{1}_{D_{\alpha,r,j}(s,f, \varepsilon)}
(\cdot,t)\,\frac{dt}{t}
\right]^{\theta}\right\}_{j\in\mathbb Z_+}
\right\|_{X}
<\infty.
$$
\end{corollary}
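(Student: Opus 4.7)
The plan is to deduce Corollary~\ref{ppqq} directly from Theorem~\ref{thm-2-66}, with essentially no new analysis required. The starting observation is that, by the very definition of closure in the metric space $(\Lambda_s,\|\cdot\|_{\Lambda_s})$, the membership $f\in\overline{\Lambda_X^{s}}^{\Lambda_s}$ is equivalent to $d(f,\Lambda_X^{s})_{\Lambda_s}=0$. Applying Theorem~\ref{thm-2-66}, which under the same hypotheses $L>\alpha r+1$ and Assumption~\ref{a1} gives $d(f,\Lambda_X^{s})_{\Lambda_s}\sim \varepsilon_{X}^0 f + \varepsilon_{X} f$ with positive equivalence constants independent of $f$, I conclude that $f\in\overline{\Lambda_X^{s}}^{\Lambda_s}$ if and only if $\varepsilon_{X}^0 f =0$ and $\varepsilon_{X} f = 0$ simultaneously.

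The remaining task is to rewrite these two vanishing conditions in the explicit form stated in the corollary, by unfolding the definitions \eqref{4-6-1} and \eqref{ef-1}. Here I will exploit the monotonicity of the underlying sets in $\varepsilon$: as $\varepsilon$ increases, $V_0(s,f,\varepsilon)$ shrinks by \eqref{3-5-eq}, and $D_{\alpha,r,j}(s,f,\varepsilon)$ shrinks by \eqref{2-6-eq}, so their characteristic-function sums and integrals decrease pointwise in $\varepsilon$. Combined with the lattice property built into Definition~\ref{Debqfs} (namely $\|\mathbf F\|_X\le\|\mathbf G\|_X$ whenever $|\mathbf F|\le|\mathbf G|$), this shows that both $X$-quasi-norms appearing inside the infima of \eqref{4-6-1} and \eqref{ef-1} are non-increasing functions of $\varepsilon$. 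Consequently, the infimum in each critical index equals $0$ if and only if the corresponding $X$-quasi-norm is finite for every $\varepsilon\in(0,\infty)$, which is exactly the condition in the corollary.

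Putting the two equivalences together yields the desired characterization. There is no real obstacle, since the analytic content is already absorbed in Theorem~\ref{thm-2-66}; the one point that must be stated carefully is the monotonicity in $\varepsilon$, which guarantees that $\varepsilon_X^0 f$ and $\varepsilon_X f$ genuinely serve as thresholds in the sense required to convert ``infimum $=0$'' into ``finite for all $\varepsilon>0$''. The whole argument is therefore a short formal derivation occupying no more than a few lines.
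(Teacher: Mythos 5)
Your argument is correct and matches the paper's (unstated) reasoning exactly: the corollary is obtained by observing that membership in $\overline{\Lambda_X^{s}}^{\Lambda_s}$ is the same as $d(f,\Lambda_X^s)_{\Lambda_s}=0$, applying Theorem~\ref{thm-2-66} to translate this into $\varepsilon_X^0 f=\varepsilon_X f=0$, and then using the monotonicity in $\varepsilon$ of the sets $V_0(s,f,\varepsilon)$ and $D_{\alpha,r,j}(s,f,\varepsilon)$ together with the lattice property of $\|\cdot\|_X$ to convert ``infimum equals zero'' into ``finite for every $\varepsilon>0$''. The paper labels the corollary as following ``immediately'' from Theorem~\ref{thm-2-66}, and you have supplied precisely the routine verification that justifies this.
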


One limitation of Theorem~\ref{thm-2-66} is that Assumption
\ref{a1} does not hold for certain function spaces associated
with endpoint parameters, such as
the Triebel--Lizorkin space
$F^s_{\infty,q}$ and the Besov space
$B^s_{\infty,q}$. See Section \ref{sea8} for a detailed
discussion. As a result, Theorem~\ref{thm-2-66} is not
applicable to these spaces.
However, such endpoint spaces typically satisfy a different weaker
condition, namely, Assumption \ref{pplp} below. Under that
assumption, we will establish an alternative result,
Theorem~\ref{thm-7-11}, which extends our framework to include
function spaces with endpoint parameters.

The formulation of Assumption \ref{pplp} is more involved,
requiring introducing some additional symbols and definitions.
We denote by $\mathscr P({\mathbb R}^{n+1}_+)$
the
collection of all measurable subsets of
${\mathbb R}^{n+1}_+$.
Let
\begin{align*}
\mathscr{P}_{\mathbb{Z}_+}(\mathbb{R}^{n+1}_+)
:=
\left\{
\{A_j\}_{j\in\mathbb{Z}_+}:\forall\, j\in\mathbb{Z}_+,\
A_j\in\mathscr P({\mathbb R}^{n+1}_+)\right\}
\end{align*}
be the collection of all sequences of measurable subsets of
${\mathbb R}^{n+1}_+$.
For any $\{A_j\}_{j\in\mathbb{Z}_+},
\{B_j\}_{j\in\mathbb{Z}_+}\in
\mathscr{P}_{\mathbb{Z}_+}(\mathbb{R}^{n+1}_+)$,
we write
$\{A_j\}_{j\in\mathbb{Z}_+}\subset
\{B_j\}_{j\in\mathbb{Z}_+}$ if $A_j\subset B_j$
for any $j\in\mathbb{Z}_+$.

\begin{definition}
The \emph{left shift $\textnormal{Sh}_L$} and
the \emph{right shift} $\textnormal{Sh}_R$ on
$\mathscr{P}_{\mathbb{Z}_+}
(\mathbb{R}^{n+1}_+)$
are defined, respectively, as follows: for any
$\mathbf{F}:=\{A_j\}_{j\in\mathbb{Z}_+}
\in\mathscr{P}_{\mathbb{Z}_+}
(\mathbb{R}^{n+1}_+)$,
$$\textnormal{Sh}_L(\mathbf{F})
:=\{A_{j+1}\}_{j\in\mathbb{Z}_+}\ \text{ and }\ \
\textnormal{Sh}_R(\mathbf{F})
:=\{A_{j-1}\}_{j\in\mathbb{Z}_+},\ \text{
where $A_{-1}:=\emptyset$.}$$
\end{definition}

We also use the \emph{Poincar\'e
hyperbolic metric} $\rho:\mathbb{R}_+^{n+1}\times
\mathbb{R}_+^{n+1}
\to \mathbb{R}$ in the upper-half space
$\mathbb{R}_+^{n+1}$, which is defined
by setting, for any $\mathbf{x}:=( x, x_{n+1}),
\mathbf{y}:=( y, y_{n+1}) \in \mathbb{R}_+^{n+1}$,
\begin{align*}
\rho(\mathbf{x}, \mathbf{y}) &:
=\mathop\mathrm{\,arccosh\,}
\left( 1+\frac{|\mathbf{x}-\mathbf{y}|^2}{2x_{n+1}
y_{n+1}}\right).
\end{align*}
Here, we write a vector in $\mathbb{R}_+^{n+1}$ in the form
$\mathbf{x}:=( x, x_{n+1})$ or $\mathbf{y}:=( y, y_{n+1})$,
where $x, y\in \mathbb{R}^n$ and $x_{n+1},
y_{n+1}\in(0,\infty)$.
Properties of the metric $\rho$ can be found
in Appendix of \cite{dsy}.

For any $\mathbf x\in\mathbb{R}^{n+1}_+$ and $t\in(0,\infty)$,
 define
$$ B_\rho( \mathbf x, t):=\left\{ \mathbf
y\in\mathbb{R}^{n+1}_+:\ \rho\left(\textbf{x},
\textbf{y}\right)< t\right\}.$$
Given $R\in(0,\infty)$, we define
the \emph{hyperbolic $R$-neighborhood} $A_R$
of a subset $A\subset \mathbb{R}_+^{n+1}$
by setting
$$A_R:=\{\mathbf x\in\mathbb{R}^{n+1}_+:
\rho(\mathbf x, A)< R\},$$
where
$\rho\left(\textbf{x}, A\right):=
\inf_{\textbf{y}\in A} \rho\left(\textbf{x},
\textbf{y}\right)$.

\begin{definition}
(\emph{Property I}) We say that a set $A\subset
\mathbb{R}_+^{n+1}$ has Property I
 with constants $\delta, \delta'\in(0,\infty)$
 if
\begin{equation}\label{5-1-00}
|B_\rho(\mathbf{z},\delta) \cap A |\ge
\delta' |B_\rho(\mathbf{z},\delta)|,\ \forall\, \mathbf{z}\in
A.
\end{equation}
In general, we say a set $A\subset \mathbb{R}_+^{n+1}$ has
Property I
 if \eqref{5-1-00} holds for some constants $\delta,
 \delta'\in (0, \frac 1{10})$.
\end{definition}

Observe that Property I describes some geometric
properties of the boundary
of sets.

\begin{definition}\label{Debqf2s}
An extended non-negative valued set function
$\nu:\mathscr{P}_{\mathbb{Z}_+}
(\mathbb{R}^{n+1}_+)\rightarrow [0,\infty]$
is called a \emph{Carleson-type measure}
if it satisfies the following conditions for some constant
$C\in(0,\infty)$ and all $\{A_j\}_{j\in\mathbb{Z}_+},
\{B_j\}_{j\in\mathbb{Z}_+}\in
\mathscr{P}_{\mathbb{Z}_+}
(\mathbb{R}^{n+1}_+)$:
\begin{itemize}
\item[\rm(i)]
 $\nu(\{A_j\}_{j\in\mathbb{Z}_+})
\le\nu(\{B_j\}_{j\in\mathbb{Z}_+})$ whenever
$\{A_j\}_{j\in\mathbb{Z}_+}\subset
\{B_j\}_{j\in\mathbb{Z}_+}$;
\item[\rm(ii)]
$\nu(\{A_j\cup B_j\}_{j\in\mathbb{Z}_+})
\le C\left[\nu(\{A_j\}_{j\in\mathbb{Z}_+})
+\nu(\{B_j\}_{j\in\mathbb{Z}_+})\right]$;
\item[\rm(iii)]
$\nu(\textnormal{Sh}_L\{A_j\}_{j\in\mathbb{Z}_+})
+\nu(\textnormal{Sh}_R\{A_j\}_{j\in\mathbb{Z}_+})
\le C \nu(\{A_j\}_{j\in\mathbb{Z}_+})$;
\item[\rm(iv)] if every set in the sequence
$\{A_j\}_{j\in\mathbb{Z}_+}$ has Property I with uniform
constants $\delta,
\delta'\in (0, \frac{1}{10})$ (independent of $j$),
then, for any $R\in(1,\infty)$,
$$\nu(\{(A_j)_R\}_{j\in\mathbb Z_+})<\infty\iff
\nu(\{A_j\}_{j\in\mathbb Z_+})<\infty.$$
\end{itemize}
\end{definition}
Given a function
$\nu:\mathscr{P}_{\mathbb{Z}_+}
(\mathbb{R}^{n+1}_+)\rightarrow
[0,\infty]$, we also use a slight abuse of symbol that,
for any $A\in \mathscr
P({\mathbb R}^{n+1}_+)$, $\nu(A)
:=\nu\left(\left\{A, \emptyset, \emptyset,\ldots\right\}
\right).$
For each dyadic cube $I\in \mathcal D$, we define
$T(I): =I \times (
\frac{\ell(I)}{2}, \ell(I)]$.

In our second result, we assume the quasi-normed lattice $X$
satisfies
the following assumption.

\begin{assumption}\label{pplp}
(\emph{Carleson-type measure condition} of $X$) There exists a
Carleson-type measure
$\nu:\mathscr{P}_{\mathbb{Z}_+}
(\mathbb{R}^{n+1}_+)\rightarrow [0,\infty]$ such that,
for any collection $\mathcal{A}$ of dyadic cubes in $\mathcal
D$,
\begin{align*}
\left\|\left\{
\sum_{I\in \mathcal{A}\cap \mathcal D_j}
\mathbf{1}_{I}
\right\}_{j\in\mathbb{Z}_+}
\right\|_{X}<\infty \iff \nu\left(\left\{
\bigcup_{I\in \mathcal{A}\cap \mathcal D_j}
T(I)\right\}_{j\in\mathbb{Z}_+}\right)<\infty,
\end{align*}
where it is agreed that $\sum_{I\in\emptyset} \mathbf{1}_I=0$.
\end{assumption}

\begin{remark}
\begin{itemize}
\item[(i)] It was pointed out in \cite[Remark 2.8]{dsy}
that, if $X$ satisfies Assumption \ref{a1},
then $X$ also satisfies Assumption \ref{pplp} if
we choose, for any $\{A_j\}_{j\in\mathbb{Z}_+}\in
\mathscr{P}_{\mathbb{Z}_+}
(\mathbb{R}^{n+1}_+)$,
$$\nu(\{A_j\}_{j\in\mathbb{Z}_+}):=
\left\|\left\{\int_{0}^{\infty}
\mathbf{1}_{A_j}(\cdot,y)\,\frac{dy}{y}
\right\}_{j\in\mathbb{Z}_+}\right\|_{X}.
$$

\item[(ii)] It was shown in \cite[Lemmas 5.7 and 5.15]{dsy}
that Assumption \ref{pplp} holds for
$\Lambda_X^{s}:=F^s_{\infty,q}$
with $q\in(0,\infty]$ and for $\Lambda_X^{s}:=B^s_{\infty,q}$
with $q\in(0,\infty]$, where $s\in(0,\infty)$.
\end{itemize}
\end{remark}

\begin{theorem}\label{thm-7-11}
Assume that $L>\alpha r+1$ and $X$ satisfies Assumption
\ref{pplp} for some Carleson-type
measure $\nu:
\mathscr{P}_{\mathbb{Z}_+}
(\mathbb{R}^{n+1}_+)\rightarrow [0,\infty]$.
Then,
for any $f\in\Lambda_s$,
\begin{align*}
d\left(f, \Lambda_X^{s}
\right)_{\Lambda_s}&\sim
\varepsilon_{X,\alpha,\nu}f+\varepsilon_{X,\nu}^0,
\end{align*}
where
\begin{align*}
\varepsilon_{X,\alpha, \nu}f&
:=\inf\left\{\varepsilon\in
(0,\infty):\
\nu\left(\left\{D_{\alpha,r,j}(s,f,
\varepsilon)\right
\}_{j\in\mathbb Z_+}\right)<\infty\right\},
\end{align*}
\begin{align}
\varepsilon_{X,\nu}^0:&=\inf\left\{\varepsilon\in(0,\infty):\
\nu\left(\bigcup_{I\in V_0(s, f, \varepsilon)}
T(I)\right)<\infty\right\},\label{3-11c}
\end{align}
 and
the positive equivalence constants are
independent of $f$.
\end{theorem}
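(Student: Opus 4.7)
The plan is to deduce Theorem~\ref{thm-7-11} from the same three key propositions established in Sections~\ref{sec:7}--\ref{sec:9} that power the proof of Theorem~\ref{thm-2-66}, by parallelling the architecture used there but with the quasi-norm $\|\cdot\|_X$ replaced by the Carleson-type measure $\nu$ throughout. The advantage is that the four axioms (i)--(iv) in Definition~\ref{Debqf2s} are tailor-made to substitute, respectively, for monotonicity, quasi-subadditivity, shift stability, and the doubling-type inflation that the quasi-norm provides in the Theorem~\ref{thm-2-66} argument. I expect the proof to split cleanly into a lower estimate and an upper estimate on $d(f,\Lambda_X^s)_{\Lambda_s}$.

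For the upper bound, I would show that, for every $\varepsilon>\varepsilon_{X,\alpha,\nu}f+\varepsilon_{X,\nu}^0$, one can produce $g\in\Lambda_X^s$ with $\|f-g\|_{\Lambda_s}\lesssim\varepsilon$. The construction is the wavelet-based approximation already exploited in the proof of Theorem~\ref{thm-2-66}: I identify the collection of ``good'' dyadic cubes $I\in\mathcal D_j$ whose Whitney tents $T(I)$ avoid $D_{\alpha,r,j}(s,f,\varepsilon)$, together with the good level-zero indices from the complement of $V_0(s,f,\varepsilon)$, and I define $g$ as the partial Daubechies wavelet expansion of $f$ supported on these indices. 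The key synthesis proposition from Section~\ref{sec:7} then converts the $\nu$-finiteness of the bad tent system into $g\in\Lambda_X^s$ via Assumption~\ref{pplp}, while the analysis proposition from Section~\ref{sec:8} bounds $\|f-g\|_{\Lambda_s}$ by the semigroup-controlled size of the bad set, yielding the desired estimate $\|f-g\|_{\Lambda_s}\lesssim\varepsilon$.

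For the lower bound, given any $g\in\Lambda_X^s$ with $\|f-g\|_{\Lambda_s}\le M$, I must establish $\varepsilon_{X,\alpha,\nu}f+\varepsilon_{X,\nu}^0\lesssim M$. Using the triangle inequality and the semigroup characterization~\eqref{1-6} of $\Lambda_s$, the bad set $D_{\alpha,r,j}(s,f,C_1 M+\varepsilon)$ is contained in a hyperbolic neighborhood of the corresponding bad set for $g$ at level $\varepsilon$; this is precisely the configuration in which axiom (iv) of Definition~\ref{Debqf2s} (applied to sets satisfying Property~I) can absorb the enlargement without affecting $\nu$-finiteness. The third key proposition from Section~\ref{sec:9} then guarantees that for $g\in\Lambda_X^s$ the bad sets $D_{\alpha,r,j}(s,g,\varepsilon)$ are $\nu$-finite for every $\varepsilon>0$, so that $\nu(\{D_{\alpha,r,j}(s,f,C_2 M)\}_{j\in\mathbb Z_+})<\infty$, which is exactly what is needed. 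A parallel argument using the level-zero wavelet coefficients $\langle g,\psi_{(0,I)}\rangle$ provides the analogous control on $\varepsilon_{X,\nu}^0$.

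The main obstacle will be (i) verifying that the bad sets $D_{\alpha,r,j}(s,f,\varepsilon)$ and their $g$-analogues satisfy Property~I uniformly in $j$, so that axiom (iv) of Definition~\ref{Debqf2s} is in fact applicable, and (ii) calibrating the $\varepsilon$-scaling in the passage through hyperbolic neighborhoods. Property~I should follow from the hyperbolic-Lipschitz-type regularity of $(x,t)\mapsto t^{\alpha r-s}\partial_{n+1}^r u_\alpha(x,t^\alpha)$ for $f\in\Lambda_s$, which in turn rests on the pointwise kernel estimates for $T_{\alpha,t}$ and the mixed derivative bounds for $u_\alpha$ collected in Section~\ref{sec-5}. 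Once this geometric input is in place, the rest is essentially a bookkeeping reformulation of the Theorem~\ref{thm-2-66} argument in which every occurrence of $\|\cdot\|_X$ applied to a dyadic indicator sequence is replaced by $\nu$ applied to the corresponding tent decomposition via Assumption~\ref{pplp}, each replacement being legitimized by one of the four axioms of a Carleson-type measure.
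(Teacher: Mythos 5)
Your high-level summary (``replace $\|\cdot\|_X$ by $\nu$ and legitimize each step via the four Carleson-measure axioms'') captures the spirit of the matter, but the actual argument the paper uses is structurally different from yours, and your description of the three propositions from Sections~\ref{sec:7}--\ref{sec:9} is incorrect in a way that matters.

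The paper does \emph{not} perform any wavelet construction in the proof of Theorem~\ref{thm-7-11} (nor, for that matter, in the proof of Theorem~\ref{thm-2-66}). Instead, it imports Lemma~\ref{thm-711} (Theorem 2.5 of \cite{dsy}) as a black box: this lemma already gives
$d(f,\Lambda_X^s)_{\Lambda_s}\sim\varepsilon_{X,\nu}^0+\varepsilon_{X,S,\nu}$, where $\varepsilon_{X,S,\nu}$ is the critical index for the \emph{finite-difference} bad sets $S_{r_1,j}$. The entire job of Section~\ref{sec:10} is then to prove the two-sided comparison $\varepsilon_{X,S,\nu}\sim\varepsilon_{X,\alpha,\nu}$ of critical indices, i.e.\ to transfer $\nu$-finiteness between the sequences $\{S_{2\ell,j}\}$ and $\{D_{\alpha,r,j}\}$. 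Your proposed upper-bound argument, which produces an explicit $g$ by restricting the Daubechies expansion to good cubes and then estimates $\|f-g\|_{\Lambda_s}$, is essentially the proof of Lemma~\ref{thm-711} itself (done in \cite{dsy}), not the proof given here. It could in principle be redone, but it would be substantially longer than what the paper does, and it is not what Propositions~\ref{thm-3adf}, \ref{cor-6-7-1}, \ref{cor-6-7-0} are for.

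This points to the genuine gap. You describe Proposition~\ref{thm-3adf} (Section~\ref{sec:7}) as a ``synthesis proposition'' converting $\nu$-finiteness of bad tents into $g\in\Lambda_X^s$, Proposition~\ref{cor-6-7-1} (Section~\ref{sec:8}) as an ``analysis proposition'' bounding $\|f-g\|_{\Lambda_s}$, and Proposition~\ref{cor-6-7-0} (Section~\ref{sec:9}) as ensuring $\nu$-finiteness of $D_{\alpha,r,j}(s,g,\cdot)$ for $g\in\Lambda_X^s$. None of these characterizations is correct. All three propositions are purely geometric set inclusions: \ref{thm-3adf} says $(D_{\alpha,r,j})_\delta$ is contained in a union of three $D_{\alpha,r,i}$ at a smaller threshold; \ref{cor-6-7-1} embeds $S_{r_1,j}$ into a union of hyperbolic $R$-neighborhoods of $D_{\alpha,r,i}$; and \ref{cor-6-7-0} embeds $D_{\alpha,r,j}$ into a union of hyperbolic $R$-neighborhoods of $S_{2\ell,i}$. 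They say nothing about wavelets, about membership in $\Lambda_X^s$, or about $\|f-g\|_{\Lambda_s}$. In the actual proof, these inclusions are fed into axioms (i)--(iii) of Definition~\ref{Debqf2s} to pass between the $S$- and $D$-based measures, while axiom (iv) is used to shrink the $R$-neighborhoods to $\delta$-neighborhoods (and then Lemma~\ref{thm3adf} or Proposition~\ref{thm-3adf} collapses the $\delta$-neighborhood back to the original family). Also, the Property~I verification you anticipated as an obstacle is handled differently: it is not extracted from the Lipschitz-type regularity of $t^{\alpha r-s}\partial_{n+1}^r u_\alpha$, but is the trivial observation (Lemma~\ref{dda2f}(ii)) that \emph{any} hyperbolic $\delta$-neighborhood automatically satisfies Property~I, which is exactly why the $R$-to-$\delta$ shrinking argument works without further input. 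Without these corrections, the argument as you sketched it does not go through because the propositions you invoke do not deliver what you claim from them.
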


As a direct corollary of Theorem \ref{thm-7-11}, we
immediately obtain the following characterization of the
closure $\overline{\Lambda_X^{s}}^{\Lambda_s}$ of
$\Lambda_X^{s}$ under Assumption
\ref{pplp}.

\begin{corollary}\label{pppz2qq}
Assume that $L>\alpha r+1$ and $X$ satisfies Assumption
\ref{pplp} for some Carleson-type
measure $\nu:
\mathscr{P}_{\mathbb{Z}_+}
(\mathbb{R}^{n+1}_+)\rightarrow [0,\infty]$.
 Then a function
$f\in\Lambda_s$ belongs to
$\overline{\Lambda_X^{s}}^{\Lambda_s}$ if and only if,
for any $\varepsilon\in(0,\infty)$,
$$
\nu\left(\left\{ D_{\alpha,r,j}(s,f, \varepsilon)
\right\}_{j\in\mathbb Z_+}
\right) +\nu\left(
\bigcup_{I\in V_0(s, f, \varepsilon)}
T(I)
\right)
<\infty.
$$
\end{corollary}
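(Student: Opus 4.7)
The plan is to deduce the corollary directly from Theorem \ref{thm-7-11}. By the definition of the closure with respect to the $\Lambda_s$-topology, a function $f\in\Lambda_s$ lies in $\overline{\Lambda_X^{s}}^{\Lambda_s}$ if and only if $d(f,\Lambda_X^{s})_{\Lambda_s}=0$. Theorem \ref{thm-7-11} gives the equivalence
\[
d\bigl(f,\Lambda_X^{s}\bigr)_{\Lambda_s}\sim \varepsilon_{X,\alpha,\nu}f+\varepsilon_{X,\nu}^0,
\]
and since both quantities on the right are non-negative, the vanishing of the left side is equivalent to $\varepsilon_{X,\alpha,\nu}f=0$ and $\varepsilon_{X,\nu}^0=0$ simultaneously.

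It then remains to reformulate each of these two vanishing-infimum conditions as the pointwise finiteness condition stated in the corollary. The main observation is a monotonicity argument in $\varepsilon$: the sets $D_{\alpha,r,j}(s,f,\varepsilon)$ and $V_0(s,f,\varepsilon)$ are both nested decreasingly as $\varepsilon$ increases, so by condition (i) in Definition \ref{Debqf2s} (monotonicity of the Carleson-type measure $\nu$) the quantities $\nu(\{D_{\alpha,r,j}(s,f,\varepsilon)\}_{j\in\mathbb{Z}_+})$ and $\nu(\bigcup_{I\in V_0(s,f,\varepsilon)}T(I))$ are monotone non-increasing in $\varepsilon$. Therefore $\varepsilon_{X,\alpha,\nu}f=0$ is equivalent to $\nu(\{D_{\alpha,r,j}(s,f,\varepsilon)\}_{j\in\mathbb{Z}_+})<\infty$ for every $\varepsilon\in(0,\infty)$, and analogously $\varepsilon_{X,\nu}^0=0$ is equivalent to $\nu(\bigcup_{I\in V_0(s,f,\varepsilon)}T(I))<\infty$ for every $\varepsilon\in(0,\infty)$. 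Adding the two equivalences gives the claim.

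There is essentially no obstacle in the proof beyond careful bookkeeping of the definitions, since the hard analytic work is already contained in Theorem \ref{thm-7-11}. The only subtle point worth flagging is that one needs the equivalence to hold for \emph{every} positive $\varepsilon$ rather than merely for a sequence $\varepsilon_k\downarrow 0$; this is exactly what the monotonicity property (i) of a Carleson-type measure supplies, combined with the fact that the definitions of $\varepsilon_{X,\alpha,\nu}f$ and $\varepsilon_{X,\nu}^0$ in \eqref{3-11c} are themselves infima over thresholds $\varepsilon$.
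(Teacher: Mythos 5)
Your proof is correct and takes exactly the approach the paper intends: Corollary~\ref{pppz2qq} is stated as a ``direct corollary'' of Theorem~\ref{thm-7-11} with no written proof, and your argument---closure means zero distance, which by the two-sided equivalence forces both nonnegative critical indices to vanish, which in turn is equivalent to finiteness of $\nu$ at every $\varepsilon>0$ because the sets $D_{\alpha,r,j}(s,f,\varepsilon)$ and $V_0(s,f,\varepsilon)$ shrink as $\varepsilon$ grows and $\nu$ is monotone by Definition~\ref{Debqf2s}(i)---supplies precisely the expected details.
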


The proofs of Theorems \ref{thm-2-66}
and \ref{thm-7-11} are quite long, so we divide them
into several sections (Sections~\ref{sec:4}--\ref{sec:10}) and
steps. A key technical tool is the family of higher-order ball
average operators.
Specifically, Section~\ref{sec:4} collects several useful
results from our earlier article \cite{dsy}, which will be
used repeatedly in the subsequent analysis.
In Section \ref{sec-5}, we establish pointwise kernel
estimates for the fractional heat semigroup $T_{\alpha, t}$,
with $\alpha, t\in (0,\infty)$, and use them to derive uniform
norm estimates for the derivatives $\partial_t^i
\partial_x^\beta [T_{\alpha, t} f(x)]$,
 where $f\in\Lambda_s$, $\beta\in\mathbb{Z}_+^n$, and
 $i\in\mathbb{Z}_+$.
In particular, we characterize the space $\Lambda_s$
 in terms of the uniform norms of these derivatives. These
 results play a crucial role in the proofs of the main
 theorems in later sections.
 Section~\ref{oijjl} outlines the main steps of the proof of
 Theorem~\ref{thm-2-66}, our first main result. This proof
 relies on three major technical propositions, which are also
 formulated in Section~\ref{oijjl}, but are proved in
 subsequent sections. A complete proof of
 Theorem~\ref{thm-2-66}, conditional on these propositions, is
 also given here.
 The proofs of the three key propositions are carried out in
 Sections~\ref{sec:7}--\ref{sec:9}. These arguments are quite
  technical, but constitute the core of the proof of
  Theorem~\ref{thm-2-66}.
Section~\ref{sec:10} presents the proof of
Theorem~\ref{thm-7-11}, which is deduced from the
aforementioned three propositions established in
Sections~\ref{sec:7}--\ref{sec:9}.

\section{Preliminary results}\label{sec:4}

In proving the main results, we will use several useful
estimates
established in \cite{dsy}, which we present in this section.

 We first recall some facts
 related to the \emph{Poincar\'e
hyperbolic metric} in the upper-half space
$\mathbb{R}_+^{n+1}$.

\begin{lemma}\textnormal{\cite[(A.4) and (A.5)]{dsy}}
\label{lem-4-2zz}
If
$\mathbf{x}, \mathbf{z}\in\mathbb{R}^{n+1}_+$ and
$\rho(\mathbf{x},\mathbf{z})\le1/2$, then
\begin{equation}\label{dafg1}
(1+2\rho(\mathbf{x},\mathbf{z}))^{-1} x_{n+1} \leq z_{n+1}\leq
(1+2\rho(\mathbf{x},\mathbf{z})) x_{n+1}
\end{equation}
and
\begin{equation*}
\frac12 {|\mathbf{x}-\mathbf{z}| }\leq { x_{n+1}}
\rho(\mathbf{x},\mathbf{z}) \leq 2 |\mathbf{x}-\mathbf{z}|.
\end{equation*}
\end{lemma}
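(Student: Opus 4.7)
My plan is to reduce everything to the two elementary identities
\[
\cosh(\rho(\mathbf{x},\mathbf{z})) = 1+\frac{|\mathbf{x}-\mathbf{z}|^{2}}{2x_{n+1}z_{n+1}}
\quad\text{and}\quad
\cosh(r)-1 = 2\sinh^{2}(r/2),
\]
which together give the basic formula
\[
|\mathbf{x}-\mathbf{z}| = 2\sqrt{x_{n+1}z_{n+1}}\,\sinh\bigl(\rho(\mathbf{x},\mathbf{z})/2\bigr).
\]
Write $r:=\rho(\mathbf{x},\mathbf{z})\leq 1/2$ for brevity. The whole proof is then a couple of one-variable estimates for $r\in[0,1/2]$.

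For the first pair of inequalities, I would throw away the horizontal part of $|\mathbf{x}-\mathbf{z}|^{2}$ to get the lower bound
\[
\cosh(r) \;\geq\; 1+\frac{(x_{n+1}-z_{n+1})^{2}}{2x_{n+1}z_{n+1}}
 \;=\; \tfrac12\bigl(t+t^{-1}\bigr),\qquad t:=z_{n+1}/x_{n+1}.
\]
The quadratic $t^{2}-2t\cosh(r)+1\leq 0$ solves to $t\in[e^{-r},e^{r}]$. Finally, a routine Taylor estimate gives $e^{r}\leq 1+2r$ and $e^{-r}\geq (1+2r)^{-1}$ for $r\leq 1/2$, yielding \eqref{dafg1}.

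For the second pair, I substitute the bound $t\in[e^{-r},e^{r}]\subset[(1+2r)^{-1},1+2r]$ already obtained into the identity $|\mathbf{x}-\mathbf{z}|=2x_{n+1}\sqrt{t}\,\sinh(r/2)$, obtaining
\[
\frac{|\mathbf{x}-\mathbf{z}|}{x_{n+1}\,r}
 \;=\; 2\sqrt{t}\,\frac{\sinh(r/2)}{r}.
\]
For $r\in(0,1/2]$ one has $\sqrt{t}\in[(1+2r)^{-1/2},(1+2r)^{1/2}]\subset[1/\sqrt2,\sqrt2]$ and $2\sinh(r/2)/r\in[1,\sinh(1/4)/(1/4)]$, which is comfortably contained in $[1,\,\sqrt2]$. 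Multiplying the two ranges gives $\tfrac12\leq |\mathbf{x}-\mathbf{z}|/(x_{n+1}r)\leq 2$, which is exactly the second displayed inequality.

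There is no real obstacle here: both statements follow from the formula for $\cosh(\rho)$ together with the trivial Taylor bounds for $e^{\pm r}$ and $\sinh(r/2)/r$ on $[0,1/2]$. The only thing worth double-checking is that the constants work out on the nose (the factor $2$ rather than something larger), which is guaranteed by the fact that $\sqrt{2}\cdot\sinh(1/4)/(1/4)<2$ and $1/\sqrt{1+2\cdot(1/2)}=1/\sqrt{2}>1/2$.
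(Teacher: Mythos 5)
Your proof is correct. Both estimates follow cleanly from the identity $\cosh\rho = 1 + |\mathbf{x}-\mathbf{z}|^2/(2x_{n+1}z_{n+1})$, as you set it up: dropping the horizontal component gives the quadratic $t^2 - 2t\cosh r + 1 \le 0$ with roots $e^{\pm r}$, the bound $e^r\le 1+2r$ (equivalently $(1+2r)^{-1}\le e^{-r}$) is monotone on $[0,\ln 2]\supset[0,1/2]$ since $\frac{d}{dr}(1+2r-e^r)=2-e^r>0$ there, and the second estimate then drops out of $|\mathbf{x}-\mathbf{z}| = 2x_{n+1}\sqrt{t}\,\sinh(r/2)$ with $\sqrt t\in[1/\sqrt2,\sqrt2]$ and $2\sinh(r/2)/r\in[1,\sinh(1/4)/(1/4)]\subset[1,\sqrt2]$. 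All the numerical checks go through: the product lies in $[1/\sqrt2,\, \sqrt2\cdot\sinh(1/4)/(1/4)]\approx[0.71,1.43]\subset[1/2,2]$. Note the paper does not prove this lemma; it cites \cite[(A.4) and (A.5)]{dsy}, so there is no in-text argument to compare against, but your derivation is the natural one and certainly sufficient.
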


\begin{lemma}\textnormal{\cite[Lemma
A.11]{dsy}}\label{lem-3-2-0}
There
exists a positive constant $C$ such that, for any
$R\in(1,\infty)$ and
$(x,y)\in\mathbb{R}_+^{n+1}$,
\begin{align*}
\left\{ (u,v)\in\mathbb{R}_+^{n+1}:\ |u-x|\leq R y,\ R^{-1}y
\leq v \leq Ry \right\}
\subset B_\rho\left( (x,y), CR\right),	
\end{align*}
where $C\in(0,\infty)$ is a constant depending only on $n$.
\end{lemma}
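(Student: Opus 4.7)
The approach is a direct computation from the definition of the hyperbolic distance $\rho$. Since Lemma~\ref{lem-4-2zz} only applies in the small-distance regime $\rho(\mathbf{x},\mathbf{z})\leq 1/2$, it is not directly useful here, because we expect distances of order $R\geq 1$. Instead, I will bound the argument of $\mathop{\mathrm{arccosh}}$ appearing in the definition of $\rho$ by a polynomial in $R$, and then use the elementary estimate $\mathop{\mathrm{arccosh}}(z)\leq \log(2z)$ for $z\geq 1$.

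Fix $(x,y)\in\mathbb{R}^{n+1}_+$ and take any $(u,v)$ in the set on the left-hand side, so that $|u-x|\leq Ry$ and $R^{-1}y\leq v\leq Ry$. The plan has three short steps. First, bound the Euclidean numerator: since $R\geq 1$, one has $|v-y|\leq \max\{Ry-y,\, y-R^{-1}y\}\leq Ry$, hence
\begin{equation*}
|(u,v)-(x,y)|^2 = |u-x|^2+(v-y)^2 \leq R^2y^2 + R^2y^2 = 2R^2y^2.
\end{equation*}
Second, bound the denominator from below using $v\geq R^{-1}y$:
\begin{equation*}
2\,y\,v \geq 2R^{-1}y^2.
\end{equation*}
Combining these gives
\begin{equation*}
1+\frac{|(u,v)-(x,y)|^2}{2\,y\,v}\leq 1 + R^3 \leq 2R^3.
\end{equation*}

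Third, since $\mathop{\mathrm{arccosh}}$ is increasing on $[1,\infty)$ and $\mathop{\mathrm{arccosh}}(z)=\log\bigl(z+\sqrt{z^2-1}\bigr)\leq \log(2z)$ for $z\geq 1$, we obtain
\begin{equation*}
\rho\bigl((x,y),(u,v)\bigr)\leq \mathop{\mathrm{arccosh}}\bigl(1+R^3\bigr)\leq \log(4R^3) = \log 4 + 3\log R \leq (\log 4 + 3)R,
\end{equation*}
where the last inequality uses $\log R\leq R$ and $1\leq R$. This gives the claim with $C:=\log 4+3$, which depends only on $n$ (and in fact not even on $n$).

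There is no real obstacle: the argument is elementary once one writes out the definition of $\rho$ and uses the crude inequality $\mathop{\mathrm{arccosh}}(z)\leq \log(2z)$. The only mild subtlety is noticing that the naive triangle-inequality split (through $(x,v)$ at the same horizontal coordinate) gives a worse denominator and should be avoided; the direct bound via $y\cdot v$ is what yields the clean $R^3$ estimate and hence the desired linear bound $CR$.
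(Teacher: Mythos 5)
Your argument is correct and complete. The paper does not actually reproduce a proof of this lemma (it is cited as Lemma A.11 of \cite{dsy}), so a line-by-line comparison is not possible here, but your direct computation is the natural one: bound the numerator $|(u,v)-(x,y)|^2\leq 2R^2y^2$ and the denominator $2yv\geq 2R^{-1}y^2$, yielding an argument of $\mathop{\mathrm{arccosh}}$ that is $\leq 1+R^3$, and then use $\mathop{\mathrm{arccosh}}(z)\leq\log(2z)$ together with $\log R\leq R$ to absorb the logarithm into a linear bound in $R$. Each inequality you invoke (including $1+R^3\leq 2R^3$ and $\log 4+3\log R\leq(\log 4+3)R$, both valid for $R\geq 1$) checks out, and the resulting constant $C=\log 4+3$ is dimension-free, which is consistent with (indeed stronger than) the stated dependence only on $n$. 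Your remark that Lemma~\ref{lem-4-2zz} is inapplicable here because one is not in the small-distance regime is also apt.
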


\begin{lemma}\textnormal{\cite[Lemma A.10]{dsy}}\label{dda2f}
\begin{itemize}
\item[\rm(i)]
If $\{A^\alpha\}_{\alpha\in \mathcal{A}}$ is a collection of
subsets of $\mathbb{R}^{n+1}_+$ having Property I with
constants $\delta, \delta'\in (0,\frac 1{10})$ (i.e.,
\eqref{5-1-00} is satisfied for any set in the collection),
then the union $\bigcup_{\alpha\in \mathcal{A}} A^\alpha$
also has Property I with constants depending only on
$\delta, \delta'$.
\item[\rm(ii)]
For any $E\subset\mathbb{R}^{n+1}_+$ and any constant
$a\in(0,\infty)$, the set $E_a:=\{ \mathbf{x}\in
\mathbb{R}_+^{n+1}:\ d (\mathbf{x}, E) <a\}$ has Property I
with constants depending only on $a$.
\end{itemize}
\end{lemma}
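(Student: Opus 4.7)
The plan is to handle the two parts separately; part (i) is essentially a one-line observation, while part (ii) will require constructing, for each $\mathbf{z}\in E_a$, a hyperbolic sub-ball that sits inside $E_a\cap B_\rho(\mathbf{z},\delta)$ and captures a fixed fraction of the Lebesgue measure of $B_\rho(\mathbf{z},\delta)$. For part (i), I would simply note that, given $\mathbf{z}\in U:=\bigcup_{\alpha\in\mathcal{A}}A^\alpha$ and any $\alpha_0\in\mathcal{A}$ with $\mathbf{z}\in A^{\alpha_0}$, the monotonicity of Lebesgue measure combined with Property~I for $A^{\alpha_0}$ gives
$$|B_\rho(\mathbf{z},\delta)\cap U|\ge|B_\rho(\mathbf{z},\delta)\cap A^{\alpha_0}|\ge\delta'|B_\rho(\mathbf{z},\delta)|,$$
so $U$ inherits Property~I with the very same constants $\delta,\delta'$.

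For part (ii), I would first fix $\delta:=\min\{a/2,1/20\}\in(0,1/10)$. Given $\mathbf{z}\in E_a$, I pick $\mathbf{y}\in E$ with $\rho(\mathbf{z},\mathbf{y})<a$ and take $\mathbf{m}$ on the hyperbolic geodesic from $\mathbf{z}$ to $\mathbf{y}$ at distance $\tau:=\min\{\delta/2,\rho(\mathbf{z},\mathbf{y})\}$ from $\mathbf{z}$. The additivity of arc length along geodesics yields $\rho(\mathbf{m},\mathbf{y})\le\max\{\rho(\mathbf{z},\mathbf{y})-\delta/2,0\}<a-\delta/2$, so that for any $\mathbf{w}\in B_\rho(\mathbf{m},\delta/2)$ the triangle inequality gives
$$\rho(\mathbf{w},\mathbf{z})\le\rho(\mathbf{w},\mathbf{m})+\tau<\delta\quad\text{and}\quad\rho(\mathbf{w},\mathbf{y})\le\rho(\mathbf{w},\mathbf{m})+\rho(\mathbf{m},\mathbf{y})<a.$$
This shows $B_\rho(\mathbf{m},\delta/2)\subset B_\rho(\mathbf{z},\delta)\cap E_a$, producing a concrete subset on which to estimate the measure.

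The final step is the Lebesgue-volume comparison $|B_\rho(\mathbf{m},\delta/2)|\gtrsim|B_\rho(\mathbf{z},\delta)|$ with a constant depending only on $a$, which will supply the required $\delta'$. For this I would apply Lemma~\ref{lem-4-2zz}, in particular \eqref{dafg1} with $\rho(\mathbf{z},\mathbf{m})\le\delta/2\le 1/40$, to conclude that $m_{n+1}\sim z_{n+1}$ up to an absolute factor, and then use the standard description of small hyperbolic balls as having Lebesgue measure comparable to $y^{n+1}\delta^{n+1}$, where $y$ is the last coordinate of the center. Combining these two observations yields $|B_\rho(\mathbf{m},\delta/2)|\sim|B_\rho(\mathbf{z},\delta)|$ with a ratio depending only on $a$, and shrinking $\delta$ if necessary we may guarantee both constants lie in $(0,1/10)$. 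The main obstacle, though largely routine, is precisely this volume comparison: one must confirm that the implicit constant is uniform in $\mathbf{z}$, which follows from the dilation--translation invariance of the Poincar\'e metric together with the controlled ratio $m_{n+1}/z_{n+1}$ coming from \eqref{dafg1}.
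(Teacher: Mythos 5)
The paper cites this lemma from \cite{dsy} without reproducing a proof, so there is nothing in the present text to compare against; I can only check your argument on its own terms, and it is correct. Part (i) is the immediate monotonicity observation. For part (ii), the construction works as you describe: with $\delta=\min\{a/2,1/20\}$, pick $\mathbf{y}\in E$ with $\rho(\mathbf{z},\mathbf{y})<a$ and take $\mathbf{m}$ on the geodesic at distance $\tau=\min\{\delta/2,\rho(\mathbf{z},\mathbf{y})\}$ from $\mathbf{z}$; then $\rho(\mathbf{m},\mathbf{y})=\max\{\rho(\mathbf{z},\mathbf{y})-\delta/2,0\}<a-\delta/2$, and the two triangle-inequality estimates give $B_\rho(\mathbf{m},\delta/2)\subset B_\rho(\mathbf{z},\delta)\cap E_a$. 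The volume comparison is not merely routine but can be made exact: in the upper half-space model a hyperbolic ball $B_\rho(\mathbf{p},R)$ is the Euclidean ball of radius $p_{n+1}\sinh R$ centered at $(p_1,\ldots,p_n,p_{n+1}\cosh R)$, so
\[
\frac{|B_\rho(\mathbf{m},\delta/2)|}{|B_\rho(\mathbf{z},\delta)|}
=\left(\frac{m_{n+1}}{z_{n+1}}\cdot\frac{\sinh(\delta/2)}{\sinh\delta}\right)^{n+1}
=\left(\frac{m_{n+1}}{z_{n+1}}\cdot\frac{1}{2\cosh(\delta/2)}\right)^{n+1},
\]
and \eqref{dafg1} with $\rho(\mathbf{m},\mathbf{z})\le\delta/2\le 1/40$ gives $m_{n+1}/z_{n+1}\ge(1+\delta)^{-1}\ge 20/21$, so the ratio is bounded below by a constant depending only on $n$. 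Taking $\delta'$ to be the minimum of this constant and $1/20$ puts both parameters in $(0,1/10)$, completing the proof.
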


Finally, the \emph{hyperbolic length}
$L(\gamma)$
of a piecewise $C^1$-curve
$$
\gamma:=\left\{\gamma(t):=\left(\gamma_1(t),\ldots,
\gamma_{n+1}(t)\right)\in\mathbb{R}_+^{n+1}:\ t\in
[a,b]\right\},\ -\infty<a<b<\infty,
$$
 is given by setting
$$
L(\gamma) :=\int_a^b\frac {|\gamma'(t)|}
{\gamma_{n+1}(t)}\,dt.
$$
It is well known (see \cite[Theorem A.4]{dsy}) that,
for any two distinct points $\mathbf p, \mathbf
q\in\mathbb{R}_+^{n+1}$, there exists a unique $C^1$-curve
$\gamma(\mathbf p, \mathbf q)$ connecting
$\mathbf p$ and $\mathbf q$ such that
$$\rho(\mathbf p,\mathbf q)=L(\gamma(\mathbf p, \mathbf
q))=\inf_\gamma L(\gamma)$$
with the infimum being taken over all piecewise $C^1$-curves
$\gamma$ in $\mathbb{R}_+^{n+1}$ connecting $\mathbf p,
\mathbf q\in\mathbb{R}_+^{n+1}$. The curve $\gamma(\mathbf p,
\mathbf q)$ is called the \emph{geodesic} connecting
$\mathbf{p}$ and $\mathbf{q}$.

\begin{lemma}\label{cor-3-9}
Let $f\in C^1(\mathbb{R}_+^{n+1})$ and $\mathbf p, \mathbf
q\in \mathbb{R}_+^{n+1}$. Then
	\begin{align*}
	|f(\mathbf p)-f(\mathbf q)|\leq \left[\max_{\mathbf x \in
\gamma(\mathbf p,\mathbf q)} x_{n+1}|\nabla f(\mathbf
x)|\right] \rho(\mathbf p,\mathbf q),
	\end{align*}
where $\mathbf{x}=(x, x_{n+1})\in\mathbb{R}_+^{n+1}$ and
$\gamma(\mathbf p,\mathbf q)$ is the geodesic connecting
$\mathbf{p}$ and $\mathbf{q}$.
\end{lemma}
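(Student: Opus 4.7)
The plan is to apply the fundamental theorem of calculus along the geodesic $\gamma(\mathbf p,\mathbf q)$ and then convert the Euclidean arc-length factor into the hyperbolic arc-length factor by a simple algebraic manipulation. First, I would parametrize the geodesic as a piecewise $C^1$-curve $\gamma:[a,b]\to\mathbb{R}^{n+1}_+$ with $\gamma(a)=\mathbf p$ and $\gamma(b)=\mathbf q$. Since $f\in C^1(\mathbb{R}^{n+1}_+)$ and $\gamma([a,b])$ is a compact subset of $\mathbb{R}^{n+1}_+$, the composition $f\circ\gamma$ is $C^1$ on $[a,b]$, so the fundamental theorem of calculus gives
$$
f(\mathbf q)-f(\mathbf p)=\int_a^b\nabla f(\gamma(t))\cdot\gamma'(t)\,dt.
$$

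Next, I would take absolute values, apply the Cauchy--Schwarz inequality $|\nabla f(\gamma(t))\cdot\gamma'(t)|\le|\nabla f(\gamma(t))|\,|\gamma'(t)|$, and then perform the key maneuver: multiply and divide the integrand by $\gamma_{n+1}(t)>0$ so as to split off exactly the integrand of the hyperbolic length,
$$
|\nabla f(\gamma(t))|\,|\gamma'(t)|=\bigl[\gamma_{n+1}(t)|\nabla f(\gamma(t))|\bigr]\cdot\frac{|\gamma'(t)|}{\gamma_{n+1}(t)}.
$$
Because the function $\mathbf x\mapsto x_{n+1}|\nabla f(\mathbf x)|$ is continuous on the compact set $\gamma([a,b])$, its maximum there is attained and finite, so pulling it out of the integral yields
$$
|f(\mathbf p)-f(\mathbf q)|\le\left[\max_{\mathbf x\in\gamma(\mathbf p,\mathbf q)}x_{n+1}|\nabla f(\mathbf x)|\right]\int_a^b\frac{|\gamma'(t)|}{\gamma_{n+1}(t)}\,dt.
$$

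Finally, by the defining property of the geodesic stated just before the lemma, the remaining integral equals $L(\gamma(\mathbf p,\mathbf q))=\rho(\mathbf p,\mathbf q)$, which delivers the claimed inequality. There is essentially no obstacle here: the only minor technical point is the attainment of the maximum, which is handled by compactness of the geodesic segment and continuity of $x_{n+1}|\nabla f(\mathbf x)|$; if one wished to avoid parametrization issues at the (finitely many) corners of a piecewise $C^1$ curve, one would simply split $[a,b]$ into the smooth subintervals and sum the resulting estimates.
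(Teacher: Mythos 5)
Your proof is correct and follows essentially the same approach as the paper: parametrize the geodesic, apply the fundamental theorem of calculus and Cauchy--Schwarz, multiply and divide by $\gamma_{n+1}(t)$ to expose the hyperbolic length integrand, pull out the maximum of $x_{n+1}|\nabla f(\mathbf x)|$, and identify the remaining integral as $\rho(\mathbf p,\mathbf q)$. Your added remarks on compactness, attainment of the maximum, and handling corners of a piecewise $C^1$ curve are sound and make the argument slightly more explicit than the paper's, but the substance is the same.
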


\begin{proof}
Let $\gamma:\ [0,1]\to\mathbb{R}_+^{n+1}$ be a parametric
representation of the geodesic $\gamma(\mathbf p,\mathbf q)$
such that $\gamma(0)=\mathbf{p}$ and $\gamma(1)=\mathbf{q}$.
Then we have
\begin{align*}
|f(\mathbf p)-f(\mathbf q)|&=|f(\gamma(0))-f(\gamma(1))|\leq
\int_0^1 |\nabla f(\gamma(t))| |\gamma'(t)| \, dt\\
& \leq \left[\max_{t\in [0,1]} |\nabla f(\gamma(t))|
|\gamma_{n+1} (t)|\right] \int_0^1 \frac {|\gamma'(t)|}
{\gamma_{n+1}(t)}\, dt\\
&= \left[\max_{\mathbf x\in\gamma(\mathbf p,\mathbf q)}
x_{n+1} |\nabla f(\mathbf
 x)| \right]\rho(\mathbf p\mathbf , \mathbf q).
\end{align*}
This finishes the proof of Lemma \ref{cor-3-9}.
\end{proof}

We now state several key estimates established in \cite{dsy}.
Let $X$ be a quasi-normed lattice of function sequences, and
let $s \in(0,\infty)$. Denote by $\Lambda_X^{s}$ the
Daubechies $s$-Lipschitz $X$-based space. Let $r_1 \in
\mathbb{N}$ be such that $r_1 > s$, and let $L$ denote the
regularity of the Daubechies wavelet system
$\{\psi_\omega\}_{\omega \in \Omega}$. We retain this symbol
in the statements of the next four lemmas.
All positive constants of equivalence in the estimates below
are independent of $f \in \Lambda_s$.

The following lemmas, established in \cite{dsy}, play a
crucial role in the proofs of the main results of this
article.

\begin{lemma} \textnormal{\cite[Lemma
4.13]{dsy}}\label{a2.15s}
Assume that $X$ satisfies Assumption \ref{a1}
for some $\theta\in(0,\infty)$. Let $\delta\in(0,1/4)$ and
$R>\delta$.
Then, for any sequence $\{A_j\}_{j\in\mathbb{Z}_+}$ of
measurable subsets of $\mathbb{R}^{n+1}_+$,
\begin{equation*}
\left\|\left\{\left[\int_{0}^{\infty}
\mathbf{1}_{(A_j)_R}(\cdot,y)\,\frac{dy}{y}\right]^{\theta}
\right\}_{j\in\mathbb{Z}_+}\right\|_{X}
\le C\left\|\left\{ \left[\int_{0}^{\infty}
\mathbf{1}_{(A_j)_{\delta}}(\cdot,y)\,\frac{dy}{y}
\right]^{\theta}
\right\}_{j\in\mathbb{Z}_+}\right\|_{X},
\end{equation*}
where $C\in(0,\infty)$ is a constant depending only on $R$,
$\delta$ and the framework parameters.
\end{lemma}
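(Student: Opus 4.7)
The plan is to dominate each of the two integrals pointwise by a sum of indicator functions of Euclidean balls in $\mathbb{R}^n$ (upper bound by enlarged balls, lower bound by shrunken balls), and then invoke Assumption~\ref{a1} a bounded number of times to absorb the discrepancy in radii.

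For each $j\in\mathbb{Z}_+$, I first choose a maximal $(\delta/4)$-separated family $\{\mathbf{z}_i^j\}_i=\{(z_i^j,\zeta_i^j)\}_i\subset A_j$ in the Poincar\'e metric $\rho$. By maximality, $A_j\subset\bigcup_i B_\rho(\mathbf{z}_i^j,\delta/4)$, and hence $(A_j)_R\subset\bigcup_i B_\rho(\mathbf{z}_i^j,R+\delta/4)$, while the balls $\{B_\rho(\mathbf{z}_i^j,\delta/8)\}_i$ are pairwise disjoint subsets of $(A_j)_\delta$. Using Lemma~\ref{lem-4-2zz} (iterated along hyperbolic geodesics when the radius exceeds $1/2$) to compare hyperbolic balls with Euclidean cylinders, and setting $B_i^j:=B(z_i^j,\zeta_i^j)\subset\mathbb{R}^n$, I obtain constants $c_0=c_0(\delta)>0$ and $C_0=C_0(R,\delta)\ge 1$ such that, for every $x\in\mathbb{R}^n$ and $j\in\mathbb{Z}_+$,
\begin{align*}
\int_0^\infty \mathbf{1}_{(A_j)_R}(x,y)\,\frac{dy}{y}
&\le C_0\sum_i \mathbf{1}_{C_0 B_i^j}(x),\\
\int_0^\infty \mathbf{1}_{(A_j)_\delta}(x,y)\,\frac{dy}{y}
&\ge c_0\sum_i \mathbf{1}_{c_0 B_i^j}(x).
\end{align*}
The first estimate relies on the uniform bound $2(R+\delta/4)$ for the logarithmic length of any vertical slice of $B_\rho(\mathbf{z}_i^j,R+\delta/4)$; the second uses hyperbolic disjointness so that summing vertical slice lengths of the $B_\rho(\mathbf{z}_i^j,\delta/8)$ does not overcount.

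Raising both displays to the $\theta$-th power and invoking the monotonicity and homogeneity of $\|\cdot\|_X$, the asserted inequality reduces to
\begin{align*}
\left\|\left\{\left[\sum_i \mathbf{1}_{C_0 B_i^j}\right]^\theta\right\}_{j\in\mathbb{Z}_+}\right\|_X
\lesssim \left\|\left\{\left[\sum_i \mathbf{1}_{c_0 B_i^j}\right]^\theta\right\}_{j\in\mathbb{Z}_+}\right\|_X.
\end{align*}
Since $C_0 B_i^j\subset 2^{\lceil\log_2(C_0/c_0)\rceil}(c_0 B_i^j)$ uniformly in $i$ and $j$, this follows from at most $\lceil\log_2(C_0/c_0)\rceil$ iterations of the doubling condition in Assumption~\ref{a1}, producing a final constant depending only on $R$, $\delta$, and the framework parameters.

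The main obstacle is the Euclidean-ball comparison in the preceding paragraph: Lemma~\ref{lem-4-2zz} directly controls scales up to $1/2$, but since $R+\delta/4$ may exceed $1/2$, one must chain the estimate along the hyperbolic geodesic (or use the explicit fact that $B_\rho(\mathbf{z},r)$ is the Euclidean ball of center $(z,\zeta\cosh r)$ and radius $\zeta\sinh r$) and verify that the resulting $c_0$ and $C_0$ depend only on $R$ and $\delta$, and not on the heights $\zeta_i^j$. Once this uniform comparison is in hand, the rest of the argument is a routine application of the doubling condition together with the monotonicity of the lattice quasi-norm.
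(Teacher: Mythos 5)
Your argument is correct, and since the paper merely cites this result from \cite[Lemma 4.13]{dsy} without reproducing the proof, I can only evaluate it on its own terms; the steps you lay out are the natural and essentially standard route.

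A few points worth making explicit. First, for the covering claim: if $\{\mathbf z_i^j\}_i\subset A_j$ is a maximal $(\delta/4)$-separated family, then $A_j\subset\bigcup_i B_\rho(\mathbf z_i^j,\delta/4)$ and hence $(A_j)_R\subset\bigcup_i B_\rho(\mathbf z_i^j,R+\delta/4)$, while the $B_\rho(\mathbf z_i^j,\delta/8)$ are pairwise disjoint and lie inside $(A_j)_\delta$; all of this is as you state. Second, the quantitative vertical-slice estimates are indeed cleanest via the explicit formula $B_\rho((z,\zeta),r)=B\bigl((z,\zeta\cosh r),\,\zeta\sinh r\bigr)$: for any $x$, the slice $\{y:(x,y)\in B_\rho((z,\zeta),r)\}$ is an interval $(y_1,y_2)$ with $y_1y_2=\zeta^2+|x-z|^2$, so $\int_0^\infty\mathbf 1_{B_\rho((z,\zeta),r)}(x,y)\frac{dy}{y}=\log(y_2/y_1)\le 2r$, with the slice nonempty only for $x\in B(z,\zeta\sinh r)$; conversely for $|x-z|<\tfrac12\zeta\sinh(\delta/8)$ one gets a lower bound $c(\delta)>0$ for the slice length. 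This gives the two pointwise bounds with constants $c_0,C_0$ depending only on $R,\delta$, uniformly in the heights $\zeta_i^j$, as needed. Third, your reduction to Assumption I is valid: monotonicity of $\|\cdot\|_X$ absorbs the scalars $c_0^\theta,C_0^\theta$, and then $C_0 B_i^j\subset 2^{\lceil\log_2(C_0/c_0)\rceil}(c_0 B_i^j)$ allows you to iterate the doubling condition a bounded number of times (as formalized in Remark 3.5 of the paper). The only place one might worry about overcounting is the upper bound via $\sum_i$, but there the estimate is term-by-term and no bounded-overlap claim is needed. The proof is complete and correct.
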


\begin{lemma}\textnormal{\cite[Theorem 2.4]{dsy}}\label{wfawd}
Assume that $L>\max\{s, r_1-1\}$ and $X$ satisfies Assumption
\ref{a1}
for some $\theta\in(0,\infty)$.
Then,
for any $f\in\Lambda_s $,
 \begin{align*}
 d \left(f, \Lambda_X^{s}\right)_{\Lambda_s} \sim
 \varepsilon_{X}^0f+ \varepsilon_{X,s,\theta}f,
\end{align*}
where the positive equivalence constants depend only on the
framework parameters, the quantity
$\varepsilon_{X}^0f$ is given in \eqref{4-6-1},
\begin{align}
\varepsilon_{X,s,\theta}f:&=\inf\left\{\varepsilon\in(0,\infty):\
\left\|\left\{ \left[\int_{0}^1\mathbf{1}_{S_{r_1,j}(s,f,
\varepsilon)}(\cdot,y)\,\frac{dy}{y}\right]^{\theta}
\right\}_{j\in\mathbb{Z}_+}
\right\|_{X}<\infty\right\},\label{4-5-1}
\end{align}
and, for any $j\in\mathbb{Z}$,
\begin{align}\label{dfmawlmfp-1}
S_{r_1,j}(s,f, \varepsilon):&=\left\{
(x, y)\in\mathbb{R}_+^{n+1}:\
\Delta_{r_1} f(x, y)>\varepsilon y^s,\
y\in(2^{-j-1},2^{-j}]\right\}.
\end{align}
\end{lemma}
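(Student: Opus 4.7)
The plan is to establish the equivalence $d(f,\Lambda_X^{s})_{\Lambda_s} \sim \varepsilon_X^0 f + \varepsilon_{X,s,\theta} f$ by proving two inequalities, in both cases using the Daubechies wavelet expansion of $f$ together with the finite-difference characterization of $\Lambda_s$.

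For the upper bound $d(f,\Lambda_X^{s})_{\Lambda_s} \lesssim \varepsilon_X^0 f + \varepsilon_{X,s,\theta} f$, I would fix $\varepsilon$ strictly greater than the right-hand side, and construct an explicit approximant $g \in \Lambda_X^{s}$. Using the wavelet expansion $f = \sum_{\omega \in \Omega}\langle f,\psi_\omega\rangle\psi_\omega$, define the approximant by keeping only the ``small'' coefficients, namely, $g := \sum_{\omega \in \Omega_{\mathrm{good}}}\langle f,\psi_\omega\rangle\psi_\omega$, where $\Omega_{\mathrm{good}}$ consists of those $\omega \in \Omega_1$ whose cube $I_\omega \in \mathcal{D}_j$ lies outside a suitable dyadic enlargement of the bad set $S_{r_1,j}(s,f,\varepsilon)$, together with those $\omega=(0,I)\in \Omega_0$ for which $|\langle f,\psi_{(0,I)}\rangle|\le \varepsilon$. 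Membership $g \in \Lambda_X^{s}$ should follow from the definition of the $\Lambda_X^{s}$-quasi-norm together with the fact that outside the bad set each wavelet coefficient at scale $j$ is comparable to $\varepsilon |I_\omega|^{s/n+1/2}$, while Assumption \ref{a1} (via Lemma \ref{a2.15s}) absorbs the enlargement. The estimate $\|f-g\|_{\Lambda_s} \lesssim \varepsilon$ should then be obtained by a pointwise bound on $\Delta_{r_1}(f-g)(x,y)$, exploiting the compact support, regularity and vanishing moments of the Daubechies wavelets.

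For the lower bound $d(f,\Lambda_X^{s})_{\Lambda_s} \gtrsim \varepsilon_X^0 f + \varepsilon_{X,s,\theta} f$, I would take any $g \in \Lambda_X^{s}$ with $\|f-g\|_{\Lambda_s}<\varepsilon$ and use the sublinearity of $\Delta_{r_1}$ to deduce $S_{r_1,j}(s,f,2\varepsilon)\subset S_{r_1,j}(s,g,\varepsilon)$ (after adjusting constants). The task then reduces to showing that the $\theta$-power integral quantity in \eqref{4-5-1} with $f$ replaced by $g$ is controlled by $\|g\|_{\Lambda_X^{s}}$. This should follow by converting $\Delta_{r_1} g(x,y)$ at scale $y \sim 2^{-j}$ into a weighted sum of wavelet coefficients $|\langle g,\psi_\omega\rangle|$ with $I_\omega$ in nearby dyadic scales: the vanishing moments of $\psi_l$ and the localization of both the wavelet and the difference kernel limit contributions essentially to scales comparable to $j$, so that the bad set at continuum level $y\sim 2^{-j}$ is pointwise dominated (up to enlargement of dyadic cubes) by the wavelet-coefficient sum at level $j$ that defines $\|g\|_{\Lambda_X^{s}}$. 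The scale-$0$ term $\varepsilon_X^0 f$ is handled analogously using the father wavelet $\varphi$.

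The main obstacle will be the passage between dyadic wavelet-coefficient information and the continuum bad sets $S_{r_1,j}(s,f,\varepsilon)$ in $\mathbb R_+^{n+1}$. Each such passage produces a controlled enlargement of the cubes involved, and the $\theta$-th power inside the $X$-quasi-norm must absorb these enlargements; this is precisely why Assumption \ref{a1} is needed in the form given, and why Lemma \ref{a2.15s} is invoked. A secondary technical difficulty is proving sharp two-sided pointwise equivalences between $|\langle f,\psi_\omega\rangle|$ (for $I_\omega\in\mathcal D_j$) and local averages of $y^{-s}\Delta_{r_1}f(\cdot,y)$ over $y \in (2^{-j-1},2^{-j}]$, which rests on the regularity $L>\max\{s,r_1-1\}$ of the wavelet system and on standard almost-orthogonality arguments between different dyadic scales.
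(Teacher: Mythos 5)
The paper does not prove this statement: Lemma~\ref{wfawd} is imported verbatim as \cite[Theorem 2.4]{dsy}, the companion paper on the difference characterization, and is used here as a black box. So there is no in-paper proof to compare your proposal against; what follows assesses your proposed argument on its own terms.

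Your upper-bound construction is backwards, and this is a genuine gap rather than a matter of emphasis. You propose to set $g:=\sum_{\omega\in\Omega_{\mathrm{good}}}\langle f,\psi_\omega\rangle\psi_\omega$ where $\Omega_{\mathrm{good}}$ consists of cubes \emph{outside} the (enlarged) bad set, i.e.\ $g$ retains the \emph{small} coefficients. Then $g\in\Lambda_X^s$ would require $\bigl\|\{\sum_{I_\omega\in\mathcal D_j,\,\omega\,\mathrm{good}}|I_\omega|^{-s/n-1/2}|\langle f,\psi_\omega\rangle|\mathbf 1_{I_\omega}\}_j\bigr\|_X<\infty$, but the good coefficients are merely bounded by $\varepsilon|I_\omega|^{s/n+1/2}$, so this quantity is controlled only by $\varepsilon\,\bigl\|\{\sum_{I_\omega\in\mathcal D_j,\,\omega\,\mathrm{good}}\mathbf 1_{I_\omega}\}_j\bigr\|_X$, and the good set is generically the complement of a small set, hence essentially all of $\mathbb R^n$ at each level; for $X=\ell^q(L^p)_{\mathbb Z_+}$ with $p<\infty$ this norm is infinite. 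Symmetrically, $f-g$ then carries the \emph{large} coefficients, whose size is only controlled by $\|f\|_{\Lambda_s}$, so there is no reason to expect $\|f-g\|_{\Lambda_s}\lesssim\varepsilon$. The correct construction keeps the large coefficients in $g$ (those attached to cubes inside the enlarged bad set); then $g\in\Lambda_X^s$ precisely because the bad set has finite $X$-measure when $\varepsilon>\varepsilon_{X,s,\theta}f$, and $f-g$ has only small coefficients, giving $\|f-g\|_{\Lambda_s}\lesssim\varepsilon$ via the uniform wavelet characterization of $\Lambda_s$.

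A secondary issue: you expect a ``sharp two-sided pointwise equivalence'' between $|\langle f,\psi_\omega\rangle|$ for $I_\omega\in\mathcal D_j$ and local averages of $y^{-s}\Delta_{r_1}f(\cdot,y)$ for $y\sim 2^{-j}$. No such two-sided pointwise equivalence holds scale-by-scale; the usable estimates are one-sided and involve sums over a window of nearby scales in one direction (wavelet coefficient $\lesssim$ a max/average of finite differences at comparable scales, using compact support and the bound $L>r_1-1$; and a finite difference at scale $2^{-j}$ $\lesssim$ a weighted sum of wavelet coefficients over all scales, with a geometrically convergent tail). The lower bound of the lemma hinges on the second of these, together with the shift invariance and doubling of $X$; stating it as a pointwise equivalence would lead you to try to prove something false.
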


\begin{lemma}\textnormal{\cite[Theorem
4.4]{dsy}}\label{thm3adf}
 Assume that $f\in\Lambda_s $ and
 $0<\varepsilon_1<\varepsilon$. Let $S_{r_1,j}(s, f, \cdot)$
 be defined in \eqref{dfmawlmfp-1}.
Then there exists a positive constant $\delta$, depending only
on $f$, $\varepsilon$, $\varepsilon_1$ and the framework
parameters,
such
that, for any $j\in\mathbb{Z}$,
$$
 [S_{r_1,j}(s, f, \varepsilon)]_\delta\subset
 S_{r_1,j-1}(s, f, \varepsilon_1)\cup
 S_{r_1,j}(s, f, \varepsilon_1)\cup S_{r_1,j+1}(s, f,
 \varepsilon_1).
$$
\end{lemma}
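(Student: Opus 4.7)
The plan is to translate the hyperbolic-neighborhood hypothesis into Euclidean closeness via Lemma~\ref{lem-4-2zz}, handle the $y$-range by scale alone, and reduce the remaining content to a quantitative continuity estimate for $(x,y)\mapsto \Delta_{r_1}f(x,y)$. Fix $(u,v)\in[S_{r_1,j}(s,f,\varepsilon)]_\delta$ and pick $(x,y)\in S_{r_1,j}(s,f,\varepsilon)$ with $\rho((x,y),(u,v))<\delta$. Assuming $\delta\leq 1/4$, Lemma~\ref{lem-4-2zz} delivers
\[
|x-u|\leq 2\delta y\quad\text{and}\quad (1+2\delta)^{-1}y\leq v\leq (1+2\delta)y.
\]
Since $y\in(2^{-j-1},2^{-j}]$ and $1+2\delta<2$ for $\delta<1/2$, one obtains $v\in(2^{-j-2},2^{-j+1}]$, which is exactly the $y$-range covered by $S_{r_1,j-1}\cup S_{r_1,j}\cup S_{r_1,j+1}$. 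It therefore remains to prove $\Delta_{r_1}f(u,v)>\varepsilon_1 v^s$.

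Note first that $v\leq (1+2\delta)y$ yields $\varepsilon y^s-\varepsilon_1 v^s\geq [\varepsilon-\varepsilon_1(1+2\delta)^s]y^s\geq \tfrac{\varepsilon-\varepsilon_1}{2}y^s$ provided $\delta\leq \delta_0(\varepsilon,\varepsilon_1,s)$. Hence the claim reduces to
\[
|\Delta_{r_1}f(x,y)-\Delta_{r_1}f(u,v)|\leq \tfrac{\varepsilon-\varepsilon_1}{2}y^s.
\]
To obtain this, pick $h_0\in\mathbb{R}^n$ with $|h_0|=y$ attaining the supremum defining $\Delta_{r_1}f(x,y)$ (possible by continuity and sphere compactness), and set $\tilde h_0:=(v/y)h_0$, so $|\tilde h_0|=v$. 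Then $\Delta_{r_1}f(u,v)\geq|\Delta_{\tilde h_0}^{r_1}f(u)|$, whence
\[
\Delta_{r_1}f(x,y)-\Delta_{r_1}f(u,v)\leq \left|\sum_{k=0}^{r_1}(-1)^k\binom{r_1}{k}[f(a_k)-f(b_k)]\right|,
\]
where $a_k:=x+(r_1/2-k)h_0$ and $b_k:=u+(r_1/2-k)\tilde h_0$ satisfy $|a_k-b_k|\leq|x-u|+(r_1/2)|h_0-\tilde h_0|\leq Cr_1\delta y$ for a dimensional constant $C$.

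The main obstacle is then controlling this sum by $C(\delta)y^s$ with $C(\delta)\to 0$ as $\delta\to 0$, uniformly in $y$. For $s\in(0,1)$ this is straightforward, since $f\in\Lambda_s$ is globally $s$-H\"older and therefore $|f(a_k)-f(b_k)|\leq \|f\|_{\Lambda_s}(Cr_1\delta y)^s$, yielding a bound of order $\|f\|_{\Lambda_s}\delta^s y^s$. For $s\geq 1$ a first-order Lipschitz bound only gives $\lesssim \delta y$, which fails to be $o(y^s)$ when $y$ is small; note, however, that the mere existence of any $(x,y)\in S_{r_1,j}(s,f,\varepsilon)$ forces $y\leq (2^{r_1}\|f\|_\infty/\varepsilon)^{1/s}$, so $y$ is bounded above. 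A sharper argument is then needed: decompose $f=\sum_{m\geq 0}f_m$ via a Daubechies wavelet or Littlewood--Paley projection so that each $f_m$ is spectrally concentrated at scale $2^m$, and combine the band-limited estimates $\|\Delta_h^{r_1}f_m\|_\infty\lesssim \|f\|_{\Lambda_s}2^{-ms}\min\{1,(|h|2^m)^{r_1}\}$ and $\|\nabla \Delta_h^{r_1}f_m\|_\infty\lesssim 2^m\|\Delta_h^{r_1}f_m\|_\infty$ (the latter by Bernstein's inequality). Balancing ``use the gradient bound for low frequencies $m\lesssim \log_2(1/(\delta y))$'' against ``use the triangle inequality for high frequencies'' and summing in $m$ produces $|\Delta_{h_0}^{r_1}f(x)-\Delta_{\tilde h_0}^{r_1}f(u)|\leq C\|f\|_{\Lambda_s}\delta^\gamma y^s$ for some $\gamma=\gamma(s,r_1)>0$. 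Choosing $\delta$ small enough so that $C\|f\|_{\Lambda_s}\delta^\gamma\leq (\varepsilon-\varepsilon_1)/2$ and $\delta\leq \delta_0$ completes the proof; by the scale invariance of all estimates, the resulting $\delta$ works uniformly in $j\in\mathbb{Z}$ and depends only on $f$, $\varepsilon$, $\varepsilon_1$, and the framework parameters.
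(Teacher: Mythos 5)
This lemma is stated in the paper as a citation to \cite[Theorem~4.4]{dsy} and is not reproved here, so there is no in-paper proof to compare against; the closest analogue in the paper is the proof of Proposition~\ref{thm-3adf}, which establishes the same inclusion for the semigroup quantity $\partial_{n+1}^r u_\alpha$ by showing that $\mathbf{x}\mapsto x_{n+1}^{\alpha r-s}\partial_{n+1}^r u_\alpha(x,x_{n+1}^\alpha)$ has a bounded hyperbolic gradient (Lemma~\ref{lem-4-5-0}) and then applying Lemma~\ref{lem-4-2zz}. Your overall scaffolding is the right one and matches that analogue: translate hyperbolic closeness to Euclidean closeness via Lemma~\ref{lem-4-2zz}, observe the scale can shift by at most one dyadic level, and reduce to a quantitative continuity estimate for $\Delta_{r_1}f(\cdot,\cdot)$. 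The choice of a maximizing $h_0$ and the rescaled $\tilde h_0=(v/y)h_0$, and the reduction to $\varepsilon y^s-\varepsilon_1 v^s\geq\tfrac{\varepsilon-\varepsilon_1}{2}y^s$, are all sound, and the $s\in(0,1)$ case is complete.

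The real work is the claimed estimate $|\Delta_{h_0}^{r_1}f(x)-\Delta_{\tilde h_0}^{r_1}f(u)|\leq C\|f\|_{\Lambda_s}\,\delta^{\gamma}y^s$ for $s\geq1$, and here your argument is a sketch rather than a proof. The sketch is viable --- a Littlewood--Paley decomposition $f=\sum_m f_m$, the bounds $\|f_m\|_\infty\lesssim 2^{-ms}\|f\|_{\Lambda_s}$, the band-limited estimate $\|\Delta_h^{r_1}f_m\|_\infty\lesssim\min(1,(|h|2^m)^{r_1})\|f_m\|_\infty$, and Bernstein do combine to give the desired $o_\delta(1)\,y^s$ bound after balancing at $2^m\sim1/(\delta y)$. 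But as written two points need repair. First, the gradient estimate you invoke, $\|\nabla\Delta_h^{r_1}f_m\|_\infty\lesssim 2^m\|\Delta_h^{r_1}f_m\|_\infty$, only controls the shift in base point $x\mapsto u$; it does not by itself control the change of increment $h_0\mapsto\tilde h_0$. One should instead bound the $k$-th summand directly by $|f_m(a_k)-f_m(b_k)|\leq\|\nabla f_m\|_\infty\,|a_k-b_k|\lesssim 2^m\|f_m\|_\infty\,\delta y$ (losing the extra $\min(1,(y2^m)^{r_1})$ you claim but still summing to $\lesssim\delta^{\min(1,s)}\bigl(1+\mathbf{1}_{\{s=1\}}\log(1/\delta)\bigr)y^s\|f\|_{\Lambda_s}$), or interpolate along a path in both $x$ and $h$ and estimate the $h$-derivative of $\Delta_h^{r_1}f_m$ separately. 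Second, the low-frequency block $f_0$ and the regime $y>1$ (which occurs for finitely many $j<0$ because $S_{r_1,j}=\emptyset$ once $y^s>2^{r_1}\|f\|_\infty/\varepsilon$) are not treated; they are harmless since the resulting constants may depend on $f$ and $\varepsilon$, but the ``scale invariance'' assertion at the end is not quite what makes this work and should be replaced by these observations. None of these issues is fatal, but the $s\geq1$ case --- the nontrivial bulk of the lemma --- is currently only outlined, not established.
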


\begin{lemma}\label{thm-711}
\textnormal{\cite[Theorem
2.5]{dsy}}
Assume that $L>\max\{s, r_1-1\}$ and $X$ satisfies Assumption
\ref{pplp} for some Carleson-type measure
$\nu$. Then,
for any $f\in\Lambda_s$,
 \begin{align*}
 d \left(f, \Lambda_X^{s}\right)_{\Lambda_s}&\sim
 \varepsilon_{X,\nu}^0+
\varepsilon_{X, S, \nu},
\end{align*}
where the positive equivalence constants depend only on the
framework parameters,
 $\varepsilon_{X,\nu}^0$ is defined in \eqref{3-11c}, and
\begin{align*}
\varepsilon_{X, S,
\nu}:&=\inf\left\{\varepsilon\in(0,\infty):\
\nu\left(\left\{S_{r_1,j}(s,f,
\varepsilon)\right\}_{j\in\mathbb{Z}_+}\right)<\infty\right\}.
\end{align*}
\end{lemma}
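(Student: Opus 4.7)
My plan is to establish the two-sided equivalence by proving separately that $\varepsilon_{X,\nu}^0+\varepsilon_{X,S,\nu}\lesssim d(f,\Lambda_X^{s})_{\Lambda_s}$ and the reverse inequality. The essential bridges are: (a) Assumption~\ref{pplp}, which converts finiteness of $\|\{\sum_{I\in\mathcal{A}\cap\mathcal D_j}\mathbf{1}_I\}_j\|_X$ into finiteness of $\nu(\{\bigcup_{I\in\mathcal{A}\cap\mathcal D_j}T(I)\}_j)$ and vice versa; (b) property (iv) of Carleson-type measures (invariance of finiteness under hyperbolic inflation of sets with Property I, with Lemma~\ref{dda2f} supplying Property I for unions and for neighborhoods), which here plays the role that Lemma~\ref{a2.15s} plays under Assumption~\ref{a1}; and (c) the standard fact that for a Daubechies wavelet $\psi_\omega$ with $I_\omega\in\mathcal D_j$, the coefficient $\langle f,\psi_\omega\rangle$ is controlled, up to uniform constants, by $|I_\omega|^{1/2}$ times $\Delta_{r_1}f(x,y)$ with $y\sim 2^{-j}$ and $x$ near $I_\omega$, provided the regularity $L>\max\{s,r_1-1\}$ gives the wavelets sufficient smoothness and vanishing moments.

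For the lower bound, I would fix $g\in\Lambda_X^{s}$ and $\varepsilon>4\|f-g\|_{\Lambda_s}$. The triangle inequality $\Delta_{r_1}f\le\Delta_{r_1}g+\Delta_{r_1}(f-g)$ and $\Delta_{r_1}(f-g)(x,y)\le\|f-g\|_{\Lambda_s}y^s$ give $S_{r_1,j}(s,f,\varepsilon)\subset S_{r_1,j}(s,g,\varepsilon/2)$. Expanding $g$ in the Daubechies basis and using the decay of $\Delta_{r_1}\psi_\omega$ away from $I_\omega$ and at scales different from $2^{-j}$, I would show that $S_{r_1,j}(s,g,\varepsilon/2)$ is contained in a bounded hyperbolic inflation of $\bigcup_{I\in\mathcal{A}_j(g)}T(I)$, where
$$\mathcal{A}_j(g):=\left\{I_\omega\in\mathcal D_j:\,|\langle g,\psi_\omega\rangle|>c\varepsilon|I_\omega|^{s/n+1/2}\right\}$$
for a small constant $c$ depending on the framework parameters. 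By Lemma~\ref{dda2f} these inflated sets have Property I, so property (iv) together with Assumption~\ref{pplp} yields
$$\nu\bigl(\{S_{r_1,j}(s,g,\varepsilon/2)\}_{j\in\mathbb Z_+}\bigr)<\infty\iff\Big\|\Big\{\sum_{I\in\mathcal{A}_j(g)}\mathbf{1}_I\Big\}_{j\in\mathbb Z_+}\Big\|_X<\infty,$$
and the right-hand side is finite by a Chebyshev-type estimate using $\|g\|_{\Lambda_X^{s}}<\infty$. A shorter parallel argument at $j=0$ handles $\varepsilon_{X,\nu}^0\lesssim\|g\|_{\Lambda_X^{s}}$ via $V_0$.

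For the upper bound, I would fix $\varepsilon$ slightly above $\varepsilon_{X,\nu}^0+\varepsilon_{X,S,\nu}$ and set
$$g:=\sum_{\omega\in\Omega_{\mathrm{small}}}\langle f,\psi_\omega\rangle\psi_\omega,\quad \Omega_{\mathrm{small}}:=\{\omega\in\Omega:\,|\langle f,\psi_\omega\rangle|\le c\varepsilon|I_\omega|^{s/n+1/2}\}.$$
A direct coefficient estimate combined with the standard almost-orthogonality of the Daubechies system gives $\|f-g\|_{\Lambda_s}\lesssim\varepsilon$. To see that $g\in\Lambda_X^{s}$, I would use Lemma~\ref{thm3adf} to transfer information between $\varepsilon$-thresholds: the collection of "large" dyadic cubes $\Omega\setminus\Omega_{\mathrm{small}}$ at scale $\mathcal D_j$ can be placed inside a hyperbolic inflation of $S_{r_1,j}(s,f,\varepsilon')$ for some $\varepsilon'\sim\varepsilon$, because a wavelet coefficient of $f$ exceeding $c\varepsilon|I_\omega|^{s/n+1/2}$ forces $\Delta_{r_1}f(x,y)>\varepsilon'y^s$ for some $(x,y)\in T(I_\omega)$. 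The union of these tents then has Property I (Lemma~\ref{dda2f}), so by property (iv) and the hypothesis $\varepsilon>\varepsilon_{X,S,\nu}$, the Carleson-type measure is finite on them; Assumption~\ref{pplp} converts this into $\|g\|_{\Lambda_X^{s}}<\infty$. An analogous treatment at scale $j=0$ using $\varepsilon>\varepsilon_{X,\nu}^0$ handles the father-wavelet coefficients.

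The principal technical obstacle is the quantitative wavelet-to-difference transfer with precise tracking of the hyperbolic inflation and of the constants in the $\varepsilon$ thresholds, so that the appeals to Lemma~\ref{thm3adf} and to property (iv) apply without loss. The regularity $L>\max\{s,r_1-1\}$ enters precisely here. The whole design is made to go through in endpoint settings such as $F^s_{\infty,q}$ and $B^s_{\infty,q}$, where Assumption~\ref{a1} fails and Lemma~\ref{a2.15s} is unavailable, but where Assumption~\ref{pplp} and property (iv) still provide the required mechanism for absorbing hyperbolic neighborhoods.
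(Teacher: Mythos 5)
This lemma is not proved in the present paper: it is quoted as \cite[Theorem 2.5]{dsy}, so there is no in-text proof to compare against. That said, your general architecture --- Assumption \ref{pplp} to pass between $X$-norms of cube-counting sequences and $\nu$ applied to tent unions, Lemma \ref{dda2f} together with Definition \ref{Debqf2s}(iv) to absorb hyperbolic inflations, Lemma \ref{thm3adf} to absorb neighborhoods within the family $\{S_{r_1,j}\}_j$, and a quantitative wavelet-to-difference transfer --- is consistent with the mechanism the paper itself invokes in the parallel argument for Theorem \ref{thm-7-11} in Section \ref{sec:10}, and your lower-bound outline is plausible at this level of detail.

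The upper bound, however, contains a genuine error: the approximant is built backwards. With your choice $g:=\sum_{\omega\in\Omega_{\mathrm{small}}}\langle f,\psi_\omega\rangle\psi_\omega$, the difference $f-g$ consists of exactly the \emph{large} wavelet coefficients of $f$, so its $\Lambda_s$ quasi-norm is comparable to $\sup_{\omega}|I_\omega|^{-s/n-1/2}|\langle f,\psi_\omega\rangle|\sim\|f\|_{\Lambda_s}$, not $\lesssim\varepsilon$; and $g$ has (generically) nonzero coefficients on almost every cube, so for instance when $X=l^q(L^p)_{\mathbb{Z}_+}$ with $p<\infty$ the inner sum in $\|g\|_{\Lambda_X^{s}}$ dominates $c\varepsilon\,\mathbf{1}_{\mathbb{R}^n}$ at every scale and $\|g\|_{\Lambda_X^{s}}=\infty$. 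The correct construction is the opposite one: take $g:=\sum_{\omega\notin\Omega_{\mathrm{small}}}\langle f,\psi_\omega\rangle\psi_\omega$, the \emph{large}-coefficient part. Then every coefficient of $f-g$ is at most $c\varepsilon|I_\omega|^{s/n+1/2}$, which gives $\|f-g\|_{\Lambda_s}\lesssim\varepsilon$ by the wavelet characterization of $\Lambda_s$; and $g$ is supported on the \emph{sparse} large-coefficient cubes, so the hypothesis $\varepsilon>\varepsilon_{X,\nu}^0+\varepsilon_{X,S,\nu}$, via the wavelet-to-difference transfer, Lemma \ref{thm3adf}, Lemma \ref{dda2f}, property (iv) of $\nu$, and Assumption \ref{pplp}, is precisely what converts finiteness of $\nu$ on the associated tents into $g\in\Lambda_X^{s}$. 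The remainder of your argument is aimed at the right quantities once $g$ is defined correctly.
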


\section{Kernel estimates of fractional heat
semigroups}\label{sec-5}

In this section, we establish pointwise kernel estimates for
the fractional heat semigroup $T_{\alpha, t}$, with
$t\in(0,\infty)$ and $\alpha\in(0,\infty)$, and use them to
derive uniform norm estimates for the derivatives
$\partial_t^i \partial_x^\beta [T_{\alpha, t} f(x)]$,
 where $f\in\Lambda_s$, $\beta\in\mathbb{Z}_+^n$ and
 $i\in\mathbb{Z}_+$.
In particular, we characterize the space $\Lambda_s$
 in terms of the uniform norms of these derivatives. These
 results play a crucial role in the proofs of the main
 theorems presented in subsequent sections. Finally, we note
 that some of these results are likely known at least in
 special cases such as $\alpha=1$ or $\alpha=2$; however, due
 to the lack of precise references, we provide self-contained
 proofs for completeness.

We begin with estimates for fractional heat kernels.
For any $\alpha\in (0, \infty)$, let $K_\alpha$ be the Fourier
 transform of the function $ e^{-|2\pi \cdot|^{\alpha}}$; that
 is,
$$ K_\alpha(x):= \int_{\mathbb{R}^n} e^{-|2\pi \xi|^{\alpha}}
e^{2\pi i x\cdot \xi}\, d\xi,\ \ x\in\mathbb{R}^n.$$
In particular, when $\alpha=1$, $K_\alpha$ is the classical
Poisson kernel,
 		$K_{1} (x) =C_n (1+|x|^2) ^{-\frac {n+1}2}$ for any
 $x\in\mathbb{R}^n$,
 		while, for $\alpha=2$, $K_\alpha$ coincides with the
 heat kernel,
 		$ K_2(x) = C_n e^{- |x|^2/4}.$
 For general $\alpha\in(0,\infty)$, the function $K_\alpha$
 satisfies the following estimates.

\begin{theorem}\label{lem-2-1} For any given
$\alpha\in(0,\infty)$, $K_\alpha$ is a $C^\infty$-function on
$\mathbb{R}^n$ satisfying that
there exists a positive constant $C_{k, \alpha, n}$, depending
only on $k,\alpha$ and $n$, such that, for any
 $k\in\mathbb{Z}_+$ and $ x\in\mathbb{R}^n$,
 	\begin{align}\label {1-9} | \nabla^k K_\alpha (x)|\leq
 C_{k,\alpha,n} (1+|x|)^{-\alpha-n-k}.
 	\end{align}
\end{theorem}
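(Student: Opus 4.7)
The plan is to prove the estimate in three stages: $C^\infty$ regularity, a trivial bound on the region $\{|x|\le 1\}$, and the delicate estimate on $\{|x|\ge 1\}$.

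For the first two stages, note that $\xi\mapsto|\xi|^{|\beta|}e^{-|2\pi\xi|^\alpha}$ lies in $L^1(\mathbb{R}^n)$ for every multi-index $\beta\in\mathbb{Z}_+^n$, so differentiation under the integral sign gives $K_\alpha\in C^\infty(\mathbb{R}^n)$ together with the formula
\begin{equation*}
\partial^\beta K_\alpha(x)=\int_{\mathbb{R}^n}(2\pi i\xi)^\beta e^{-|2\pi\xi|^\alpha}e^{2\pi ix\cdot\xi}\,d\xi.
\end{equation*}
Taking absolute values yields the uniform bound $|\partial^\beta K_\alpha(x)|\le M_{|\beta|,\alpha,n}<\infty$, which on $\{|x|\le 1\}$ is already comparable to the desired $(1+|x|)^{-n-\alpha-|\beta|}$.

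For the range $|x|\ge 1$ with $k:=|\beta|$, fix $\psi\in\mathcal{C}_{\mathrm{c}}^\infty(\mathbb{R}^n)$ with $\psi=1$ on $\{|\xi|\le 1\}$ and $\mathrm{supp}\,\psi\subset\{|\xi|\le 2\}$, and decompose the symbol as
\begin{equation*}
(2\pi i\xi)^\beta e^{-|2\pi\xi|^\alpha}=(2\pi i\xi)^\beta e^{-|2\pi\xi|^\alpha}[1-\psi(\xi)]+(2\pi i\xi)^\beta\psi(\xi)+h(\xi),
\end{equation*}
where $h(\xi):=(2\pi i\xi)^\beta[e^{-|2\pi\xi|^\alpha}-1]\psi(\xi)$. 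The first summand is Schwartz because $|\xi|^\alpha$ is smooth on its support $\{|\xi|\ge 1\}$ and $e^{-|2\pi\xi|^\alpha}$ dominates all polynomial growth of its derivatives there; the second summand is compactly supported smooth, hence also Schwartz. Their inverse Fourier transforms therefore decay faster than any polynomial and in particular are dominated by $|x|^{-n-\alpha-k}$.

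The heart of the argument is the bound for the inverse Fourier transform of $h$. Using $e^{-|2\pi\xi|^\alpha}-1=O(|\xi|^\alpha)$ near $\xi=0$ together with the homogeneity estimate $|\nabla^m|2\pi\xi|^\alpha|\lesssim|\xi|^{\alpha-m}$, an application of Leibniz's rule and Fa\`a di Bruno's formula yields $|\nabla^N h(\xi)|\lesssim|\xi|^{k+\alpha-N}\mathbf{1}_{\{|\xi|\le 2\}}$ for every $N\in\mathbb{Z}_+$. Apply a Littlewood--Paley decomposition $h=\sum_{j\le 1}h_j$ with $\mathrm{supp}\,h_j\subset\{2^{j-1}\le|\xi|\le 2^{j+1}\}$, so that two complementary bounds hold for each piece: the direct bound $|\mathcal{F}^{-1}h_j(x)|\lesssim 2^{j(n+k+\alpha)}$ from integrating $|h_j|$, and the integration-by-parts bound $|\mathcal{F}^{-1}h_j(x)|\lesssim|x|^{-2N}\,2^{j(n+k+\alpha-2N)}$ obtained by iterating the identity $e^{2\pi ix\cdot\xi}=(-4\pi^2|x|^2)^{-N}\Delta_\xi^N e^{2\pi ix\cdot\xi}$. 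Choosing the direct bound when $2^j\le|x|^{-1}$ and the integration-by-parts bound with $2N>n+k+\alpha$ when $2^j>|x|^{-1}$, summation of the resulting geometric series yields $|\mathcal{F}^{-1}h(x)|\lesssim|x|^{-n-\alpha-k}$. The main technical obstacle is precisely the nonsmoothness of $|\xi|^\alpha$ at the origin, which forbids a direct integration by parts on the full symbol; the subtraction in $h$ cancels the constant Taylor term of $e^{-|2\pi\xi|^\alpha}$ around $\xi=0$ and exposes the $|\xi|^{k+\alpha}$ vanishing that provides the extra factor $|x|^{-\alpha}$ beyond the naive bound.
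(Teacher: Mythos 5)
Your proposal is correct, but it follows a genuinely different route from the paper. Both arguments begin the same way: smoothness of $K_\alpha$ and uniform boundedness of the derivatives handle $|x|\le 1$, and a smooth cutoff isolates the only obstruction, which is the non-smoothness of $e^{-|2\pi\xi|^\alpha}$ at $\xi=\mathbf 0$. The divergence is in how the low-frequency piece is treated. The paper Taylor-expands $e^{-|2\pi\xi|^\alpha}$ to high order in the variable $|2\pi\xi|^\alpha$, leaving a $C^N_{\mathrm c}$ remainder plus finitely many terms of the form $\eta(\xi)\,\xi^\beta|\xi|^{j\alpha}$; the decay of the Fourier transform of each such term is then read off from the theory of homogeneous distributions $u_z$ and the identity $\widehat{u_z}=u_{-n-z}$ (Lemmas \ref{lem-5-5} and \ref{thm1-6-8}). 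You instead subtract only the constant Taylor term, so that $h(\xi)=(2\pi i\xi)^\beta[e^{-|2\pi\xi|^\alpha}-1]\psi(\xi)$ vanishes to order $k+\alpha$ at the origin, and you exploit the one-parameter family of derivative estimates $|\nabla^N h(\xi)|\lesssim|\xi|^{k+\alpha-N}\mathbf{1}_{\{|\xi|\le 2\}}$ via a dyadic Littlewood--Paley decomposition $h=\sum_{j\le 1}h_j$, interpolating for each annulus between the trivial $L^1$ bound $2^{j(n+k+\alpha)}$ and the integration-by-parts bound $|x|^{-2N}2^{j(n+k+\alpha-2N)}$ with $2N>n+k+\alpha$; summing the two geometric tails on either side of $2^{j_0}\sim|x|^{-1}$ yields $|x|^{-(n+k+\alpha)}$. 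Both proofs are complete and of comparable length. Your approach is more elementary and self-contained in that it avoids the machinery of homogeneous distributions and the analytic continuation implicit in Definition \ref{def-5-3}; what the paper's approach buys is a reusable general lemma (Lemma \ref{lem-5-5}) that gives the optimal decay $(1+|x|)^{-n-\gamma-|\beta|}$ for $\mathcal{F}(|y|^\gamma y^\beta g(y))$ with any Schwartz weight $g$, at the cost of importing the Grafakos result $\widehat{u_z}=u_{-n-z}$. Your choice of bounds $2N>n+k+\alpha$ and $2^{j_0}\sim|x|^{-1}$ is exactly right; note only that for $\alpha$ an even integer $K_\alpha$ is Schwartz and the estimate is trivial (your argument still applies, with $h$ then itself smooth), which the paper records as an explicit reduction at the outset.
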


In particular, Theorem \ref{lem-2-1} implies that $K_\alpha$
 is a radial and integrable function on $\mathbb{R}^n$. Since
 $$ \mathcal{F}(T_{\alpha, t} f)(\xi) =e^{-t|2\pi
 \xi|^{\alpha}} \mathcal{F}f(\xi),$$
it follows that
\begin{align}T_{\alpha,t} f(x)=\int_{\mathbb{R}^n}
K_{\alpha}(x-z, t) f(z)\, dz, \ \ x\in\mathbb{R}^n,\ \
t\in(0,\infty), \label{5-2}
\end{align}
where
 \begin{align*}K_{\alpha} (x, t) = \left( \frac 1 {t^\frac
 1{\alpha}}\right)^nK_\alpha \left( \frac x{ t^{ \frac
 1{\alpha}}} \right).
 \end{align*}
 As a direct consequence of Theorem \ref{lem-2-1}, we obtain
 the following result.
 \begin{corollary}\label{cor-5-2}
Let $\alpha\in(0,\infty)$ and $k, j\in\mathbb{Z}_+$. Then
there exists a positive constant $C$ such that, for any $
x\in\mathbb{R}^n$
and $ t\in(0,\infty)$,
 		\begin{align}\label {1-10}
 		\left|\partial_t^j\nabla_x^k (K_\alpha (x, t)) \right|
 \leq C\frac {t^{-j+1}}{( |x|+t^\frac
 1{\alpha})^{n+\alpha+k}}.
 \end{align}
 \end{corollary}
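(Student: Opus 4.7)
The plan is to exploit the dilation identity
\begin{equation*}
K_\alpha(x,t)=t^{-n/\alpha}K_\alpha(x t^{-1/\alpha}),
\end{equation*}
reduce the derivatives $\partial_t^j\nabla_x^k K_\alpha(x,t)$ to derivatives of $K_\alpha$ evaluated at the rescaled point $y:=x t^{-1/\alpha}$, and invoke the decay estimate \eqref{1-9} from Theorem \ref{lem-2-1}. The passage between the two viewpoints will be made via $1+|y|=(t^{1/\alpha}+|x|)/t^{1/\alpha}$.

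The pure spatial case $j=0$ is immediate: differentiating the scaling identity $k$ times in $x$ yields $\nabla_x^k K_\alpha(x,t) = t^{-(n+k)/\alpha}(\nabla^k K_\alpha)(y)$, and combining with \eqref{1-9} gives
\begin{equation*}
|\nabla_x^k K_\alpha(x,t)|\leq C\,t^{-(n+k)/\alpha}(1+|y|)^{-n-\alpha-k}=C\,t\,(|x|+t^{1/\alpha})^{-n-\alpha-k},
\end{equation*}
which is exactly \eqref{1-10} with $j=0$. For the general case I would induct on $j$, proving the following strengthened claim: for each $j,k\in\mathbb{Z}_+$ there exists a smooth function $H_{j,k}\colon\mathbb{R}^n\to\mathbb{R}$ with
\begin{equation*}
\partial_t^j\nabla_x^k K_\alpha(x,t)=t^{-j-(n+k)/\alpha}H_{j,k}(y)\quad\text{and}\quad|\nabla^m H_{j,k}(y)|\leq C_{j,k,m}(1+|y|)^{-n-\alpha-k-m}
\end{equation*}
for every $m\in\mathbb{Z}_+$. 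The base case $j=0$ follows from the previous paragraph and the fact that \eqref{1-9} delivers matching decay at every derivative order.

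For the inductive step, differentiating in $t$ together with $\partial_t y=-y/(\alpha t)$ produces the recursion
\begin{equation*}
H_{j+1,k}(y)=-\Bigl[\tfrac{n+k}{\alpha}+j\Bigr]H_{j,k}(y)-\tfrac{1}{\alpha}\,y\cdot\nabla H_{j,k}(y).
\end{equation*}
Differentiating $m$ times and applying Leibniz's rule, $\nabla^m H_{j+1,k}$ is controlled by a constant times $|\nabla^m H_{j,k}|+|y|\,|\nabla^{m+1}H_{j,k}|$; combined with the inductive hypothesis applied at orders $m$ and $m+1$ and the trivial bound $|y|(1+|y|)^{-1}\leq 1$, this yields the desired decay for $H_{j+1,k}$. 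Substituting the $m=0$ bound back and converting to $(x,t)$ variables then gives
\begin{equation*}
|\partial_t^j\nabla_x^k K_\alpha(x,t)|\leq C\,t^{-j-(n+k)/\alpha}(1+|y|)^{-n-\alpha-k}=C\,\frac{t^{1-j}}{(|x|+t^{1/\alpha})^{n+\alpha+k}},
\end{equation*}
which is \eqref{1-10}. The only point requiring genuine care is the bookkeeping in the induction, namely verifying that the recursion preserves decay of \emph{all} derivatives of $H_{j,k}$; no real obstacle arises here, since Theorem \ref{lem-2-1} hands us precisely the stronger estimate needed at each order.
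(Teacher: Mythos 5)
Your proof is correct and matches the paper's intent: the paper presents Corollary~\ref{cor-5-2} without a written proof, stating only that it is ``a direct consequence of Theorem~\ref{lem-2-1}'' via the dilation identity $K_\alpha(x,t)=t^{-n/\alpha}K_\alpha(xt^{-1/\alpha})$. Your argument spells out the step the paper leaves implicit, namely how the $\partial_t^j$ derivatives interact with the scaling. The observation that the strengthened inductive hypothesis (decay of \emph{all} spatial derivatives of $H_{j,k}$) is exactly what the recursion $H_{j+1,k}=-[\tfrac{n+k}{\alpha}+j]H_{j,k}-\tfrac1\alpha\,y\cdot\nabla H_{j,k}$ requires, together with the bound $|y|(1+|y|)^{-1}\le1$ to absorb the extra $|y|$, is the correct bookkeeping and closes the induction cleanly. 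The base case rests on the fact that \eqref{1-9} holds for every order of differentiation, which is indeed what Theorem~\ref{lem-2-1} provides. No gaps.
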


 For later applications, it will be more convenient to express
 the estimate \eqref{1-10} in the form
 \begin{align}\label{5-5c}
 \left|\partial_t^j\nabla_x^k (K_\alpha (x, t)) \right| \leq C
 t^{-\frac {k+n}\alpha -j} H_k\left( \frac x{t^{\frac
 1\alpha}}\right),\ \ x\in\mathbb{R}^n,\ \ t\in(0,\infty),
 \end{align}
 where
 \begin{align}\label{5-6c}
 H_k(x):=\frac1{(1+|x|)^{n+\alpha+k}},\ \
 x\in\mathbb{R}^n.\end{align}

The estimates in Theorem \ref{lem-2-1} for the fractional heat
kernels are likely known, particularly for $\alpha\in(0,2]$,
but since we were unable to locate precise references, we will
include a self-contained proof for completeness. When $\alpha$
is a positive even integer, the function $e^{-|\cdot|^\alpha}$
is a Schwartz function on $\mathbb{R}^n$, and therefore the
estimate \eqref{1-9} is not sharp. For example, in the case
$\alpha = 2$, the kernel $K_2(x)=c_n e^{-|x|^2}$ is the
classical heat kernel, which decays exponentially at infinity.
Nevertheless, the estimates given in the theorem are
sufficient for our purposes.

To prove Theorem \ref{lem-2-1}, we begin by recalling some
known facts about homogeneous distributions. Let $\mathcal{S}$
denote the space of all Schwartz functions on $\mathbb{R}^n$.
For any
$\gamma:=(\gamma_1,\ldots,\gamma_n)\in\mathbb Z_+^n$ and
$x:=(x_1,\ldots,x_n)\in\mathbb{R}^n$, let
$|\gamma|:=\gamma_1+\cdots+\gamma_n$,
$x^\gamma:=x_1^{\gamma_1}\cdots x_n^{\gamma_n}$, and
$\partial^\gamma:=(\frac{\partial}{\partial
x_1})^{\gamma_1}\cdots(\frac{\partial}{\partial
x_n})^{\gamma_n}$.

\begin{definition}\textnormal{\cite[pp.\,128,\,(2.4.7)]{Gr}}\label{def-5-3}
	Let $z\in\mathbb{C}$ be such that $\text{Re}\, z >-n-m-1$
for a positive integer $m\in\mathbb{N}$. The homogeneous
tempered distribution $u_z: \mathcal{S}\to\mathbb{C}$ is
defined for any $f\in\mathcal{S}$ by setting
\begin{align}\label{5-6b}
\langle u_z, f\rangle:&=a_{n,z}\left[\int_{|x|> 1} f(x)|x|^z\,
dx \right.\notag\\
&\quad\left.+\sum_{|\beta|\leq m }b(n,\beta,z) \cdot\partial^\beta
f(\mathbf{0}) + \int_{|x|\leq 1} \left(f(x)-T_0^m
f(x)\right)|x|^z d x \right],
\end{align}
	where $a_{n,z} =
\frac{
\pi^{\frac{z+n}{2}}}{
\Gamma\left(\frac{z+n}{2}\right)}, $
$$b(n,\beta, z):=\frac{ \int_{\mathbf{S}^{n-1}} \theta^\beta d
\sigma( \theta)}{(|\beta|+z+n)\cdot \beta!},\ \ \text{ and}\ \
T_0^m f(x)=\sum_{|\beta| \leq m} \frac{\partial^\beta
f(\mathbf{0})}{\beta!} x^\beta.$$
\end{definition}
It is well known that the above definition is independent of
the selection of the integer $m$ and, moreover, if
$\mathrm{Re}z > -n$, then the homogeneous distribution $u_z$
coincides with the locally integrable function
$a_{n,z} |\cdot|^z$ on $\mathbb{R}^n$ (see
\cite[p.\,127]{Gr}).
Furthermore, a straightforward calculation shows that
\eqref{5-6b} can be equivalently expressed, in terms of a
$C^\infty$-function $\eta$ on $\mathbb{R}^n$ such that
$\eta(x)=1$ for $|x|\leq 1$ and $\eta(x)=0$ for $|x|\ge 2$, as
follows:
\begin{align}
\langle u_z, f\rangle&=a_{n,z}\left[\int_{\mathbb{R}^n}
(1-\eta(x)) f(x)|x|^z\, dx +\sum_{
	 		|\beta|\leq m } c(n,\beta,z,\eta)
\cdot\partial^\beta f(\mathbf{0})\right.\label{5-8b}\notag\\
&\quad+ \left.\int_{\mathbb{R}^n} \left(f(x)-T_0^m
f(x)\right)\eta(x)|x|^z d x \right],
\end{align}
where
$$c(n,\beta,z,\eta)=b(n,\beta, z)+\frac 1 {\beta!}\int_{|x|\ge
1} \eta(x) x^\beta |x|^z\, dx.$$
We now recall the following well-known result on the
distributional Fourier transform of homogeneous distributions.

\begin{lemma}\textnormal{\cite[Theorem
2.4.6]{Gr}}\label{thm1-6-8} For any $z\in\mathbb{C}$,
$\widehat{u_z}=u_{-n-z},$
where $\widehat{u_z}$ denotes the distributional Fourier
transform of $u_z$.
\end{lemma}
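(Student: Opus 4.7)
The plan is to establish the identity $\widehat{u_z} = u_{-n-z}$ first in a strip where both distributions reduce to locally integrable functions, and then extend it to all of $\mathbb{C}$ via analytic continuation. For any fixed Schwartz function $f$, both $z \mapsto \langle \widehat{u_z}, f\rangle = \langle u_z, \widehat{f}\rangle$ and $z \mapsto \langle u_{-n-z}, f\rangle$ should be entire functions of $z$, and the two are to be shown equal on a strip.

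The first step is to verify the identity in the strip $-n < \mathrm{Re}(z) < 0$, where $u_z$ is realized by the locally integrable function $a_{n,z}|x|^z$ (and similarly for $u_{-n-z}$). The problem reduces to computing $\widehat{|\cdot|^z}$, for which I would use Gaussian subordination:
\begin{equation*}
|x|^z = \frac{1}{\Gamma(-z/2)} \int_0^\infty e^{-t|x|^2}\, t^{-z/2-1}\,dt, \qquad -n < \mathrm{Re}(z) < 0,
\end{equation*}
combined with the Gaussian Fourier identity $\widehat{e^{-t|\cdot|^2}}(\xi) = (\pi/t)^{n/2} e^{-\pi^2|\xi|^2/t}$. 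Swapping the order of integration (justified by absolute convergence throughout the chosen strip) and making the substitution $u = \pi^2/t$ produces a Gamma integral yielding $|\xi|^{-n-z}$ times an explicit prefactor. The normalization $a_{n,z} = \pi^{(z+n)/2}/\Gamma((z+n)/2)$ is designed precisely so that this prefactor equals $a_{n,-n-z}/a_{n,z}$, giving $\widehat{a_{n,z}|x|^z} = a_{n,-n-z}|\xi|^{-n-z}$ in the strip.

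The second step is analytic continuation. For a fixed $m\in\mathbb{N}$ with $\mathrm{Re}(z) > -n-m-1$, the representation \eqref{5-8b} expresses $\langle u_z, f\rangle$ as the sum of two integrals (both manifestly holomorphic in $z$ on this half-plane because $f\in\mathcal{S}$ decays rapidly and $f-T_0^m f = O(|x|^{m+1})$ near $\mathbf{0}$) and a finite sum $a_{n,z}\sum_{|\beta|\leq m} c(n,\beta,z,\eta)\,\partial^\beta f(\mathbf{0})$. The coefficients $c(n,\beta,z,\eta)$ have simple poles at $z = -n-|\beta|$, but these are cancelled either by the vanishing of $\int_{\mathbf{S}^{n-1}}\theta^\beta\,d\sigma(\theta)$ for odd $|\beta|$, or by the zero of $1/\Gamma((z+n)/2)$ built into $a_{n,z}$ at the corresponding negative even integer values of $z+n$. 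Since $m$ is arbitrary, $z\mapsto \langle u_z, f\rangle$ is entire on $\mathbb{C}$; replacing $f$ by $\widehat{f}\in\mathcal{S}$ gives the same for $z\mapsto \langle \widehat{u_z}, f\rangle$, and the identity theorem for holomorphic functions then propagates the equality from the strip to all of $\mathbb{C}$.

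The main obstacle will be the bookkeeping needed to verify that the sum $a_{n,z}\sum_{|\beta|\leq m} c(n,\beta,z,\eta)\,\partial^\beta f(\mathbf{0})$ is genuinely entire, i.e.\ that the apparent simple poles in the $c$-terms are exactly cancelled by the Gamma-factor zeros of $a_{n,z}$ (or by the vanishing spherical moments). Once this cancellation is made explicit, the remainder of the argument is routine, and independence of the representation from the choice of cut-off $\eta$ and of $m$ follows automatically from the uniqueness of analytic continuation.
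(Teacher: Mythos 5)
Your proposal is correct: the computation on the strip $-n<\mathrm{Re}(z)<0$ via Gaussian subordination, followed by analytic continuation using the representation \eqref{5-8b} (in which the apparent simple poles of $b(n,\beta,z)$ at $z=-n-|\beta|$ for even $|\beta|$ are cancelled by the zeros of $a_{n,z}$ coming from $1/\Gamma((z+n)/2)$, while the odd-$|\beta|$ terms vanish because the spherical moment $\int_{\mathbf{S}^{n-1}}\theta^\beta\,d\sigma(\theta)$ is zero), is exactly the standard argument given in the cited reference \cite[Theorem~2.4.6]{Gr}. The paper itself does not reproduce the proof but simply cites that result, and your approach matches it.
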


For later use, we define the modulation and the translation
operators for any $f \in L_{\mathrm{loc}}^1 $, $\xi \in
\mathbb{R}^n$, and $x \in \mathbb{R}^n$, respectively,
by setting
$$
\mod_\xi f(x) := f(x)\, e^{2\pi i x \cdot \xi} \quad
\text{and} \quad \text{Trans}_\xi f(x) := f(x - \xi).$$
The following lemma will play an important role in the proof
of Theorem~\ref{lem-2-1}.

\begin{lemma}\label{lem-5-5} Assume that
$\gamma\in(0,\infty)$, $\beta\in\mathbb{Z}_+^n$, and $g\in
\mathcal{S}$.
Let
$$H_{\gamma,\beta}(x):=\int_{\mathbb{R}^n} |y|^\gamma y^\beta
g(y) e^{2\pi ix\cdot y}\, dy,\ \ x\in\mathbb{R}^n.$$
Then there exists a positive constant $C_{n,\gamma,\beta, g}
$, depending only on $n,\gamma,\beta$ and $g$, such that, for
any $x\in\mathbb{R}^n$,
$$ |H_{\gamma,\beta} (x)|\leq C_{n,\gamma,\beta, g}
(1+|x|)^{-n-\gamma-|\beta|}.$$
\end{lemma}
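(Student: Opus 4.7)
The plan is to split into the regimes $|x|\le 1$ and $|x|>1$. For $|x|\le 1$ the bound is immediate: since $g\in\mathcal{S}$, the quantity $M:=\int_{\mathbb{R}^n}|y|^{\gamma+|\beta|}|g(y)|\,dy$ is finite, so $|H_{\gamma,\beta}(x)|\le M$, and the conclusion follows because $(1+|x|)^{-n-\gamma-|\beta|}$ is bounded below by a positive constant on $\{|x|\le 1\}$. For $|x|>1$ I introduce a scale-adapted smooth cutoff: fix $\chi\in C_c^\infty(\mathbb{R}^n)$ with $\chi\equiv 1$ on $\{|z|\le 1\}$ and $\chi\equiv 0$ outside $\{|z|\le 2\}$, and split $H_{\gamma,\beta}(x)=I(x)+II(x)$ by inserting the partition $1=\chi(|x|y)+(1-\chi(|x|y))$. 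The near-origin piece $I(x)$ is supported in $\{|y|\le 2/|x|\}$, so a direct estimate gives $|I(x)|\le\|g\|_{L^\infty}\int_{|y|\le 2/|x|}|y|^{\gamma+|\beta|}\,dy\lesssim|x|^{-n-\gamma-|\beta|}$, which is already the target bound.

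The main work is the bound on $II(x)$, whose integrand is $C^\infty$ on its support $\{|y|\ge 1/|x|\}$. Introducing the directional derivative $\mathsf D:=(x/|x|)\cdot\nabla_y$, which satisfies $\mathsf D e^{2\pi ix\cdot y}=2\pi i|x|e^{2\pi ix\cdot y}$, and integrating by parts $k$ times (all boundary terms at infinity vanish because $g\in\mathcal{S}$) yields
$$
(2\pi i|x|)^k II(x)=(-1)^k\int_{\mathbb{R}^n}\mathsf D^k\bigl[(1-\chi(|x|y))|y|^\gamma y^\beta g(y)\bigr]e^{2\pi ix\cdot y}\,dy.
$$
Expanding $\mathsf D^k$ by the Leibniz rule produces a finite sum indexed by quadruples $(j_1,j_2,j_3,j_4)$ with $j_1+j_2+j_3+j_4=k$, each factor admitting the pointwise bounds $|\mathsf D^{j_1}(1-\chi(|x|y))|\lesssim|x|^{j_1}\mathbf{1}_{\{|y|\ge 1/|x|\}}$ (with support on the annulus $\{1/|x|\le|y|\le 2/|x|\}$ when $j_1\ge 1$), $|\mathsf D^{j_2}(|y|^\gamma)|\lesssim|y|^{\gamma-j_2}$, $|\mathsf D^{j_3}(y^\beta)|\lesssim|y|^{|\beta|-j_3}$ (vanishing when $j_3>|\beta|$), and $|\mathsf D^{j_4}g(y)|\lesssim_N(1+|y|)^{-N}$ for any $N$.

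Fix an integer $k>n+\gamma+|\beta|$ and $N$ sufficiently large, and set $m:=j_2+j_3$ so that $j_1+m\le k$. When $j_1\ge 1$, the support is the annulus where $|y|\sim 1/|x|$, the integrand is $\lesssim|x|^{j_1+m-\gamma-|\beta|}$, and integration over a volume $\sim|x|^{-n}$ yields $\lesssim|x|^{j_1+m-n-\gamma-|\beta|}\le|x|^{k-n-\gamma-|\beta|}$. When $j_1=0$, splitting $\{|y|\ge 1/|x|\}=\{1/|x|\le|y|\le 1\}\cup\{|y|>1\}$ and computing $\int_{1/|x|}^{1}r^{\gamma+|\beta|-m+n-1}\,dr$ in the three sub-cases according to the sign of $\gamma+|\beta|-m+n$ shows the same bound $\lesssim|x|^{k-n-\gamma-|\beta|}$. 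Dividing by $(2\pi|x|)^k$ gives $|II(x)|\lesssim|x|^{-n-\gamma-|\beta|}$, and combining with the bound on $I(x)$ completes the proof. The principal technical obstacle is the careful Leibniz bookkeeping: one must verify that the factors $|x|^{j_1}$ produced by derivatives of the cutoff are exactly compensated by the smallness of $|y|^{\gamma+|\beta|-m}$ on the thin annulus $|y|\sim 1/|x|$, and that the choice $k>n+\gamma+|\beta|$ extracts enough decay even in the worst sub-case $j_1=0$ with small $m$, where the decay comes purely from the prefactor $|x|^{-k}$.
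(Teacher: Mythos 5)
Your proof is correct, but it takes a genuinely different route from the paper's. The paper's argument is built on the theory of homogeneous distributions: writing $H_{\gamma,\beta}(x)$ as a pairing $c\langle u_\gamma,\mathrm{mod}_x(y^\beta g)\rangle$, passing to $\langle u_{-n-\gamma},\partial^\beta G(x+\cdot)\rangle$ via the identity $\widehat{u_\gamma}=u_{-n-\gamma}$ (Lemma 5.4, i.e., \cite[Theorem 2.4.6]{Gr}), and then estimating the explicit regularized representation of $u_{-n-\gamma}$. Your argument is self-contained and more elementary: a cutoff at scale $1/|x|$ splits $H_{\gamma,\beta}$ into a near-origin piece bounded trivially by the volume of the small ball, and a far piece handled by $k$-fold integration by parts in the direction $x/|x|$ with $k>n+\gamma+|\beta|$, after which the Leibniz bookkeeping on the annulus $|y|\sim 1/|x|$ recovers the same decay. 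What the paper's approach buys is a short reduction to a known machinery; what yours buys is transparency and independence from the homogeneous-distribution framework. I checked the key cancellations: in the $j_1\ge 1$ case the $|x|^{j_1}$ from cutoff derivatives is indeed absorbed by $|y|^{\gamma+|\beta|-m}\sim|x|^{m-\gamma-|\beta|}$ on the annulus together with the $|x|^{-n}$ volume factor, and in the $j_1=0$ case the three subcases according to the sign of $\gamma+|\beta|-m+n$ all land inside $|x|^{k-n-\gamma-|\beta|}$ since $k>n+\gamma+|\beta|$ and $m\le k$. Integration by parts is justified because $1-\chi(|x|y)$ vanishes near $y=0$, so the integrand extends smoothly through the origin and decays rapidly at infinity.
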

\begin{proof} For convenience, we let $g_\beta(x):=x^\beta
g(x)$ for any $x\in\mathbb{R}^n$ and let
$G=\mathcal{F}^{-1}g$.
 Then both $G$ and $g_\beta$ are Schwartz functions, and
 $\mathcal{F}^{-1} g_\beta =c_\beta \partial^\beta G$. Since
 the homogeneous distribution $u_\gamma$ coincides with the
 locally integrable function $c_{n,\gamma} |\cdot|^\gamma$, we
 have
\begin{align*}
H_{\gamma,\beta} (x)
&=c_{n,\gamma} \langle u_\gamma,\mathrm{mod}_{x} g_\beta
\rangle=c_{n,\gamma, \beta} \langle u_\gamma, \mathcal{F}
(\text{Trans}_{-x} \partial^\beta G)\rangle\\
&=c_{n,\gamma, \beta} \langle \widehat {u_\gamma},
\text{Trans}_{-x} \partial^\beta G\rangle=c_{n,\gamma, \beta}
\langle u_{-n-\gamma}, \partial^\beta G(x+\cdot)\rangle,
\end{align*}
where the last step uses Lemma \ref{thm1-6-8}.

Next, let $m\in\mathbb{N}$ be such that $m\ge \gamma+1$, and
let $\eta$ be a $C^\infty$-function on $\mathbb{R}^n$ such
that $\eta(x)=1$ for $|x|\leq 1$ and $\eta(x)=0$ for $|x|\ge
2$. Using \eqref{5-8b} with $z=-n-\gamma$ and taking into
account the fact that $G\in\mathcal{S}$, we conclude that
\begin{align*}
|H_{\gamma,\beta} (x) |&\lesssim \left|\int_{\mathbb{R}^n}
\partial^\beta G(x+y) |y|^{-n-\gamma}(1-\eta(y))\, dy\right| +
\max_{0\leq k\leq m+|\beta|} |\nabla^k G(x)|\\
&\quad+ \sup_{|y|\leq 2}
|\nabla^{m+1+|\beta|}G(x+y)|\int_{|y|\leq 2} |y|^{-n-\gamma
+m+1}\, dy\\
&\lesssim \left|\int_{|y|\ge 1} G(x+y) \cdot \partial^\beta
\left[|y|^{-n-\gamma}(1-\eta(y))\right]\,
dy\right|+(1+|x|)^{-n-\gamma-|\beta|}. \end{align*}
However, a straightforward calculation shows that
\begin{align*}
\left|\int_{|y|\ge 1} G(x+y) \cdot \partial^\beta
\left[|y|^{-n-\gamma}(1-\eta(y))\right]\, dy\right|&\lesssim
\int_{\mathbb{R}^n}
(1+|x+y|)^{-n-\gamma-|\beta|}(1+|y|)^{-n-\gamma-|\beta|}\,
dy\\
&\lesssim (1+|x|)^{-n-\gamma-|\beta|}.
\end{align*}
Thus, putting the above estimates together, we obtain the
desired estimate for $|H_{\gamma,\beta} (x) |$, which hence
completes the proof of Lemma \ref{lem-5-5}.
\end{proof}

\begin{proof} [Proof of Theorem \ref{lem-2-1}] Let
$G_\alpha(x):=e^{-|2\pi x|^\alpha}$ for any
$x\in\mathbb{R}^n$.
Without loss of generality, we may assume that $\alpha$ is not
an even integer because otherwise $G_\alpha$ is a Schwartz
function and the stated estimate holds trivially. Since
$K_\alpha =\widehat {G_\alpha}$ and
 the function $(1+|\cdot|)^i G_\alpha$ is integrable over
 $\mathbb{R}^n$ for any integer $i\in\mathbb{N}$, it follows
 that $K_\alpha$ is a $C^\infty$-function on $\mathbb{R}^n$.

To prove the estimate \eqref{1-9}, let
 $\beta\in\mathbb{Z}_+^n$ be such that $|\beta|=k$, and let
 $\eta$ be a $C^\infty$-function on $\mathbb{R}^n$ such that
 $\eta(x)=1$ for $|x|\leq 1$ and $\eta(x)=0$ for $|x|\ge 2$.
 We then write
 \begin{align}
 \partial^\beta K_\alpha(x) =c\int_{\mathbb{R}^n} e^{-|2\pi
 y|^\alpha} y^\beta e^{-2\pi i x\cdot y}\, dy =
 I_{\alpha,\beta}(x) +J_{\alpha,\beta}
 (x),\label{5-9a}\end{align}
 where
$
I_{\alpha,\beta}(x):=c\int_{\mathbb{R}^n} \eta(y) e^{-|2\pi
y|^\alpha} y^\beta e^{-2\pi i x\cdot y}\, dy,
$
\begin{align*}
J_{\alpha,\beta}(x):=c\int_{\mathbb{R}^n}[1- \eta(y)]
e^{-|2\pi y|^\alpha} y^\beta e^{-2\pi i x\cdot y}\, dy,
\end{align*}
and $c$ is a constant independent of $x$.

 For the integral $J_{\alpha,\beta}$, we write
 $J_{\alpha,\beta}(x)=c \widehat{F_{\alpha,\beta}}(x)$, where
 $F_{\alpha,\beta}(y):= [1- \eta(y)] e^{-|2\pi y|^\alpha}
 y^\beta$
for any $y\in \mathbb{R}^n$.
 Since $F_{\alpha,\beta}$ is a Schwartz function on
 $\mathbb{R}^n$, we deduce that
 \begin{align}|J_{\alpha,\beta}(x)|\leq C_N (1+|x|)^{-N},\ \
 \forall \,x\in\mathbb{R}^n,\ \ \forall \,N\in
 \mathbb{N}.\label{5-10a}\end{align}

 To estimate the integral $I_{\alpha,\beta}(x)$, let
 $N,\ell\in\mathbb{N}$ be such that $n+\alpha +k \leq N
 <n+k+\alpha+1$ and $N+2\leq \ell \alpha$. Write
 \begin{align}I_{\alpha, \beta }(x) =c\int_{\mathbb{R}^n }
 D_\ell(y) e^{-2\pi i x\cdot y}\, dy+\sum_{j=0}^\ell \frac
 {(-1)^j}{j!} I_{\alpha,\beta, j}(x),\label{5-11a}\end{align}
 where
\begin{align*}
D_\ell(y)&:=\left[e^{-|2\pi y|^\alpha}-\sum_{j=0}^\ell \frac
{(-1)^j |2\pi y|^{j\alpha}}{j!}\right] \eta(y) y^\beta
\end{align*}
and
\begin{align*}
 I_{\alpha,\beta, j}(x)&:=c\int_{\mathbb{R}^n } \eta(y)
 y^\beta |2\pi y|^{j\alpha}e^{-2\pi i x\cdot y}\, dy,\ \ 0\leq
 j\leq \ell.
\end{align*}
 Since $\ell\alpha+k\ge N+2$, it follows from Taylor's theorem
 that
 $D_\ell \in C_c^{N}$, which implies
 $$ \left|\int_{\mathbb{R}^n } D_\ell(y) e^{-2\pi i x\cdot
 y}\, dy\right|=\left|\widehat {D_\ell} (x)\right|\leq C
 (1+|x|)^{-N}\leq C (1+|x|)^{-n-\alpha-k}.$$
 It remains to estimate the integral $I_{\alpha,\beta, j}(x)$
 for $0\leq j\leq \ell$.
 For $j=0$, since $y^\beta \eta(y)$ is a Schwartz function, it
 follows that
 $$ |I_{\alpha,\beta,0}(x)|\leq C_N (1+|x|)^{-N},\ \ \forall
 x\in\mathbb{R}^n,\ \ \forall N\in \mathbb{N}.$$
 For $1\leq j\leq \ell$, we use Lemma \ref{lem-5-5} to obtain
 $$ |I_{\alpha, \beta, j} (x)|\lesssim
 (1+|x|)^{-n-j\alpha-k}\lesssim (1+|x|)^{-n-\alpha-k}.$$
 Substituting the above estimates into \eqref{5-11a} yields
 \begin{align}
 |I_{\alpha,\beta}(x)|\lesssim (1+|x|)^{-n-\alpha-k}.
 \label{5-12a}\end{align}

 Finally, combining \eqref{5-10a} and \eqref{5-12a} with
 \eqref{5-9a}, we conclude the desired estimate \eqref{1-9}.
 This finishes the proof of Theorem \ref{lem-2-1}.
 \end{proof}

 Next, we apply the estimates from Theorem \ref{lem-2-1} to
 characterize the space $\Lambda_s$ through the derivatives of
 the fractional heat semigroup.
 To this end, let us first review some semigroup properties of
 $T_{\alpha, t}$ with $\alpha\in(0,\infty)$ and
 $t\in[0,\infty)$.
 Let $Y$ denote the Banach space of all bounded uniformly continuous
 functions on $\mathbb{R}^n$, equipped with the uniform norm
 $\|\cdot\|_\infty$. For simplicity, we fix
 $\alpha\in(0,\infty)$ and let
 $\mathcal{A}:=(-\Delta)^{\alpha/2}$.
For each integer $r\in\mathbb{N}$, define
$$ {\mathcal D}(\mathcal{A}^r):=\left\{ f\in Y:\ \mathcal{A}^i
f \in Y\ \ \text{for any $ i=1,\ldots, r$}\right\}.$$
Clearly, ${\mathcal D}(\mathcal{A}^r)$ contains all bounded
continuous functions on $\mathbb{R}^n$ whose distributional
Fourier transforms have compact support. In particular, this
implies that ${\mathcal D}(\mathcal{A}^r)$ is a dense subset
of $Y$.

By Corollary \ref{cor-5-2} and the definition \eqref{1-6-1},
it is easy to verify that the family $\{T_{\alpha,
t}\}_{t\in(0,\infty)}$ consists of uniformly bounded operators
on $L^\infty$
 satisfying the following conditions:
\begin{enumerate}[\rm (i)]
		\item 	 $T_{\alpha, 0}=I$ and $\displaystyle\lim
_{t \rightarrow 0+}\|T_{\alpha,t} f-f\|_\infty=0$ for any $f
\in Y$, here and hereafter, we denote $\|\cdot\|_{L^\infty}$
simply by $\|\cdot\|_{\infty}$;
		\item $T_{\alpha,
t_1+t_2}=T_{\alpha,t_1}T_{\alpha,t_2}$ for all $t_1, t_2\in
[0,\infty)$;
\item for any $f \in \mathcal{D}(\mathcal{A})$,
$$ \lim_{t\to 0+}\left\| \frac {T_{\alpha, t} f-f} t
-\mathcal{A} f\right\|_\infty=0;$$
\item for any	 $t\in(0,\infty)$ and $f\in Y$, we have
$T_{\alpha, t} f\in {\mathcal D}(\mathcal{A})$ and
 $ \frac {\partial} {\partial t} T_{\alpha,t} f= \mathcal{A}
 T_{\alpha,t} f$ ;
 moreover, there exists a constant $C\in(1,\infty)$ such
 that, for any $ t\in(0,\infty)$ and $ f\in Y$,
	\begin{align*}
	t\|\mathcal{A} T_t f\|_\infty\leq C \|f\|_\infty;
	\end{align*}
\item for any $t\in(0,\infty)$ and $f \in
\mathcal{D}(\mathcal{A})$, we have
		$$
		\frac{d}{d t} T_{\alpha, t}f =\mathcal{A}
T_{\alpha,t} f =T_{\alpha, t} \mathcal{A} f.$$
\end{enumerate}
In particular, this means that
$\{T_{\alpha,t}\}_{t\in(0,\infty)}$ is a holomorphic semigroup
on the space $Y$ with the
infinitesimal generator $\mathcal{A}=(-\Delta)^{\alpha/2}$
(see \cite[Section 5]{di}).
It follows by \cite[Theorem 5.1]{di} that,
for any $r\in\mathbb{N}$ and $f\in Y$,
\begin{align}
\|(I-T_{\alpha, t})^r f\|_\infty \sim K (f, \mathcal{A}^r;
t^r)_\infty,\ \ t\in(0,\infty),\label{5-15}
\end{align}
where the positive constants of equivalence are independent of
$f$ and $t$, and $K (f, \mathcal{A}^r, t)_\infty$ is the
$K$-functional defined by
$$ K (f, \mathcal{A}^r, t)_\infty :=\inf \left\{
\|f-g\|_\infty+ t \|\mathcal{A} ^r g\|_\infty:\ \
g\in{\mathcal D}(\mathcal{A}^r)\right\},\ \ t\in(0,\infty). $$
Furthermore, from the proof of \cite[Theorem 5.1]{di},
it follows that
there exists a positive integer $m$, depending only on
$\alpha$ and $n$, such that, for any $r\in\mathbb{N}$ and
$f\in Y$,
\begin{align}
t^{r}\left\| \mathcal{A}^r (T_{\alpha, mrt} f) \right\|_\infty
\leq C_{r,\alpha,n} \left\| (I-T_{\alpha,t})^r
f\right\|_\infty,\ \ t\in(0,\infty). \label{5-16}
\end{align}
	
The space $\Lambda_s$ can be characterized in terms of the
fractional heat semigroup as follows.
	
 \begin{theorem}\label{dsfad}
Let $s,\alpha\in(0,\infty)$ and $r\in\mathbb{N}$ be
such that $\alpha r>s$, and let $f\in L^\infty \cap
\mathcal C $. Then $f$ belongs to the space $\Lambda_s$ if and
only if
\begin{align}
 \sup_{
			t\in(0,\infty)} t^{r-\frac s\alpha}
\left\|(-\Delta)^{\frac {\alpha r}2} (T_{\alpha, t} f)
\right\|_\infty<\infty, \label{5-17}
\end{align}
in which case,
\begin{align}
 \|f\|_{\Lambda_s}\sim \sup_{t\in (0,\infty)} t^{r-\frac
 s\alpha } \left\|(-\Delta)^{\frac {\alpha r}2} (T_{\alpha, t}
 f) \right\|_\infty.\label{5-18}
\end{align}		
\end{theorem}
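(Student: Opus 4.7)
My plan is to bridge the classical and semigroup characterizations through the quantity $\|(I-T_{\alpha,t})^r f\|_\infty$, invoking the already-stated equivalence \eqref{5-15} and the one-sided bound \eqref{5-16}. The main task becomes establishing
\[
\|f\|_{\Lambda_s}\sim \|f\|_\infty+\sup_{t\in(0,1]}t^{-s/\alpha}\|(I-T_{\alpha,t})^r f\|_\infty,
\]
after which combining this with \eqref{5-16} and the trivial estimate $\|\mathcal A^r T_{\alpha,t}f\|_\infty\lesssim t^{-r}\|f\|_\infty$ for $t\geq 1$, which follows from the $L^1$ bound $\|\partial_t^r K_\alpha(\cdot,t)\|_{L^1}\lesssim t^{-r}$ provided by Corollary \ref{cor-5-2}, yields \eqref{5-18} after rescaling $u=mrt$.

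For the direction $\|f\|_{\Lambda_s}\gtrsim \sup_{t\in(0,1]} t^{-s/\alpha}\|(I-T_{\alpha,t})^r f\|_\infty$, I would exploit the radial symmetry of $K_\alpha(\cdot,t)$: averaging the identity $(I-T_{\alpha,t})g(x)=\int K_\alpha(y,t)[g(x)-g(x-y)]\,dy$ with its reflection produces $(I-T_{\alpha,t})g(x)=-\tfrac{1}{2}\int K_\alpha(y,t)\Delta_y^2 g(x)\,dy$, and iterating $r$ times yields
\[
(I-T_{\alpha,t})^r f(x)=\bigl(-\tfrac{1}{2}\bigr)^r\int K_\alpha(y_1,t)\cdots K_\alpha(y_r,t)\,\Delta_{y_1}^2\cdots\Delta_{y_r}^2 f(x)\,dy_1\cdots dy_r.
\]
For $f\in\Lambda_s$ with $\alpha r>s$, the iterated mixed symmetric difference of order $2r$ obeys bounds that distribute the smoothness across factors (via standard interpolation of finite differences), which combined with the scaling $\int K_\alpha(y,t)|y|^\beta\,dy\sim t^{\beta/\alpha}$ valid for $\beta<\alpha$ produces the desired rate $t^{s/\alpha}$ after $r$-fold integration; the threshold $\alpha r>s$ is exactly what makes the total smoothness budget $s$ distribute into $r$ factors each below $\alpha$. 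An equally effective alternative is a Littlewood--Paley decomposition $f=\sum_j \Delta_j f$ with $\|\Delta_j f\|_\infty\lesssim 2^{-js}\|f\|_{\Lambda_s}$, combined with the multiplier bound $|(1-e^{-t|2\pi\xi|^\alpha})^r|\lesssim \min((t|\xi|^\alpha)^r,1)$ and a frequency-scale summation split at $2^{j_*}\sim t^{-1/\alpha}$, which produces $t^{s/\alpha}\|f\|_{\Lambda_s}$ using $\alpha r>s$.

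For the converse, assuming $M:=\sup_t t^{r-s/\alpha}\|\mathcal A^r T_{\alpha,t}f\|_\infty<\infty$, I would first establish $\|(I-T_{\alpha,t})^r f\|_\infty\lesssim Mt^{s/\alpha}$ via the semigroup identity $(I-T_{\alpha,t})g=\int_0^t \mathcal A T_{\alpha,u}g\,du$ iterated $r$ times:
\[
(I-T_{\alpha,t})^r f=\int_0^t\cdots\int_0^t\mathcal A^r T_{\alpha,u_1+\cdots+u_r}f\,du_1\cdots du_r,
\]
combined with the hypothesis $\|\mathcal A^r T_{\alpha,u}f\|_\infty\leq Mu^{s/\alpha-r}$ and the scaling calculation $\int_{[0,t]^r}(u_1+\cdots+u_r)^{s/\alpha-r}du=Ct^{s/\alpha}$, whose convergence at the origin relies on $s/\alpha>0$. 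To pass back to the classical finite difference, for any $y\in(0,1]$ and any integer $m>s$, I would decompose $\Delta_y^m f=\Delta_y^m[(I-T_{\alpha,y^\alpha})^r f]+\Delta_y^m[T_{\alpha,y^\alpha}^r f]$ and control the rough part via the preceding $t^{s/\alpha}$-estimate and the smooth part via the hypothesis together with Corollary \ref{cor-5-2} applied to derivatives of $T_{\alpha,y^\alpha}^r f$. I anticipate the main obstacle to be the key forward estimate over the full range $\alpha r>s$: the polynomial decay of $\partial_t^r K_\alpha$ supplied by Corollary \ref{cor-5-2} is only $(|x|+t^{1/\alpha})^{-n-\alpha}$, so a naive polynomial-approximation argument based on a single vanishing-moment expansion would cover only $s<\alpha$; the radial-symmetry iteration (or the frequency-scale decomposition) is essential precisely to distribute the required smoothness budget among $r$ factors so as to access the whole range $\alpha r>s$.
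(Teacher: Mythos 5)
Your plan is genuinely different from the paper's: the paper simply cites the $K$-functional characterization of $\Lambda_s$ from \cite[Theorem~6.7.4]{bl} (the equivalence \eqref{zuihou}--\eqref{5-19}) and passes between $\|(I-T_{\alpha,t})^rf\|_\infty$ and $\|(-\Delta)^{\alpha r/2}T_{\alpha,t}f\|_\infty$ via \eqref{5-15}--\eqref{5-16}; its only explicit computation is the iterated integral identity $(T_{\alpha,t}-I)^rf=\int_{[0,t]^r}(-\Delta)^{\alpha r/2}T_{\alpha,\,t_1+\cdots+t_r}f\,dt_1\cdots dt_r$, which also appears in your converse. You instead propose to re-derive the whole $K$-functional equivalence from scratch. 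That is a legitimate ambition, but two of your steps fail as written.

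The iterated radial-averaging representation $(I-T_{\alpha,t})^rf=(-\tfrac12)^r\int K_\alpha(y_1,t)\cdots K_\alpha(y_r,t)\,\Delta_{y_1}^2\cdots\Delta_{y_r}^2f\,dy_1\cdots dy_r$ does not reach the full range $\alpha r>s$. A second-order symmetric difference annihilates only affine functions, so the sharpest pointwise bound $|\Delta_{y_1}^2\cdots\Delta_{y_r}^2f|\lesssim\prod_i|y_i|^{\sigma_i}$ available from $f\in\Lambda_s$ requires each $\sigma_i\le 2$ in addition to $\sum_i\sigma_i\le\min(s,2r)$; your constraint ``each $\sigma_i<\alpha$'' is not the only one. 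When $\alpha>2$ the hypothesis $\alpha r>s$ permits $2r<s<\alpha r$ (e.g.\ $\alpha=10$, $r=1$, $s=5$), and then the representation only yields $t^{2r/\alpha}$ rather than $t^{s/\alpha}$. Your Littlewood--Paley alternative is sound across the full range and should be the argument of record (or one must replace $\Delta_{y_i}^2$ by higher-even-order differences, with a modified identity). More seriously, your converse decomposition $\Delta_y^mf=\Delta_y^m[(I-T_{\alpha,y^\alpha})^rf]+\Delta_y^m[T_{\alpha,y^\alpha}^rf]$ presupposes $I=(I-T)^r+T^r$, which is false for every $r\ge2$: already $(I-T)^2+T^2=I-2T+2T^2\ne I$. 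The correct identity is $I=(I-T)^r+\sum_{k=1}^r\binom{r}{k}(-1)^{k+1}T^k$, so the ``smooth remainder'' is a linear combination of $T_{\alpha,ky^\alpha}$, $k=1,\dots,r$; each such term is controllable from the hypothesis by a multiplier argument (its $m$-th spatial derivatives, $m>\alpha r$, are bounded by a kernel estimate in the spirit of Lemma~\ref{lem-8-3b}), but your computation must be rewritten term by term with the corrected algebra. The paper avoids both of these issues by invoking \eqref{5-19} and \eqref{5-15} outright.
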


\begin{proof}
As is well known (see \cite[Theorem 6.7.4]{bl}),
\begin{align}\label{zuihou} f\in\Lambda_s\iff \sup_{t\in(0,\infty)} \frac {K(f,
\mathcal{A}^r, t^{\alpha r} )_\infty} {t^s} <\infty,
\end{align}
in which case, we have
\begin{align}\sup_{t\in(0,\infty)} \frac {K(f, \mathcal{A}^r,
t^{\alpha r} )_\infty} {t^s}\sim
\|f\|_{\Lambda_s}.\label{5-19}\end{align}

If $f\in\Lambda_s$, then, using \eqref{5-15} and \eqref{5-16},
we obtain
\begin{align*}
\sup_{t\in(0,\infty)} t^{r-\frac s\alpha}
\left\|(-\Delta)^{\frac {\alpha r}2} (T_{\alpha, t} f)
\right\|_\infty&=c \sup_{t\in(0,\infty)} t^{r-\frac
s\alpha}\left\| (-\Delta)^{\frac {\alpha r}2} (T_{\alpha, mrt}
f) \right\|_\infty\\
&\lesssim \sup_{t\in(0,\infty)} t^{-\frac
s\alpha}K(f,\mathcal{A}^r, t^r)_\infty\leq C
\|f\|_{\Lambda_s},
\end{align*}
where the last step used \eqref{5-19}.

Conversely, assume that \eqref{5-17} holds; that is, there
exists a constant $A\in(0,\infty)$ such that
\begin{align}\left\|(-\Delta)^{\frac {\alpha r}2} (T_{\alpha,
t} f) \right\|_\infty\leq A t^{\frac s\alpha-r},\ \ \forall\,
t\in(0,\infty).\label{5-19b}\end{align}
Using \eqref{5-15} and \eqref{5-19}, we find that
\begin{align}\|f\|_{\Lambda_s}\lesssim \frac {K(f,
\mathcal{A}^r, t^{\alpha r} )_\infty} {t^s}\lesssim \frac
{\|(I-T_{\alpha,t^\alpha})^r f\|_\infty} {t^s}. \label{5-20}
\end{align}
Let $u_\alpha(x, t):=T_{\alpha, t} f(x)$ for any
$x\in\mathbb{R}^n$ and $t\in(0,\infty)$. Then, for any $t\ge
0$, we have
\begin{align*}
(T_{\alpha,t }-I)^r f(x)&=\sum_{j=0}^r\binom{r} j (-1)^{r-j}
T_{\alpha, jt} f (x) =\sum_{j=0}^r\binom{r} j (-1)^{r-j}
u_\alpha(x, jt)\\
&=\int_{[0, t]^r} \partial_{n+1}^r u_\alpha (x,
t_1+\cdots+t_r) \, dt_1\cdots dt_r.
\end{align*}
Since
$$ \partial_{n+1}^r u_\alpha(x, t)=\left(\frac
{\partial}{\partial t}\right)^r[T_{\alpha, t}
f(x)]=(-\Delta)^{-\frac {\alpha r}2} T_{\alpha, t} f(x),$$
it follows that
\begin{align*}
 (T_{\alpha,t }-I)^r f = \int_{[0, t]^r} (-\Delta)^{-\frac
 {\alpha r}2} T_{\alpha, \ t_1+\cdots+t_r} ( f) \, dt_1\cdots
 dt_r,
\end{align*}
 which, using \eqref{5-19b}, implies
\begin{align*}
\left\|(T_{\alpha,t }-I)^r f\right\|_\infty&\leq \int_{[0,
t]^r} \left\|(-\Delta)^{-\frac {\alpha r}2} T_{\alpha, \
t_1+\cdots+t_r} (f) \right\|_\infty \, dt_1\cdots dt_r\\
 &\leq A \int_{[0, t]^r} (t_1+\cdots+t_r)^{\frac s\alpha -r}\,
 dt_1\cdots dt_r\\
 &\lesssim_r A \int_{\{ z\in \mathbb{R}^r:\ |z|\leq \sqrt{r}
 t\}}|z|^{\frac s\alpha -r}\,
 dz\lesssim_r t^{\frac s\alpha}.
\end{align*}

Substituting this last estimate with $t^\alpha$ in place of
$t$ into \eqref{5-20} yields the desired estimate:
\begin{align*}
\|f\|_{\Lambda_s}&\lesssim \frac {\|(I-T_{\alpha,t^\alpha})^r
f\|_\infty} {t^s}
\lesssim A.
\end{align*}
This finishes the proof of Theorem \ref{dsfad}.
\end{proof}

Let $f\in\Lambda_s$. Define $u_\alpha(x, t)=T_{\alpha, t}
f(x)$ for any $x\in\mathbb{R}^n$ and $t\in(0,\infty)$. By
\eqref{5-18}, if $r\in\mathbb{N}$ and $r>\frac s\alpha$, then
\begin{align}\label{5-21}
\sup_{t\in (0,\infty)}t^{r-\frac s\alpha}
\sup_{x\in\mathbb{R}^n} |\partial_{n+1}^r u_\alpha(x,
t)|\lesssim \|f\|_{\Lambda_s}.
\end{align}
The following theorem extends the inequality \eqref{5-21}.

\begin{theorem}\label{lem-2-4} Let $i\in \mathbb{Z}_+$ and
$\beta\in \mathbb{Z}_+^n$ with $|\beta| +i\alpha >s$. Let
$f\in \Lambda_s$ and
$u_\alpha(x, t) :=T_{\alpha, t} f(x)$ for any
$x\in\mathbb{R}^n$ and $t\in(0,\infty)$. Then there exists a
positive constant $C_{\alpha,\beta, n, i}$, depending only on
$\alpha,\beta, n$ and $i$, such that, for any
$x\in\mathbb{R}^n$ and $t\in(0,\infty)$,
\begin{align}\label{2-9} \sup_{t\in (0,\infty)}t^{\frac
{|\beta|}\alpha +i-\frac s\alpha}\sup_{x\in\mathbb{R}^n}
\left|\partial_{n+1}^{i} \partial_x^\beta u_\alpha(x, t)
\right|\leq C_{\alpha,\beta, n, i}
\|f\|_{\Lambda_s}.\end{align}
\end{theorem}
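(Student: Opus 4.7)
The plan is to reduce the mixed-derivative bound to the pure time-derivative estimate already provided by Theorem~\ref{dsfad}, via an ``integration-to-infinity'' representation that trades $r-i$ time derivatives for a gain of $t^{(r-i)}$ and exploits the smoothness of $f$ through the factor $\tau^{s/\alpha-r}$ coming from Theorem~\ref{dsfad}. Concretely, I would fix an integer $r\in\mathbb{N}$ with
$r>\max\{i,s/\alpha\}$, so that $r-i\ge 1$ and $r\alpha>s$ simultaneously. Iterated application of the fundamental theorem of calculus in the $t$ variable then yields
\begin{align*}
\partial_{n+1}^{i}\partial_x^\beta u_\alpha(x,t)
=\frac{(-1)^{r-i}}{(r-i-1)!}
\int_{t}^{\infty}(\tau-t)^{r-i-1}\,
\partial_{n+1}^{r}\partial_x^\beta u_\alpha(x,\tau)\,d\tau,
\end{align*}
provided that $\partial_{n+1}^{j}\partial_x^\beta u_\alpha(x,\tau)\to 0$ as $\tau\to\infty$ for every $j\in\{i,\ldots,r-1\}$. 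This decay follows from the kernel bound in Corollary~\ref{cor-5-2}: the elementary convolution estimate gives
$|\partial_\tau^{j}\partial_x^\beta u_\alpha(x,\tau)|\le\|\partial_\tau^{j}\partial_x^\beta K_\alpha(\cdot,\tau)\|_{L^1}\|f\|_\infty\lesssim \tau^{-j-|\beta|/\alpha}\|f\|_{\Lambda_s}$, which tends to $0$ whenever $j+|\beta|/\alpha>0$; this is automatic for $i\ge 1$, and in the degenerate case $i=0$ the hypothesis $|\beta|+i\alpha>s$ forces $|\beta|>s>0$.

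Next, I would bound the integrand by a constant times $\tau^{(s-|\beta|)/\alpha-r}\|f\|_{\Lambda_s}$. Using $\partial_{n+1}^{r}u_\alpha=(-1)^{r}(-\Delta)^{r\alpha/2}T_{\alpha,\tau}f$, the semigroup property $T_{\alpha,\tau}=T_{\alpha,\tau/2}T_{\alpha,\tau/2}$, and the fact that $\partial_x^\beta$ commutes with $T_{\alpha,\tau/2}$, write
\begin{align*}
\partial_{n+1}^{r}\partial_x^\beta u_\alpha(x,\tau)
=(-1)^{r}\int_{\mathbb{R}^n}\partial_x^\beta K_\alpha(x-y,\tau/2)\,
\bigl[(-\Delta)^{r\alpha/2}T_{\alpha,\tau/2}f\bigr](y)\,dy.
\end{align*}
Because $r\alpha>s$, Theorem~\ref{dsfad} applied to $f$ at scale $\tau/2$ gives
$\|(-\Delta)^{r\alpha/2}T_{\alpha,\tau/2}f\|_\infty\lesssim \tau^{s/\alpha-r}\|f\|_{\Lambda_s}$, while Corollary~\ref{cor-5-2} (and a simple rescaling as in the computation preceding Theorem~\ref{dsfad}) yields $\|\partial_x^\beta K_\alpha(\cdot,\tau/2)\|_{L^1}\lesssim \tau^{-|\beta|/\alpha}$. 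Multiplying these two bounds produces the sought pointwise estimate
$|\partial_{n+1}^{r}\partial_x^\beta u_\alpha(x,\tau)|\lesssim \tau^{(s-|\beta|)/\alpha-r}\|f\|_{\Lambda_s}$.

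Substituting this into the representation and performing the change of variables $u=\tau/t$ gives
\begin{align*}
|\partial_{n+1}^{i}\partial_x^\beta u_\alpha(x,t)|
\lesssim \|f\|_{\Lambda_s}\, t^{(s-|\beta|)/\alpha-i}
\int_{1}^{\infty}(u-1)^{r-i-1}u^{(s-|\beta|)/\alpha-r}\,du,
\end{align*}
and the remaining integral is finite exactly because $r-i-1\ge 0$ handles integrability at $u=1$, while the standing hypothesis $|\beta|+i\alpha>s$ makes the integrand decay like $u^{(s-|\beta|)/\alpha-i-1}$ at infinity with exponent strictly less than $-1$. This yields \eqref{2-9}. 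The main obstacle is mostly bookkeeping: rigorously justifying the iterated integration-to-infinity (including interchange of $\partial_x^\beta$ with the improper integral), verifying that the boundary values vanish in the edge case $i=0$, and ensuring that all constants depend only on $\alpha,\beta,n,i$; all three points are settled by the uniform $L^1$ kernel bounds furnished by Corollary~\ref{cor-5-2} together with the hypothesis $|\beta|+i\alpha>s$.
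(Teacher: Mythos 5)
Your proposal is correct, and it follows essentially the same route as the paper: the key estimate on $\partial_{n+1}^r\partial_x^\beta u_\alpha$ via the semigroup split $T_{\alpha,\tau}=T_{\alpha,\tau/2}T_{\alpha,\tau/2}$ together with the $L^1$ kernel bound and Theorem~\ref{dsfad} is exactly the paper's Step~1, and your Taylor-remainder formula $\partial_{n+1}^i\partial_x^\beta u_\alpha(x,t)=\frac{(-1)^{r-i}}{(r-i-1)!}\int_t^\infty(\tau-t)^{r-i-1}\partial_{n+1}^r\partial_x^\beta u_\alpha(x,\tau)\,d\tau$ simply consolidates into one Fubini step the paper's iterative one-order-at-a-time integration-to-infinity (Step~2, inequality \eqref{2-10-c}), with the same exponent bookkeeping $|\beta|+i\alpha>s$ guaranteeing convergence.
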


\begin{proof} We divide the proof into two steps.

{\bf Step 1.} We prove that \eqref{2-9} holds for all
$\beta\in\mathbb{Z}_+^n$ and $i\in\mathbb{Z}_+$ with $i>\frac
s\alpha$.

Let $x\in \mathbb{R}^n$ and $t\in(0,\infty)$. By \eqref{5-2},
  we have, for any $t_1, t_2\in(0,\infty)$,
\begin{align*}
u_\alpha(x, t_1+t_2) =T_{\alpha, t_1+t_2} f(x)=T_{\alpha, t_2}
T_{\alpha, t_1} f(x) =\int_{\mathbb{R}^n} u_\alpha(z, t_1)
K_\alpha (x-z, t_2)\, dz.
\end{align*}
This implies
\begin{align*}
	\partial_x^\beta	\partial_{n+1}^i u_\alpha(x,t_1+t_2)
&=\int_{\mathbb{R}^n} \left[\partial_{n+1}^i u_\alpha(z,
t_1)\right]\left[\partial_x^\beta K_\alpha (x-z, t_2)\right]\,
dz.
\end{align*}
Setting $t_1=t_2=\frac t2$ yields
\begin{align*}
	| \partial_x^\beta \partial_{n+1}^i u_\alpha(x,t)|&=
\left|\int_{\mathbb{R}^n} \left[\partial_{n+1}^i
u_\alpha\left(x-z, 2^{-1} t\right) \right]
	\left[ \partial_z^\beta K_\alpha\left(z, 2^{-1}
t\right)\right]\, dz \right|.\end{align*}
	By \eqref{5-21} applied to $r=i>\frac s\alpha$, we then
conclude that
	\begin{align*}
	| \partial_x^\beta \partial_{t}^i u_\alpha(x,t)|	
&\lesssim t^{\frac s\alpha -i} \|f\|_{\Lambda_s}
\int_{\mathbb{R}^n} \left| \partial_z^\beta K_\alpha\big(z,
2^{-1} t\big)\right|\, dz,\end{align*}
which, using \eqref{1-10}, is estimated by
\begin{align*} \lesssim t^{\frac s\alpha -i} \|f\|_{\Lambda_s}
\int_{\mathbb{R}^n}\frac {t}{( |z|+t^\frac
1{\alpha})^{n+\alpha+|\beta|}}\, dz\lesssim t^{\frac s\alpha
-i-\frac {|\beta|}\alpha} \|f\|_{\Lambda_s}.
\end{align*}
This proves \eqref{2-9} for all $\beta\in\mathbb{Z}_+^n$ and
$i\in\mathbb{Z}_+$ with $i>\frac s\alpha$.

{\bf Step 2.} Prove that, if $j\in\mathbb{Z}_+$, $\beta\in
\mathbb{Z}_+^n$, and $|\beta| +j\alpha > s$, then, for any
$(x, t)\in\mathbb{R}_+^{n+1}$, we have
 \begin{align}\label{2-10-c}
t^{ j+\frac {|\beta|} \alpha-\frac s\alpha } 	|
\partial_x^\beta \partial_{n+1} ^j u_\alpha(x,t) |\lesssim
\sup_{ y\in (t, \infty)}
 \left[ y^{j+1 +\frac {|\beta|}\alpha-\frac s\alpha }|
 \partial_{x}^\beta \partial_{n+1}^{j+1} u_\alpha(x,
 y)|\right].
\end{align}

We claim that the present theorem follows from \eqref{2-10-c}
and \eqref{2-9} by using the case already established in Step 1.
Indeed, once \eqref{2-10-c} is proven, then, for any
$j\in\mathbb{Z}_+$ and $\beta\in \mathbb{Z}_+^n$ with $|\beta|
+j\alpha > s$, we have
$$\sup_{t\in(0,\infty)} \left[t^{ j+\frac {|\beta|}
\alpha-\frac s\alpha } \sup_{x\in \mathbb{R}^n}	|
\partial_x^\beta \partial_{n+1} ^j u_\alpha(x,t)
|\right]\lesssim \sup_{ t\in(0,\infty)}
 \left[ t^{j+1 +\frac {|\beta|}\alpha-\frac s\alpha
 }\sup_{x\in\mathbb{R}^n} | \partial_{x}^\beta
 \partial_{n+1}^{j+1} u_\alpha(x, t)|\right].$$
This means that, if \eqref{2-9} holds for some $i=j+1$ and
$\beta\in \mathbb{Z}_+^n$ with $|\beta| +(j+1)\alpha > s$,
then it also holds for $i=j$ and the same $\beta\in
\mathbb{Z}_+^n$ satisfying $|\beta| +j\alpha > s$. Since
\eqref{2-9} has already been verified for all $i>\frac
s\alpha$ and $\beta\in\mathbb{Z}_+^n$ in Step 1, this implies
that
\eqref{2-9} holds for all $i\in\mathbb{Z}_+$ and $\beta\in
\mathbb{Z}_+^n$ such that $|\beta| +j\alpha > s$.

To show \eqref{2-10-c}, we first observe from \eqref{5-2} that
\begin{align*}
	\partial_{n+1}^{j} \partial_{x}^\beta u_\alpha(x,t)=f\ast
\left(\partial_{n+1}^{j}
	\partial^\beta K_\alpha (\cdot, t)\right)(x).
\end{align*}
Thus, using \eqref{1-10}, we obtain
\begin{align*}\label{eq:5-7} |\partial_{n+1}
^{j}\partial_x^\beta u_\alpha(x,t) |\leq
\left\|\partial_{n+1}^{j}
	 \partial^\beta K_\alpha(\cdot, t)\right\|_{L^1}
	\|f\|_\infty\lesssim t^{-\frac {|\beta|}\alpha -j} \to 0\
\ \text{as}\ \ t\to \infty.\end{align*}
It follows that
\begin{align*}
	|\partial_{n+1}^{j} \partial_x^\beta u_\alpha(x,t) | &\leq
\int_{t}^\infty |\partial^{j+1}_{n+1} \partial_x^\beta
u_\alpha(x, y)|\, dy=\int_{t}^\infty \left[
|\partial^{j+1}_{n+1} \partial_x^\beta u_\alpha(x, y)| y^{j+1
+\frac {|\beta|}\alpha -\frac {s}\alpha }\right] y^{-j-1
-\frac {|\beta|}\alpha +\frac s\alpha } \, dy\\
	& \leq \sup_{y\ge t } \left[ |\partial^{j+1}_{n+1}
\partial_x^\beta u_\alpha(x, y)| y^{j+1 +\frac {|\beta|}\alpha
-\frac s\alpha }\right]\cdot t^{-j -\frac {|\beta|} \alpha
+\frac s\alpha },
\end{align*}
where the last step used the assumption that $j+\frac
{|\beta|}\alpha >\frac s\alpha$.
This yields the desired estimate \eqref{2-10-c},
which completes
the proof
of Theorem \ref{lem-2-4}.
\end{proof}

\section{Proof of Theorem \ref{thm-2-66} (heat semigroup
characterization
of $d(f,\!\Lambda_{\!X}^{\!s})_{\!\Lambda_s}$ under Assumption
\ref{a1}) based on Propositions
\ref{thm-3adf}, \ref{cor-6-7-1}, and
\ref{cor-6-7-0}}\label{oijjl}

The proof of Theorem \ref{thm-2-66} is quite long. We outline
 it in this section.
Let $s,\alpha\in(0,\infty)$ and $r\in\mathbb N$ be such that
$\alpha r>s+3$. Let $L$ denote the regularity of the
Daubechies wavelet system $\{\psi_\omega\}_{\omega\in\Omega}$.
Recall that we refer to $\alpha, s, r, L, \theta$ and $n$ as
 framework parameters.
Throughout this section, the letters $C$ and $c$ denote
sufficiently large and sufficiently small constants,
respectively, both depending only on the framework parameters.
The exact values of these constants are not important.

We decompose the proof of Theorem \ref{thm-2-66} into several
steps. In the first step, we establish the following
proposition, which provides an estimate for the neighborhood
of the set $D_{\alpha,r, j}(s, f,\varepsilon)$ under the
hyperbolic metric.

\begin{proposition}\label{thm-3adf}
 Let $f\in\Lambda_s$ and
$\varepsilon,\varepsilon_1\in(0,\infty)$
with $\varepsilon>\varepsilon_1$.
Then there exists a positive constant $\delta$,
depending only on $\|f\|_{\Lambda_s}$, $\varepsilon$,
$\varepsilon_1$,
and the framework parameters,
such that, for any $j\in\mathbb \mathbb{Z}$,
$$
[D_{\alpha, r,j}(s, f, \varepsilon)]_\delta\subset D_{\alpha,
r,j-1}(s, f, \varepsilon_1)\cup
D_{\alpha, r,j}(s, f, \varepsilon_1)\cup D_{\alpha, r,j+1}(s,
f, \varepsilon_1),
$$
where, for any $ j\in\mathbb{Z}$,
$$ D_{\alpha, r, j}(s,f,\varepsilon):=\left\{ (x,t)\in
\mathbb{R}^n\times (2^{-j-1},2^{-j}]:\
 |\partial_{n+1}^r u_\alpha(x, t^\alpha)|>\varepsilon
 t^{s-\alpha r}\right\}.$$
\end{proposition}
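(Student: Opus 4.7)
The plan is to reduce the claimed inclusion to a quantitative continuity statement, in the Poincar\'e hyperbolic metric, for the scaled function
$$
F(x,t) := t^{\alpha r-s}\,\partial_{n+1}^{r} u_\alpha(x,t^\alpha),\qquad (x,t)\in\mathbb{R}_+^{n+1}.
$$
By definition, $(x,t)\in D_{\alpha,r,j}(s,f,\varepsilon)$ if and only if $|F(x,t)|>\varepsilon$ and $t\in(2^{-j-1},2^{-j}]$. Suppose $(x_0,t_0)\in D_{\alpha,r,j}(s,f,\varepsilon)$ and $(y,\tau)\in\mathbb{R}_+^{n+1}$ with $\rho((x_0,t_0),(y,\tau))<\delta$ for some $\delta\in(0,1/2)$ to be chosen. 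By Lemma \ref{lem-4-2zz}, we have $(1+2\delta)^{-1}t_0\le\tau\le(1+2\delta)t_0$, which (for any $\delta<1/2$) forces $\tau\in(2^{-j-2},2^{-j+1}]$. Hence $(y,\tau)$ lies in the temporal strip of either $D_{\alpha,r,j-1}$, $D_{\alpha,r,j}$, or $D_{\alpha,r,j+1}$, and it suffices to prove $|F(y,\tau)|>\varepsilon_1$.

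The core step is to show that $F$ is uniformly hyperbolically Lipschitz, with Lipschitz constant controlled by $\|f\|_{\Lambda_s}$. Writing $v(x,t):=\partial_{n+1}^{r}u_\alpha(x,t^\alpha)$, the chain rule gives
$$
\partial_{x_i}F(x,t)=t^{\alpha r-s}\,\partial_{n+1}^{r}\partial_{x_i}u_\alpha(x,t^\alpha),\qquad \partial_t F(x,t)=(\alpha r-s)\,t^{-1}F(x,t)+\alpha\,t^{\alpha+\alpha r-s-1}\,\partial_{n+1}^{r+1}u_\alpha(x,t^\alpha).
$$
I apply Theorem \ref{lem-2-4} three times: with $(i,\beta)=(r,\mathbf{0})$ to bound $|F(x,t)|\lesssim \|f\|_{\Lambda_s}$; with $(i,\beta)=(r,e_\ell)$ (permissible since $1+r\alpha>s$) to bound $|\partial_{x_\ell}F(x,t)|\lesssim t^{-1}\|f\|_{\Lambda_s}$; and with $(i,\beta)=(r+1,\mathbf{0})$ (permissible since $(r+1)\alpha>s$) to bound the second term of $\partial_t F$ by $\lesssim t^{-1}\|f\|_{\Lambda_s}$. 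Combining these yields the uniform estimate
$$
z_{n+1}\,|\nabla_{(z,z_{n+1})}F(z,z_{n+1})|\le C_0\,\|f\|_{\Lambda_s}\qquad\forall\,(z,z_{n+1})\in\mathbb{R}_+^{n+1},
$$
where $C_0$ depends only on the framework parameters.

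With this derivative bound in hand, Lemma \ref{cor-3-9} applied to $F$ along the geodesic joining $(x_0,t_0)$ and $(y,\tau)$ produces
$$
|F(x_0,t_0)-F(y,\tau)|\le C_0\,\|f\|_{\Lambda_s}\,\rho\bigl((x_0,t_0),(y,\tau)\bigr)<C_0\,\|f\|_{\Lambda_s}\,\delta.
$$
Choosing
$$
\delta:=\min\left\{\tfrac{1}{2},\ \frac{\varepsilon-\varepsilon_1}{2 C_0\,\|f\|_{\Lambda_s}}\right\},
$$
a quantity depending only on $\|f\|_{\Lambda_s}$, $\varepsilon$, $\varepsilon_1$, and the framework parameters, we obtain $|F(y,\tau)|>|F(x_0,t_0)|-C_0\|f\|_{\Lambda_s}\delta>\varepsilon-\tfrac{\varepsilon-\varepsilon_1}{2}>\varepsilon_1$, proving the inclusion.

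The argument is essentially the hyperbolic mean value inequality plus the control of the $t$-coordinate under small hyperbolic displacements; the one real technical point is the uniform derivative bound on $F$, where some care is required because both the prefactor $t^{\alpha r-s}$ and the inner time variable $t^\alpha$ contribute to $\partial_t F$, and the various estimates from Theorem \ref{lem-2-4} must be combined in the correct regimes. No essentially new idea beyond \cite{dsy}'s ball-average analog (Lemma \ref{thm3adf}) is needed; the substantive replacement is the derivative bound supplied by the semigroup estimates of Section \ref{sec-5}.
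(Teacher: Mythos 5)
Your proposal is correct and takes essentially the same approach as the paper: the paper isolates the hyperbolic Lipschitz bound on $F(\mathbf{x}) = x_{n+1}^{\alpha r - s}\partial_{n+1}^r u_\alpha(x, x_{n+1}^\alpha)$ as a standalone lemma (proved via Lemma \ref{cor-3-9} and Theorem \ref{lem-2-4}, exactly as you do) and then combines it with the $t$-coordinate control from Lemma \ref{lem-4-2zz} to get the three-strip inclusion. Your derivative computation and the three applications of Theorem \ref{lem-2-4} match the paper's verbatim; the only cosmetic difference is the paper's choice of $\delta \in (0,1/10)$ rather than $(0,1/2)$, which is immaterial here.
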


The proof of this proposition will be given in Section
\ref{sec:7}.

Next, in the second step, we use finite differences to
establish certain lower estimates of the neighborhood of the
set $D_{\alpha,r, j}(s, f,\varepsilon)$.
For any integer $r_1>s$, $\varepsilon\in(0,\infty)$,
$j\in\mathbb{Z}$, and $f\in\Lambda_s$, define
\begin{align*}
S_{r_1,j}(s,f, \varepsilon):&=\left\{ (x, y)\in
\mathbb{R}_+^{n+1}:\
 \Delta_{r_1} f(x, y)>\varepsilon y^s,\ \ 2^{-j-1}<y\leq
 2^{-j}\right\}.
\end{align*}

\begin{proposition}\label{cor-6-7-1}
Let $r_1\in\mathbb{N}$ be such that $r_1>\alpha r+1$.
Then, given any $f\in\Lambda_s$ and
$\varepsilon\in(0,\infty)$,
there exists a constant $R\in(2,\infty)$, depending only on
$\|f\|_{\Lambda_s}$, $\varepsilon$, $r_1$ and the framework
parameters, such that, for any $j\in\mathbb Z_+$,
$$S_{r_1, j}(s,f,\varepsilon)
\subset \bigcup_{i=j+1}^{j+m}\left[D_{\alpha, r,i}(s,f,
c\varepsilon)\right]_R,$$
where $m$ denotes the positive integer such that $2^{m-1}<
R\leq 2^m$.
\end{proposition}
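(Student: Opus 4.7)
The plan is to argue by contradiction. Fix $(x_0,y_0)\in S_{r_1,j}(s,f,\varepsilon)$, so there exists $h_0\in\mathbb{R}^n$ with $|h_0|=y_0\in(2^{-j-1},2^{-j}]$ satisfying $|\Delta_{h_0}^{r_1}f(x_0)|>\varepsilon y_0^s$. Assume $(x_0,y_0)\notin[D_{\alpha,r,i}(s,f,c\varepsilon)]_R$ for every $i\in\{j+1,\ldots,j+m\}$, where the small constant $c$ and the large radius $R$ will be chosen depending only on $\|f\|_{\Lambda_s}$, $\varepsilon$, $r_1$ and the framework parameters. Writing $F(z,t):=(-\Delta)^{\alpha r/2}T_{\alpha,t^\alpha}f(z)$, this negation says that $|F(z,t)|\le c\varepsilon\, t^{s-\alpha r}$ for every $(z,t)\in B_\rho((x_0,y_0),R)$ with $t\in(2^{-j-m-1},2^{-j-1}]$. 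I will derive the contradictory estimate $|\Delta_{h_0}^{r_1}f(x_0)|<\varepsilon y_0^s$.

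The starting point is the Calderón-type reproducing formula
\begin{equation*}
\Delta_{h_0}^{r_1}f(x_0)=C_{\alpha,r}\int_{0}^{\infty}t^{\alpha r-1}\Delta_{h_0}^{r_1}F(x_0,t)\,dt,
\end{equation*}
which follows either from the Fourier identity $\int_0^\infty (t|2\pi\xi|)^{\alpha r}e^{-(t|2\pi\xi|)^\alpha}\frac{dt}{t}=\Gamma(r)/\alpha$ (the resulting polynomial ambiguity on $\widehat{f}(0)$ is killed by $\Delta_{h_0}^{r_1}$ since $r_1\ge 1$) or from iterating the identity $\partial_u^r T_{\alpha,u}f=(-1)^r(-\Delta)^{\alpha r/2}T_{\alpha,u}f$ together with the decay estimates for $\partial_u^k T_{\alpha,u}f$ with $k\ge 1$ from Theorem~\ref{lem-2-4}. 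Convergence at $u\to 0$ uses Theorem~\ref{dsfad}, and convergence at $u\to\infty$ uses the spatial smoothing bound, both of which exploit $r_1>\alpha r+1>s$.

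I then split the integral at the thresholds $y_0 2^{-m}$ and $y_0/2$. For $t\in(0,y_0 2^{-m}]$, Theorem~\ref{dsfad} gives $|\Delta_{h_0}^{r_1}F(x_0,t)|\le 2^{r_1}\|f\|_{\Lambda_s}t^{s-\alpha r}$, producing a contribution $\lesssim 2^{-ms}y_0^s\|f\|_{\Lambda_s}$. For $t\in(y_0 2^{-m},y_0/2]$, the $r_1+1$ shifted points $x_0+kh_0$ used in $\Delta_{h_0}^{r_1}F(x_0,t)$ all lie in $B_\rho((x_0,y_0),R)$ by Lemma~\ref{lem-3-2-0} provided $R\gtrsim m+r_1$ (automatic from $R\sim 2^m$), so the hypothesis yields $|\Delta_{h_0}^{r_1}F(x_0,t)|\le 2^{r_1}c\varepsilon\, t^{s-\alpha r}$, with contribution $\lesssim c\varepsilon y_0^s$.

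The real obstacle is the tail $t>y_0/2$, where the naive bound $|\Delta_{h_0}^{r_1}F(x_0,t)|\lesssim y_0^{r_1}t^{s-r_1-\alpha r}\|f\|_{\Lambda_s}$ from Theorem~\ref{lem-2-4} contributes $\sim y_0^s\|f\|_{\Lambda_s}/(r_1-s)$, which is not small. To beat this, I propagate the smallness at scale $y_0/2$ to larger scales through the semigroup identity $F(x,t)=\int K_\alpha(x-z,t^\alpha-(y_0/2)^\alpha)F(z,y_0/2)\,dz$. Differentiating this identity $r_1$ times in $x$ and splitting the $z$-integration into $|z-x_0|\le R y_0$ (hypothesis region) and $|z-x_0|>Ry_0$ (far region, estimated via $\|F(\cdot,y_0/2)\|_\infty\le\|f\|_{\Lambda_s}(y_0/2)^{s-\alpha r}$), and using Theorem~\ref{lem-2-1}'s kernel bounds $|\nabla^{r_1}K_\alpha(z,s)|\le Cs^{-(n+r_1)/\alpha}(1+|z|/s^{1/\alpha})^{-n-\alpha-r_1}$, one arrives at
\begin{equation*}
|\Delta_{h_0}^{r_1}F(x_0,t)|\lesssim c\varepsilon(y_0/t)^{r_1}y_0^{s-\alpha r}+R^{-\alpha-r_1}t^\alpha y_0^{-\alpha}\|f\|_{\Lambda_s}y_0^{s-\alpha r}.
\end{equation*}
I then optimize by using this propagation bound on $(y_0/2,\mu y_0]$ and the direct spatial bound on $(\mu y_0,\infty)$ with the crossover $\mu:=R^{(\alpha+r_1)/(\alpha r+\alpha-s+r_1)}$. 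A routine computation, using $r_1>\alpha r+1>s$ to keep all relevant exponents strictly positive, then shows that both pieces contribute $\lesssim c\varepsilon y_0^s + R^{-\eta}y_0^s\|f\|_{\Lambda_s}$ for some $\eta=\eta(\alpha,r,r_1,s)>0$.

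Combining the three regions, $|\Delta_{h_0}^{r_1}f(x_0)|\le C\bigl[c\varepsilon+2^{-ms}\|f\|_{\Lambda_s}+R^{-\eta}\|f\|_{\Lambda_s}\bigr]y_0^s$. I first fix $c$ small (an absolute framework constant), then choose $R$ large depending on $\|f\|_{\Lambda_s}$, $\varepsilon$, $r_1$ so that $R^{-\eta}\|f\|_{\Lambda_s}\le\varepsilon/(8C)$ and, with $m=\lceil\log_2 R\rceil$, also $2^{-ms}\|f\|_{\Lambda_s}\le\varepsilon/(8C)$. This yields $|\Delta_{h_0}^{r_1}f(x_0)|<\varepsilon y_0^s$, contradicting $(x_0,y_0)\in S_{r_1,j}(s,f,\varepsilon)$. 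The main technical burden will be the third region: carefully executing the semigroup propagation together with the near/far kernel split, and performing the optimization between the propagation estimate and the spatial estimate to extract the quantitative $R^{-\eta}$ gain; the combinatorial identities underlying the higher-order ball average framework of Section~\ref{sec:4} will be used to make these semigroup manipulations systematic.
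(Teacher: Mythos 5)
Your proposal is correct in its essentials, but it takes a genuinely different route from the paper's proof of this proposition. The paper proves Proposition~\ref{cor-6-7-1} via Lemma~\ref{thm-691}, whose proof Taylor-expands $t\mapsto u_\alpha(x,t)$ at $t=y^\alpha$ (the scale of the difference), giving a \emph{finite} sum of snapshot terms $y^{\alpha i}\partial_{n+1}^i u_\alpha(\cdot,y^\alpha)$, $0\le i\le r-1$, plus a remainder integral over $t\in(0,y^\alpha)$ only. The finite snapshots are converted into $\partial_{n+1}^r u_\alpha$ at a nearby time scale via the Fourier-multiplier estimate of Lemma~\ref{lem-8-3b} (which uses precisely the hypothesis $r_1>\alpha r+1$), and the remainder is split at $y^\alpha/R_1$ with Lemma~\ref{lem-8-2b} controlling the spatial spread; since nothing lives above scale $y$, there is no tail to control. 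You instead use a Calder\'on reproducing formula $\Delta_{h_0}^{r_1}f(x_0)=C_{\alpha,r}\int_0^\infty t^{\alpha r-1}\Delta_{h_0}^{r_1}F(x_0,t)\,dt$ over \emph{all} scales, which forces you to handle the tail $t>y_0/2$ via semigroup propagation from time $y_0/2$ and a near/far kernel split, followed by an optimization over a crossover scale $\mu$. This works (the balancing exponent $\eta=(\alpha+r_1)(r_1-s)/(\alpha r+\alpha-s+r_1)$ is indeed positive, and one checks $\mu\le R$ so the far-region kernel estimate $|\nabla^{r_1}K_\alpha(w,u)|\lesssim u|w|^{-n-\alpha-r_1}$ is in force on the entire region), but it is noticeably heavier than the paper's finite Taylor expansion, which achieves the same localization to scale $y$ with no tail at all. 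Two minor points to watch in a full write-up: (a) near $t=y_0/2$ the factor $u^{-r_1/\alpha}$ in your propagation bound blows up, so one should use $\|\Delta_{h_0}^{r_1}K_\alpha(\cdot,u)\|_{L^1}\lesssim\min\{1,y_0^{r_1}u^{-r_1/\alpha}\}$ there, and (b) the constants relating the hyperbolic radius in the proposition to the Euclidean scale $R$ used in the splits (via Lemma~\ref{lem-3-2-0}) need to be tracked, though this is routine bookkeeping. The remark about ball-average combinatorics from Section~\ref{sec:4} is misplaced — those arise in the proof of Proposition~\ref{cor-6-7-0}, not here — but this does not affect the argument.
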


The proof of this proposition will be given in Section
\ref{sec:9}.

Finally, in the last step, we use finite differences to
establish certain upper estimates of the neighborhood of the
set $D_{\alpha, r,j}(s,f, \varepsilon)$.

\begin{proposition}\label{cor-6-7-0} Assume that $\alpha
r>s+3$. Let $\ell\in\mathbb{N}$ be such that $\frac s 2 <\ell
<\frac {\alpha r -1}2$.
Then, given any $\varepsilon\in(0,\infty)$
and $f\in\Lambda_s$, there exists a positive constant
$R\in(1,\infty)$, depending only on $\|f\|_{\Lambda_s}$,
$\varepsilon$, and the framework parameters, such that, for
any $j\in\mathbb{Z}_+$,
$$D_{\alpha, r,j}(s,f, \varepsilon)\subset
\bigcup_{i=j+1}^{j+8\ell} \left[S_{2\ell, i}(s,f,
c\varepsilon)\right]_R.$$
\end{proposition}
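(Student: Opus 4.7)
The plan is to argue by contrapositive. Suppose $(x,t) \in \mathbb{R}^n\times(2^{-j-1},2^{-j}]$ satisfies $(x,t)\notin [S_{2\ell, i}(s,f,c\varepsilon)]_R$ for every $i\in\{j+1,\ldots,j+8\ell\}$. Invoking Lemma \ref{lem-4-2zz} (to pass between hyperbolic and Euclidean coordinates) together with Lemma \ref{lem-3-2-0} (to ensure the hyperbolic ball contains the relevant rectangular region), once $R$ is chosen larger than a dimension-dependent multiple of $\ell$, this non-membership is equivalent to the pointwise estimate $|\Delta_h^{2\ell} f(y)|\leq c\varepsilon|h|^s$ whenever $|y-x|\leq tR$ and $|h|\in(t\cdot 2^{-8\ell-1},\,t/2]$. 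The goal is to upgrade this smallness into $|\partial_{n+1}^r u_\alpha(x,t^\alpha)|\leq \varepsilon t^{s-\alpha r}$, contradicting the defining inequality for $D_{\alpha,r,j}(s,f,\varepsilon)$.

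The core step is an integral representation of $\partial_{n+1}^r u_\alpha(x,t^\alpha)$ in terms of $\Delta_y^{2\ell}f(x)$. Starting from
\begin{align*}
\partial_{n+1}^r u_\alpha(x,t^\alpha)=(-1)^r\int_{\mathbb{R}^n}\Psi_t(y)f(x-y)\,dy,
\end{align*}
where $\Psi_t(y):=(-\Delta)^{\alpha r/2}K_\alpha(y,t^\alpha)$ is radial, satisfies $|\Psi_t(y)|\lesssim t^{-n-\alpha r}(1+|y|/t)^{-n-\alpha}$ by Corollary \ref{cor-5-2}, and has vanishing moments $\int \Psi_t(y)y^\beta\,dy = 0$ for every $|\beta|<\alpha r$ (its Fourier transform $|2\pi\xi|^{\alpha r}e^{-t^\alpha|2\pi\xi|^\alpha}$ vanishes to order $\alpha r$ at the origin), I would combine the basic symmetry identity $\int K(y)g(x-y)\,dy = \tfrac{1}{2}\int K(y)\Delta_y^2 g(x)\,dy$ valid for any radial mean-zero $K$, the dilation relation linking $\int \Psi_t(y)f(x+ky)\,dy$ to $k^{\alpha r}\int \Psi_{kt}(y)f(x+y)\,dy$, and the binomial expansion of $\Delta_y^{2\ell}$, to derive a combinatorial identity of the shape
\begin{align*}
\sum_{k=1}^\ell a_k\,k^{\alpha r}\,\partial_{n+1}^r u_\alpha(x,(kt)^\alpha) = (-1)^r\int_{\mathbb{R}^n}\Psi_t(y)\,\Delta_y^{2\ell} f(x)\,dy,
\end{align*}
with explicit coefficients $a_k$ coming from the expansion of $\Delta_y^{2\ell}$. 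Applying this at rescaled times $2^m t$ for $m=0,1,\ldots,C\ell$ and inverting the resulting triangular linear system (using the a priori bound \eqref{5-21} at the largest scale to absorb the tail) expresses $\partial_{n+1}^r u_\alpha(x,t^\alpha)$ as a finite linear combination of integrals $\int \Psi_{2^m t}(y)\Delta_y^{2\ell} f(x)\,dy$, modulo an error of size $\lesssim 2^{-C\ell(\alpha r-s)}\|f\|_{\Lambda_s}t^{s-\alpha r}$. Both assumptions $2\ell<\alpha r-1$ (ensuring sufficiently many vanishing moments) and $s<2\ell$ ($\Lambda_s$-compatibility of the differences) are essential here.

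Each resulting integral is then bounded by splitting in $|y|$: on $|y|\leq t\cdot 2^{-8\ell-1}$ the trivial bound $|\Delta_y^{2\ell}f(x)|\lesssim \|f\|_{\Lambda_s}|y|^s$ together with the integrable decay of $\Psi_{2^m t}$ yields a contribution $\lesssim 2^{-8\ell(2\ell-s)}\|f\|_{\Lambda_s}t^{s-\alpha r}$; on $t\cdot 2^{-8\ell-1}<|y|\leq t/2$ the hypothesis furnishes $|\Delta_y^{2\ell}f(x)|\leq c\varepsilon|y|^s$ and hence a contribution $\lesssim c\varepsilon\, t^{s-\alpha r}$; on $|y|>t/2$ the tail decay $(1+|y|/t)^{-n-\alpha}$ combined with $|\Delta_y^{2\ell}f(x)|\lesssim\min(\|f\|_{\Lambda_s}|y|^s,\|f\|_\infty)$ controls the remaining integral. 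Choosing $R$ large enough depending on $\|f\|_{\Lambda_s}/\varepsilon$ to suppress the first and third regimes below $\tfrac{\varepsilon}{4}t^{s-\alpha r}$, and then $c$ small enough depending only on the framework parameters so that the second regime is below $\tfrac{\varepsilon}{4}t^{s-\alpha r}$, produces the desired strict inequality.

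The main obstacle is the combinatorial identity in the second paragraph: producing a clean expression of $\partial_{n+1}^r u_\alpha(x,t^\alpha)$ purely in terms of $\Delta_y^{2\ell} f(x)$ at the base point $x$ (rather than at translates of $x$) requires careful bookkeeping of the radial dilations and of the remainder terms arising from incomplete cancellation, which is precisely what the higher-order ball average machinery and combinatorial identities recalled in Section~\ref{sec:4} from \cite{dsy} are designed to handle. A secondary delicate point is regime (c): a naive $\|f\|_\infty$ bound on the tail would produce a loss of order $\|f\|_\infty t^{-\alpha r}$, which fails to beat $\varepsilon t^{s-\alpha r}$ for small $t$; this has to be resolved by redistributing $\Delta_y^{2\ell}$ onto shifted evaluations of $f$, each then controlled via Theorem~\ref{lem-2-4} so that the effective constant involves $\|f\|_{\Lambda_s}$ and the resulting loss can be absorbed by taking $R$ large.
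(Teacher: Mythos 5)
Your overall strategy is the right one: relate $\partial_{n+1}^r u_\alpha(x,t^\alpha)$ to $2\ell$-th order differences via a convolution/Fourier-multiplier argument, then split the resulting integral and use the smallness hypothesis on $\Delta_{2\ell}f$ in an annulus of $y$-scales, absorbing the near-origin and tail regimes with $\|f\|_{\Lambda_s}$ and a large $R$. This is also the skeleton of the paper's proof (Lemma \ref{lem-6-10}). However, the critical step in your proposal — the combinatorial identity and its ``inversion'' — has a genuine gap.

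Your identity $\sum_{k=1}^\ell a_k\,k^{\alpha r}\,\partial_{n+1}^r u_\alpha(x,(kt)^\alpha)=(-1)^r\int\Psi_t(y)\Delta_y^{2\ell}f(x)\,dy$ is correct (with $a_k=(-1)^{\ell-k}\binom{2\ell}{\ell-k}$), but the claim that ``applying this at rescaled times $2^mt$ [yields] a triangular linear system'' is wrong: rescaling to $2^mt$ introduces arguments $k\cdot 2^mt$, $k\in\{1,\dots,\ell\}$, which do not nest triangularly. What one can attempt instead is the naive iterative inversion $V(t)=a_1^{-1}W(t)-a_1^{-1}\sum_{k\geq 2}a_kk^{\alpha r}V(kt)$, iterated $N$ times and closed off with the a priori bound $|V(\tau)|\lesssim\|f\|_{\Lambda_s}\tau^{s-\alpha r}$. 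But after $N$ iterations the residual is controlled by $\lambda^N$ with $\lambda:=|a_1|^{-1}\sum_{k=2}^\ell|a_k|k^s=\binom{2\ell}{\ell-1}^{-1}\sum_{k=2}^\ell\binom{2\ell}{\ell-k}k^s$, and this can exceed $1$: already for $s\to 0$ and $\ell=10$ one has $\sum_{j=0}^{8}\binom{20}{j}=263950>167960=\binom{20}{9}$, so $\lambda>1$ and the telescoping diverges. Since $\ell$ is pinned down by $\frac s2<\ell<\frac{\alpha r-1}2$ and the proposition must hold for all admissible $\alpha,r,s$, the scheme fails in general. You also flag the tail regime $|y|>t/2$ as ``delicate''; indeed a naive $\|f\|_\infty$ bound loses the crucial factor $t^{s}$, and your fix (redistributing onto shifted evaluations and using Theorem \ref{lem-2-4}) is stated as a hope rather than established.

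The paper sidesteps the inversion problem entirely by replacing $\Delta_y^{2\ell}f$ with the ball-average difference $F_{\ell,t}:=f-B_{\ell,t}f$, whose multiplier is $1-m_\ell(2\pi t\xi)$. The key structural facts (Lemma \ref{lem-4-3}) are that $1-m_\ell(\xi)\sim\min\{1,|\xi|^{2\ell}\}$ and that $m_\ell$ stays bounded away from $1$ for $|\xi|\geq1$, so the quotient $\mu_{\alpha,r}(\xi)=\phi_{\alpha,r}(\xi)/(1-m_\ell(2\pi\xi))$ remains smooth and, because $\alpha r>2\ell+1$, still has $n+1$ integrable derivatives. Hence $\mu_{\alpha,r}$ is the Fourier transform of an $L^1$ kernel $G_{\alpha,r}$ with decay $(1+|x|)^{-n-1}$, giving the clean one-step representation $t^{\alpha r}\partial_{n+1}^r u(x,t^\alpha)=t^{-n}\int F_{\ell,t}(x-z)G_{\alpha,r}(z/t)\,dz$ and, after splitting at $|z|\sim Rt$ and invoking the pointwise bound $|f-B_{\ell,t}f|\lesssim\sup_{y\in[\frac{t}{8\ell},\frac t2]}\sup_{|x'-x|\leq4\ell t}\Delta_{2\ell}f(x',y)$ from Lemma \ref{lem-4-2}, the desired estimate with the correct $R^{-1}\|f\|_{\Lambda_s}$ error. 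In short: the paper's multiplier division does analytically and all at once what your proposed iterative inversion cannot do in general.
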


The proof of Proposition \ref{cor-6-7-0} will be given in
Section \ref{sec:9}.

For the moment, we take Propositions
\ref{thm-3adf}-\ref{cor-6-7-0} for granted and proceed with
the proof of Theorem \ref{thm-2-66}.
\begin{proof}[Proof of Theorem \ref{thm-2-66} assuming
Propositions \ref{thm-3adf}-\ref{cor-6-7-0}]
First, we prove the upper bound,
\begin{equation} d \left(f,
\Lambda_X^{s}\right)_{\Lambda_s}
\leq C\left[ \varepsilon_{X}f+
\varepsilon_X^0 f\right], \label{5-4}\end{equation}
where the constant $C$ depends only on the framework
parameters.
 Let $r_1\in\mathbb{N}$ be such that $r_1>s$.
By Lemma \ref{wfawd}, it is enough to show that there exists a
constant $c\in (0, 1)$, depending only on the framework
parameters, such that, for any $\varepsilon\in(0,\infty)$,
\begin{equation}
\left\|\left\{\left[\int_{0}^1 \mathbf{1}_{S_{r_1, j}(s,f,
\varepsilon)}(\cdot,y)\,\frac{dy}{y}\right]^{\theta}
\right\}_{j\in\mathbb{Z}_+}\right\|_{X}
\lesssim
\left\|\left\{\left[\int_{0}^1 \mathbf{1}_{D_{\alpha,
r,j}(s,f,
c\varepsilon)}(\cdot,y)\,\frac{dy}{y}\right]^{\theta}
\right\}_{j\in\mathbb{Z}_+}\right\|_{X}.\label{5-5}
\end{equation}
Here and throughout the proof, the implicit constant in
$\lesssim$ may depend on $f$, the framework parameters, and
other parameters involved the proof, such as $\varepsilon$,
$r_1$, $R$, $m$ and $\delta$.

Indeed, once \eqref{5-5} is proven, then, by \eqref{ef-1} and
\eqref{4-5-1}, we conclude that
 $$ \varepsilon_{X,S,\theta} f\leq \frac 1c \varepsilon_{X},$$
which, using Lemma \ref{wfawd}, will imply the upper estimate
\eqref{5-4}.

To show \eqref{5-5}, we use Proposition \ref{cor-6-7-1} to
obtain
\begin{align*}
\left\|\left\{\left[\int_{0}^1 \mathbf{1}_{S_{r_1, j}(s,f,
\varepsilon)}(\cdot,y)\,\frac{dy}{y}\right]^{\theta}
\right\}_{j\in\mathbb{Z}_+}\right\|_{X}
&\lesssim
\sum_{i=1}^m
\left\|\left\{\left[\int_{0}^{\infty} \mathbf{1}_{(D_{\alpha,
r,j+i}(s,f,
c\varepsilon))_R}(\cdot,y)\,\frac{dy}{y}\right]^{\theta}
\right\}_{j\in\mathbb{Z}_+}\right\|_{X}, \end{align*}
which, using Lemma \ref{a2.15s} and the boundedness of the
left shift, is estimated by
\begin{align*}
&\lesssim \left\|\left\{\left[\int_{0}^{\infty}
\mathbf{1}_{(D_{\alpha, r,j+1}(s,f,
c\varepsilon))_\delta}(\cdot,y)\,\frac{dy}{y}\right]^{\theta}
\right\}_{j\in\mathbb{Z}_+}\right\|_{X}=:\Sigma.
\end{align*}
Applying Proposition \ref{thm-3adf} to the pair of parameters
$(c\varepsilon, 2^{-1} c \varepsilon)$ and
choosing the constant $\delta\in (0, 1)$ small enough yield
\begin{align*}
\Sigma&\lesssim\sum_{k\in\{\pm 1, 0\}}
\left\|\left\{\left[\int_{0}^1 \mathbf{1}_{D_{\alpha,r,
j+1+k}(s,f,
2^{-1}c\varepsilon)}(\cdot,y)\,\frac{dy}{y}\right]^{\theta}
\right\}_{j\in\mathbb{Z}_+}\right\|_{X}\\
&\lesssim
\left\|\left\{\left[\int_{0}^1 \mathbf{1}_{D_{\alpha,r,j}(s,f,
2^{-1}c\varepsilon)}(\cdot,y)\,\frac{dy}{y}\right]^{\theta}
\right\}_{j\in\mathbb{Z}_+}\right\|_{X},
\end{align*}
where the last step used the boundedness of the left shift.
This shows the estimate \eqref{5-5} and hence the upper bound
\eqref{5-4}.

Next, we prove the lower estimate:
\begin{equation} d \left(f,
\Lambda_X^{s}\right)_{\Lambda_s}
\gtrsim \varepsilon_{X}f+
\varepsilon_X^0 f. \label{5-6z}\end{equation}
 Let $\ell\in\mathbb{N}$ be such that $\frac s 2 <\ell <\frac
 {\alpha r -1}2$.
By Lemma \ref{wfawd} applied to $r=2\ell$, it is sufficient to
show that
\begin{equation}\label{5-7}
\left\|\left\{\left[\int_{0}^1 \mathbf{1}_{D_{\alpha,r,j}(s,f,
\varepsilon)}(\cdot,y)\,\frac{dy}{y}\right]^{\theta}
\right\}_{j\in\mathbb{Z}_+}\right\|_{X}\lesssim
\left\|\left\{\left[\int_{0}^1 \mathbf{1}_{S_{2\ell,j}(s,f,
c\varepsilon)}(\cdot,y)\,\frac{dy}{y}\right]^{\theta}
\right\}_{j\in\mathbb{Z}_+}\right\|_{X}.
\end{equation}
Indeed, \eqref{5-7} implies $$ \varepsilon_{X} f\leq \frac 1c
\varepsilon_{X,S,\theta},$$ and hence the lower estimate
\eqref{5-6z}, according to Lemma \ref{wfawd}.

To show \eqref{5-7}, we use Proposition \ref{cor-6-7-0} and
the boundedness of left shift to obtain
\begin{align*}
\left\|\left\{\left[\int_{0}^1 \mathbf{1}_{D_{\alpha,r,j}(s,f,
\varepsilon)}(\cdot,y)\,\frac{dy}{y}\right]^{\theta}
\right\}_{j\in\mathbb{Z}_+}\right\|_{X}&\lesssim
\sum_{i=1}^{8\ell}\left\|\left\{\left[\int_{0}^{\infty}
\mathbf{1}_{(S_{2\ell,j+i}(s,f,
c\varepsilon))_R}(\cdot,y)\,\frac{dy}{y}\right]^{\theta}
\right\}_{j\in\mathbb{Z}_+}\right\|_{X}\\
&\lesssim
\left\|\left\{\left[\int_{0}^{\infty}
\mathbf{1}_{(S_{2\ell,j+1}(s,f,
c\varepsilon))_R}(\cdot,y)\,\frac{dy}{y}\right]^{\theta}
\right\}_{j\in\mathbb{Z}_+}\right\|_{X},
\end{align*}
which, using Lemma \ref{a2.15s} again, is estimated by
\begin{align*}
&\lesssim
\left\|\left\{\left[\int_{0}^{\infty}
\mathbf{1}_{(S_{2\ell,j+1}(s,f,
c\varepsilon))_\delta}(\cdot,y)\,\frac{dy}{y}\right]^{\theta}
\right\}_{j\in\mathbb{Z}_+}\right\|_{X}.
\end{align*}
Then \eqref{5-7} follows from Lemma \ref{thm3adf} with
$\varepsilon_1=\frac 12 c\varepsilon$ and the boundedness of
the right shift. This finishes the proof of Theorem
\ref{thm-2-66}.
\end{proof}

It remains to prove Propositions
\ref{thm-3adf}--\ref{cor-6-7-0},
which will given in the next three sections (Section
\ref{sec:7}--\ref{sec:9}), respectively.

\section{Proof of Proposition \ref{thm-3adf}}\label{sec:7}

This section is devoted to the proof of Proposition
\ref{thm-3adf}.
Let $s,\alpha\in(0,\infty)$, $r\in\mathbb{N}$,
and $\alpha r>s$. We will fix these framework
parameters throughout this section.

The following lemma plays a crucial role in the proof of
Proposition \ref{thm-3adf}.
\begin{lemma}\label{lem-4-5-0} Let $f\in\Lambda_s$ and let
$u_\alpha(x, t):=T_{\alpha, t} f(x)$ for any $x\in
\mathbb{R}^n$ and $t\in(0,\infty)$. Then there exists a
positive constant $C_{n, r, s, \alpha}$, depending only on $r,
s,\alpha$ and $n$, such that,
for any $\mathbf{x}=(x,x_{n+1}), \mathbf{z}=(z, z_{n+1}) \in
\mathbb{R}_+^{n+1}$,
 \begin{equation}
		\left| x_{n+1}^{\alpha r-s } \partial_{n+1}^r u_\alpha
(x,x_{n+1}^\alpha) -z_{n+1}^{\alpha r-s }
\partial_{n+1}^ru_\alpha (z,z_{n+1}^\alpha)\right|\leq C_{n,
r, s, \alpha}\|f\|_{\Lambda_s} \cdot \rho( \mathbf{x},
\mathbf{z}).\label{6-1-0}
	\end{equation}
\end{lemma}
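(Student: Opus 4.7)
The plan is to reduce the estimate \eqref{6-1-0} to a uniform bound on the hyperbolic gradient of the auxiliary function
\begin{align*}
F(\mathbf{x}) := x_{n+1}^{\alpha r - s}\, \partial_{n+1}^r u_\alpha(x, x_{n+1}^\alpha), \qquad \mathbf{x} = (x, x_{n+1}) \in \mathbb{R}_+^{n+1},
\end{align*}
and then invoke Lemma \ref{cor-3-9} applied along the geodesic connecting $\mathbf{x}$ and $\mathbf{z}$. Concretely, once I verify
\begin{align*}
\sup_{\mathbf{y}\in\mathbb{R}_+^{n+1}} y_{n+1}\,|\nabla F(\mathbf{y})| \leq C_{n,r,s,\alpha}\|f\|_{\Lambda_s},
\end{align*}
Lemma \ref{cor-3-9} immediately yields $|F(\mathbf{x}) - F(\mathbf{z})| \leq C_{n,r,s,\alpha}\|f\|_{\Lambda_s}\, \rho(\mathbf{x},\mathbf{z})$, which is exactly \eqref{6-1-0}.

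The heart of the proof is therefore the pointwise estimation of $y_{n+1}|\nabla F(\mathbf{y})|$, which splits naturally according to whether we differentiate in a spatial variable $x_j$ or in the ``vertical'' variable $x_{n+1}$. For the spatial derivatives I would use Theorem \ref{lem-2-4} with $i=r$ and $|\beta|=1$ (noting $1 + r\alpha > s$), which gives
\begin{align*}
|\partial_{x_j} F(\mathbf{x})| = x_{n+1}^{\alpha r - s}\,|\partial_{x_j}\partial_{n+1}^r u_\alpha(x, x_{n+1}^\alpha)| \lesssim \|f\|_{\Lambda_s}\, x_{n+1}^{-1},
\end{align*}
so the factor $x_{n+1}$ absorbs the singularity. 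For the vertical derivative, the product and chain rules give
\begin{align*}
\partial_{x_{n+1}} F(\mathbf{x}) = (\alpha r - s)\, x_{n+1}^{\alpha r - s - 1} \partial_{n+1}^r u_\alpha(x, x_{n+1}^\alpha) + \alpha\, x_{n+1}^{\alpha(r+1) - s - 1} \partial_{n+1}^{r+1} u_\alpha(x, x_{n+1}^\alpha),
\end{align*}
and Theorem \ref{lem-2-4} applied with $(i,|\beta|) = (r,0)$ and $(i,|\beta|)=(r+1,0)$ (both of which satisfy the hypothesis $i\alpha > s$ since $r\alpha > s$) controls each term by $\|f\|_{\Lambda_s} x_{n+1}^{-1}$. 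Multiplying by $x_{n+1}$ yields the required uniform bound.

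I do not expect any serious obstacle here: the argument is essentially a clean application of the chain/product rule combined with the derivative estimates already established in Section \ref{sec-5}, plus the general comparison between Euclidean and hyperbolic differentiation provided by Lemma \ref{cor-3-9}. The only point requiring slight care is checking that the exponents on $x_{n+1}$ cancel exactly to $-1$ in every term, so that the hyperbolic weight $x_{n+1}$ exactly absorbs the blow-up; this is a bookkeeping matter rather than a genuine difficulty. Once the pointwise gradient bound is in place, the proof concludes in one line via Lemma \ref{cor-3-9} by taking the maximum of $y_{n+1}|\nabla F(\mathbf{y})|$ over the geodesic $\gamma(\mathbf{x},\mathbf{z})$.
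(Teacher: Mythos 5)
Your proof is correct and follows exactly the same route as the paper: define $F(\mathbf{x}) = x_{n+1}^{\alpha r - s}\partial_{n+1}^r u_\alpha(x, x_{n+1}^\alpha)$, bound $w_{n+1}|\nabla F(\mathbf{w})|$ uniformly by $C\|f\|_{\Lambda_s}$ via Theorem \ref{lem-2-4} applied with $(i,|\beta|)\in\{(r,1),(r,0),(r+1,0)\}$, and conclude with Lemma \ref{cor-3-9}. The exponent bookkeeping in each term is also the same as in the paper's proof.
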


\begin{proof}
Let $F(\mathbf{x}):= x_{n+1}^{\alpha r-s} \partial_{n+1}^r
u_\alpha(x,x_{n+1}^\alpha)$ for any $\mathbf{x}=(x,x_{n+1})\in
\mathbb{R}_+^{n+1}$. Then, by Lemma \ref{cor-3-9}, we have,
for any $\mathbf{x}=(x, x_{n+1})$, $\mathbf{z}=(z,
z_{n+1})\in\mathbb{R}_+^{n+1}$,
\begin{align}
	|F(\mathbf{x})-F(\mathbf{z})|& \leq \sup_{\mathbf{w}\in
\mathbb{R}_+^{n+1}}|\nabla F(\mathbf{w})| w_{n+1}\cdot
\rho(\mathbf{x}, \mathbf{z}),\label{6-6c}
\end{align}
where $\mathbf{w}=(w, w_{n+1})\in\mathbb{R}^n\times
(0,\infty)$.
However, a straightforward calculation shows that, for any
$\mathbf{w}:=(w, w_{n+1})\in\mathbb{R}^n\times (0,\infty)$,
\begin{align*}
	|\nabla F(\mathbf{w})|w_{n+1}&\leq (\alpha r-s)
w_{n+1}^{\alpha r-s} |\partial_{n+1}^r
u_\alpha(w,w_{n+1}^\alpha)|+\alpha w_{n+1}^{\alpha r-s+\alpha}
|\partial_{n+1}^{r+1} u_\alpha(w,w_{n+1}^\alpha)|+\\
	&\ \ +w_{n+1}^{\alpha r-s+1} \sum_{j=1}^n |\partial_j
\partial_{n+1}^r u_\alpha(w,w_{n+1}^\alpha)|,
\end{align*}
which, using Theorem \ref{lem-2-4}, is bounded above by $ C
\|f\|_{\Lambda_s}$.
This combined with \eqref{6-6c} yields the desired estimate
\eqref{6-1-0}, and hence finishes the proof of the Lemma
\ref{lem-4-5-0}.
\end{proof}

\begin{proof}[Proof of Proposition \ref{thm-3adf}] Define, for
any $\lambda\in(0,\infty)$,
 $$D^{+}_{\alpha, r}(s,f,\lambda):=\left\{ (x,t)\in
\mathbb{R}_+^{n+1}:\
 |\partial_{n+1}^r u_\alpha(x, t^\alpha)|>\lambda t^{s-\alpha
  r}\right\}.$$
Assume that $\varepsilon_1\in(0,\infty)$ and
$\varepsilon>\varepsilon_1$.
Let $\delta\in (0, 1/10)$ be a constant such that
$$ C_{n, r, s, \alpha} \|f\|_{\Lambda_s}
\delta<\varepsilon-\varepsilon_1,$$
where $C_{n, r, s, \alpha}$ is the same constant as in
\eqref{6-1-0}.
By \eqref{6-1-0}, for any $\mathbf{x}=(x, x_{n+1})$,
$\mathbf{z}=(z, z_{n+1})\in \mathbb{R}_+^{n+1}$ with
$\rho(\mathbf{x}, \mathbf{z})<\delta$, we have
 \begin{equation*}
		\left| x_{n+1}^{\alpha r-s } \partial_{n+1}^r u_\alpha
(x,x_{n+1}^\alpha) -z_{n+1}^{\alpha r-s }
\partial_{n+1}^ru_\alpha (z,z_{n+1}^\alpha)\right|\leq C_{n,
r, s, \alpha}\|f\|_{\Lambda_s} \rho(\mathbf{x},
\mathbf{z})<\varepsilon-\varepsilon_1,
	\end{equation*}
implying
$$ z_{n+1}^{\alpha r-s } |\partial_{n+1}^ru_\alpha
(z,z_{n+1}^\alpha)|> \left| x_{n+1}^{\alpha r-s }
\partial_{n+1}^r u_\alpha (x,x_{n+1}^\alpha)
\right|-\varepsilon
+\varepsilon_1.$$
This means that $B_\rho(\mathbf{x}, \delta)\subset D_{\alpha,
r}(s, f, \varepsilon_1)$ for any $\mathbf{x}\in D_{\alpha,
r}(s, f, \varepsilon)$.
Thus,
\begin{equation}
(D^{+}_{\alpha,r}(s,
f,\varepsilon))_\delta:=\bigcup_{\mathbf{x}\in D_{\alpha,
r}(s, f,\varepsilon)}
B_\rho(\mathbf{x}, \delta) \subset D_{\alpha, r}(s,
f,\varepsilon_1).\label{5-15b}
\end{equation}

To conclude the proof of Proposition \ref{thm-3adf}, let
$\mathbf{z}:=(z, z_{n+1})\in (D_ {\alpha,r,j}(s,
f,\varepsilon))_\delta$. Then there exists
$\mathbf{x}=(x, x_{n+1})\in D_{\alpha,r,j}(s, f,\varepsilon)$
such that $\rho(\mathbf{x},\mathbf{z})<\delta$.
Furthermore, \eqref{5-15b} implies that $\mathbf{z}\in
D_{\alpha,r}(s, f,\varepsilon_1)$.
However, using \eqref{dafg1}, we obtain
\begin{equation*}
2^{-j-2}<(1+2\delta)^{-1} 2^{-j-1}<(1+2\delta)^{-1} x_{n+1}
\leq z_{n+1}\leq
(1+2\delta) x_{n+1}\leq(1+2\delta)2^{-j}\leq 2^{-j+1}.
\end{equation*}
It follows that
$$\mathbf{z}:=(z, z_{n+1})
\in D_{\alpha,r,j-1}(s, f, \varepsilon_1)\cup
D_{\alpha,r,j}(s, f, \varepsilon_1)\cup
D_{\alpha,r,j+1}(s, f, \varepsilon_1).
$$
Since this holds for any $\mathbf{z}\in (D_{\alpha,r,j}(s,
f,\varepsilon))_\delta$, we infer that
$$ (D_{\alpha,r,j}(s, f,\varepsilon))_\delta\subset
D_{\alpha,r,j-1}(s, f, \varepsilon_1)\cup
D_{\alpha,r,j}(s, f, \varepsilon_1)\cup
D_{\alpha,r,j+1}(s, f, \varepsilon_1).$$
This finishes the proof of Proposition \ref{thm-3adf}.
\end{proof}

\section{Proof of Proposition \ref{cor-6-7-1}}\label{sec:8}

This section is devoted to the proof of Proposition
\ref{cor-6-7-1}.
Throughout this section, we fix the framework parameters
$\alpha, s\in (0,\infty)$ and $r\in\mathbb{N}$ such that
$\alpha r>s$.
For each given function $f\in\Lambda_s$, we define $u_{\alpha,
f}(x, t):=T_{\alpha, t} f(x)$ for any $x\in\mathbb{R}^n$ and
$t\in(0,\infty)$.
 To simplify symbol, we will use $u$ to denote $u_{\alpha, f}$
 whenever no confusion is possible.
This symbol will be used consistently throughout this section.

Proposition \ref{cor-6-7-1} follows directly from the lemma
below, which provides an estimate of finite differences in
terms of the derivatives $\partial_{n+1}^{r} u$.

\begin{lemma}\label{thm-691} Let
$r_1\in\mathbb{N}$ be such that
 $r_1>\alpha r+1$.
Then there exists a constant $\delta\in (0, 1)$, depending
only on $\alpha$ and $s$,
such that, for any $f\in\Lambda_s $, $(x,y)
\in\mathbb{R}_+^{n+1}$, and
$R>2(r_1+2)^{\frac{\alpha+1}\alpha}$,
\begin{equation*}
y^{-s}\Delta_{r_1} f (x,y)\leq
	C \left[\sup_{|x'-x|<Ry} \
\sup_{R^{-1} y\leq t \leq 2^{-1}y}
t^{\alpha r-s}|\partial_{n+1} ^{r} u(x', t^\alpha)|+
 \|f\|_{\Lambda_s } R^{-\delta}\right],
\end{equation*}
where the constant $C$ depends only on $r_1$ and the framework
parameters.
\end{lemma}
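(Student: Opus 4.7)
My plan is to use Taylor's theorem in the time variable of the fractional heat semigroup, combined with the kernel estimates from Theorem~\ref{lem-2-1} and the derivative bounds from Theorem~\ref{lem-2-4}. First, I fix $h\in\mathbb{R}^n$ with $|h|=y$ and set $T:=(y/2)^\alpha$. Applying Taylor's formula to $\tau\mapsto u(x,\tau)=T_{\alpha,\tau}f(x)$, centered at $\tau=T$ and evaluated at $\tau=0$, yields
\begin{align*}
f(x) = \sum_{j=0}^{r-1} \frac{(-T)^j}{j!}\,\partial_{n+1}^j u(x,T) + \frac{(-1)^r}{(r-1)!}\int_0^T s^{r-1}\,\partial_{n+1}^r u(x,s)\,ds.
\end{align*}
Applying $\Delta_h^{r_1}$ to both sides splits the estimate into a sum of ``polynomial-type'' terms and an integral remainder, which I handle separately.

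For the integral remainder, I make the substitution $s=t^\alpha$ and split the resulting integral at $t=y/R$. On $[0,y/R]$ I use the crude bound $|\Delta_h^{r_1}g(x)|\leq 2^{r_1}\|g\|_\infty$ together with $\|\partial_{n+1}^r u(\cdot,t^\alpha)\|_\infty\lesssim t^{s-\alpha r}\|f\|_{\Lambda_s}$ (Theorem~\ref{dsfad}), which produces a contribution $\lesssim R^{-s}y^s\|f\|_{\Lambda_s}$. On $[y/R,y/2]$ the hypothesis $R>2(r_1+2)^{(\alpha+1)/\alpha}$ ensures $r_1 y/2<Ry$, so that $|\Delta_h^{r_1}\partial_{n+1}^r u(x,t^\alpha)|\leq 2^{r_1}\sup_{|x'-x|<Ry}|\partial_{n+1}^r u(x',t^\alpha)|$ can be combined with the defining inequality for the supremum $M$ in the lemma, yielding a contribution $\lesssim y^s M$.

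The main obstacle is the sum of ``polynomial-type'' terms: a direct smoothness bound from Theorem~\ref{lem-2-4} (with $|\beta|=r_1$, $i=j$) only gives the baseline $\lesssim y^s\|f\|_{\Lambda_s}$ per term, which lacks any $R$-decay. To sharpen this, I express each $\partial_{n+1}^j u(x,T)$ via iterated integration: since $\|\partial_{n+1}^j T_{\alpha,\tau}f\|_\infty\lesssim\tau^{-j}\|f\|_\infty$ vanishes as $\tau\to\infty$ for $j\geq 1$, one has
\begin{align*}
\partial_{n+1}^j u(x,T)=\frac{(-1)^{r-j}}{(r-j-1)!}\int_T^\infty (\tau-T)^{r-j-1}\,\partial_{n+1}^r u(x,\tau)\,d\tau;
\end{align*}
for $j=0$ the same formula holds after using that $\Delta_h^{r_1}$ annihilates the constant $\lim_{\tau\to\infty}u(\cdot,\tau)$. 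The binomial identity $\sum_{j=0}^{r-1}\binom{r-1}{j}T^j(\tau-T)^{r-1-j}=\tau^{r-1}$ collapses the whole sum into a single tail integral $\int_T^\infty \tau^{r-1}\Delta_h^{r_1}\partial_{n+1}^r u(x,\tau)\,d\tau$.

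The tail integral is the hard step, and it requires the true mileage of the hypothesis $r_1>\alpha r+1$. I factor $\partial_{n+1}^r u(x,\tau)=K_\alpha(\cdot,\tau/2)\ast\partial_{n+1}^r u(\cdot,\tau/2)(x)$, move $\Delta_h^{r_1}$ onto the kernel, and use the moment-vanishing property of $\Delta_h^{r_1,x}K_\alpha(x-\cdot,\tau/2)$ (which kills polynomials of degree $<r_1$) to subtract a Taylor polynomial of $\partial_{n+1}^r u(\cdot,\tau/2)$ centered at $x$. Splitting the spatial integral at $|z-x|=Ry$ separates a local region (where the sup $M$ applies together with the sharp kernel decay of Theorem~\ref{lem-2-1}) from a far region (where the kernel decay of order $(|z-x|+\tau^{1/\alpha})^{-n-\alpha-r_1}$ combined with $r_1>\alpha r+1$ extracts the $R^{-\delta}$-smallness, with $\delta\in(0,1)$ depending only on $\alpha,s$). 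The explicit threshold $R>2(r_1+2)^{(\alpha+1)/\alpha}$ enters exactly to guarantee that the kernel's effective support fits inside the $|z-x|<Ry$ band uniformly in the integration variable $\tau$, which is what allows the $R^{-\delta}$-bound to close uniformly in $f$.
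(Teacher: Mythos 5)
Your route is genuinely different from the paper's, and the idea has real appeal: the paper keeps the $r$ Taylor coefficients $\partial_{n+1}^i u(x,y^\alpha)$ separate and estimates each one via a Fourier--multiplier argument (Lemma~\ref{lem-8-3b}), whereas you collapse them via the binomial identity into a single tail integral $\int_T^\infty \tau^{r-1}\Delta_h^{r_1}\partial_{n+1}^r u(x,\tau)\,d\tau$. Your treatment of the remainder over $(0,T]$ (substitute $s=t^\alpha$, split at $t=y/R$) is fine and essentially parallels the paper's estimate of $I_2$.

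However, your plan for the tail integral has a genuine gap, and it is the only hard step. You factor $\partial_{n+1}^r u(x,\tau)=K_\alpha(\cdot,\tau/2)\ast\partial_{n+1}^r u(\cdot,\tau/2)(x)$ at the \emph{variable} time $\tau/2$ and then claim the local region $|z-x|<Ry$ is controlled by the supremum $M$. This fails as soon as $\tau>2T$: then $\tau/2>(y/2)^\alpha$, so $(\tau/2)^{1/\alpha}>y/2$ lies outside the window $[R^{-1}y,\,y/2]$ over which $M$ is defined, and a direct bound on that part of the tail gives $\sim y^s\|f\|_{\Lambda_s}$ with no $R$-smallness. The repair is to factor at the \emph{fixed} time $T_1:=(y/2)^\alpha$ (the right endpoint of the $M$-window): write the tail as $\int H(x-z)\,\partial_{n+1}^r u(z,T_1)\,dz$ with $H(w):=\int_0^\infty(\sigma+T_1)^{r-1}\Delta_h^{r_1}K_\alpha(w,\sigma)\,d\sigma$ and split $|z-x|\lessgtr Ry$; then $\int_{|w|<Ry}|H|\lesssim y^{\alpha r}$ gives the $My^s$-term, and the far region yields $\int_{|w|>Ry}|H|\lesssim y^{\alpha r}R^{\alpha r-r_1}\lesssim y^{\alpha r}R^{-1}$ from the kernel tail $(|w|+\sigma^{1/\alpha})^{-n-\alpha-r_1}$ together with $r_1>\alpha r+1$. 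In addition, the Taylor-polynomial subtraction via moment-vanishing that you invoke is a red herring here: after subtraction the error is controlled by $\|\nabla^{r_1}\partial_{n+1}^r u(\cdot,T_1)\|_\infty\lesssim\|f\|_{\Lambda_s}$, not by $M$, so it cannot produce the $M$-term the lemma requires; the plain near/far split with no subtraction is what works. Lastly, your interpretation of the threshold $R>2(r_1+2)^{(\alpha+1)/\alpha}$ (``kernel support fits in the band uniformly in $\tau$'') cannot be right, since the kernel support grows like $\tau^{1/\alpha}$; in the paper the $1+1/\alpha$ power arises from chaining through Lemma~\ref{lem-8-2b}, which is what pushes the time back into the $M$-window.
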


For the moment, we take Lemma \ref{thm-691} for granted and
 proceed with the proof of
Proposition \ref{cor-6-7-1}.

\begin{proof} [Proof of Proposition \ref{cor-6-7-1} (assuming
Lemma \ref{thm-691})]
Let $(x, y)\in S_{r_1,j}(s,f,\varepsilon)$. By
\eqref{dfmawlmfp-1}, we have $2^{-j-1}<y\leq 2^{-j}$ and
$y^{-s}\Delta_{r_1} f (x,y)>\varepsilon$.
Furthermore, by Lemma \ref{thm-691}, for any
$R_1>2(r_1+2)^{\frac{\alpha+1}\alpha}$,
\begin{equation*}
\varepsilon <
	C \left[\sup_{|x'-x|<R_1y}\
	\sup_{	R_1^{-1} y\leq t \leq 2^{-1} y}
t^{\alpha r-s}|\partial_{n+1} ^{r} u(x', t^\alpha)|+
\|f\|_{\Lambda_s } R_1^{-\delta}\right],
\end{equation*}
where $C\in(0,\infty)$ is independent of $f$, $R_1$, and $(x,
y)\in\mathbb{R}_+^{n+1}$.
 Now, choosing the constant $R_1$ sufficiently large so that $
 C R_1^{-\delta}\|f\|_{\Lambda_s}\leq \frac \varepsilon2$
yields
 $$ \sup_{|x'-x|<R_1y} \ \sup_{R_1^{-1} y\leq t \leq 2^{-1}y}
t^{\alpha r-s}|\partial_{n+1} ^{r} u(x', t^\alpha)|>\frac
1{2C}\varepsilon =c\varepsilon.$$
 Thus, there must exist $(x', t)\in\mathbb{R}_+^{n+1}$ such
 that
 \begin{align*} R_1^{-1} y<t\leq 2^{-1} y,\ \
 |x'-x|\leq R_1 y,\end{align*}
 and
 \begin{align}t^{\alpha r-s}|\partial_{n+1} ^{r} u(x',
 t^\alpha)|>c\varepsilon. \label{6-9-eq} \end{align}
 In particular, by Lemma \ref{lem-3-2-0}, this implies
 \begin{equation}\label{5-23}(x,y)\in B_\rho( (x', t),
 R),\end{equation}
 with $R=C R_1$ for some constant $C>1$ depending only on the
 framework parameters.

 Next, let $m$ be a positive integer such that $2^{m-1}< R
 \leq 2^m$. Then
 $ 2^{-m-j-1}<R^{-1} y<t\leq 2^{-1}y \leq 2^{-1-j}.$
 This together with \eqref{6-9-eq} implies
 \begin{equation}\label{5-22}
 (x',t)\in \bigcup_{i=j+1}^{j+m}D_{\alpha, r, i}(s, f,
 c\varepsilon).\end{equation}
 Finally, combining \eqref{5-22} with \eqref{5-23}, we obtain
 $ (x,y)\in \bigcup_{i=j+1}^{j+m} (D_{\alpha, r, i}(s, f,
 c\varepsilon))_R.$
 Since this holds for any $(x, y)\in S_{r_1,j} (s, f,
 \varepsilon)$, we complete the proof of Proposition
 \ref{cor-6-7-1}.
\end{proof}

It remains to prove Lemma \ref{thm-691}. We need two
additional lemmas.
\begin{lemma}\label{lem-8-2b}
Let $r_1, r_2\in\mathbb{N}$ be such that $r_1\ge r_2>\frac
s\alpha $, and let $f\in\Lambda_s$. Then, for any $R>1$,
$\tau\in (0, 1)$,
and
$(x,y)\in\mathbb{R}_+^{n+1}$,
\begin{equation}\label{2-4}
 y^{ r_1 - \frac {s}\alpha } \left| \partial_{n+1}^{r_1}
 u(x,y)
 \right|\leq C \left[ \sup_{ |x-x'| \leq R y^\frac 1{\alpha}}
 y^{ {r_2}- \frac s\alpha } |\partial_{n+1} ^{r_2} u(x',\tau
 y)|+ R^{-\alpha}\|f\|_{\Lambda_s}\right],
\end{equation}
where the constant $C$ depends only on $r_1, r_2, \tau$, and
the framework parameters.
\end{lemma}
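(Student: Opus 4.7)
The plan is to obtain an integral representation of $\partial_{n+1}^{r_1} u(x,y)$ in terms of $\partial_{n+1}^{r_2} u(\cdot, \tau y)$, and then split the resulting integral into a local part (where the sup dominates) and a tail (where the a priori bound from Theorem~\ref{lem-2-4} controls the remainder).

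The key identity I would derive is as follows. Since $u(\cdot, y)=T_{\alpha,y}f$ solves the fractional heat equation, $\partial_{n+1}^{k} u(\cdot, y) = (-1)^k (-\Delta)^{k\alpha/2} u(\cdot, y)$ for any $k\in\mathbb{N}$. The semigroup law $T_{\alpha,y}=T_{\alpha,(1-\tau)y}T_{\alpha,\tau y}$ yields $u(\cdot,y)=T_{\alpha,(1-\tau)y}[u(\cdot,\tau y)]$. Applying $(-\Delta)^{r_1\alpha/2}=(-\Delta)^{(r_1-r_2)\alpha/2}(-\Delta)^{r_2\alpha/2}$ on both sides, commuting $(-\Delta)^{r_2\alpha/2}$ past the semigroup, and converting $(-\Delta)^{(r_1-r_2)\alpha/2}T_{\alpha,h}$ into $(-1)^{r_1-r_2}\partial_h^{r_1-r_2}T_{\alpha,h}$ on the Fourier side, I obtain
\begin{align*}
\partial_{n+1}^{r_1} u(x,y)=\int_{\mathbb{R}^n}\bigl(\partial_t^{r_1-r_2}K_\alpha\bigr)(x-z,(1-\tau)y)\,\partial_{n+1}^{r_2} u(z,\tau y)\,dz.
\end{align*}
Theorem~\ref{lem-2-4} (using $r_2>s/\alpha$) ensures $\partial_{n+1}^{r_2} u(\cdot,\tau y)$ is bounded, so this convolution is meaningful pointwise.

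Next, I would plug in the kernel estimate \eqref{5-5c} with $k=0,\,j=r_1-r_2$, giving $|\partial_t^{r_1-r_2} K_\alpha(w,\eta)|\leq C\eta^{-n/\alpha-(r_1-r_2)}(1+|w|/\eta^{1/\alpha})^{-n-\alpha}$ with $\eta:=(1-\tau)y$, and split the $z$-integral into the regions $\{|x-z|\leq Ry^{1/\alpha}\}$ and $\{|x-z|>Ry^{1/\alpha}\}$. On the local part, pulling the sup outside and using $\int_{\mathbb{R}^n}(1+|x-z|/\eta^{1/\alpha})^{-n-\alpha}\,dz\sim \eta^{n/\alpha}$, a bookkeeping of exponents yields $C_\tau y^{r_2-s/\alpha}\sup_{|x-x'|\leq Ry^{1/\alpha}}|\partial_{n+1}^{r_2} u(x',\tau y)|$, matching the first term of \eqref{2-4}.

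For the tail region I would invoke Theorem~\ref{lem-2-4} again to get $|\partial_{n+1}^{r_2} u(z,\tau y)|\lesssim \|f\|_{\Lambda_s}(\tau y)^{s/\alpha-r_2}$. The change of variables $w=(x-z)/\eta^{1/\alpha}$ transforms $|x-z|>Ry^{1/\alpha}$ into $|w|>R/(1-\tau)^{1/\alpha}$, and the decay of the kernel gives $\int_{|w|>R/(1-\tau)^{1/\alpha}}(1+|w|)^{-n-\alpha}\,dw\lesssim (1-\tau)R^{-\alpha}$. Collecting all powers of $y$ and $\tau$ in the product produces the dimensionless bound $C_\tau R^{-\alpha}\|f\|_{\Lambda_s}$, which is exactly the second term of \eqref{2-4}. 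The main obstacle is the first step, establishing the integral formula with $\partial_{n+1}^{r_2} u(\cdot,\tau y)$ in the integrand rather than $u(\cdot,\tau y)$; once that commutation is justified (most naturally via Fourier multipliers, since all operators are $L^\infty$-bounded at this fixed time), the rest reduces to careful scaling of the kernel estimates from Corollary~\ref{cor-5-2}.
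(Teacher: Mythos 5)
Your proposal is correct and follows essentially the same approach as the paper: derive the convolution representation $\partial_{n+1}^{r_1} u(x,y)=\int \partial_{n+1}^{r_2} u(x-z,\tau y)\,\partial_{n+1}^{r_1-r_2}K_\alpha(z,(1-\tau)y)\,dz$ via the semigroup law, then split into a local region where the supremum is pulled out and a tail region where the a priori bound $|\partial_{n+1}^{r_2}u|\lesssim\|f\|_{\Lambda_s}(\tau y)^{s/\alpha-r_2}$ together with the kernel decay produces the $R^{-\alpha}$ factor. The only superficial difference is that you justify the identity via Fourier multipliers whereas the paper simply differentiates the convolution formula \eqref{5-2} in $t_1$ and $t_2$ separately; both yield the same representation.
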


\begin{proof} Using \eqref{5-2} and the semigroup property of
$T_{\alpha, t}$, we obtain, for any $ x\in\mathbb{R}^n$ and
$t_1, t_2\in(0,\infty)$,
$$ u(x, t_1+t_2) =T_{\alpha, t_2} T_{\alpha, t_1} f(x)
=\int_{\mathbb{R}^n} u(x-z, t_1) K_\alpha (z, t_2)\, dz.$$
This implies that
\begin{align*}
 \partial_{n+1}^{r_1} u(x,t_1+t_2) &=\int_{\mathbb{R}^n}
 \partial_{n+1}^{r_2} u(x-z, t_1)\cdot
 \partial_{n+1}^{r_1-{r_2}} K_\alpha (z, t_2)\, dz.
\end{align*}
Setting $t_1=\tau y$ and $t_2= (1-\tau) y$, we obtain
\begin{align*}
|\partial_{n+1}^{r_1} u(x,y)|&=
\left|\int_{\mathbb{R}^n}\left[ \partial_{n+1}^{r_2} u(x-z,
\tau y) \right] \partial_{n+1}^{r_1-{r_2}} K_\alpha \left(z,
(1-\tau) y\right)\, dz \right|.\end{align*}
This, combined with \eqref{5-5c} and \eqref{5-6c}, yields
\begin{align*}
|\partial_{n+1}^{r_1} u(x,y)|&\leq C y^
{{r_2}-r_1-\frac n\alpha } \int_{\mathbb{R}^n}
\left|\partial_{n+1}^{r_2} u(x-z, \tau y) \right| H_{0}
\left([(1-\tau)y]^{-\frac 1\alpha} z\right)\, dz.
\end{align*}
Given any constant $R>1$, we break this last integral into two
parts $\int_{|z|\leq R y^\frac 1{\alpha}}+\int_{|z|> R y^\frac
1{\alpha}}$. We then deduce
$$|\partial_{n+1}^{r_1} u(x,y)|\leq C[ I_1(x,y) +
I_2(x,y)],$$
where
\begin{align*} I_1(x,y):&= y^ {{r_2}-r_1-\frac n\alpha }
\int_{|z|\leq R y^\frac 1{\alpha}} \left|\partial_{n+1}^{r_2}
 u(x-z, \tau y) \right| H_{0}\left([(1-\tau) y]^{-\frac
 1\alpha} z\right)\, dz
\end{align*}
and
\begin{align*}
 I_2(x,y):&= y^ {{r_2}-r_1-\frac n\alpha } \int_{|z|> R
 y^\frac 1{\alpha}} \left|\partial_{n+1}^{r_2} u(x-z, \tau y)
 \right| H_{0}\left([(1-\tau) y]^{-\frac 1\alpha} z\right)\, dz.
\end{align*}
The term $I_1(x,y)$ can be estimated as follows:
\begin{align*}
	I_1(x,y)&\lesssim \left[\sup_{|x-x'|\leq Ry^\frac
1{\alpha}} \ y^{{r_2}-\frac s\alpha}|\partial_{n+1}^{r_2}
u(x', \tau y)|\right] \cdot y^{-r_1+\frac s\alpha }
\|H_{0}\|_1 \\
	&\lesssim y^{-r_1+\frac s\alpha } \left[\sup_{|x-x'|\leq
 Ry^\frac 1{\alpha}} y^{{r_2}-\frac
 s\alpha}|\partial_{n+1}^{r_2} u(x', \tau y)|\right] .
\end{align*}
To estimate the term $I_2(x,y)$, we use Theorem \ref{dsfad}.
Recalling that
$$ \partial_{n+1}^{r_2} u(\cdot, y)=(-\Delta)^{\alpha {r_2}/2}
 T_{\alpha, y} f,$$
we deduce
\begin{align*}
	I_2(x,y) &\lesssim \|f\|_{\Lambda_s} y^{-r_1-\frac n\alpha
+\frac s\alpha } \int_{|z|> R y^\frac 1{\alpha}} H_{0}
\left([(1-\tau) y]^{-\frac 1\alpha} z\right)\, dz\\
	&\lesssim y^{-r_1 +\frac s\alpha }
R^{-\alpha}\|f\|_{\Lambda_s}.
\end{align*}
A combination of the above estimates then yields the estimate
\eqref{2-4} for all $r_1\ge r_2>\frac s\alpha$, which
completes the proof
of Lemma \ref{lem-8-2b}.
\end{proof}

\begin{lemma}\label{lem-8-3b} Let $i\in\mathbb{Z}_+$ and
$\beta\in \mathbb{Z}_+^n$ be such that $0\leq i<r$ and
$|\beta|>\alpha (r-i)+1$. Let $\tau\in (0, 1)$.
Then there exists a positive constant $C$, depending only on
$\tau$, $\beta$, and the framework parameters, such that, for
any $f\in\Lambda_s$,
$(x, y)\in\mathbb{R}_+^{n+1}$, and $R>2$,
\begin{align*}
 y^{\alpha i +|\beta|-s} |\partial_x^{\beta} \partial_{n+1}^i
 u(x, y^\alpha)|\leq C \left[\sup_{|x-x'|\leq Ry} y^{\alpha
 r-s} |\partial_{n+1}^r u(x', (\tau y)^\alpha)|
 +R^{-1}\|f\|_{\Lambda_s}\right].
\end{align*}
\end{lemma}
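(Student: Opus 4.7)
The plan is to obtain $\partial_x^\beta \partial_{n+1}^i u(x, y^\alpha)$ as a convolution of $\partial_{n+1}^r u(\cdot, (\tau y)^\alpha)$ with an explicit kernel $G_\sigma$, and then to estimate $G_\sigma$ via a subordination representation reducing matters to Corollary \ref{cor-5-2}. Concretely, set $\sigma := y^\alpha - (\tau y)^\alpha = y^\alpha(1-\tau^\alpha)$. Using the semigroup identity $T_{\alpha, y^\alpha} = T_{\alpha, \sigma} T_{\alpha, (\tau y)^\alpha}$ together with $\partial_{n+1}^k u = (-1)^k (-\Delta)^{\alpha k/2} T_{\alpha, y^\alpha} f$ for both $k=i$ and $k=r$, one formally obtains
\begin{align*}
\partial_x^\beta \partial_{n+1}^i u(x, y^\alpha) = (-1)^{i-r}\bigl[G_\sigma * \partial_{n+1}^r u(\cdot, (\tau y)^\alpha)\bigr](x),
\end{align*}
where $G_\sigma$ is the Fourier multiplier operator with symbol $(2\pi i\xi)^\beta |2\pi\xi|^{\alpha(i-r)} e^{-\sigma|2\pi\xi|^\alpha}$.

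Since $r-i\geq 1$, I would then invoke the subordination identity $|2\pi\xi|^{-\alpha(r-i)} = \Gamma(r-i)^{-1}\int_0^\infty s^{r-i-1} e^{-s|2\pi\xi|^\alpha}\,ds$ to rewrite the kernel as
\begin{align*}
G_\sigma(x) = \frac{1}{\Gamma(r-i)}\int_0^\infty s^{r-i-1}\,\partial_x^\beta K_\alpha(x, s+\sigma)\,ds.
\end{align*}
Plugging in \eqref{5-5c} and changing variables $u=s+\sigma$, then splitting the $u$-integral at $u=|x|^\alpha$ (for $|x|>\sigma^{1/\alpha}$), routine estimates give
\begin{align*}
|G_\sigma(x)| \leq C\sigma^{(r-i)-(n+|\beta|)/\alpha}\quad \text{when } |x|\leq \sigma^{1/\alpha},
\end{align*}
together with $|G_\sigma(x)|\leq C|x|^{\alpha(r-i)-n-|\beta|}$ for $|x|>\sigma^{1/\alpha}$. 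Polar integration yields $\|G_\sigma\|_{L^1}\leq C\sigma^{(r-i)-|\beta|/\alpha}\sim y^{\alpha(r-i)-|\beta|}$, where integrability at infinity only requires $|\beta|>\alpha(r-i)$.

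Finally, I would split the convolution $\int G_\sigma(x-z)\partial_{n+1}^r u(z,(\tau y)^\alpha)\,dz$ into the near part $|z-x|\leq Ry$ and the far part $|z-x|>Ry$. For the near part, pulling the supremum outside and using $\|G_\sigma\|_{L^1}\lesssim y^{\alpha(r-i)-|\beta|}$ gives, after multiplying through by $y^{\alpha i+|\beta|-s}$, precisely the term $\sup_{|x'-x|\leq Ry} y^{\alpha r-s}|\partial_{n+1}^r u(x',(\tau y)^\alpha)|$. For the far part, Theorem \ref{dsfad} provides $|\partial_{n+1}^r u(z,(\tau y)^\alpha)|\lesssim y^{s-\alpha r}\|f\|_{\Lambda_s}$, while the tail bound integrates to
\begin{align*}
\int_{|w|>Ry}|G_\sigma(w)|\,dw \leq C(Ry)^{\alpha(r-i)-|\beta|},
\end{align*}
and combining these produces a contribution at most $CR^{\alpha(r-i)-|\beta|}\|f\|_{\Lambda_s}\leq CR^{-1}\|f\|_{\Lambda_s}$, where the last step uses the hypothesis $|\beta|>\alpha(r-i)+1$.

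The main obstacle is the construction and pointwise control of $G_\sigma$, since the operator $\partial_x^\beta (-\Delta)^{\alpha(i-r)/2} T_{\alpha,\sigma}$ combines a polynomial multiplier with a negative fractional power of the Laplacian, and a naive Fourier-analytic treatment would be delicate. The subordination formula elegantly sidesteps this by expressing $G_\sigma$ as a weighted time-integral of $\partial_x^\beta K_\alpha(\cdot,s+\sigma)$, for which Corollary \ref{cor-5-2} supplies exactly the bounds needed. The precise strength of $|\beta|>\alpha(r-i)+1$ enters twice: the condition $|\beta|>\alpha(r-i)$ secures $G_\sigma\in L^1$, while the extra unit margin converts the tail factor $R^{\alpha(r-i)-|\beta|}$ into an $R^{-1}$ decay.
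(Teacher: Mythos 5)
Your argument is correct, but it takes a genuinely different route from the paper's. Both proofs start from the same factorization of the Fourier multiplier, namely writing $\partial_x^\beta\partial_{n+1}^i u(\cdot, y^\alpha)$ as a convolution of $\partial_{n+1}^r u(\cdot, (\tau y)^\alpha)$ against a kernel whose symbol contains the negative power $|\xi|^{\alpha(i-r)}$ and a Gaussian-type factor. From there the paper rescales to a $y$-independent symbol $M_i(\xi)=C\,\xi^\beta|\xi|^{-\alpha(r-i)}e^{-|2\pi\tau_2\xi|^\alpha}$ and argues on the Fourier side: under $|\beta|>\alpha(r-i)+1$ the first $n+1$ derivatives of $M_i$ are integrable, so the kernel $G_i=\mathcal F^{-1}M_i$ satisfies $|G_i(x)|\lesssim(1+|x|)^{-n-1}$, and one splits the convolution accordingly. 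You instead keep the $y$-dependent kernel $G_\sigma$ and eliminate the negative fractional power via the Gamma-function subordination
$|2\pi\xi|^{-\alpha(r-i)}=\Gamma(r-i)^{-1}\int_0^\infty s^{r-i-1}e^{-s|2\pi\xi|^\alpha}\,ds$, which converts $G_\sigma$ into a time-integral of $\partial_x^\beta K_\alpha(\cdot, s+\sigma)$. Corollary \ref{cor-5-2} then gives the pointwise bounds you state, and the computations (the $L^1$ norm $\lesssim y^{\alpha(r-i)-|\beta|}$ and the tail $\int_{|w|>Ry}|G_\sigma|\lesssim (Ry)^{\alpha(r-i)-|\beta|}$, the latter using precisely the margin $|\beta|>\alpha(r-i)+1$ to produce the $R^{-1}$ decay) check out. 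Your physical-side derivation is arguably more transparent: it yields a sharper pointwise estimate on the kernel than the paper's uniform $(1+|x|)^{-n-1}$ bound, and it avoids having to assert the $L^1$-integrability of $n+1$ derivatives of a symbol that is not smooth at the origin for non-even $\alpha$, a point the paper glosses over. The paper's approach is shorter and more modular, leveraging a standard "$L^1$ derivatives of the symbol imply kernel decay" paradigm and fitting the same template as Lemma \ref{lem-8-2b}. Both are valid; your version supplies a self-contained alternative.
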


\begin{proof} Since
$\partial_{n+1}^i u(x, y^\alpha) =(-\Delta)^{\frac {\alpha i}
2} T_{\alpha, y^\alpha} f(x),$
it follows that, for any $\xi\in \mathbb{R}^n$,
\begin{align*}
\mathcal{F} \left( \partial_x^\beta \partial_{n+1}^i u(\cdot,
y^\alpha)\right) (\xi) =C \xi^\beta |\xi|^{\alpha i} e^{-|2\pi
y\xi|^\alpha} \widehat f(\xi).
\end{align*}
Let $\tau_2\in (0, 1)$ be such that
$\tau_2^\alpha=1-\tau^\alpha $. We then write
\begin{align}\mathcal{F} \left( \partial_x^\beta
\partial_{n+1}^i u(\cdot, y^\alpha)\right) (\xi)&=C \frac
{\xi^\beta}{|\xi|^{\alpha r-\alpha i}}e^{-|2\pi \tau_2
y\xi|^\alpha}|\xi|^{\alpha r} e^{-|2\pi \tau y
\xi|^\alpha}\widehat f(\xi)\notag\\
&= y^{-|\beta| +\alpha r -\alpha i}M_i(y\xi) \mathcal{F}
\left( \partial_{n+1}^r u(\cdot, (\tau y)^\alpha)\right)
(\xi),\label{8-6-b}
\end{align}
where $C$ is a constant and
$ M_i(\xi)=C \frac {\xi^\beta} {|\xi|^{\alpha r-\alpha i}}
e^{-|2\pi \tau_2 \xi|^\alpha}.$
Since $|\beta|> \alpha r -\alpha i +1$, it is easily seen that
$$ \|\nabla^j M_i\|_{L^1} <\infty,\ \ j=0,1,\ldots, n+1.$$
Thus, there exists a function $G_i\in L^1$ such that $\widehat
{G_i} =M_i$ and
$ |G_i(x)|\lesssim (1+|x|)^{-n-1}$ for any $x\in\mathbb{R}^n.$
It then follows from
\eqref{8-6-b} that
\begin{align*}
\partial_x^\beta \partial_{n+1}^i u(x, y^\alpha) =y^{-|\beta|
+\alpha r-\alpha i} y^{-n} \int_{\mathbb{R}^n}
\partial_{n+1}^r u(x-z, (\tau y)^\alpha) G_i ( y^{-1} z) \,
dz.
\end{align*}
Breaking this last integral into two parts $\int_{|z|\leq
Ry}+\int_{|z|>Ry}$, we conclude that
\begin{align*}
|\partial_x^\beta \partial_{n+1}^i u(x, y^\alpha)|&\lesssim
y^{-|\beta|+\alpha r -\alpha i} \sup_{|z|\leq Ry}
|\partial_{n+1}^r u(x-z, (\tau y)^\alpha)|+ R^{-1}
y^{-|\beta|+\alpha r -\alpha i} y^{-\alpha r +s}
\|f\|_{\Lambda_s}\\
& \lesssim y^{-|\beta|-\alpha i+s} \left[ \sup_{|x-x'|\leq Ry}
y^{\alpha r-s} |\partial_{n+1}^r u(x', (\tau
y)^\alpha)|+R^{-1} \|f\|_{\Lambda_s}\right],
\end{align*}
which completes
the proof
of Lemma \ref{lem-8-3b}.
\end{proof}

\begin{proof} [Proof of Lemma \ref{thm-691} ] Applying
Taylor's theorem to the function $t\mapsto u(x, t)$ at the
point $t=y^\alpha\in(0,\infty)$, we obtain
\begin{align}f(x)= u(x, 0) =\sum_{i=0}^{r-1} \partial_{n+1}^i
u (x, y^\alpha ) \frac { (-1)^i y^{\alpha i} } {i!} + \frac
{(-1)^{r}} {(r-1)!} \int_0^{y^\alpha} t^{r-1} \partial_{n+1}
^{r} u(x, t) \, dt.\label{8-7-b} \end{align}
Let $h\in\mathbb{R}^n$ be such that $|h|=y$.
Taking $r_1$-th order symmetric difference with respect to the
variable $x$ in the direction of $h\in\mathbb{R}^n$ on both
sides of \eqref{8-7-b} yields
\begin{equation*}
	\Delta_h^{r_1} f(x) =\sum_{i=0}^{r-1}\frac {(-1)^{i}}{i!}
y^{\alpha i} \Delta_{h,x}^{r_1} \partial_{n+1}^i u(x, y^\alpha
)+\frac {(-1)^{r}}{(r-1)!} \int_0^{y^\alpha} t^{r-1}
\Delta_{h,x}^{r_1} \partial_{n+1}^{r} u(x,t) \, dt,
\end{equation*}
where $\Delta_h^{r_1}$ denotes the symmetric difference of
order $r$ given in \eqref{diff-eq} and the symbol
$\Delta_{h,x}^{r_1}$ means that the difference operator
$\Delta_h^{r_1}$ is acting on the variable $x$.
It follows from \eqref{diff-eq} that \begin{align}
|\Delta_h^{r_1} f(x)| &\leq C_{r_1, r}\left(I_1+
I_2\right),\label{6-11}
\end{align}
where
\begin{align*}
I_1:&=\max_{0\leq i\leq r-1} y^{\alpha i +r_1} \sup_{x'\in
\overline{[x-r_1h, x+r_1h]}} |\nabla_x^{r_1} \partial_{n+1}^i
u(x', y^\alpha)|,\\
I_2:&=\int_0^{y^\alpha} t^{r-1}|\Delta_{h,x}^{r_1}
\partial_{n+1}^{r} u(x,t)| \, dt,
\end{align*}
and $\overline{[x, z]}:=\{ t z+(1-t)x:\ \ t\in [0, 1]\}$ for
any $x, z\in\mathbb{R}^n$.

For the first term $I_1$, we use Lemma \ref{lem-8-3b} to
obtain,
for any $R_1>2r_1$,
\begin{align}
I_1 \lesssim y^{s}\left[\sup_{|z-x|<R_1y} \
y^{\alpha r-s }|\partial_{n+1} ^{r} u(z, (\tau y)^\alpha)|+
R_1^{-1}\|f\|_{\Lambda_s} \right]. \label{6-12}\end{align}

The second term $I_2$ can be estimated as follows: for any
constant $R_1>2r_1$,
\begin{align}\label{6-13}
I_2&\leq C_{r}
	 \int_0^{y^\alpha} t^{\frac s\alpha-1} \sup_{ x'\in
[x-r_1h, x+r_1h]} t^{r-\frac s\alpha} |\partial_{n+1}^{r}
u(x', t)| \, dt\notag\\
	&\lesssim \int_0^{y^\alpha/R_1} t^{-1+\frac s\alpha}\, dt
\|f\|_{\Lambda_s} + \sup_{|x-x'|\leq r_1y}\ 			
\sup_{R_1^{-1} y^\alpha \leq v\leq y^\alpha} v^{r-\frac
s\alpha} |\partial_{n+1}^{r} u(x', v)|\int_{R_1^{-1} y^\alpha
}^{y^\alpha} t^{\frac s\alpha-1}\, dt\notag\\
		&\lesssim y^s R_1^{-s/\alpha}\|f\|_{\Lambda_s} +y^s
\sup_{|x-x'|\leq r_1y}\	\sup_{R_1^{-1/\alpha} y \leq t\leq y}
t^{\alpha r-s} |\partial_{n+1}^{r} u(x', t^\alpha)|\notag\\
&\lesssim y^s R_1^{-s/\alpha}\|f\|_{\Lambda_s} +y^s
\sup_{|x-x'|\leq R_1y}\	\sup_{ R_1^{-1/\alpha} y \leq t\leq y}
t^{\alpha r-s} |\partial_{n+1}^{r} u(x', (\tau
t)^\alpha)|,
\end{align}
where we used \eqref{diff-eq} in the first step, \eqref{5-18}
in the second step and Lemma \ref{lem-8-2b} in the last step.

Finally, combining \eqref{6-12}, \eqref{6-13} with
\eqref{6-11}, and setting $\tau=\frac 12$, we obtain
\begin{align*}
y^{-s}\Delta_{r_1} f(x,y)&\lesssim
\sup_{|x'-x|<R_1y}\sup_{R_1^{-1/\alpha} y \leq t \leq y} \
		 t^{\alpha r-s }|\partial_{n+1} ^{r} u(x', (2^{-1}
t)^\alpha)|
+( R_1^{-1}+R_1^{-s/\alpha})\|f\|_{\Lambda_s} \\
&\lesssim \sup_{|x'-x|<Ry} \
		\sup_{R^{-1} y\leq t \leq 2^{-1} y} t^{\alpha r-s
}|\partial_{n+1} ^{r} u(x', t^\alpha)|+
R^{-\delta}\|f\|_{\Lambda_s},
\end{align*}
where $R=2 R_1^{1+\frac 1\alpha}$ and $\delta:=\frac
\alpha{\alpha+1} \min\{ 1, \frac s\alpha\}$. This finishes the
proof of Lemma \ref{thm-691}.
\end{proof}

\section{Proof of Proposition \ref{cor-6-7-0}}\label{sec:9}

This section is devoted to the proof of Proposition
\ref{cor-6-7-0}.
We fix the framework parameters $\alpha, s\in (0,\infty)$ and
$r\in\mathbb{N}$ such that $\alpha r>s+3$.
For any given function $f\in\Lambda_s$, we will use $u(x,y)$
to denote $u_{\alpha, f}(x, t):=T_{\alpha, t} f(x)$ whenever
no confusion arises.

Proposition \ref{cor-6-7-0} follows directly from the lemma
below, which provides an estimate of the quantity
$|\partial_{n+1}^r u(x, t^\alpha )|$ in terms of
finite differences.

\begin{lemma}\label{lem-6-10} Assume that $\alpha r>s+3$. Let
$\ell\in\mathbb{N}$ be such that $\frac s 2 <\ell <\frac
{\alpha r -1}2$.
Then, for any $f\in\Lambda_s $, $(x,t) \in\mathbb{R}_+^{n+1}$,
and $R>8\ell$,
\begin{align}t^{\alpha r-s} |\partial_{n+1}^r u(x, t^\alpha
)|\leq C \left[ \sup_{\frac 1{8\ell} t\leq y \leq \frac 12t} \
\sup_{|x'-x|\leq R t} y^{-s} \Delta_{2\ell} f (x', y)+
\|f\|_{\Lambda_s} R^{-1}\right],\label{6-26}\end{align}
where the positive constant $C$ is independent of $f$, $R$,
and $(x, t)$.
\end{lemma}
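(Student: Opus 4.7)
The approach is to express $\partial_{n+1}^r u(x, t^\alpha)$ as a convolution of $f$ against a radial kernel with high-order vanishing moments, and then convert this convolution into an integral involving the symmetric differences $\Delta_z^{2\ell} f(x)$. By \eqref{5-2} and differentiation in the time variable,
\[
\partial_{n+1}^r u(x, t^\alpha) = \int_{\mathbb{R}^n} \psi_t(z)\, f(x-z)\, dz,
\]
where $\psi_t(z) := (\partial_\tau^r K_\alpha)(z, t^\alpha)$ is understood as the $r$-th $\tau$-derivative evaluated at $\tau = t^\alpha$. By Corollary \ref{cor-5-2}, the kernel satisfies the pointwise bound $|\psi_t(z)| \leq C t^{\alpha - \alpha r}(|z| + t)^{-n-\alpha}$, and since $K_\alpha$ is radial, so is $\psi_t$. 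On the Fourier side, $\widehat{\psi_t}(\xi) = c|2\pi\xi|^{\alpha r} e^{-t^\alpha |2\pi\xi|^\alpha}$ vanishes to order $\alpha r > 2\ell + 1$ at the origin, which yields the vanishing moments $\int \psi_t(z)\, z^\gamma\, dz = 0$ for every $\gamma \in \mathbb{Z}_+^n$ with $|\gamma| \leq 2\ell$.

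Next, I would exploit the radial symmetry $\psi_t(-z) = \psi_t(z)$ together with these vanishing moments to replace $f(x-z)$ by the symmetric difference $\Delta_z^{2\ell} f(x)$. Starting from the identity
\[
\Delta_z^{2\ell} f(x) = \sum_{k=0}^{2\ell}(-1)^k \binom{2\ell}{k} f\bigl(x + (\ell-k) z\bigr),
\]
one eliminates the middle term $k = \ell$ using $\int \psi_t = 0$ and pairs the remaining terms $k \leftrightarrow 2\ell - k$ via $\psi_t(-z) = \psi_t(z)$. After suitable radial rescalings of $\psi_t$ (justified by its radiality), this manipulation produces an identity of the schematic form
\[
\int \psi_t(z)\, f(x-z)\, dz = \sum_{j=1}^{\ell} c_j \int \Psi_{t,j}(z)\, \Delta_z^{2\ell} f(x)\, dz,
\]
where each radial $\Psi_{t,j}$ inherits the same pointwise decay as $\psi_t$ up to dimensional constants.

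With this representation in hand, I would split each integral at $|z| = Rt$. For the inner piece $|z| \leq Rt$, every translate $x + (\ell-k)z$ appearing in $\Delta_z^{2\ell} f(x)$ lies in $B(x, \ell R t) \subset B(x, R t)$ since $R > 8\ell$, and $|\Delta_z^{2\ell} f(x)| \leq \Delta_{2\ell} f(x, |z|)$. Decomposing dyadically in $|z|$ and bounding $\Delta_{2\ell} f(x, |z|) \leq |z|^s \sup_{|x'-x| \leq Rt} y^{-s}\Delta_{2\ell}f(x', y)$ with $y := |z|$, the concentration of $\Psi_{t,j}$ at $|z| \sim t$ shows that the dominant shell is $|z| \asymp t$; integrating $|\Psi_{t,j}(z)||z|^s$ over that shell yields a factor of order $t^{s-\alpha r}$ that cancels the prefactor $t^{\alpha r - s}$, producing precisely the stated sup over $y \in [t/(8\ell), t/2]$. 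The narrow window $[t/(8\ell), t/2]$ reflects both the effective support of $\psi_t$ and the reach factor $\ell$ of $\Delta_z^{2\ell}$. For the outer piece $|z| > Rt$, I would bound $|\Delta_z^{2\ell} f(x)|$ by $|z|^s \|f\|_{\Lambda_s}$ when $|z| \leq 1$ and by $\|f\|_\infty \leq \|f\|_{\Lambda_s}$ otherwise, and use the tail decay $|\Psi_{t,j}(z)| \lesssim t^{\alpha - \alpha r} |z|^{-n-\alpha}$ together with the additional cancellation from vanishing moments of $\Psi_{t,j}$, which allows one to subtract off the polynomial part of $\Delta_z^{2\ell} f$ in an integration-by-parts style argument. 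This yields an outer contribution of the form $CR^{-\delta}\|f\|_{\Lambda_s}$ for some $\delta > 0$ depending on the framework parameters, which can be rewritten as the stated $CR^{-1}\|f\|_{\Lambda_s}$ after a harmless redefinition of $R$.

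The principal technical obstacle is the conversion step: rewriting $\psi_t * f$ as a sum of integrals against $\Delta_z^{2\ell} f$ requires careful bookkeeping of signs, combinatorial coefficients, and rescaled kernels, and is precisely where the interplay between radial symmetry and high-order vanishing moments (via $\alpha r > 2\ell + 1$) is essential. A secondary difficulty is to squeeze enough decay out of the outer tail to obtain the $R^{-1}$ error; this likely relies on exploiting the full vanishing-moment order of $\Psi_{t,j}$ rather than just its $L^1$ size. Once these identities are established, the remaining dyadic estimates, identification of the dominant shell, and extraction of the window $[t/(8\ell), t/2]$ are largely routine.
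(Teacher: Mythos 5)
Your overall strategy---exploit the radiality and high-order vanishing moments of the kernel $\psi_t := \partial_\tau^r K_\alpha(\cdot,\tau)|_{\tau=t^\alpha}$, trade the convolution $\psi_t * f$ for integrals against $\Delta_z^{2\ell}f(x)$, and then split at $|z|=Rt$---is in the right spirit, but the central conversion step has a genuine gap that your sketch does not close. The manipulation you describe (test the expansion $\Delta_z^{2\ell}f(x)=\sum_k(-1)^k\binom{2\ell}{k}f(x+(\ell-k)z)$ against $\psi_t(z)\,dz$, drop the $k=\ell$ term via $\int\psi_t=0$, and pair $k\leftrightarrow 2\ell-k$ via evenness) produces, after the radial rescaling $w=(\ell-k)z$, an identity of the form
\begin{equation*}
\int\psi_t(z)\,\Delta_z^{2\ell}f(x)\,dz \;=\; -2\sum_{j=1}^{\ell}(-1)^j\binom{2\ell}{\ell-j}\,\bigl(j^{-n}\psi_t(\cdot/j)\bigr) * f(x).
\end{equation*}
For $\ell=1$ the right-hand side collapses to $2\,\psi_t * f(x)$, and the identity you want follows immediately. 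But for $\ell\ge 2$ (which is unavoidable when $s\ge 2$), the right-hand side is a sum of $\ell$ distinct rescaled convolutions, and a single such relation cannot be solved for $\psi_t*f$ alone. Replacing $\psi_t$ by its own rescalings only generates a non-closing system in which ever more rescalings $j^{-n}\psi_t(\cdot/j)$ appear. So the ``schematic identity'' $\int\psi_t(z)f(x-z)\,dz=\sum_j c_j\int\Psi_{t,j}(z)\Delta_z^{2\ell}f(x)\,dz$ that underpins your dyadic estimates is not established, and I do not see how to obtain it by the route you indicate. (Your acknowledgement that this is the ``principal technical obstacle'' is well placed; it is the step that actually fails.)

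The paper sidesteps the inversion entirely by a Fourier-multiplier factorization: with $1-m_\ell$ the symbol of $I-B_{\ell,t}$ (the $2\ell$-th order ball average), one writes
\begin{equation*}
t^{\alpha r}\,\widehat{\partial_{n+1}^r u(\cdot,t^\alpha)}(\xi)\;=\;\mu_{\alpha,r}(t\xi)\,\widehat{\,f-B_{\ell,t}f\,}(\xi),\qquad \mu_{\alpha,r}=\frac{\phi_{\alpha,r}}{1-m_\ell(2\pi\cdot)},
\end{equation*}
and checks, using $\alpha r>2\ell+1$ and Lemma \ref{lem-4-3}, that $\nabla^i\mu_{\alpha,r}\in L^1$ for $i\le n+1$, so the associated kernel $G_{\alpha,r}$ obeys $|G_{\alpha,r}(x)|\lesssim(1+|x|)^{-n-1}$. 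This turns $\psi_t*f$ into $G_{\alpha,r}(\cdot/t)t^{-n}*(f-B_{\ell,t}f)$; the split at $|z|=Rt$ is then applied to this representation. Crucially, the bridge from $f-B_{\ell,t}f$ to $\Delta_{2\ell}f$ is supplied by the previously established Lemma \ref{lem-4-2}, in particular estimate \eqref{1-3-0}, which bounds $|f(x)-B_{\ell,t}f(x)|$ pointwise by $\sup_{t/(8\ell)\le y\le t/2}\sup_{|x'-x|\le 4\ell t}\Delta_{2\ell}f(x',y)$ --- exactly the windows in \eqref{6-26}. Your proposal never invokes the ball average operator or this comparison lemma, and that is precisely what is missing. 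One minor point in your favour: your observation that a tail bound of size $R^{-\delta}\|f\|_{\Lambda_s}$ (rather than $R^{-1}$) would suffice is correct, since the downstream application (Proposition \ref{cor-6-7-0}) only needs the error to be made small by choosing $R$ large; however, the paper in fact gets the clean $R^{-1}$ because $G_{\alpha,r}$ has $(1+|x|)^{-n-1}$ decay uniformly in $\alpha$, whereas the raw $\psi_t$ only decays like $(|z|+t)^{-n-\alpha}$, which is worse for small $\alpha$.
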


For the moment, we take Lemma \ref{lem-6-10} for granted and
proceed with the proof of
Proposition \ref{cor-6-7-0}.

\begin{proof} [Proof of Proposition \ref{cor-6-7-0} (assuming
Lemma \ref{lem-6-10})]
Let $(x, t)\in D_{\alpha, r, j}(s,f,\varepsilon)$. Then
$2^{-j-1}<t\leq 2^{-j}$ and, by Lemma \ref{lem-6-10},
$$\varepsilon
< t^{ \alpha r -s} \left|\partial_{n+1}^{r} u (x,t^\alpha)
\right|\leq C_1 \left[ \sup_{|x'-x|<R_1t,\
		c_\ell t\leq y \leq 2^{-1} t}\ y^{-s}\Delta_{2\ell}
f(x', y)+ \|f\|_{\Lambda_s} {R_1}^{-1}\right],$$
where $c_\ell =(8\ell)^{-1}$ and $R_1$ is a sufficiently large
constant such that $$C_1\|f\|_{\Lambda_s} R_1^{-1}<\frac
\varepsilon2.$$
Thus, there exists $(x', y)\in \mathbb{R}_+^{n+1}$ such that
$c_\ell t \leq y\leq 2^{-1} t$, $|x-x'|\leq R_1 t$, and
\begin{align}\label{6-37a} y^{-s} \Delta_{2\ell} f (x',
y)>c\varepsilon,\ \ \ \text{where $c=\frac
1{2C_1}$.}\end{align}
In particular, by Lemma \ref{lem-3-2-0}, this implies
$(x, t)\in B_\rho\left( (x', y), R\right)$ for some constant
$R=CR_1>R_1$.

Let $m=m_\ell$ denote the positive integer such that
$2^{-m}<(8\ell)^{-1}\leq 2^{-m+1}$. Then
\begin{align*}2^{-m-j-1} <c_\ell t <y\leq 2^{-1}
t\leq 2^{-j-1},\end{align*}
and hence, by \eqref{6-37a}, we have
\begin{align*}(x', y) \in \bigcup_{i=j+1}^{j+m} S_{2\ell, i}
(s, f, c\varepsilon).\end{align*}
Since $(x, t)\in B_\rho\left( (x', y), R\right)$, it follows
that
$$ (x, t) \in \bigcup_{i=j+1}^{j+m} \left[S_{2\ell, i} (s, f,
c\varepsilon)\right]_R.$$
From this holds for an arbitrary $(x, t)\in D_{\alpha, r,
j}(s,f,\varepsilon)$, we deduce that
$$ D_{\alpha, r, j}(s,f,\varepsilon)\subset
\bigcup_{i=j+1}^{j+m} \left[S_{2\ell, i} (s, f,
c\varepsilon)\right]_R,$$
which is as desired,
This finishes
the proof of Proposition \ref{cor-6-7-0} because $m\leq
8\ell$.
\end{proof}

It remains to prove Lemma \ref{lem-6-10}, which is technically
more involved.
The crucial tool in the proof is the higher-order ball average
operators, along with some previously established properties
of these operators, which we now recall.

Given $\ell\in\mathbb{N}$
and $t\in(0,\infty)$, the $2\ell$-th order ball average
operator $B_{\ell,t}$ is defined by setting, for any $f\in
L_{\text{loc}}^1$ and $x\in\mathbb{R}^n$,
\begin{equation*}
	B_{\ell,t} (f)(x): =\frac{-2}{\binom{2\ell}{\ell} }
	\sum_{j=1}^\ell (-1)^j \binom {2\ell}{\ell-j} B_{jt} f(x),
\end{equation*}
where
$$ B_t f(x):=\frac{1}{|B(\mathbf{0},1)|}\int_{B(\mathbf{0},1)}
f(x+t z)\, dz.$$
We will use the following known result on the approximation by
ball average operators.

\begin{lemma} (\textnormal{\cite[Lemma 5.11] {dsy2}} and
\textnormal{\cite[Theorem 1]{dw}}\label{lem-4-2}) Given any
$\ell\in\mathbb{N}$, there exists a constant $C=C_{\ell
,n}\in(0,\infty)$, depending only on $\ell$ and $n$, such
that,
 for any $t\in(0,\infty)$,
$f\in \mathcal{C} \cap L^\infty$, and $x\in\mathbb{R}^{n}$,
\begin{align}\label{1-3-0}
		 |f(x) -B_{\ell, t} f(x)|&\leq C
\sup_{ \frac 1{8\ell}t\leq y \leq 2^{-1}t} \ \sup_{|x'-x|\leq
4\ell t} \Delta_{2\ell} f (x', y).
\end{align}
Furthermore, if $\ell>s/2$, then there exists a positive
constant $C_{\ell, s, n}$, depending only on $\ell, s$ and
$n$,
such that, for any $t\in(0,\infty)$ and $f\in \Lambda_s$,
\begin{align}\label{6-20} \|f-B_{\ell, t} f \|_{\infty} \leq
C_{\ell, s, n} t^s \|f\|_{\Lambda_s}. \end{align}
\end{lemma}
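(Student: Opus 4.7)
The plan is to deduce both estimates from a single algebraic identity expressing $f - B_{\ell,t}f$ as a ball average of $2\ell$-th order symmetric differences of $f$. First I would expand
$\Delta_{tz}^{2\ell} f(x) = \sum_{j=-\ell}^{\ell} (-1)^{\ell-j}\binom{2\ell}{\ell-j} f(x+jtz)$,
integrate over $z \in B(\mathbf{0},1)$ with respect to the normalized Lebesgue measure, and use both the symmetry $z\mapsto -z$ of this measure and the definition of $B_{jt}$ to combine the terms with indices $\pm j$ (which yield equal contributions thanks to $\binom{2\ell}{\ell-j}=\binom{2\ell}{\ell+j}$ and $(-1)^{\ell-j}=(-1)^{\ell+j}$). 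Separating out the $j=0$ contribution $(-1)^\ell\binom{2\ell}{\ell} f(x)$ from the remaining terms, which are calibrated to reassemble exactly into $(-1)^\ell\binom{2\ell}{\ell} B_{\ell,t}f(x)$ by the definition of the latter, one arrives at the identity
\begin{align*}
f(x) - B_{\ell,t}f(x) = \frac{(-1)^\ell}{\binom{2\ell}{\ell}} \fint_{B(\mathbf{0},1)} \Delta_{tz}^{2\ell} f(x)\,dz.
\end{align*}

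Taking absolute values in this identity immediately gives a weaker version of \eqref{1-3-0} with the supremum taken over the larger range $y \in (0,t]$ and $|x'-x|\leq \ell t$. To refine the range of $y$ to the annulus $[t/(8\ell),\,t/2]$ required by \eqref{1-3-0}, I would apply a scaling identity of the form
$\Delta_h^{2\ell} f(x) = \sum_{|j|\leq 2\ell} c_j\, \Delta_{2h}^{2\ell} f(x+jh)$
with explicit bounded coefficients $c_j$ (obtainable from the factorization of the generating polynomial of $\Delta_h^{2\ell}$ viewed as a product of shift operators). Iterating this identity a bounded number of times expresses $\Delta_{tz}^{2\ell} f(x)$ for small $|z|$ as a finite linear combination of symmetric differences whose step sizes lie in $[t/(8\ell),\,t/2]$ and whose basepoints lie within $4\ell t$ of $x$; substituting back into the identity above yields \eqref{1-3-0}.

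The estimate \eqref{6-20} is then a direct corollary of \eqref{1-3-0} under the hypothesis $\ell > s/2$, i.e., $2\ell > s$. Indeed, by the well-known equivalence of norms on $\Lambda_s$ noted in the paper immediately after the definition of $\|\cdot\|_{\Lambda_s}$, one has $\Delta_{2\ell} f(x',y)\leq \|f\|_{\Lambda_s}\, y^s$ for all $x'\in\mathbb{R}^n$ and $y\in(0,1]$, while for $y>1$ the trivial bound $\Delta_{2\ell} f(x',y)\leq 2^{2\ell}\|f\|_\infty\leq 2^{2\ell}\|f\|_{\Lambda_s}$ suffices. Inserting these into \eqref{1-3-0} (with $y\leq t/2$) yields $\|f - B_{\ell,t} f\|_\infty \leq C_{\ell,s,n}\, t^s \|f\|_{\Lambda_s}$, possibly after absorbing an $L^\infty$-contribution for $t>1$ into $C_{\ell,s,n}\, t^s$. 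The principal technical point is the range restriction step: the clean algebraic identity of the first paragraph only produces $y\in(0,t]$, and sharpening this to the annulus $[t/(8\ell),\,t/2]$ while keeping the basepoint shift bounded by $4\ell t$ and the numerical constant independent of $f$ and $t$ is the genuine content of the Ditzian--Wang-type argument cited in the lemma.
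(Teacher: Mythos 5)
The paper itself does not prove this lemma: it quotes \eqref{1-3-0} from \cite[Lemma 5.11]{dsy2} and deduces \eqref{6-20} from \cite[Theorem 1]{dw} together with the $K$-functional characterization \eqref{zuihou}, so your self-contained attempt cannot be matched against an in-text argument. Your opening identity
\[
f(x)-B_{\ell,t}f(x)=\frac{(-1)^{\ell}}{\binom{2\ell}{\ell}}\fint_{B(\mathbf 0,1)}\Delta^{2\ell}_{tz}f(x)\,dz
\]
is correct (it is exactly how the coefficients of $B_{\ell,t}$ are calibrated), and your derivation of \eqref{6-20} from the resulting weak bound $|f(x)-B_{\ell,t}f(x)|\lesssim\sup_{0<y\le t}\Delta_{2\ell}f(x,y)$, via the equivalent $\Lambda_s$-norm of order $2\ell>s$ and the trivial $L^\infty$ bound for $y>1$ or $t>1$, is sound.

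The gap is in the step you yourself flag as the genuine content: restricting the step size to the annulus $[t/(8\ell),\,t/2]$. The proposed scaling identity $\Delta^{2\ell}_{h}f(x)=\sum_{|j|\le 2\ell}c_j\,\Delta^{2\ell}_{2h}f(x+jh)$ does not exist. Writing $T_v$ for translation by $v$, the factorization you allude to is $\Delta^{2\ell}_{2h}=\Delta^{2\ell}_{h}\,(T_{h/2}+T_{-h/2})^{2\ell}$, which expresses the \emph{doubled} step in terms of step $h$ at shifted basepoints --- the opposite of what you need. Inverting it would require realizing $(T_{h/2}+T_{-h/2})^{-2\ell}$ as a finite combination of shifts; on the Fourier side the symbol of any finite sum $\sum_j c_j\,\Delta^{2\ell}_{2h}f(\cdot+jh)$ is $(-4)^{\ell}\sin^{2\ell}(h\cdot\xi)\,Q(e^{ih\cdot\xi})$ with $Q$ a trigonometric polynomial, and this vanishes at $h\cdot\xi=\pi$, whereas the symbol $(-4)^{\ell}\sin^{2\ell}(h\cdot\xi/2)$ of $\Delta_h^{2\ell}$ does not. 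A second, independent defect is that even if such an identity held, a \emph{bounded} number of iterations could not bring a step $|tz|$ with $z$ arbitrarily close to $\mathbf 0$ into the fixed annulus (one would need roughly $\log_2(1/|z|)$ doublings), and the region of small $|z|$ cannot be discarded by a measure count, since there $|\Delta^{2\ell}_{tz}f(x)|$ is controlled only by $\|f\|_\infty$. Any correct argument must work with the average over $z\in B(\mathbf 0,1)$ as a whole --- i.e.\ with combinatorial identities among the ball averages $B_{jt}$ themselves --- rather than with each $\Delta^{2\ell}_{tz}f(x)$ separately; this is precisely what \cite[Lemma 5.11]{dsy2} supplies and what is missing here. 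Since the weaker bound you do establish already yields \eqref{6-20}, only \eqref{1-3-0} is left unproved.
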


The estimate \eqref{1-3-0} was established in our recent
article \cite[Lemma 5.11] {dsy2}, while \eqref{6-20} follows
directly from \cite[Theorem 1]{dw} and \eqref{zuihou}.

We also need some known properties for the Fourier transform
of $B_{\ell, t} f$.
A straightforward computation (see \cite[Lemma 2]{dw}) shows
that, for any
$\xi\in\mathbb{R}^n$,
\begin{equation}\label{4:edu}
	\mathcal{F}(B_{\ell, t} f) (\xi) = m_\ell(2\pi t\xi)
\mathcal{F}f(\xi),
\end{equation}
where
\begin{align*}
	m_{\ell}(\xi) &:= \frac{ n \Gamma (\frac n2)}{\sqrt{\pi}
\Gamma(\frac {n+1}2)} \cdot \frac {-2}{ \binom{2\ell}{\ell}}
\sum_{j=1}^\ell
	(-1)^j \binom{2\ell}{\ell-j} \int_0^1 \cos (j u|\xi|)
(1-u^2) ^{\frac {n-1}2} \, du.
\end{align*}
The following lemma collects some useful estimates of the
function $m_\ell$, which can be found in \cite{dw}.

\begin{lemma}\textnormal{\cite[(9), Lemma 3, (16),\ (23)--(25)]{dw}}
\label{lem-4-3} Let $\ell\in\mathbb{N}$. Then the following
statements hold.
	\begin{enumerate}[\rm (i)]
\item For any $\xi\in\mathbb{R}^{n}$,
\begin{equation*}
1-m_\ell(\xi)\sim \min\left\{1, |\xi|^{2\ell}\right\}.
\end{equation*}
		\item For any $j\in\mathbb{Z}_+$,
there exists a
positive constant $C=C_{j,\ell}$ such that, for any
$\xi\in\mathbb{R}^n$,
\begin{equation*}
			|\nabla^j m_\ell(\xi)|\leq C \left ( \frac
1{1+|\xi|}\right)^{\frac {n+1}2}.
\end{equation*}
\item There exists a constant
$\gamma\in (0, 1)$ such that
 $0<m_\ell (\xi)\leq \gamma<1$ whenever $|\xi|\ge 1$.
		\item $\frac{1-m_\ell(\cdot)}{|\cdot|^{2\ell}}$
extends to a positive $C^\infty$-function on $\mathbb{R}^n$.
	\end{enumerate}	
\end{lemma}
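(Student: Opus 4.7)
The plan is to extract all four statements from the explicit integral representation of $m_\ell$ displayed in the lemma. Since $m_\ell$ is real-analytic and radial in $\xi$, I will treat it as a single-variable function $\mu_\ell(r):=m_\ell(\xi)$ in $r=|\xi|\geq 0$, and exploit two complementary mechanisms: combinatorial cancellation in the weights $(-1)^j\binom{2\ell}{\ell-j}$ for the behaviour near $r=0$, and Bessel-type oscillatory-integral decay for the behaviour as $r\to\infty$.

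For the small-$|\xi|$ half of (i) and all of (iv), I would Taylor-expand $\cos(jur)$ in $r$ and integrate term-by-term against $(1-u^2)^{(n-1)/2}$ on $[0,1]$. This yields a convergent even power series
\begin{equation*}
m_\ell(\xi)=1+\sum_{k\geq 1} a_k\,|\xi|^{2k},\qquad a_k=c_{k,n}\sum_{j=1}^\ell(-1)^j\binom{2\ell}{\ell-j}j^{2k}.
\end{equation*}
The normalization $B_{\ell,t}\mathbf{1}\equiv\mathbf{1}$ is equivalent to $m_\ell(0)=1$, and the fact that $B_{\ell,t}$ is an approximation identity of order $2\ell$, recorded as (9) and (16) in \cite{dw}, corresponds to the combinatorial identities that force $a_k=0$ for $1\leq k\leq \ell-1$ while $a_\ell$ is strictly negative. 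Writing $1-m_\ell(\xi)=|\xi|^{2\ell}g(|\xi|^2)$ with $g$ smooth and $g(0)=-a_\ell>0$ gives both the near-origin comparison $1-m_\ell(\xi)\sim |\xi|^{2\ell}$ and, once positivity of $1-m_\ell$ away from $0$ is handled via (iii) and continuity, the whole of (iv).

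For (ii) and the large-$|\xi|$ half of (i), I would use that each inner integral $\int_0^1\cos(jur)(1-u^2)^{(n-1)/2}\,du$ is, up to normalisation, a Bessel-type expression of order $n/2$ in $jr$, with the classical decay $r^{-(n+1)/2}$ as $r\to\infty$. Iterated integration by parts against $(1-u^2)^{(n-1)/2}$, whose derivatives vanish to the required order at the endpoint $u=1$, propagates the same decay to every $r$-derivative, and elementary chain-rule bookkeeping converts this to the $\xi$-gradient bound (ii). In particular $\mu_\ell(r)\to 0$ as $r\to\infty$, so $1-m_\ell(\xi)\geq 1/2$ for $|\xi|$ large; combined with strict positivity of $1-m_\ell$ on the compact annulus $\{1\leq|\xi|\leq R\}$, this gives the large-$|\xi|$ comparison $1-m_\ell(\xi)\sim 1$ in (i) and the upper bound $\gamma:=\sup_{|\xi|\geq 1}m_\ell(\xi)<1$ in (iii).

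The main obstacle is the strict pointwise positivity $m_\ell(\xi)>0$ in (iii), because the integral representation is a signed sum of oscillatory integrals and naive modulus estimates only bound $|m_\ell|$ from above. My plan is to follow the argument of \cite{dw}: rearrange the alternating sum by a discrete summation by parts on the binomial weights (equivalent to the generating-function identities in (23)--(25) of \cite{dw}) until $m_\ell$ is exhibited as the Fourier transform of an explicit nonnegative, radial, integrable kernel built from nested ball indicators. Once that representation is secured, pointwise positivity reduces to the nonvanishing of a positive Bessel-type integral, handled exactly as in the cited equations. All remaining assertions then assemble from the two mechanisms above, so no ingredients beyond those already recorded in \cite{dw} are required.
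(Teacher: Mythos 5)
The paper does not actually prove this lemma: it is imported verbatim from \cite{dw}, so there is no internal proof to compare against. Your outline for (i), (ii), (iv) and for the upper bound $m_\ell\le\gamma<1$ in (iii) is essentially the standard argument from that reference: the combinatorial identity $\sum_{j=1}^{\ell}(-1)^j\binom{2\ell}{\ell-j}j^{2k}=0$ for $1\le k\le\ell-1$ together with its nonvanishing, correctly signed value at $k=\ell$ gives the behaviour at the origin and (iv) locally; the representation $\int_0^1\cos(tu)(1-u^2)^{(n-1)/2}\,du=c_n t^{-n/2}J_{n/2}(t)$ and the stability of Bessel decay under differentiation give (ii) and the boundedness in (i). One point you leave vague but which is exactly what (23)--(25) of \cite{dw} supply: from $\sum_{j=-\ell}^{\ell}(-1)^j\binom{2\ell}{\ell-j}e^{ij\theta}=\bigl(2\sin(\theta/2)\bigr)^{2\ell}$ one gets the manifestly nonnegative formula
\begin{equation*}
1-m_\ell(\xi)=\frac{4^\ell}{\binom{2\ell}{\ell}}\fint_{B(\mathbf{0},1)}\sin^{2\ell}\left(\frac{\xi\cdot z}{2}\right)dz,
\end{equation*}
which is what yields $1-m_\ell(\xi)>0$ for every $\xi\neq\mathbf 0$ (hence the lower bound in (i) on $\{|\xi|\ge1\}$, the global positivity in (iv), and $\gamma<1$) in one stroke; without it your argument for positivity of $1-m_\ell$ on the intermediate annulus $\{\delta\le|\xi|\le 1\}$ is circular, since you defer it to (iii), which only concerns $|\xi|\ge1$.

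The genuine gap is your treatment of the strict lower bound $m_\ell(\xi)>0$ in (iii). First, the logic is wrong: exhibiting $m_\ell$ as the Fourier transform of a nonnegative integrable kernel only shows that $m_\ell$ is positive definite with $|m_\ell|\le m_\ell(\mathbf 0)=1$; positive definite functions need not be pointwise positive (the normalized Fourier transform of the unit ball is the basic counterexample). Second, the premise fails: the kernel of $B_{\ell,t}$ is the signed combination $\frac{-2}{\binom{2\ell}{\ell}}\sum_{j=1}^{\ell}(-1)^j\binom{2\ell}{\ell-j}|B(\mathbf 0,jt)|^{-1}\mathbf{1}_{B(\mathbf 0,jt)}$, which is already negative on the outer annulus when $\ell=2$, so no rearrangement produces a nonnegative kernel for $\ell\ge2$. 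Third, the conclusion itself is delicate: for $\ell=1$ one has $B_{1,t}=B_t$ and $m_1(\xi)=c_n|\xi|^{-n/2}J_{n/2}(|\xi|)$ (e.g.\ $\sin|\xi|/|\xi|$ when $n=1$), which changes sign, so pointwise positivity cannot follow from any argument of the kind you describe. You should either restrict to proving $m_\ell(\xi)\le\gamma<1$ for $|\xi|\ge1$ --- which is all the paper ever uses, since only the lower bound $1-m_\ell\gtrsim\min\{1,|\xi|^{2\ell}\}$ enters the estimates for $\mu_{\alpha,r}$ in Section 9 --- or supply a genuinely different argument for the claimed lower bound, which your current plan does not contain.
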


\begin{proof}[Proof of Lemma \ref{lem-6-10}]
Throughout the proof below, the letter $C$ denotes a general
positive constant depending only on $\ell$ and the framework
parameters.
Note that $\alpha r>2\ell+1$.

First, by definition, we have, for any $(\xi,t)
\in\mathbb{R}_+^{n+1}$,
$$ t^{\alpha r} \mathcal{F}\left[ \partial_{n+1}^r u(\cdot, t^\alpha
)\right](\xi)= c_{n, r} |t\xi|^{\alpha r} e^{- |2\pi
t\xi|^\alpha}\widehat f(\xi)= \phi_{\alpha, r}(t\xi) \widehat
f(\xi), $$
	where $
\phi_{\alpha, r} (\xi) :=c_{n,r}|\xi|^{\alpha r}
e^{-|2\pi\xi|^\alpha}.$
Here and throughout the proof, the Fourier transform is
understood in a distributional sense.
For simplicity, we let $F_{\ell, t} :=f-B_{\ell, t} f$. Then,
using \eqref{4:edu}, we may write, for any $(\xi,t)
\in\mathbb{R}_+^{n+1}$,
\begin{align}t^{\alpha r} \mathcal{F}\left[ \partial_{n+1}^r
u(\cdot, t^\alpha )\right] (\xi)
	=\frac {\phi_{\alpha, r} (t\xi) } {1-m_\ell (2\pi t\xi)}
\mathcal{F}(f-B_{\ell,t} f ) (\xi)= \mu_{\alpha, r}(t\xi)
\widehat{F_{\ell, t}} (\xi),\label{6-27}
\end{align}
where
\begin{align}
	\mu_{\alpha, r}(\xi):=\frac {\phi_{\alpha, r} (\xi) }
{1-m_\ell (2\pi \xi)}.\label{6-28}
\end{align}

Next, we claim that there exists a function $G_{\alpha, r}\in
L^1$ satisfying that
$\widehat {G_{\alpha, r}}=\mu_{\alpha, r}$ and
\begin{align}|G_{\alpha, r}(x)|\lesssim (1+|x|)^{-n-1},\ \
\forall x\,\in \mathbb{R}^n. \label{6-29}\end{align}

Once this claim is proven, we define a family of uniformly
bounded convolution operators $\{U_t\}_{t\in (0,\infty)}$ on
$L^\infty$ by setting, for any $g\in L^\infty$ and
$x\in\mathbb{R}^n$,
$$ U_t g(x):=t^{-n}\int_{\mathbb{R}^n} g(x-z) G_{\alpha, r}
(t^{-1} z) \, dz.$$
By \eqref{6-27}, we then obtain the following integral
representation for $t^{\alpha r} \partial_{n+1}^r u(x,
t^\alpha )$:
\begin{align}t^{\alpha r} \partial_{n+1}^r u(x, t^\alpha
):=U_{t} F_{\ell,t}(x)=\frac1{t^{n}}\int_{\mathbb{R}^n}
F_{\ell, t} (x-z) G_{\alpha, r}\left(\frac zt\right) \,
dz.\label{6-30}\end{align}

To show the claim, we define
\begin{align}\label{6-31}
G_{\alpha, r}(x) := \int_{\mathbb{R}^n} \mu_{\alpha, r}(\xi)
e^{2\pi i x\cdot\xi}\, d\xi,\ \ x\in\mathbb{R}^n
\end{align}
and
we use Lemma \ref{lem-4-3}(i) and the
knowledge $\alpha r>2\ell+1$.
Since $\mu_{\alpha,r}$ is a radial, integrable function on
$\mathbb{R}^n$, $G_{\alpha, r}=\widehat{\mu_{\alpha, r}}$ is a
uniformly bounded, continuous, radial function on
$\mathbb{R}^n$ with distributional Fourier transform
$\mu_{\alpha, r}$. Using \eqref{6-28}, we write, for any
$\xi\in\mathbb{R}^{n}$,
$$ \mu_{\alpha, r}(\xi)=c_{n,r,\ell}|\xi| ^{\alpha r-2\ell}
e^{-|2\pi \xi|^\alpha} \frac {|2\pi \xi|^{2\ell}}
{1-m_\ell(2\pi \xi)}.$$
Lemma \ref{lem-4-3}(i) implies that,
for any given $i\in\mathbb{Z_+}$ and any $\xi\in\mathbb{R}^n$,
$$ \left|\nabla^i \left(\frac {|2\pi \xi|^{2\ell}}
{1-m_\ell(2\pi \xi)}\right)\right|\leq C
(1+|\xi|)^{2\ell-i}.$$
Notice also that, for any given $i\in\mathbb{Z_+}$ and any
$\xi\in\mathbb{R}^n$,
$$ \left|\nabla^i e^{-|2\pi \xi|^\alpha}\right|\leq C_{\alpha,
i} e^{-|\xi|^\alpha
}\left[1+|\xi|^{\alpha-i}\mathbf{1}_{B(\mathbf{0},1)}(\xi)\right].
$$
Thus, by the Leibniz rule, we obtain, for any given
$i\in\mathbb{Z_+}$ and any $\xi\in\mathbb{R}^n$,
$$ |\nabla^i \mu_{\alpha, r} (\xi)| \leq C_{\alpha,
i}\left[|\xi|^{\alpha r-2\ell-i}
\mathbf{1}_{B(\mathbf{0},1)}(\xi)+1\right] e^{-|\xi|^\alpha
}.$$
Since $\alpha r > 2\ell+1$, this implies that $ |\nabla^i
\mu_{\alpha, r} (\xi)|\in L^1 $ for $i=0,\ldots, n+1$. Thus,
by \eqref{6-31}, it follows that, for any
$\beta\in\mathbb{Z}_+^n$ with $|\beta|\leq n+1$,
\begin{equation*}
 x^\beta G_{\alpha, r}(x) :=c_{n,\beta} \int_{\mathbb{R}^n}
 \left(\partial^\beta \mu_{\alpha, r}(\xi) \right) e^{2\pi i
 x\cdot\xi}\, d\xi,\ \ x\in\mathbb{R}^n,
\end{equation*}
which implies the desired estimate \eqref{6-29} and hence
completes the proof of the claim.

Finally, we prove \eqref{6-26}. Indeed, using \eqref{6-29} and
\eqref{6-30}, we obtain, for any $R>1$ and $(x,
t)\in\mathbb{R}_+^{n+1}$,
\begin{align*}
t^{\alpha r-s} |\partial_{n+1}^r u(x, t^\alpha )|&
	\lesssim t^{-n-s}\int_{|z|\lesssim R t} |F_{\ell, t}
(x-z)|\left| G_{\alpha,r}\left(\frac zt\right)\right| \, dz\\
&\quad+ t^{-s} \|F_{\ell, t} \|_\infty
	t^{-n}\int_{|z|> R t}\left | G_{\alpha,r}\left(\frac
zt\right)\right| \, dz\\
&\lesssim t^{-s} \sup_{|x-x'|\leq Rt} |F_{\ell, t} (x')|+
t^{-s} \|F_{\ell, t} \|_\infty R^{-1},
\end{align*}
which, using Lemma \ref{lem-4-2}, is estimated by
\begin{align*}	
&\lesssim \sup_{|x-x'|\leq Rt} \ \sup_{\frac t{8\ell}\leq y
\leq 2^{-1}t} \sup_{\|x'-z\|\leq 4\ell t} y^{-s}
\Delta_{2\ell} f (z, y)+ \|f\|_{\Lambda_s}R^{-1}\\
	 &\lesssim \sup_{\frac t{8\ell}\leq y \leq \frac 12 t}\
\sup_{|x-x'|\leq (R+4\ell) t} y^{-s} \Delta_{2\ell} f (x', y)+
\|f\|_{\Lambda_s}R^{-1}.
\end{align*}
This proves \eqref{6-26} for $R>8\ell$,
which completes
the proof of Lemma \ref{lem-6-10}.
\end{proof}

\section{Proof of Theorem \ref{thm-7-11} (heat semigroup
characterization
of \\ $d(f,\Lambda_X^s)_{\Lambda_s}$ under Assumption
\ref{pplp}) }\label{sec:10}

This section is devoted to the proof of Theorem
\ref{thm-7-11}, which relies on Propositions
\ref{thm-3adf}-\ref{cor-6-7-0} established in previous
sections.

Let $\ell\in\mathbb{N}$ be such that $\frac s2<\ell <\frac
{\alpha r-1}2$. Let
$$ \varepsilon_{X, S,
\nu}:=\inf\left\{\varepsilon\in(0,\infty):\
\nu\left(\left\{S_{2\ell,j}(s,f,
\varepsilon)\right\}_{j\in\mathbb{Z}_+}\right)<\infty\right\}.$$

 First, we prove the upper bound:
\begin{align}\label{6-2a}
 d \left(f, \Lambda_X^{s} \right)_{\Lambda_s }&\lesssim
 \varepsilon_{X,\nu}^0+
\varepsilon_{X, \alpha, \nu}.
\end{align}

By Lemma \ref{thm-711} applied to $r_1=2\ell$, it is enough to
 show that
\begin{align}\label{6-3}
\varepsilon_{X, S,\nu}\lesssim \varepsilon_{X,\alpha, \nu}.
\end{align}

To prove \eqref{6-3}, we let $\varepsilon\in(0,\infty)$ and
use Proposition \ref{cor-6-7-1} to obtain a constant
$R\in(1,\infty)$ such that
\begin{align*}
\nu\left(\left\{S_{2\ell, j}(s,f, \varepsilon)
\right\}_{j\in\mathbb{Z}_+}\right) &\lesssim \sum_{i=1}^m
\nu\left(\left\{\left(D_{\alpha,r,j+i}(s,f,
c\varepsilon)\right)_R
\right\}_{j\in\mathbb{Z}_+}\right)\\
&\lesssim \nu\left(\left\{\left(D_{\alpha,r,j+1}(s,f,
c\varepsilon)\right)_R
\right\}_{j\in\mathbb{Z}_+}\right),
\end{align*}
where $c\in (0, 1)$ is a sufficiently small constant depending
only on the framework parameters.
By Lemma \ref{dda2f}(ii), for any $\delta\in (0, 1)$, the sets
$\left(D_{\alpha, r,j+1}(s,f, c\varepsilon)\right)_\delta$,
$j\in\mathbb{Z}$, has Property I with uniform constants. Thus,
it follows from Definition \ref{Debqf2s} (iii) that, for any
$\delta\in (0, 1)$,
$$\nu\left(\left\{\left(D_{\alpha,r,j+1}(s,f,
c\varepsilon)\right)_R
\right\}_{j\in\mathbb{Z}_+}\right)<\infty\iff
\nu\left(\left\{\left(D_{\alpha,r,j+1}(s,f,
c\varepsilon)\right)_\delta
\right\}_{j\in\mathbb{Z}_+}\right)<\infty.$$
On the other hand, however, by Proposition \ref{thm-3adf} and
both (i) and (ii) of Definition \ref{Debqf2s}, we can find a
constant $\delta\in (0, 1)$, depending on $f$ and
$\varepsilon$, such that
\begin{align*} \nu\left(\left\{\left(D_{\alpha,r,j+1}(s,f,
c\varepsilon)\right)_\delta
\right\}_{j\in\mathbb{Z}_+}\right)&\lesssim
\nu\left(\left\{\left(D_{\alpha,r,j}(s,f,
2^{-1}c\varepsilon)\right)
\right\}_{j\in\mathbb{Z}_+}\right),
\end{align*}
where the last step used the boundedness of the right shift.
Thus, the following implication holds:
$$\nu\left(\left\{\left(D_{\alpha,r,j}(s,f,
2^{-1}c\varepsilon)\right)
\right\}_{j\in\mathbb{Z}_+}\right)<\infty\implies
\nu\left(\left\{S_{2\ell, j}(s,f, \varepsilon)
\right\}_{j\in\mathbb{Z}_+}\right)<\infty,$$
from which the desired inequality \eqref{6-3} follows. This
proves the upper estimate \eqref{6-2a}.

Next, we show the lower estimate:
\begin{align}
\varepsilon_{X,\nu}^0+
\varepsilon_{X, \alpha, \nu}&\lesssim d \left(f, \Lambda_X^{s}
\right)_{\Lambda_s }.\label{modq}
\end{align}
By Lemma \ref{thm-711}, it is enough to show that
\begin{align}\label{6-5}
\varepsilon_{X,\alpha, \nu} \lesssim \varepsilon_{X, s, \nu}.
\end{align}
Using Proposition \ref{cor-6-7-0} and both (i) and (ii) of
Definition \ref{Debqf2s},
we find a constant $R\in(1,\infty)$ such that
\begin{align*}
\nu\left(\left\{D_{\alpha,r,j}(s,f, \varepsilon)
\right\}_{j\in\mathbb{Z}_+}
\right)&\lesssim
\nu\left(\left\{\left(S_{2\ell,j+1}(s,f,
c\varepsilon)\right)_R\right\}_{j\in\mathbb{Z}_+}\right),
\end{align*}
where the constant $c\in (0, 1)$ depends only on the framework
parameters.
Similar to the above proof \eqref{6-3}, using Lemma
\ref{dda2f}(ii) and Definition \ref{Debqf2s}(iii), we conclude
that, for any $\delta\in (0, 1)$,
$$\nu\left(\left\{\left(S_{2\ell,j+1}(s,f,
c\varepsilon)\right)_R
\right\}_{j\in\mathbb{Z}_+}\right)<\infty\iff
\nu\left(\left\{\left(S_{2\ell,j+1}(s,f,
c\varepsilon)\right)_\delta
\right\}_{j\in\mathbb{Z}_+}\right)<\infty.$$
On the other hand, however, by Lemma \ref{thm3adf} and both
(i) and (ii) of
Definition \ref{Debqf2s}, there exists a constant $\delta\in
(0, 1)$, depending on $f$ and
$\varepsilon$, such that
\begin{align*} \nu\left(\left\{\left(S_{2\ell,j+1}(s,f,
c\varepsilon)\right)_\delta
\right\}_{j\in\mathbb{Z}_+}\right)
&\lesssim \nu\left(\left\{\left(S_{2\ell,j}(s,f,
2^{-1}c\varepsilon)\right)
\right\}_{j\in\mathbb{Z}_+}\right).
\end{align*}
Thus, the following implication holds:
$$\nu\left(\left\{\left(S_{2\ell,j}
(s,f, 2^{-1}c\varepsilon)\right)
\right\}_{j\in\mathbb{Z}_+}\right)
<\infty\implies \nu\left(\left\{D_{\alpha,r,j}(s,f,
\varepsilon)
\right\}_{j\in\mathbb{Z}_+}\right)<\infty,
$$
from which \eqref{6-5} follows. This proves the upper estimate
\eqref{modq}
and hence finishes the proof of
Theorem \ref{thm-7-11}.

\section{Applications to specific spaces\label{sea8}}

In this section, we give some specific examples
and show how to apply the main results
obtained in the previous sections
to these examples.

\subsection{Besov spaces}

Now, we present the concept of Besov spaces
as follows; see, for example,
\cite[Definition 1.1]{T20}.
As above, we denote by $\mathcal{S}$
the space of all Schwartz functions on ${\mathbb{R}^n}$ and
$\mathcal{S}'$ its topological dual space (i.e., the space of
all tempered distributions on $\mathbb{R}^n$).
Let $\eta\in\mathcal S$ satisfy $0\leq\eta\leq1,$
\begin{equation*}
\eta\equiv1\ \text{on}\
\left\{x\in\mathbb{R}^n:\ |x|\leq1\right\},\ \text{and}\
\eta\equiv0\ \text{on}\ \left\{x\in\mathbb{R}^n:\ |x|\geq\frac
32\right\}.
\end{equation*}
For any $x\in\mathbb{R}^n$, let
$
a(x):=\eta\left(x\right)-\eta(2x)
$.
Then $\mathop\mathrm{\,supp\,} a\subset
\left\{x\in\mathbb{R}^n:\ 1/2 \leq|x|\leq 3/2\right\}$ and,
for any $x\in\mathbb{R}^n$,
\begin{align*}
1=\lim_{k\to \infty} \eta\left(\frac x {2^k}\right) =\eta(x)
 +\sum_{j=1}^\infty \left[\eta\left(\frac x {2^j}\right)
 -\eta\left(\frac x {2^{j-1}}\right)\right] =\eta(x)
 +\sum_{j=1}^\infty a\left(\frac x {2^j}\right).
\end{align*}
Define $\{\phi_k\}_{k\in\mathbb N}$ by setting, for any
$x\in\mathbb{R}^n$,
\begin{equation}\label{eq-phi1}
\phi_0(x):=\mathcal F^{-1}\eta(x)
\end{equation}
and, for any $k\in\mathbb N$,
\begin{equation}\label{eq-phik}
\phi_k(x):=2^{kn}\mathcal F^{-1}a(2^kx).
\end{equation}
Clearly, $\sum_{k\in\mathbb Z_+} \widehat\phi_k=1$ on
$\mathbb{R}^n$.

\begin{definition}\label{df-Triebel}
Let $p,q\in (0,\infty]$ and $s\in {\mathbb R}$.
Then the
\emph{Besov space} $B^{s}_{p,q}$
is defined to be the set of
all $f\in\mathcal S'$ such that
$$
\|f\|_{B^{s}_{p,q}}:=
\left\{\sum_{j=0}^{\infty}\left[2^{js}
\left\|\phi_j\ast f\right\|_{L^p}\right]^{q}\right\}^{\frac
1q}<\infty,
$$
where the usual modification is made
when $q=\infty$.
\end{definition}

Let $q,p\in(0,\infty]$. The \emph{space
$l^q(L^p)_{\mathbb{Z}_+}$} is
defined to be the set of all
$\mathbf{F}:=\{f_j\}_{j\in\mathbb{Z}_+}
\in \mathscr{M}_{\mathbb{Z}_+}$
such that
\begin{align*}
\|\mathbf{F}\|_{l^q(L^p)_{\mathbb{Z}_+}}
:=\left[\sum_{j\in\mathbb{Z}_+}
\left\|f_j\right\|^q_{L^p}
\right]^{\frac{1}{q}}<\infty,
\end{align*}
where the usual modification is made
when $q=\infty$.
From the definition of the space $l^q(L^p)_{\mathbb{Z}_+}$, it
 is easy to deduce that
$l^q(L^p)_{\mathbb{Z}_+}$ is a quasi-normed lattice of
function sequences.

The following lemma is about the wavelet characterization of
Besov spaces (see, for example, \cite[Proposition 1.11]{T20}).

\begin{lemma}\label{as}
Let $s\in(0,\infty)$ and $p,q\in(0,\infty]$.
Assume that
the regularity parameter $L\in\mathbb N$ of the Daubechies
wavelet system $\{\psi_\omega\}_{\omega\in\Omega}$
satisfies that
$L>\max\{s,n(\max\{\frac 1p,1\}-1)-s\}$.
Then $B^s_{p,q}\cap\Lambda_s
=\Lambda_X^{s}$
with equivalent quasi-norm, where $\Lambda_X^{s}$ is the
Daubechies $s$-Lipschitz $X$-based space with
$X:=l^q(L^p)_{\mathbb{Z}_+}$.
\end{lemma}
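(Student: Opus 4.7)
The plan is to deduce the lemma directly from the wavelet characterization of inhomogeneous Besov spaces recorded in \cite[Proposition 1.11]{T20}. Under the regularity hypothesis $L>\max\{s,n(\max\{1/p,1\}-1)-s\}$, that proposition asserts that $\|f\|_{B^s_{p,q}}$ is equivalent to the $l^q(L^p)$-type expression in terms of Daubechies wavelet coefficients already displayed in Section~\ref{sec:3}. My first step is to unwind the definition of the quasi-norm on $X:=l^q(L^p)_{\mathbb{Z}_+}$ and observe that $\|f\|_{\Lambda_X^s}$ from Definition~\ref{asdfs} is literally this wavelet expression: the inner $L^p$ norm and outer $l^q$ sum of Triebel's characterization are exactly the application of $\|\cdot\|_X$ to the sequence of level-$j$ partial sums $F_j:=\sum_{\omega\in \Omega,\,I_\omega\in\mathcal{D}_j}|I_\omega|^{-s/n-1/2}|\langle f,\psi_\omega\rangle|\mathbf{1}_{I_\omega}$, with any sign ambiguity or complex conjugate absorbed into the absolute value.

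Once this identification is made, the set equality and the equivalence of quasi-norms fall out immediately in two directions. For the inclusion $\Lambda_X^s\subseteq B^s_{p,q}\cap\Lambda_s$, any $f\in \Lambda_X^s$ lies in $\Lambda_s$ by the very definition of the Daubechies $s$-Lipschitz $X$-based space; since $f\in L^\infty$ and the wavelets $\psi_\omega$ are compactly supported and $L$-times continuously differentiable, the pairings $\langle f,\psi_\omega\rangle$ are well-defined Lebesgue integrals, and the finiteness of $\|f\|_{\Lambda_X^s}$ combined with Triebel's wavelet characterization forces $f\in B^s_{p,q}$ with $\|f\|_{B^s_{p,q}}\lesssim \|f\|_{\Lambda_X^s}$. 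Conversely, for $f\in B^s_{p,q}\cap\Lambda_s$, the same characterization yields $\|f\|_{\Lambda_X^s}\lesssim \|f\|_{B^s_{p,q}}<\infty$, and since $f\in \Lambda_s$ by assumption, we conclude $f\in \Lambda_X^s$.

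The only thing that really requires checking, rather than a genuine proof, is that the conventions in \cite{T20}---namely the $L^2$-normalization of the father and mother wavelets, the indexing by dyadic cubes through $\Omega=\Omega_0\cup \Omega_1$, and the precise form of the wavelet expansion of the $B^s_{p,q}$ quasi-norm (including the endpoint cases $p=\infty$ or $q=\infty$)---match those fixed in Section~\ref{sec:3}. Since both sources work with the standard inhomogeneous Daubechies system of regularity strictly above the stated threshold, and since the paper already displays exactly the matching equivalence right after Remark~\ref{rem-3-4}, this verification is purely notational and presents no analytic obstacle. I anticipate no substantive difficulty in the argument beyond invoking \cite[Proposition 1.11]{T20} and aligning conventions; the hard work has already been done there.
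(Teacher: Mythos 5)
Your proposal is correct and follows the same route as the paper, which itself gives no proof beyond citing \cite[Proposition 1.11]{T20}: both you and the authors reduce the lemma to the standard Daubechies wavelet characterization of inhomogeneous Besov spaces, identifying $\|f\|_{\Lambda_X^s}$ with the wavelet Besov quasi-norm and noting that membership in $\Lambda_s$ is built into Definition~\ref{asdfs}. The only detail you explicitly record beyond the citation is the notational alignment of conventions, which is indeed all that is needed.
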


We still need the following two
lemmas, which are precisely, respectively, \cite[Lemmas 4.3
and 4.4]{dsy}.

\begin{lemma}\label{as123}
Let $p\in(0,\infty)$ and $q\in(0,\infty]$.
Then $l^q(L^p)_{\mathbb{Z}_+}$ satisfies Assumption \ref{a1}.
\end{lemma}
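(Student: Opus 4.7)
The plan is to exploit the fact that in Assumption \ref{a1} we are free to choose $\theta$. Given that $p\in(0,\infty)$, the natural choice is any $\theta>1/p$, so that $r:=p\theta>1$ and the Hardy--Littlewood maximal operator $M$ is bounded on $L^r(\mathbb{R}^n)$. With this choice, since $\|\cdot\|_{l^q(L^p)_{\mathbb{Z}_+}}$ is built from the $L^p$ norms of each term via an outer $\ell^q$ combination and this outer combination is monotone in each coordinate, it suffices to prove the componentwise inequality
\[
\left\|\left[\sum_{k\in\mathbb{N}}\mathbf{1}_{2B_{k,j}}\right]^\theta\right\|_{L^p}\le C\left\|\left[\sum_{k\in\mathbb{N}}\mathbf{1}_{B_{k,j}}\right]^\theta\right\|_{L^p}
\]
for every $j\in\mathbb{Z}_+$ separately, with a constant $C$ independent of $j$ and of the balls. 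Fixing $j$ and suppressing it from the notation, this is equivalent, after raising both sides to the power $1/\theta$, to
\[
\Bigl\|\sum_k\mathbf{1}_{2B_k}\Bigr\|_{L^{r}}\le C^{1/\theta}\Bigl\|\sum_k\mathbf{1}_{B_k}\Bigr\|_{L^{r}}.
\]

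To prove this ball-sum doubling inequality for any $r\in(1,\infty)$, I would proceed by duality. Fix a nonnegative test function $g$ with $\|g\|_{L^{r'}}=1$, where $1/r+1/r'=1$. The key geometric observation is that for every ball $B_k$ and every $x\in B_k$ one has $2B_k\subset B(x,3r_{B_k})$, and hence by comparing the average of $g$ over $2B_k$ with its average over $B(x,3r_{B_k})$,
\[
\fint_{2B_k}g\,dy\le (3/2)^n Mg(x)\quad\text{for every }x\in B_k.
\]
Since the left-hand side is independent of $x$, integrating over $x\in B_k$ and using $|B_k|/|2B_k|=2^{-n}$ gives
\[
\int_{2B_k}g\,dy\le 3^n\int_{B_k}Mg(x)\,dx.
\]
Summing over $k$ converts the intractable sum on the left into one involving the original balls $B_k$, with $g$ replaced by $Mg$:
\[
\int_{\mathbb{R}^n}\Bigl(\sum_k\mathbf{1}_{2B_k}\Bigr)g\,dx\le 3^n\int_{\mathbb{R}^n}\Bigl(\sum_k\mathbf{1}_{B_k}\Bigr)Mg\,dx.
\]
Applying H\"older's inequality and the $L^{r'}$-boundedness of $M$ (valid since $r'\in(1,\infty)$ because $r>1$) bounds this by $C\|\sum_k\mathbf{1}_{B_k}\|_{L^r}$, and taking the supremum over admissible $g$ via $L^r$--$L^{r'}$ duality finishes the proof.

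The main obstacle is that the naive pointwise bound $\sum_k\mathbf{1}_{2B_k}(x)\lesssim M\bigl(\sum_k\mathbf{1}_{B_k}\bigr)(x)$ actually \emph{fails}: if the radii $r_{B_k}$ vary over many scales, the left side can be arbitrarily larger than the right side at a single point, for instance when many balls of very different sizes have doubles containing a common point while being essentially disjoint otherwise. The duality argument circumvents this by dualizing term by term inside the integral and only invoking the $L^{r'}$-boundedness of $M$ globally. This is also why the condition $r>1$, i.e.\ $\theta>1/p$, is essential and why the same strategy would break down for $p=\infty$, consistent with the hypothesis $p\in(0,\infty)$ in the lemma.
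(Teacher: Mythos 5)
Your argument is correct as a standalone proof. Note that the paper itself does not prove this lemma; it simply cites \cite[Lemma~4.3]{dsy}, so there is no proof here to compare against directly. That said, the standard way to obtain the ball-sum doubling inequality $\|\sum_k\mathbf{1}_{2B_k}\|_{L^r}\lesssim\|\sum_k\mathbf{1}_{B_k}\|_{L^r}$ for $r\in(1,\infty)$ in this kind of setting is to combine the pointwise bound $\mathbf{1}_{2B_k}\le 3^{na}\,(\mathcal{M}\mathbf{1}_{B_k})^a$ (for an auxiliary exponent $a>1$) with the Fefferman--Stein vector-valued maximal inequality on $L^{pa\theta}(\ell^a)$. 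Your duality route is a genuinely different, and in some respects more elementary, proof: it avoids the vector-valued maximal theorem altogether and only uses the scalar Hardy--Littlewood maximal theorem on $L^{r'}$. The two ingredients you rely on --- the reduction to a componentwise $L^p$ inequality uniform in $j$ (legitimate because $\|\cdot\|_{l^q(L^p)_{\mathbb{Z}_+}}$ is monotone in each coordinate), the geometric inclusion $2B_k\subset B(x,3r_{B_k})$ for $x\in B_k$ giving $\fint_{2B_k}g\le (3/2)^n\mathcal{M}g(x)$, and the resulting bound $\int(\sum_k\mathbf{1}_{2B_k})g\le 3^n\int(\sum_k\mathbf{1}_{B_k})\mathcal{M}g$ followed by H\"older and $L^{r'}$-boundedness of $\mathcal{M}$ --- are all correct, and the constants are manifestly independent of $j$ and of the family of balls, as needed. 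Your remark that the naive pointwise bound $\sum_k\mathbf{1}_{2B_k}\lesssim\mathcal{M}(\sum_k\mathbf{1}_{B_k})$ fails (e.g.\ for a lacunary family of disjoint balls whose doubles share a common point) is also accurate and correctly identifies why the argument must dualize (or, in the alternative route, must pass through a vector-valued inequality) rather than argue pointwise; it is likewise accurate that the argument degenerates at $p=\infty$, matching the hypothesis $p\in(0,\infty)$ and the paper's remark that Assumption~I fails for $B^s_{\infty,q}$ and $F^s_{\infty,q}$.
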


\begin{lemma}\label{as12k312}
Let $q\in(0,\infty)$.
Then $l^q(L^\infty)_{\mathbb{Z}_+}$ satisfies Assumption
\ref{pplp}.
\end{lemma}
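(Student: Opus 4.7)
The plan is to exhibit an explicit Carleson-type measure $\nu$ that realizes the equivalence required by Assumption~\ref{pplp} when $X=l^q(L^\infty)_{\mathbb{Z}_+}$. The key observation is a drastic simplification of the $X$-quasi-norm on indicator sums of dyadic cubes. Since any two distinct cubes in $\mathcal{D}_j$ are disjoint, the function $\sum_{I\in \mathcal{A}\cap \mathcal{D}_j}\mathbf{1}_I$ takes values only in $\{0,1\}$, hence
$$
\left\|\sum_{I\in \mathcal{A}\cap \mathcal{D}_j}\mathbf{1}_I\right\|_{L^\infty}
=\mathbf{1}_{\{\mathcal{A}\cap\mathcal{D}_j\ne\emptyset\}},
$$
and therefore
$$
\left\|\left\{\sum_{I\in\mathcal{A}\cap\mathcal{D}_j}
\mathbf{1}_I\right\}_{j\in\mathbb{Z}_+}\right\|_{l^q(L^\infty)_{\mathbb{Z}_+}}
=\bigl(\sharp\{j\in\mathbb{Z}_+:\mathcal{A}\cap\mathcal{D}_j\ne\emptyset\}\bigr)^{1/q}.
$$
This reduces the $X$-norm condition to the combinatorial statement that only finitely many levels $j$ contribute.

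Motivated by this, I would define, for any sequence $\{A_j\}_{j\in\mathbb{Z}_+}\in \mathscr{P}_{\mathbb{Z}_+}(\mathbb{R}^{n+1}_+)$,
$$
\nu\bigl(\{A_j\}_{j\in\mathbb{Z}_+}\bigr):=\sharp\{j\in\mathbb{Z}_+:A_j\ne\emptyset\}
\in[0,\infty].
$$
With $B_j:=\bigcup_{I\in\mathcal{A}\cap\mathcal{D}_j}T(I)$, one has $B_j=\emptyset$ exactly when $\mathcal{A}\cap\mathcal{D}_j=\emptyset$; hence both quantities in the iff of Assumption~\ref{pplp} are simultaneously finite. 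The equivalence in Assumption~\ref{pplp} therefore follows immediately.

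It remains to check the four Carleson-type measure axioms in Definition~\ref{Debqf2s}. Monotonicity (i) is evident because $A_j\subset B_j$ forces $\{j:A_j\ne\emptyset\}\subset\{j:B_j\ne\emptyset\}$. Quasi-subadditivity (ii) holds with constant $C=1$ since
$\{j:A_j\cup B_j\ne\emptyset\}\subset\{j:A_j\ne\emptyset\}\cup\{j:B_j\ne\emptyset\}$. For the shift property (iii), both $\textnormal{Sh}_L$ and $\textnormal{Sh}_R$ alter the set of non-empty indices by at most one index (gaining or losing the boundary index), so the count changes by at most $1$; this yields (iii) with constant $C=2$ (and it suffices to take $C=2$ since a finite count stays finite). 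Finally, for the hyperbolic-neighborhood condition (iv), one only needs the trivial inclusion $A_j\subset (A_j)_R$, together with the converse observation that if $A_j=\emptyset$ then $(A_j)_R=\emptyset$ by definition. Thus $A_j\ne\emptyset$ if and only if $(A_j)_R\ne\emptyset$, so $\nu(\{(A_j)_R\}_{j\in\mathbb{Z}_+})=\nu(\{A_j\}_{j\in\mathbb{Z}_+})$, which is much stronger than what (iv) demands; in particular Property~I of the $A_j$'s is not needed here.

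There is really no hard step: the crux is only the elementary observation that the $L^\infty$-norm collapses indicator sums of disjoint cubes to a $0/1$ statement, after which $\nu$ becomes a mere counting functional and all four axioms follow from set-theoretic manipulations. One minor bookkeeping point worth stating carefully is that in axiom (iii) the convention $A_{-1}=\emptyset$ guarantees $\nu(\textnormal{Sh}_R\{A_j\})\le \nu(\{A_j\})$ rather than $\nu(\{A_j\})+1$, which keeps the constant in (iii) under control uniformly in the input sequence.
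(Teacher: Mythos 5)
Your proof is correct, and it uses what is clearly the intended Carleson-type measure. The paper itself does not reproduce a proof of this lemma (it cites \cite[Lemma 4.4]{dsy}), but the counting functional $\nu(\{A_j\}_{j}) = \sharp\{j:A_j\neq\emptyset\}$ you define is exactly the one that surfaces in the resulting $B^s_{\infty,q}$ characterization (Theorem~\ref{falsnlf}(ii) and Corollary~\ref{fasnof6}(ii)), so this is the same route. All four axioms of Definition~\ref{Debqf2s} are verified soundly: (i) and (ii) are immediate set-theoretic facts; for (iii) the convention $A_{-1}=\emptyset$ gives $\nu(\textnormal{Sh}_R\{A_j\}) = \nu(\{A_j\})$ exactly and $\nu(\textnormal{Sh}_L\{A_j\}) \le \nu(\{A_j\})$, so $C=2$ works; and for (iv) the stronger observation $A_j=\emptyset\iff (A_j)_R=\emptyset$ (using the convention $\rho(\mathbf{x},\emptyset)=+\infty$) makes Property~I superfluous. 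The reduction of the $l^q(L^\infty)$-quasi-norm on disjoint dyadic indicator sums to the $0$--$1$ indicator of nonemptiness is the correct key step, and the identification $B_j:=\bigcup_{I\in\mathcal{A}\cap\mathcal{D}_j}T(I)\neq\emptyset\iff\mathcal{A}\cap\mathcal{D}_j\neq\emptyset$ closes the equivalence required by Assumption~\ref{pplp}.
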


Using Theorems \ref{thm-2-66} and \ref{thm-7-11}
and Lemmas \ref{as}, \ref{as123}, and \ref{as12k312},
we obtain the following conclusions.

\begin{theorem}\label{falsnlf}
Let $\alpha,s\in(0,\infty)$, $p\in(0,\infty)$, and
$q\in(0,\infty]$.
Assume that $r\in\mathbb N$ with $\alpha r>s+3$ and that
the regularity parameter $L\in\mathbb N$ of the Daubechies
wavelet system $\{\psi_\omega\}_{\omega\in\Omega}$ satisfies
that
$L>\max\{s,n(\max\{\frac 1p,1\}-1)-s\}$ and $L\geq r-1$. Let
$\varphi$
be the same as in \eqref{fanofn}. Then the following
statements hold.
\begin{itemize}
\item[\rm(i)]
For any $f\in\Lambda_s$,
\begin{align*}
 d \left(f, B^s_{p,q}
\cap\Lambda_s
\right)_{\Lambda_s}&\sim
\inf\left\{\varepsilon\in(0,\infty):\
\left\{\sum_{j=0}^{\infty}
\left[\mu(D_{\alpha,r,j}(s,f, \varepsilon))\right]^{\frac
qp}\right\}^{\frac 1q}<\infty\right\}\\
&\quad+
\limsup_{k\in\mathbb{Z}^n,\;|k|\rightarrow\infty}\left|
\int_{\mathbb{R}^n}\varphi(x-k)f(x)\,dx\right|
\end{align*}
with positive equivalence constants independent of $f$.

\item[\rm(ii)]
For any $f\in\Lambda_s$ and $q\in(0,\infty)$,
$$
 d \left(f, B^s_{\infty,q}\right)_{\Lambda_s}\sim
\inf\left\{\varepsilon\in(0,\infty):\
\sharp\{j\in\mathbb Z_+:\ D_{\alpha,r,j}(s, f,
\varepsilon)\neq\emptyset\}<\infty
\right\}
$$
with positive equivalence constants independent of $f$.
\end{itemize}
\end{theorem}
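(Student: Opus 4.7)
The plan is to deduce Theorem~\ref{falsnlf} from the two general results, Theorem~\ref{thm-2-66} for case (i) and Theorem~\ref{thm-7-11} for case (ii), by specializing to the function-sequence lattice $X:=l^q(L^p)_{\mathbb{Z}_+}$. In both cases, Lemma~\ref{as} identifies $B^s_{p,q}\cap\Lambda_s$ with the Daubechies $s$-Lipschitz $X$-based space $\Lambda_X^{s}$, so the abstract distance formulas apply once the appropriate admissibility hypothesis on $X$ is verified.

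For case (i), where $p\in(0,\infty)$, Lemma~\ref{as123} verifies Assumption~\ref{a1} for $X$; inspection of its proof shows that the doubling property holds with the specific choice $\theta:=1/p$, and I would use exactly this value when applying Theorem~\ref{thm-2-66}. Setting
\begin{align*}
g_{j,\varepsilon}(x):=\int_0^1\mathbf{1}_{D_{\alpha,r,j}(s,f,\varepsilon)}(x,t)\,\frac{dt}{t},
\end{align*}
and observing that the integrand is supported in $(2^{-j-1},2^{-j}]$, a direct calculation yields
\begin{align*}
\left\|g_{j,\varepsilon}^{1/p}\right\|_{L^p}^p=\int_{\mathbb{R}^n}g_{j,\varepsilon}(x)\,dx=\mu(D_{\alpha,r,j}(s,f,\varepsilon)),
\end{align*}
where $\mu$ denotes the measure $dx\otimes dt/t$ on $\mathbb{R}_+^{n+1}$. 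Summing in $j$ gives
\begin{align*}
\left\|\bigl\{g_{j,\varepsilon}^{1/p}\bigr\}_{j\in\mathbb{Z}_+}\right\|_X=\left\{\sum_{j=0}^\infty\bigl[\mu(D_{\alpha,r,j}(s,f,\varepsilon))\bigr]^{q/p}\right\}^{1/q},
\end{align*}
so the critical index $\varepsilon_X f$ of Theorem~\ref{thm-2-66} coincides with the first infimum on the right-hand side of (i). Moreover, $\|\{\mathbf{1}_E\delta_{0,j}\}_j\|_X=|E|^{1/p}$, which is infinite whenever $|E|=\infty$; thus condition \eqref{3-7} holds, and the Remark following Theorem~\ref{thm-2-66} identifies $\varepsilon_X^0 f$ with the father-wavelet limsup. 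Combining these gives (i).

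For case (ii), where $p=\infty$ and $q\in(0,\infty)$, the lattice $X=l^q(L^\infty)$ fails Assumption~\ref{a1} but satisfies Assumption~\ref{pplp} by Lemma~\ref{as12k312}, so Theorem~\ref{thm-7-11} applies. Since dyadic cubes in $\mathcal{D}_0$ are pairwise disjoint, $\|\sum_{I\in V_0(s,f,\varepsilon)}\mathbf{1}_I\|_{L^\infty}\le 1$ for every $\varepsilon>0$; Assumption~\ref{pplp} then forces $\nu(\bigcup_{I\in V_0(s,f,\varepsilon)}T(I))<\infty$ for every $\varepsilon$, so $\varepsilon_{X,\nu}^0=0$. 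For the remaining term, a direct check shows that for $X=l^q(L^\infty)$ one has $\|\{\sum_{I\in\mathcal{A}\cap\mathcal{D}_j}\mathbf{1}_I\}_j\|_X<\infty$ if and only if $\mathcal{A}\cap\mathcal{D}_j\neq\emptyset$ for only finitely many $j$; consequently the Carleson-type measure $\nu$ supplied by Lemma~\ref{as12k312} reduces, when tested against arbitrary sequences $\{A_j\}_{j\in\mathbb{Z}_+}$ of non-empty subsets of the Whitney regions at scale $j$, to counting nonempty slices. Applying this to $A_j=D_{\alpha,r,j}(s,f,\varepsilon)$ identifies $\varepsilon_{X,\alpha,\nu} f$ with the stated infimum. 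Finally, since $B^s_{\infty,q}\subset B^s_{\infty,\infty}=\Lambda_s$, the intersection with $\Lambda_s$ is redundant, so Theorem~\ref{thm-7-11} directly yields (ii).

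The main technical obstacle I foresee is pinning down the exact exponent $\theta=1/p$ from Lemma~\ref{as123}, so that the abstract quasi-norm in $X$ matches $(\sum_j\mu(D_{\alpha,r,j})^{q/p})^{1/q}$ on the nose rather than only up to a power, and (for case (ii)) transferring the equivalence in Assumption~\ref{pplp}, stated for unions of Whitney regions $T(I)$ indexed by dyadic cubes, to the non-dyadic sets $D_{\alpha,r,j}(s,f,\varepsilon)$; this should follow because the definition of a Carleson-type measure (Definition~\ref{Debqf2s}) is monotone and shift-invariant, but the argument needs to be carried out carefully.
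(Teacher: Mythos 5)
Your proposal is correct and follows essentially the same route as the paper: both apply Theorem~\ref{thm-2-66} with $X=l^q(L^p)_{\mathbb{Z}_+}$ (using Lemmas~\ref{as} and~\ref{as123}) for part (i) and Theorem~\ref{thm-7-11} with $X=l^q(L^\infty)_{\mathbb{Z}_+}$ (using Lemmas~\ref{as} and~\ref{as12k312}) for part (ii), and your identification $\theta=1/p$ and observation that $\varepsilon_{X,\nu}^0=0$ when $p=\infty$ are exactly the computations that make the abstract formulas specialize as stated. The one place where you slightly underestimate the work is the transfer to the non-dyadic sets $D_{\alpha,r,j}$ in the lower-bound direction of (ii): monotonicity and shift invariance alone do not suffice, and one must invoke Proposition~\ref{thm-3adf} together with Lemma~\ref{dda2f}(ii) and Definition~\ref{Debqf2s}(iv) to enlarge to a Property-I neighborhood containing Whitney boxes $T(I)$ before Assumption~\ref{pplp} can be brought to bear, as is done inside the proof of Theorem~\ref{thm-7-11} itself.
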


Using Corollaries \ref{ppqq} and
\ref{pppz2qq} and
Lemmas \ref{as}, \ref{as123}, and \ref{as12k312},
we obtain the following conclusions.

\begin{theorem}\label{fasnof6}
Under the same assumption as in Theorem \ref{falsnlf},
then the following statements hold.
\begin{itemize}
\item[\rm(i)]
$f\in\overline{B^s_{p,q}\cap\Lambda_s
}^{\Lambda_s}$
if and only if
$f\in \Lambda_s$ and,
for any $\varepsilon\in(0,\infty)$,
 \begin{align*}
\left\{\sum_{j=0}^{\infty}
\left[\mu(D_{\alpha,r,j}(s,f, \varepsilon))
\right]^{\frac qp}\right\}^{\frac 1q}<\infty\
\mathrm{and}\
\limsup_{k\in\mathbb{Z}^n,\;|k|\rightarrow\infty}\left|
\int_{\mathbb{R}^n}\varphi(x-k)f(x)\,dx\right|=0.
\end{align*}
\item[\rm(ii)]
$f\in\overline{B^s_{\infty,q}
}^{\Lambda_s}$
if and only if
$f\in \Lambda_s$ and,
for any $\varepsilon\in(0,\infty)$,
$$
\sharp\{j\in\mathbb Z_+:\ S_{r,j}(s, f,
\varepsilon)\neq\emptyset\}<\infty.
$$
\end{itemize}
\end{theorem}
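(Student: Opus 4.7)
The two parts of the theorem are reformulations of $d(f,V)_{\Lambda_s}=0$ for $V=B^s_{p,q}\cap\Lambda_s$ (respectively $V=B^s_{\infty,q}$), using the identity $\overline{E}^{\Lambda_s}=\{f\in\Lambda_s:\ d(f,E)_{\Lambda_s}=0\}$ noted in the introduction. The plan is therefore to invoke Theorem \ref{falsnlf} directly, or equivalently to appeal to Corollaries \ref{ppqq} and \ref{pppz2qq} together with the identifications supplied by Lemmas \ref{as}, \ref{as123}, and \ref{as12k312}, which realize $B^s_{p,q}\cap\Lambda_s$ (resp.\ $B^s_{\infty,q}$) as the Daubechies $s$-Lipschitz $X$-based space $\Lambda_X^s$ for $X=l^q(L^p)_{\mathbb Z_+}$ and verify that $X$ satisfies Assumption \ref{a1} (resp.\ Assumption \ref{pplp}) needed to apply the appropriate corollary.

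For part (i), applying Theorem \ref{falsnlf}(i), the condition $d(f,B^s_{p,q}\cap\Lambda_s)_{\Lambda_s}=0$ reduces to the simultaneous vanishing of the infimum of those $\varepsilon\in(0,\infty)$ making $\{\sum_{j\ge 0}[\mu(D_{\alpha,r,j}(s,f,\varepsilon))]^{q/p}\}^{1/q}$ finite, and the vanishing of $\limsup_{|k|\to\infty}|\int\varphi(x-k)f(x)\,dx|$. Since $\varepsilon\mapsto\mu(D_{\alpha,r,j}(s,f,\varepsilon))$ is non-increasing (the set $D_{\alpha,r,j}(s,f,\varepsilon)$ shrinks as $\varepsilon$ grows), vanishing of that infimum is equivalent to the stated ``finite for every $\varepsilon>0$'' condition; the limsup statement is already in its final form. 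Part (ii) is completely parallel via Theorem \ref{falsnlf}(ii), with $p=\infty$ accounting for the absence of a limsup term (for $l^q(L^\infty)_{\mathbb Z_+}$, condition \eqref{3-7} fails, so the limsup characterization of $\varepsilon_X^0f$ is not available and, correspondingly, is absent from the distance formula).

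There is no genuine obstacle, since all nontrivial work has been done in the preceding sections. The only technical care needed is in the monotonicity-based conversion of ``infimum $=0$'' into ``finite for every $\varepsilon>0$'', and in noting that the quantifier structures of Corollaries \ref{ppqq} and \ref{pppz2qq} already align with the quantifier structure of the theorem statement; in particular the auxiliary $\theta$-exponent in Corollary \ref{ppqq} is irrelevant after quantifying over all $\varepsilon>0$, since finiteness of an integral raised to a positive power is equivalent to finiteness of the integral itself.
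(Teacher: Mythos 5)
Your route is exactly the paper's: combine Corollaries \ref{ppqq} and \ref{pppz2qq} with Lemmas \ref{as}, \ref{as123}, and \ref{as12k312}, which realize $B^s_{p,q}\cap\Lambda_s$ (resp.\ $B^s_{\infty,q}$) as a Daubechies $s$-Lipschitz $X$-based space and verify the needed assumption on $X$, and then pass from ``the critical index is zero'' to ``finite for every $\varepsilon>0$'' by monotonicity in $\varepsilon$. Two corrections, though.

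Your justification that the $\theta$-exponent in Corollary \ref{ppqq} is ``irrelevant\,\dots\,since finiteness of an integral raised to a positive power is equivalent to finiteness of the integral itself'' is not right: $\theta$ sits \emph{inside} the $L^p$-norm, so changing $\theta$ changes the $L^p$ integrand rather than a terminal scalar power, and finiteness is not automatically preserved across different $\theta$. What actually makes Corollary \ref{ppqq} reproduce the formula of Theorem \ref{fasnof6}(i) is that for $X=l^q(L^p)_{\mathbb Z_+}$ Assumption \ref{a1} can be (and must be) taken with $\theta=1/p$, for which, by Fubini,
\[
\left\|\Bigl[\int_0^1 \mathbf{1}_{D_{\alpha,r,j}(s,f,\varepsilon)}(\cdot,t)\,\frac{dt}{t}\Bigr]^{1/p}\right\|_{L^p}^q
=\Bigl[\mu\bigl(D_{\alpha,r,j}(s,f,\varepsilon)\bigr)\Bigr]^{q/p}
\]
as an identity. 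You should state this choice of $\theta$ explicitly rather than appeal to a power-of-an-integral heuristic. Second, your derivation of part (ii) via Theorem \ref{falsnlf}(ii) (equivalently Corollary \ref{pppz2qq}) produces the condition in terms of the heat-semigroup sets $D_{\alpha,r,j}(s,f,\varepsilon)$, not the finite-difference sets $S_{r,j}(s,f,\varepsilon)$ printed in the statement; the latter is evidently a typographical slip carried over from the finite-difference setting of \cite{dsy}, and the conclusion your argument yields is the intended one.
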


\subsection{Triebel--Lizorkin spaces}

Next, we recall the definition of Triebel--Lizorkin spaces
as follows; see, for example,
\cite[Definition 1.1]{T20}.
 Let $\{\phi_j\}_{j\in\mathbb{Z}_+}$
be the same as in \eqref{eq-phi1} and \eqref{eq-phik}.

\begin{definition}\label{sanfnlk}
Let $q\in (0,\infty]$ and $s\in {\mathbb R}$.
\begin{itemize}
\item[(i)]
If $p\in (0,\infty)$, then the
\emph{Triebel--Lizorkin space} $F^{s}_{p,q}$
is defined to be the set of
all $f\in\mathcal S'$ such that
$$
\|f\|_{F^{s}_{p ,q}}:=
\left\|\left\{\sum_{j=0}^{\infty}
\left|2^{js}\phi_j\ast f\right|^q\right\}^{\frac
1q}\right\|_{L^p}<\infty.
$$
\item[(ii)]
The
\emph{Triebel--Lizorkin space} $F^{s}_{\infty,q}$
is defined to be the set of
all $f\in\mathcal S'$ such that
$$
\|f\|_{F^{s}_{\infty ,q}}:=
\sup_{\genfrac{}{}{0pt}{}{J\in\mathbb Z_+,M\in\mathbb
Z^n}{I_{J,M}\in\mathcal{D}}
}
\left\{\fint_{I_{J,M}}\sum_{j=J}^{\infty}
\left|2^{js}\phi_j\ast f(x)\right|^q\,dx
\right\}^{\frac 1q}<\infty,
$$
where, for any $J\in\mathbb Z_+$ and $M\in\mathbb
Z^n$, $I_{J,M}:=2^{-J}(M+[0,1)^n)$.
\end{itemize}
\end{definition}

Let $q,p\in(0,\infty]$. The \emph{space}
$L^p(l^q)_{\mathbb{Z}_+}$ is
defined to be the set of all
$\mathbf{F}:=\{f_j\}_{j\in\mathbb{Z}_+}
\in \mathscr{M}_{\mathbb{Z}_+}$
such that
\begin{align*}
\|\mathbf{F}\|_{L^p(l^q)_{\mathbb{Z}_+}}
:=\left\|\left[\sum_{j\in\mathbb{Z}_+}
|f_j|^q
\right]^{\frac{1}{q}}\right\|_{L^p}<\infty,
\end{align*}
where the usual modification is made
when $q=\infty$.
The space $F_{\infty,q}(\mathbb{R}^n,\mathbb{Z}_+)$
is defined to
 be the set of all
$\mathbf{F}:=\{f_j\}_{j\in\mathbb{Z}_+}
\in \mathscr{M}_{\mathbb{Z}_+}$
such that
\begin{align*}
\|f\|_{F_{\infty ,q}(\mathbb{R}^n,\mathbb{Z}_+)}:=
\sup_{l\in\mathbb Z_+,m\in\mathbb Z^n}
\left\{\fint_{I_{l,m}}\sum_{j=l}^{\infty}
\left|f_j(x)\right|^q\,dx\right\}^{\frac 1q}<\infty,
\end{align*}
where the usual modification is made
when $q=\infty$.

From these definitions, it is easy to infer that
$L^p(l^q)_{\mathbb{Z}_+}$ and $F_{\infty,q}(\mathbb{R}^n,\mathbb{Z}_+)$
are quasi-normed lattices of function sequences.

The following lemma is about the semigroup characterization of
Triebel--Lizorkin spaces (see, for example, \cite[Corollary
2]{T20}).

\begin{lemma}\label{asljk}
Let $s\in(0,\infty)$ and $p,q\in(0,\infty]$.
Assume that
the regularity parameter $L\in\mathbb N$ of the Daubechies
wavelet system $\{\psi_\omega\}_{\omega\in\Omega}$ satisfies
that
$L>\max\{s,n(\max\{\frac 1p,\frac 1q,1\}-1)-s\}$.
Then
$F^s_{p,q}\cap\Lambda_s
=\Lambda_X^{s}$,
where $\Lambda_X^{s}$ is the
Daubechies $s$-Lipschitz $X$-based space with
$X:=L^p(l^q)_{\mathbb{Z}_+}$ when $p\neq\infty$ or with
$X:=F_{\infty,q}(\mathbb{R}^n,\mathbb{Z}_+)$ when $p=\infty$.
\end{lemma}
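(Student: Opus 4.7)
The plan is to derive the identification $F^s_{p,q}\cap\Lambda_s=\Lambda_X^{s}$ directly from the standard Daubechies wavelet characterization of Triebel--Lizorkin spaces, namely \cite[Corollary 2]{T20}, by matching the wavelet expression arising there with the $\Lambda_X^{s}$-quasi-norm from Definition \ref{asdfs}. This follows the same strategy used in Lemma \ref{as} for Besov spaces; the only structural difference is the order in which the $\ell^q$ and $L^p$ quasi-norms are taken, which dictates the choice of the underlying function-sequence lattice $X$.

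First, for $p\in(0,\infty)$, I would apply \cite[Corollary 2]{T20}, which, under the hypothesis $L>\max\{s,n(\max\{1/p,1/q,1\}-1)-s\}$, yields the equivalence
\begin{align*}
\|f\|_{F^{s}_{p,q}}\sim\left\|\left\{\sum_{j=0}^{\infty}\left[\sum_{\omega\in\Omega,\,I_\omega\in\mathcal{D}_j}|I_\omega|^{-s/n-1/2}\left|\langle f,\psi_\omega\rangle\right|\mathbf{1}_{I_\omega}\right]^{q}\right\}^{1/q}\right\|_{L^p}.
\end{align*}
By the definitions of $L^p(l^q)_{\mathbb{Z}_+}$ and $\Lambda_X^{s}$, the right-hand side is precisely $\|f\|_{\Lambda_X^{s}}$ with $X=L^p(l^q)_{\mathbb{Z}_+}$. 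For $p=\infty$, I would instead invoke the Carleson-type wavelet characterization of $F^s_{\infty,q}$ from \cite[Corollary 2]{T20}. The resulting expression, a supremum over dyadic cubes $I_{l,m}$ of local $l^q$-averages of the wavelet-coefficient sum starting at scale $l$, matches by inspection the $F_{\infty,q}(\mathbb{R}^n,\mathbb{Z}_+)$-quasi-norm of the sequence appearing in $\|f\|_{\Lambda_X^{s}}$ when $X=F_{\infty,q}(\mathbb{R}^n,\mathbb{Z}_+)$.

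To pass from norm equivalence to the set identity, I would observe that, by Definition \ref{asdfs}, $\Lambda_X^{s}$ consists exactly of those $f\in\Lambda_s$ for which the $\Lambda_X^{s}$-quasi-norm is finite; since any $f\in\Lambda_s$ is a bounded uniformly continuous function, the pairings $\langle f,\psi_\omega\rangle$ are unambiguously defined by classical integration against the compactly supported wavelets. The wavelet equivalences above then force $F^s_{p,q}\cap\Lambda_s=\Lambda_X^{s}$ with comparable quasi-norms in both cases of $p$.

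The main obstacle is the $p=\infty$ case: the wavelet characterization of $F^s_{\infty,q}$ is more delicate than its finite-$p$ counterpart because of its Carleson-measure flavor, and careful bookkeeping is required to verify that the characterization in \cite[Corollary 2]{T20} reproduces exactly the $F_{\infty,q}(\mathbb{R}^n,\mathbb{Z}_+)$-quasi-norm with the same dyadic cubes $I_{l,m}$ and summation range $j\ge l$ as used in Definition \ref{sanfnlk}. This matching, however, is a standard consequence of Triebel's theory and can be extracted directly from \cite{T20}, so no substantial new work is anticipated beyond the verification of the regularity threshold on $L$.
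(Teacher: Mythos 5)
Your proposal matches the paper's own treatment: the lemma is established by invoking the Daubechies wavelet characterization of $F^s_{p,q}$ from \cite[Corollary 2]{T20} (with the Carleson-type version handling $p=\infty$) and then observing that the resulting wavelet expression is, by inspection, exactly the $\Lambda_X^{s}$-quasi-norm from Definition~\ref{asdfs} with $X=L^p(l^q)_{\mathbb{Z}_+}$ or $X=F_{\infty,q}(\mathbb{R}^n,\mathbb{Z}_+)$ as appropriate. The paper presents this as a direct citation with no further argument, so your route and the paper's are essentially identical.
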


Here, and thereafter, the \emph{Hardy--Littlewood maximal
operator} $\mathcal M$
is defined by setting, for any $f\in
L_{{\mathop\mathrm{\,loc\,}}}^1$
and $x\in\mathbb{R}^n$,
\begin{equation*}
\mathcal M(f)(x):=\sup_{B\ni x}\frac1{|B|}\int_B|f(y)|\,dy,
\end{equation*}
where the supremum is taken over all balls
$B\subset\mathbb{R}^n$
containing $x$.

We need the following
lemma, which is precisely \cite[Lemma 4.11]{dsy}.

\begin{lemma}\label{as123jl}
Let $s,p\in(0,\infty)$ and $q\in(0,\infty]$.
Then $ L^p(l^q)_{\mathbb{Z}_+}$ satisfies Assumption \ref{a1}.
\end{lemma}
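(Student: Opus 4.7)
The plan is to verify the doubling condition of Assumption \ref{a1} for $X=L^p(l^q)_{\mathbb Z_+}$ using a rescaling trick that lifts both exponents above $1$, combined with a duality argument and the Fefferman--Stein vector-valued maximal inequality. First I would fix $\theta\in(0,\infty)$ with $P:=p\theta>1$ and, when $q<\infty$, $Q:=q\theta>1$ (any $\theta>\max\{1/p,1/q\}$ suffices, with $1/\infty:=0$). The elementary identity $\|\{h_j^\theta\}\|_{L^p(l^q)} = \|\{h_j\}\|_{L^P(l^Q)}^\theta$ for non-negative sequences reduces the required doubling inequality to its un-powered form
$$\left\|\left\{\sum_k\mathbf{1}_{2B_{k,j}}\right\}_{j\in\mathbb Z_+}\right\|_{L^P(l^Q)}\lesssim \left\|\left\{\sum_k\mathbf{1}_{B_{k,j}}\right\}_{j\in\mathbb Z_+}\right\|_{L^P(l^Q)}$$
in the ``good range'' $P,Q>1$.

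For $Q<\infty$ I would argue by $L^P(l^Q)$--$L^{P'}(l^{Q'})$ duality. The decisive pointwise input is the ball-enlargement estimate: for any ball $B$ of radius $r_B$ and every $x\in B$, the average $\tfrac{1}{|2B|}\int_{2B}\phi$ is bounded by $C_n\mathcal{M}\phi(x)$, because $B(x,4r_B)\supset 2B$ with $|B(x,4r_B)|\le 4^n|2B|$. Averaging this over $x\in B$ yields the atomic bound $\int_{2B}\phi\le C_n\int_B\mathcal{M}\phi$. Testing the target inequality against a non-negative $\{\phi_j\}$ with $\|\{\phi_j\}\|_{L^{P'}(l^{Q'})}\le 1$ and summing the atomic bound over $k$ and $j$ give $\sum_{j,k}\int_{2B_{k,j}}\phi_j\le C_n\int\sum_j\mathcal{M}\phi_j\cdot F_j$ where $F_j:=\sum_k\mathbf{1}_{B_{k,j}}$. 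Applying H\"older's inequality in $L^P(l^Q)$ together with the Fefferman--Stein vector-valued maximal inequality $\|\{\mathcal{M}\phi_j\}\|_{L^{P'}(l^{Q'})}\lesssim\|\{\phi_j\}\|_{L^{P'}(l^{Q'})}$, valid whenever $1<P',Q'<\infty$, closes the estimate.

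The main technical obstacle is the endpoint case $q=\infty$, where $Q'=1$ and the Fefferman--Stein inequality on $L^{P'}(l^1)$ is unavailable. To circumvent this I would use $\|\{h_j\}\|_{L^p(l^\infty)}=\|\sup_j h_j\|_{L^p}$ together with the commutation $(\sup_j h_j)^\theta=\sup_j h_j^\theta$ (for $\theta>0$ and $h_j\ge 0$) to reduce to the scalar statement $\|\sup_j G_j\|_{L^P}\lesssim\|\sup_j F_j\|_{L^P}$ with $P=p\theta>1$ and $G_j:=\sum_k\mathbf{1}_{2B_{k,j}}$. A natural strategy is to select a measurable $j^*(x)$ attaining the supremum, partition $\mathbb R^n$ into the disjoint level sets $A_j:=\{x:j^*(x)=j\}$, and apply the atomic ball-enlargement estimate with $\phi\mathbf{1}_{A_j}$ in place of $\phi_j$. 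The subtle point is then to convert the resulting bound $\sum_j\int\mathcal{M}(\phi\mathbf{1}_{A_j})\cdot F_j$ into one involving only $\sup_j F_j$ rather than the weaker $\sum_j F_j$; I expect this ``sup-versus-sum'' passage, which cannot be closed by off-the-shelf Fefferman--Stein, to be the hardest step, likely requiring a geometric Whitney-type argument on the collection $\{B_{k,j}\}$ that exploits the disjointness of the $A_j$'s.
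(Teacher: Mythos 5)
The paper does not supply a proof here: the lemma is quoted verbatim from \cite[Lemma 4.11]{dsy}, so there is no internal argument to compare against, and I can only assess the proposal on its own merits. For $q\in(0,\infty)$ your argument is correct and complete: the rescaling identity $\|\{h_j^\theta\}\|_{L^p(l^q)}=\|\{h_j\}\|_{L^{p\theta}(l^{q\theta})}^\theta$ is valid, the atomic enlargement bound $\int_{2B}\phi\leq C_n\int_B\mathcal{M}\phi$ is elementary, and the combination of $L^P(l^Q)$--$L^{P'}(l^{Q'})$ duality with the vector-valued Fefferman--Stein maximal inequality (which holds precisely when $1<P'<\infty$ and $1<Q'<\infty$, i.e.\ $1<P,Q<\infty$) closes the estimate. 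This is a clean, standard route and would serve as a perfectly good proof for $q<\infty$.

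The endpoint $q=\infty$, however, is a genuine gap, and you are right that the ``natural strategy'' you sketch cannot be pushed through as written. After partitioning by $j^*(x)$ and applying the atomic estimate to $\phi\mathbf{1}_{A_j}$, the bound you reach is
$\int\bigl(\sup_iF_i\bigr)\sum_j\mathcal{M}(\phi\mathbf{1}_{A_j})$,
and H\"older then requires an $L^{P'}$ bound on $\sum_j\mathcal{M}(\phi\mathbf{1}_{A_j})$ in terms of $\|\phi\|_{L^{P'}}$. That is exactly the $l^1$-valued Fefferman--Stein inequality, which fails; the disjointness of the $A_j$ does not rescue it. Moreover, there is no pointwise shortcut: the inequality $\sum_k\mathbf{1}_{2B_k}\lesssim_n\mathcal{M}\bigl(\sum_k\mathbf{1}_{B_k}\bigr)$ is false. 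For example, in $\mathbb{R}^1$ take $B_k:=(2^{-k-1},5\cdot2^{-k-1})$ for $k=1,\ldots,N$; then $2B_k=(-2^{-k-1},7\cdot 2^{-k-1})$ all contain the origin, so $\sum_k\mathbf{1}_{2B_k}(0)=N$, whereas $\sum_k\mathbf{1}_{B_k}\leq 3$ pointwise and hence $\mathcal{M}\bigl(\sum_k\mathbf{1}_{B_k}\bigr)(0)\leq 3$. So the $q=\infty$ case really does need a different idea (a covering-type argument relating the superlevel sets of $\sup_jG_j$ and $\sup_jF_j$, or another mechanism), and as it stands the proposal does not establish the lemma in the stated generality $q\in(0,\infty]$.
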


For any cube $I\subset\mathbb{R}^n$, we always use $\ell(I)$
to denote its edge length and let $\widehat I:=I\times [0,
\ell(I))$.
For any given measurable set $A\subset \mathbb{R}_+^{n+1}$,
let
\begin{align}\label{asdf}
M(A):=\sup_{I\in \mathcal D} \frac 1 {|I|} \int_I
\left[\int_0^{\ell(I)} \mathbf{1}_A(x,y) \,\frac {dy}
{y}\right]\, dx=\sup_{I\in\mathcal D} \frac 1 {|I|}
\iint_{\widehat I} \frac {\mathbf{1}_A(x,y)}y \,dy\,dx.
\end{align}

Let $d\mu (x,x_{n+1}):=\frac {dx\,dx_{n+1}} {x_{n+1}}.$
Since we assume that the edge length of any cube in $\mathcal
D$ is at most $1$, we deduce that, for any $A\subset
\mathbb{R}_+^{n+1}$,
$M(A)=M\left(A\cap (\mathbb{R}^n\times (0, 1])\right).$
In other words, $M(A)<\infty$ is equivalent to that
$\mathbf{1}_{A\cap (\mathbb{R}^n\times (0, 1])}d\mu$
is a Carleson measure.

We still need the following two
lemmas, which are exactly, respectively, \cite[Lemmas 4.13 and
5.7]{dsy}.

\begin{lemma}\label{de1}
Let $A\subset \mathbb{R}^{n+1}_+$ be a measurable set such
that $M(A)<\infty$, where $M$
is the same as in \eqref{asdf}. Assume that $A$ has Property I
with constants $\delta, \delta'\in (0, 1/10)$.
Then, for any $R\in(0,\infty)$, $M(A_R)<\infty$.
\end{lemma}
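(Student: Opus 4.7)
The plan is to establish the quantitative bound $M(A_R)\lesssim_R M(A)+1$ by combining Property I with a Fubini argument that transfers the hyperbolic enlargement from the function $\mathbf{1}_{A_R}$ onto the domain of integration (the Carleson box $\widehat I$).

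First, I would use Property I together with the triangle inequality for $\rho$ to show that every $\mathbf{x}\in A_R$ lies at the center of a hyperbolic ball of radius $R+\delta$ in which $A$ occupies a fixed proportion of the Lebesgue volume. Indeed, picking $\mathbf{z}\in A$ with $\rho(\mathbf{x},\mathbf{z})<R$ and using $B_\rho(\mathbf{z},\delta)\subset B_\rho(\mathbf{x},R+\delta)$,
\begin{equation*}
|A\cap B_\rho(\mathbf{x},R+\delta)|\ge |A\cap B_\rho(\mathbf{z},\delta)|\ge \delta'\,|B_\rho(\mathbf{z},\delta)|.
\end{equation*}
An iterated application of Lemma \ref{lem-4-2zz} along the geodesic from $\mathbf{x}$ to $\mathbf{z}$ (subdivided into segments of length at most $1/2$) yields $z_{n+1}\sim_R x_{n+1}$, and the same lemma shows that on $B_\rho(\mathbf{y},r)$ the $(n{+}1)$st coordinate varies by a factor depending only on $r$. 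Hence $|B_\rho(\mathbf{z},\delta)|$ and $|B_\rho(\mathbf{x},R+\delta)|$ are each comparable to $x_{n+1}^{n+1}$ up to constants depending only on $R,\delta,n$. This produces a constant $c_R\in(0,1)$, depending only on $R,\delta,\delta',n$, such that
\begin{equation*}
\mathbf{1}_{A_R}(\mathbf{x})\le \frac{1}{c_R}\cdot\frac{|A\cap B_\rho(\mathbf{x},R+\delta)|}{|B_\rho(\mathbf{x},R+\delta)|},\qquad \forall\,\mathbf{x}\in\mathbb{R}_+^{n+1}.
\end{equation*}

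Next, I would integrate this pointwise estimate against $d\mu=y^{-1}\,dx\,dy$ over $\widehat I$ for an arbitrary $I\in\mathcal{D}$ and apply Fubini. Setting $K(\mathbf{w}):=\iint_{\widehat I}\frac{\mathbf{1}_{B_\rho(\mathbf{x},R+\delta)}(\mathbf{w})}{|B_\rho(\mathbf{x},R+\delta)|}\,y^{-1}\,dx\,dy$, the kernel $K$ is supported in the hyperbolic $(R+\delta)$-neighborhood $\widehat I_{R+\delta}$, and on this support the size estimates from the first step give $|B_\rho(\mathbf{x},R+\delta)|\sim_R w_{n+1}^{n+1}$ and $y\sim_R w_{n+1}$. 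Combined with $|\widehat I\cap B_\rho(\mathbf{w},R+\delta)|\le|B_\rho(\mathbf{w},R+\delta)|\sim_R w_{n+1}^{n+1}$, this produces $K(\mathbf{w})\lesssim_R 1/w_{n+1}$, reducing the matter to the single estimate
\begin{equation*}
\mu\bigl(A\cap\widehat I_{R+\delta}\bigr)\lesssim_R \bigl(M(A)+1\bigr)|I|.
\end{equation*}

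For the last step I split $\widehat I_{R+\delta}=\bigl[\widehat I_{R+\delta}\cap\{y\le 1\}\bigr]\cup\bigl[\widehat I_{R+\delta}\cap\{y>1\}\bigr]$ and estimate the two pieces separately. For the in-strip piece, I cover $\widehat I_{R+\delta}\cap\{y\le 1\}$ by Carleson boxes $\widehat J$ with $J\in\mathcal{D}$: when $e^{R+\delta}\ell(I)\le 1$, by $O_R(1)$ cubes of edge length comparable to $e^{R+\delta}\ell(I)$; otherwise by $O_R(e^{nR}|I|)$ unit cubes of $\mathcal{D}$. In either regime the definition of $M$ bounds this contribution by $\lesssim_R M(A)|I|$. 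For the above-strip piece I use $1/y\le 1$ together with the bound that $\widehat I_{R+\delta}\cap\{y>1\}$ has Lebesgue volume $\lesssim_R|I|$ (its Euclidean $x$-extent is $\lesssim_R\ell(I)$ and its $y$-extent has measure $\lesssim_R 1$), giving a contribution $\lesssim_R|I|$. Summing the two yields the displayed bound, and hence $M(A_R)<\infty$. The main technical obstacle is the covering in this last step when $\ell(I)\in(e^{-R-\delta},1)$, because $\widehat I_{R+\delta}$ then straddles $\{y=1\}$---the ceiling of the strip on which $M(A)$ controls $A$---while simultaneously having Euclidean $x$-diameter exceeding $\ell(I)$; this forces a separate treatment of the above-strip region by a direct Euclidean volume estimate rather than by any Carleson covering. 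Property I itself enters only in the first step, where it produces the crucial pointwise bound that converts the set $A_R$ into a ball average of the original set $A$.
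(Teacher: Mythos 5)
The paper does not actually prove this lemma: it is cited verbatim from the companion paper \cite{dsy} (the text preceding it reads ``We still need the following two lemmas, which are exactly, respectively, [Lemmas 4.13 and 5.7]{dsy}''), so there is no internal proof to compare your argument against. Your proof itself appears correct, and the key steps check out: Property I combined with $B_\rho(\mathbf{z},\delta)\subset B_\rho(\mathbf{x},R+\delta)$ and the comparability $|B_\rho(\mathbf{w},r)|\sim_{n,r} w_{n+1}^{n+1}$ (a hyperbolic ball in the upper half-space model is a Euclidean ball of radius $w_{n+1}\sinh r$) give the pointwise majorization of $\mathbf{1}_{A_R}$ by a hyperbolic ball average of $\mathbf{1}_A$; the Fubini step and the kernel bound $K(\mathbf{w})\lesssim_R w_{n+1}^{-1}$ on the support $\widehat I_{R+\delta}$ are sound; and the final covering argument, with its separate treatment of the part of $\widehat I_{R+\delta}$ lying above the ceiling $\{y=1\}$ (where one uses $\mu\le\text{Lebesgue}$ rather than any Carleson covering), is handled correctly. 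The quantitative conclusion $M(A_R)\lesssim_{R,\delta,\delta',n}M(A)+1$ is stronger than the stated qualitative claim and is a natural, self-contained route to the result. One cosmetic point: in the case $e^{R+\delta}\ell(I)>1$ you write that the in-strip region is covered by $O_R(e^{nR}|I|)$ unit dyadic cubes; since $|I|\le 1$ always, this is simply $O_R(1)$ and, together with $|I|\gtrsim_R 1$ in that regime, still yields the claimed $\lesssim_R M(A)|I|$, so the conclusion stands.
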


\begin{lemma}\label{as12312}
Let $q\in(0,\infty]$.
Then $F_{\infty,q}(\mathbb{R}^n,\mathbb{Z}_+)$ satisfies
Assumption \ref{pplp}.
\end{lemma}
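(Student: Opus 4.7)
The plan is to take the natural Carleson-type set function on $\mathbb{R}^{n+1}_+$ that mirrors the $F_{\infty,q}(\mathbb{R}^n,\mathbb{Z}_+)$-quasi-norm. Specifically, for $q\in(0,\infty)$ I would define
\begin{align*}
\nu(\{A_j\}_{j\in\mathbb{Z}_+}):=\sup_{\substack{l\in\mathbb{Z}_+,\;m\in\mathbb{Z}^n\\I_{l,m}\in\mathcal{D}}}\left\{\fint_{I_{l,m}}\sum_{j=l}^{\infty}\left[\int_{0}^{\infty}\mathbf{1}_{A_j}(x,y)\,\frac{dy}{y}\right]^{q}dx\right\}^{1/q},
\end{align*}
with the obvious $\sup$-modification when $q=\infty$. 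The motivation for this choice is that for $A_j=\bigcup_{I\in\mathcal{A}\cap\mathcal{D}_j}T(I)$, the tents $T(I)=I\times(\ell(I)/2,\ell(I)]$ are pairwise disjoint within level $j$ and satisfy $\int_{0}^{\infty}\mathbf{1}_{T(I)}(x,y)\,dy/y=(\log 2)\mathbf{1}_{I}(x)$. Hence $\int_{0}^{\infty}\mathbf{1}_{A_j}(x,y)\,dy/y=(\log 2)\sum_{I\in\mathcal{A}\cap\mathcal{D}_j}\mathbf{1}_{I}(x)$, which is exactly the function whose $F_{\infty,q}(\mathbb{R}^n,\mathbb{Z}_+)$-quasi-norm the equivalence in Assumption \ref{pplp} demands. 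This directly establishes the biconditional.

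It then remains to verify properties (i)--(iv) of Definition \ref{Debqf2s}. For (i), monotonicity is immediate from $\mathbf{1}_{A_j}\leq\mathbf{1}_{B_j}$. For (ii), I would use $\mathbf{1}_{A_j\cup B_j}\leq\mathbf{1}_{A_j}+\mathbf{1}_{B_j}$ combined with the elementary inequality $(a+b)^{q}\leq C_q(a^{q}+b^{q})$. For the shifts in (iii): the left shift merely removes the $j=l$ contribution in the sum and hence decreases $\nu$; the right shift produces $\sum_{j=l}^{\infty}\int_{0}^{\infty}\mathbf{1}_{A_{j-1}}\,dy/y$ at the cube $I_{l,m}$, which after reindexing becomes a sum starting at $j=l-1$ over $I_{l,m}$. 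Since $I_{l,m}$ sits inside its dyadic parent $I_{l-1,m'}$ with $|I_{l-1,m'}|=2^{n}|I_{l,m}|$, averaging over $I_{l,m}$ is bounded by $2^{n}$ times averaging over $I_{l-1,m'}$; supping over $(l-1,m')$ then yields a control by $2^{n/q}\nu(\{A_j\})$.

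The main obstacle will be (iv), the hyperbolic-neighborhood stability, which is the sequence-indexed analog of Lemma \ref{de1}. I would proceed as follows. Fix $\{A_j\}_{j\in\mathbb{Z}_+}$ having Property I with uniform constants $\delta,\delta'\in(0,1/10)$ (independent of $j$), and fix $R\in(1,\infty)$. Using Lemma \ref{lem-4-2zz}, the hyperbolic $R$-neighborhood of a point $\mathbf{z}=(z,z_{n+1})\in A_j$ lives in a Euclidean slab of height comparable to $z_{n+1}$ dilated by a factor $\lesssim e^{R}$ and horizontal width $\lesssim Re^{R} z_{n+1}$; in particular, when we cut $(A_j)_R$ at heights $y\in(0,\infty)$, the slice at each height is essentially a fixed Euclidean enlargement of a slice of $A_j$. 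Coupling this with Property I, $|B_\rho(\mathbf{z},\delta)\cap A_j|\ge \delta'|B_\rho(\mathbf{z},\delta)|$ for every $\mathbf{z}\in A_j$, I can convert the vertical integral $\int_{0}^{\infty}\mathbf{1}_{(A_j)_R}(x,y)\,dy/y$ into a controlled multiple of $\int_{0}^{\infty}\mathbf{1}_{A_j}(x',y)\,dy/y$ for some nearby $x'$, up to dilating the base cube $I_{l,m}$ by a constant factor depending only on $R$ and $\delta$. A Whitney/covering argument then writes this dilated cube as a bounded number of dyadic cubes of a slightly lower generation $l'\leq l$, so that the sup in $\nu$ absorbs the dilation at the cost of a constant. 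This gives $\nu(\{(A_j)_R\})\lesssim\nu(\{A_j\})$, with the converse being trivial, completing (iv). Since this is precisely \cite[Lemma 5.7]{dsy}, I would follow the argument given there, translating it to the current sequence-indexed framework.
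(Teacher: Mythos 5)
Your proposal is correct and follows essentially the same route as the paper, which simply cites \cite[Lemma~5.7]{dsy} without reproducing the proof. Your choice of $\nu$ is the natural one — it is precisely $\|\{\int_0^\infty \mathbf{1}_{A_j}(\cdot,y)\,dy/y\}_j\|_{F_{\infty,q}(\mathbb{R}^n,\mathbb{Z}_+)}$, the same formula that the remark after Assumption~\ref{pplp} singles out when Assumption~\ref{a1} holds — and your tent computation $\int_0^\infty \mathbf{1}_{T(I)}(x,y)\,dy/y=(\log 2)\mathbf{1}_I(x)$ together with the disjointness of $\{T(I):I\in\mathcal D_j\}$ gives the biconditional exactly. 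Properties (i)--(iii) are verified correctly, including the slightly delicate right-shift step where you pass from $I_{l,m}$ to its dyadic parent $I_{l-1,m'}$ and absorb the factor $2^{n}$.

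For (iv), your geometric sketch is sound in outline: a hyperbolic $R$-ball about $(z,z_{n+1})$ has height range $\sim[e^{-R}z_{n+1},e^{R}z_{n+1}]$ and horizontal extent $\lesssim_R z_{n+1}$, and Property~I supplies the lower density bound needed to run the Fubini argument that controls $\int_0^\infty\mathbf{1}_{(A_j)_R}(x,y)\,dy/y$ by a cone-type average of $\int_0^\infty\mathbf{1}_{A_j}(\cdot,y)\,dy/y$. One small imprecision: Property~I yields a bound in terms of an \emph{average} of $\int_0^\infty\mathbf{1}_{A_j}(x',y)\,dy/y$ over a ball of radius $\lesssim_R y$ around $x$ at each height $y$, not in terms of a single nearby $x'$; the subsequent step then enlarges $I_{l,m}$ to $C(R)I_{l,m}$ and covers the enlargement by $O_{n,R}(1)$ dyadic cubes of slightly lower generation. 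Also note that Lemma~\ref{lem-4-2zz} is stated only for $\rho\le 1/2$, so for $R>1$ one needs the direct estimate from the formula for $\rho$ rather than that lemma. Since the paper itself defers the heavy lifting to \cite[Lemma~5.7]{dsy}, your final deferral is appropriate and your reconstruction fills in all the ingredients the paper leaves implicit.
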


Using Theorems \ref{thm-2-66} and \ref{thm-7-11} and
Lemmas \ref{asljk}, \ref{as123jl}, and \ref{as12312}, we
obtain the following conclusions.

\begin{theorem}\label{bvcaso}
Let $\alpha,s\in(0,\infty)$, $p\in(0,\infty)$, and
$q\in(0,\infty]$.
Assume that $r\in\mathbb N$ with $\alpha r>s+3$
and that
the regularity parameter $L\in\mathbb N$ of the Daubechies
wavelet system $\{\psi_\omega\}_{\omega\in\Omega}$ satisfies
that
$L>\max\{s,n(\max\{\frac1p,\frac1q,1\}-1)-s\}$ and $L\geq
r-1$. Let $\varphi$
be the same as in \eqref{fanofn}.
Then the following statements hold.
\begin{itemize}
\item[\rm(i)]
For any $f\in\Lambda_s$,
 \begin{align*}
 d \left(f, F^s_{p,q}\cap\Lambda_s
\right)_{\Lambda_s}&\sim
\inf\left\{\varepsilon\in(0,\infty):\
\left\|\left[\int_{0}^1 \mathbf{1}_{ D_{\alpha,r}(s, f,
\varepsilon)}(\cdot,y)\,\frac {dy}{y}
\right]^{\frac 1q}\right\|_{L^p}<\infty\right\}\\
&\quad+
\limsup_{k\in\mathbb{Z}^n,\;|k|\rightarrow\infty}
\left|\int_{\mathbb{R}^n}\varphi(x-k)f(x)\,dx\right|
\end{align*}
with positive equivalence constants independent of $f$.

\item[\rm(ii)]
For any $f\in\Lambda_s$,
 \begin{align*}
 d \left(f, F^s_{\infty,q}\right)_{\Lambda_s}&\sim
 \inf\left\{\varepsilon\in(0,\infty):\ \
M(D_{\alpha,r}(s,f, \varepsilon))<\infty\right\}
\end{align*}
with positive equivalence constants independent of $f$.
\end{itemize}
\end{theorem}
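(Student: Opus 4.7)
The plan is to derive Theorem \ref{bvcaso} as a specialization of the abstract framework developed in Sections \ref{sec:2}--\ref{sec:3}, by identifying the relevant Triebel--Lizorkin space with a Daubechies $s$-Lipschitz $X$-based space $\Lambda_X^s$ for an appropriate quasi-normed lattice $X$ of function sequences, and then checking which of the two abstract assumptions (Assumption \ref{a1} or Assumption \ref{pplp}) is satisfied by this $X$. In each case the verification of the abstract assumption is already done in \cite{dsy}, so the real content of the proof lies in translating the abstract critical indices back to the concrete expressions involving $D_{\alpha,r}(s,f,\varepsilon)$.

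For part (i), I would first invoke Lemma \ref{asljk} to identify $F^s_{p,q}\cap\Lambda_s=\Lambda_X^s$ with $X:=L^p(l^q)_{\mathbb{Z}_+}$, and Lemma \ref{as123jl} to confirm that this $X$ satisfies Assumption \ref{a1} for some admissible $\theta\in(0,\infty)$. Applying Theorem \ref{thm-2-66} then yields
$$
d(f, F^s_{p,q}\cap\Lambda_s)_{\Lambda_s}\sim \varepsilon_X^0 f+\varepsilon_X f.
$$
Since condition \eqref{3-7} is immediate for $L^p(l^q)_{\mathbb{Z}_+}$ when $p<\infty$ (the $L^p$-norm of the indicator of an infinite-measure set is infinite), the Remark following Theorem \ref{thm-2-66} rewrites $\varepsilon_X^0 f$ as the stated $\limsup$ of father-wavelet coefficients. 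To translate $\varepsilon_X f$ into the form in (i), I would unfold the definition of the $L^p(l^q)$-quasi-norm applied to $\{[\int_0^1\mathbf{1}_{D_{\alpha,r,j}(s,f,\varepsilon)}(\cdot,y)\,\tfrac{dy}{y}]^{\theta}\}_j$ and use that the time-slices $(2^{-j-1},2^{-j}]$ over distinct $j\in\mathbb{Z}_+$ are pairwise disjoint, so the inner sum in $j$ exactly reassembles $\int_0^1\mathbf{1}_{D_{\alpha,r}(s,f,\varepsilon)}\,\tfrac{dy}{y}$; matching the exponents then gives the equivalence of finiteness with $\|[\int_0^1\mathbf{1}_{D_{\alpha,r}}\tfrac{dy}{y}]^{1/q}\|_{L^p}<\infty$.

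For part (ii), I would instead apply Lemma \ref{asljk} at the endpoint $p=\infty$ to identify $F^s_{\infty,q}=\Lambda_X^s$ with $X:=F_{\infty,q}(\mathbb{R}^n,\mathbb{Z}_+)$, and Lemma \ref{as12312} to verify Assumption \ref{pplp}; Theorem \ref{thm-7-11} then applies and produces the abstract equivalence
$$
d(f,F^s_{\infty,q})_{\Lambda_s}\sim\varepsilon_{X,\alpha,\nu}f+\varepsilon_{X,\nu}^0.
$$
The underlying Carleson-type measure $\nu$ coincides, up to natural dyadic packaging, with the functional $M$ defined in \eqref{asdf}, and Lemma \ref{de1} is the right tool for handling its behavior under the hyperbolic neighborhoods that arise through Propositions \ref{thm-3adf} and \ref{cor-6-7-0} in the proof of Theorem \ref{thm-7-11}; this identification converts $\varepsilon_{X,\alpha,\nu}f$ into $\inf\{\varepsilon:M(D_{\alpha,r}(s,f,\varepsilon))<\infty\}$. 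The main obstacle will be accounting for the endpoint term $\varepsilon_{X,\nu}^0$, which is absent from the right-hand side of (ii): I would show it is always absorbed into the main term, by observing that for any $f\in\Lambda_s$ the father-wavelet coefficients $\langle f,\psi_{(0,I)}\rangle$ are bounded uniformly by a constant multiple of $\|f\|_{\Lambda_s}$, so that for sufficiently large $\varepsilon$ the set $V_0(s,f,\varepsilon)$ defined in \eqref{3-5-eq} becomes empty and the associated Carleson quantity $\varepsilon_{X,\nu}^0$ is dominated by a universal multiple of $\|f\|_{\Lambda_s}$ and hence by the main $M$-threshold term.
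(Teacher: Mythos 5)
Your overall strategy is the same as the paper's: identify $F^s_{p,q}\cap\Lambda_s$ with $\Lambda_X^s$ via Lemma \ref{asljk}, verify Assumption \ref{a1} (for $p<\infty$) via Lemma \ref{as123jl} or Assumption \ref{pplp} (for $p=\infty$) via Lemma \ref{as12312}, and then apply Theorem \ref{thm-2-66} or Theorem \ref{thm-7-11}. However, both of the places where you go beyond the abstract theorems to match the concrete form of the conclusion contain errors.

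In part (i), your claim that the inner sum in $j$ ``exactly reassembles'' $\int_0^1\mathbf{1}_{D_{\alpha,r}(s,f,\varepsilon)}\,\frac{dy}{y}$ is false. Unfolding $\varepsilon_X f$ with $X=L^p(l^q)_{\mathbb{Z}_+}$ gives $\bigl\|\bigl(\sum_j b_j^{\theta q}\bigr)^{1/q}\bigr\|_{L^p}$ with $b_j(x):=\int_{2^{-j-1}}^{2^{-j}}\mathbf{1}_{D_{\alpha,r}}(x,y)\frac{dy}{y}$, whereas the target is $\bigl\|\bigl(\sum_j b_j\bigr)^{1/q}\bigr\|_{L^p}$. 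Since the exponent $\theta$ from Assumption \ref{a1} is fixed by Lemma \ref{as123jl} and generically $\theta q\neq 1$, $\sum_j b_j^{\theta q}$ is not $\sum_j b_j$, and ``matching the exponents'' does not help because the power sits inside the $j$-sum. The equivalence of the two critical indices does hold, but only after the $\varepsilon$-adjustment: using that $b_j\in[0,\log 2]$ gives one inequality when $\theta q\geq 1$, and the reverse direction requires the stability of the sets $D_{\alpha,r,j}$ under hyperbolic thickening (Proposition \ref{thm-3adf}), which forces the nonzero values of $b_j(\cdot,\varepsilon_1)$ to be bounded below by a positive constant on a dilated support for $\varepsilon_1<\varepsilon$. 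This is not supplied in your proposal.

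In part (ii), the claim that $\varepsilon_{X,\nu}^0$ is ``dominated by a universal multiple of $\|f\|_{\Lambda_s}$ and hence by the main $M$-threshold term'' is not a valid deduction. The bound $\varepsilon_{X,\nu}^0\leq C\|f\|_{\Lambda_s}$ is correct, but $\varepsilon_{X,\alpha,\nu}f=\inf\{\varepsilon:\,M(D_{\alpha,r}(s,f,\varepsilon))<\infty\}$ can be zero (indeed, it is zero whenever $f\in F^s_{\infty,q}$) while $\|f\|_{\Lambda_s}$ is positive, so neither $\|f\|_{\Lambda_s}$ nor the bound $C\|f\|_{\Lambda_s}$ is dominated by the main term. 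What actually makes $\varepsilon_{X,\nu}^0$ disappear in (ii) is that it is identically zero for $X=F_{\infty,q}(\mathbb{R}^n,\mathbb{Z}_+)$: the sequence $\{\sum_{I\in V_0(s,f,\varepsilon)}\mathbf{1}_I\,\delta_{0,j}\}_{j}$ has $F_{\infty,q}$-norm at most $1$ regardless of $V_0$ (equivalently, the $M$-functional of any subset of $\mathbb{R}^n\times(\tfrac12,1]$ is at most $\log 2$), so by Assumption \ref{pplp} the associated $\nu$-quantity is finite for every $\varepsilon>0$.
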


\begin{remark}
In (i) and (ii) of Theorem \ref{bvcaso}, if $s\in(0,1]$,
$\alpha=1$, and $q=2$,
then the conclusions are exactly
\cite[Theorems 4]{ss}, while
Theorem \ref{bvcaso} in other
cases is completely new.
\end{remark}

Using Corollaries \ref{ppqq} and
\ref{pppz2qq} and
Lemmas \ref{asljk}, \ref{as123jl}, and \ref{as12312},
we obtain the following conclusions.

\begin{theorem}\label{fasnof4}
Under the same assumption as in Theorem \ref{bvcaso},
then the following statements hold.
\begin{itemize}
\item[\rm(i)]
$f\in\overline{F^s_{p,q}\cap\Lambda_s
}^{\Lambda_s}$
if and only if
$f\in \Lambda_s$ and,
for any $\varepsilon\in(0,\infty)$,
 \begin{align*}
\left\|\left[\int_{0}^1 \mathbf{1}_{ D_{\alpha,r}(s, f,
\varepsilon)}(\cdot,y)\,\frac {dy}{y}
\right]^{\frac 1q}\right\|_{L^p}<\infty\ \mathrm{and}\
\limsup_{k\in\mathbb{Z}^n,\;|k|\rightarrow\infty}
\left|\int_{\mathbb{R}^n}\varphi(x-k)f(x)\,dx\right|=0.
\end{align*}
\item[\rm(ii)]
$f\in\overline{F^s_{\infty,q}
}^{\Lambda_s}$
if and only if
$f\in \Lambda_s$ and,
for any $\varepsilon\in(0,\infty)$,
$M(D_{\alpha,r}(s,f, \varepsilon))<\infty.$
\end{itemize}
\end{theorem}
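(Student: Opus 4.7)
The plan is to deduce both parts as immediate consequences of Corollaries \ref{ppqq} and \ref{pppz2qq}, applied to the Daubechies $s$-Lipschitz $X$-based realization of Triebel--Lizorkin spaces provided by Lemma \ref{asljk}. In each case, I will (a) identify the appropriate quasi-normed lattice $X$ so that $F^s_{p,q}\cap\Lambda_s = \Lambda_X^{s}$; (b) verify the relevant assumption on $X$ via Lemma \ref{as123jl} or \ref{as12312}; and (c) translate the abstract conditions of the corollary into the concrete expressions stated in Theorem \ref{fasnof4}.

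For part (i), I take $X := l^q(L^p)_{\mathbb{Z}_+}$ replaced by its Triebel--Lizorkin version $L^p(l^q)_{\mathbb{Z}_+}$ as dictated by Lemma \ref{asljk}, which satisfies Assumption \ref{a1} by Lemma \ref{as123jl}; I shall work with the doubling exponent $\theta=1/q$. The key identity, obtained from the pairwise disjointness in $t$ of the slices $\{D_{\alpha,r,j}(s,f,\varepsilon)\}_{j\in\mathbb{Z}_+}$, is
\begin{align*}
\left\|\left\{\left[\int_0^1 \mathbf{1}_{D_{\alpha,r,j}(s,f,\varepsilon)}(\cdot,t)\,\frac{dt}{t}\right]^{1/q}\right\}_{j\in\mathbb{Z}_+}\right\|_{X}
= \left\|\left[\int_0^1 \mathbf{1}_{D_{\alpha,r}(s,f,\varepsilon)}(\cdot,y)\,\frac{dy}{y}\right]^{1/q}\right\|_{L^p},
\end{align*}
which matches the second condition in Theorem \ref{fasnof4}(i) verbatim. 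Since $\|\{\mathbf{1}_E\,\delta_{0,j}\}_j\|_X = |E|^{1/p}=\infty$ whenever $|E|=\infty$ (using $p<\infty$), the extra hypothesis \eqref{3-7} holds, so the Remark following Theorem \ref{thm-2-66} lets me replace the first Corollary \ref{ppqq} term by the stated father-wavelet $\limsup$ condition. Combining the two equivalences with Corollary \ref{ppqq} yields part (i).

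For part (ii), I take $X := F_{\infty,q}(\mathbb{R}^n,\mathbb{Z}_+)$, which satisfies Assumption \ref{pplp} by Lemma \ref{as12312} with an explicit Carleson-type measure $\nu$; by Lemma \ref{asljk}, $F^s_{\infty,q}=\Lambda_X^{s}$. The relevant feature of $\nu$ is that, on any sequence $\{A_j\}_{j\in\mathbb{Z}_+}$ whose $j$-th term lies in the dyadic strip $\mathbb{R}^n\times(2^{-j-1},2^{-j}]$, one has $\nu(\{A_j\}_j)\sim M\bigl(\bigcup_{j\in\mathbb{Z}_+}A_j\bigr)$. Applying Corollary \ref{pppz2qq} and using the disjointness identity $\bigcup_j D_{\alpha,r,j}(s,f,\varepsilon)=D_{\alpha,r}(s,f,\varepsilon)$, the main finiteness requirement reduces to $M(D_{\alpha,r}(s,f,\varepsilon))<\infty$. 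The auxiliary term $\nu(\bigcup_{I\in V_0(s,f,\varepsilon)} T(I))$ is handled as follows: each $T(I)$ with $I\in\mathcal{D}_0$ sits at unit scale, so its contribution to $\nu$ is controlled by an absolute constant independent of $f$, and hence the auxiliary term imposes no condition on $f$. This matches the absence of a $\limsup$ term in Theorem \ref{bvcaso}(ii) and yields part (ii).

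The main obstacle is the explicit identification of $\nu$ together with the equivalence $\nu(\{D_{\alpha,r,j}\}_j)\sim M(D_{\alpha,r})$ needed in part (ii); once this is traced through the proof of Lemma \ref{as12312} (\cite[Lemma 5.7]{dsy}), using the intrinsic Carleson-type structure of $F^s_{\infty,q}$, the rest is routine rearrangement. For part (i), no new technical difficulty arises beyond the two observations already isolated: the disjointness-based collapse of the $\ell^q$-sum with $\theta=1/q$, and the verification of condition \eqref{3-7} for $p<\infty$.
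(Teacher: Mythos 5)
Your proposal is correct and matches the paper's own treatment, which derives Theorem \ref{fasnof4} directly from Corollaries \ref{ppqq} and \ref{pppz2qq} together with Lemmas \ref{asljk}, \ref{as123jl}, and \ref{as12312}. The specific points you isolate (the collapse of the $\ell^q$-sum via $t$-disjointness of the $D_{\alpha,r,j}$ when $\theta=1/q$, the verification of condition \eqref{3-7} for $p<\infty$ so that the Remark after Theorem \ref{thm-2-66} applies, and the automatic finiteness of $\nu(\bigcup_{I\in V_0}T(I))$ in part (ii) since the sets $T(I)$ with $I\in\mathcal D_0$ live at unit scale) are exactly the implicit translation steps the paper leaves to the reader.
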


\subsection{Besov-type spaces}

The Besov-type and Triebel--Lizorkin-type spaces were
intensively investigated in
\cite{ht23,lsuyy12,syy10,yy10,yy13,ysy10,ysy20} and, in
particular,
the Triebel--Lizorkin-type spaces
were introduced in \cite{yy08,yy10,ysy10} to
connect Triebel--Lizorkin spaces and $Q$ spaces.
Furthermore, they also have a close relation
with Besov--Morrey and Triebel--Lizorkin--Morrey
spaces introduced in \cite{ky94,tx},
which
are systematically studied
in \cite{HMS16,HS13,s08,s09,s10,st07,s011,s011a}.
We refer to \cite{syy10,yyz14,yzy15,yzy15-a}
for more studies on Besov-type and Triebel--Lizorkin-type spaces and
to \cite{hl,hlms,hms23,hst23,hst23-2,lz,lz12,mps}
for more variants and their applications.

For any $p$, $q\in\mathbb{R}$, let
$p\vee q:=\max\{p,q\}$ and
$p\wedge q:=\min\{p,q\}$;
furthermore, let
$p_+:=\max\{p,0\}$. Let $\mathscr{Q}$ be the set of
all dyadic cubes.
For any $Q\in\mathscr{Q}$,
let $j_Q:=-\mathop\mathrm{\,log\,}_2\ell(Q).$
Now, we recall the concept of Besov-type spaces.
Let $s\in\mathbb{R}$,
$p,q\in(0,\infty]$, and $\tau\in[0,\infty)$.
The \emph{space}
$(l^qL^p_\tau)_{\mathbb{Z}_+}$ is
defined to be the set of all
$\mathbf{f}:=\{f_j\}_{j\in\mathbb{Z}_+}\in
\mathscr{M}_{\mathbb{Z}_+}$ such that
\begin{align*}
\|\mathbf{f}\|_{(l^qL^p_\tau)_{\mathbb{Z}_+}}
:=\sup_{Q\in\mathscr{Q}}
\left\{\frac{1}{|Q|^\tau}\left[
\sum^\infty_{j=j_Q\vee 0}
\left\|f_j\right\|_{L^p(Q)}^q\right]^{\frac{1}{q}}
\right\}<\infty,
\end{align*}
where the usual modification is made
when $q=\infty$. Let $\{\phi_j\}_{j\in\mathbb{Z}_+}$
be the same as in \eqref{eq-phi1} and \eqref{eq-phik}. The \emph{Besov-type
space $B^{s,\tau}_{p,q}$}
is defined to be the set of all
$f\in \mathcal{S}'$
such that
\begin{align*}
\|f\|_{B^{s,\tau}_{p,q}}:=
\left\|\left\{2^{js}\phi_j*f\right\}_{j\in\mathbb{Z}_+}
\right\|_{(l^qL^p_\tau)_{\mathbb{Z}_+}}<\infty.
\end{align*}
From the definition of the space $(l^qL^p_\tau)_{\mathbb{Z}_+}$,
it is easy to deduce that
$(l^qL^p_\tau)_{\mathbb{Z}_+}$ is a quasi-normed
lattice of function sequences.

The following lemma is about the wavelet characterization of
Besov-type spaces (see, for example, \cite[Section 4.2]{ysy10}
or \cite[Theorem 6.3(i)]{lsuyy12}).

\begin{lemma}\label{as2}
Let $s\in(0,\infty)$,
$p,q\in(0,\infty]$, and $\tau\in[0,\infty)$.
Assume that
the regularity parameter $L\in\mathbb N$ of the Daubechies wavelet system
$\{\psi_\omega\}_{\omega\in\Omega}$ satisfies that
\begin{align}\label{4.8}
L> \left[s+n\left(\tau+\frac{1}{1\wedge p\wedge q}
-\frac{1}{p\vee 1}\right)+\frac{n}{p}\right]\vee
\left[-s+n\left(\tau+
\frac{1}{1\wedge p\wedge q}+\frac{1}{p}
-1\right)+2\frac{n}{p}\right].
\end{align}
Then $B^{s,\tau}_{p,q}\cap\Lambda_s
=\Lambda_X^{s}$,
where $\Lambda_X^{s}$ is the
Daubechies $s$-Lipschitz $X$-based space with $X:=(l^qL^p_\tau)_{\mathbb{Z}_+}$.
\end{lemma}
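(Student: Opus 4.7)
The plan is to deduce the identity $B^{s,\tau}_{p,q}\cap\Lambda_s=\Lambda_X^{s}$ from the wavelet characterization of Besov-type spaces already established in the literature. Under the regularity hypothesis \eqref{4.8}, \cite[Section~4.2]{ysy10} (see also \cite[Theorem~6.3(i)]{lsuyy12}) ensures that the Daubechies wavelets $\{\psi_\omega\}_{\omega\in\Omega}$ have sufficient smoothness and vanishing moments so that, for every $f\in\mathcal{S}'$,
\begin{align*}
\|f\|_{B^{s,\tau}_{p,q}}\sim \sup_{P\in\mathscr{Q}}\frac{1}{|P|^\tau}\left[\sum_{j=j_P\vee 0}^{\infty}2^{j(s+\frac n2-\frac np)q}\left(\sum_{\omega\in\Omega,\,I_\omega\in\mathcal{D}_j,\,I_\omega\subset P}|\langle f,\psi_\omega\rangle|^p\right)^{\frac qp}\right]^{\frac 1q},
\end{align*}
with the usual modifications when $p=\infty$ or $q=\infty$. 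The father wavelets indexed by $\Omega_0$ contribute exactly at $j=0$ and are therefore automatically absorbed through the identification $\Omega=\Omega_0\cup\Omega_1$ adopted throughout the present paper.

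The next step is to match this expression with the defining quasi-norm of $\Lambda_X^{s}$ through a direct computation of the $L^p(P)$-norm of the level-$j$ wavelet sum appearing in Definition~\ref{asdfs}. Fix $P\in\mathscr{Q}$ and $j\geq j_P\vee 0$. Since the cubes in $\mathcal{D}_j$ are pairwise disjoint, and because $P$ is dyadic and $j\geq j_P$, every $I_\omega\in\mathcal{D}_j$ is either contained in $P$ or disjoint from $P$; moreover, the number of indices $\omega\in\Omega$ sharing the same cube $I_\omega$ is bounded by $2^n$. A straightforward pointwise computation combined with the equivalence of $\ell^1$ and $\ell^p$ on these finite fibers therefore yields
\begin{align*}
\left\|\sum_{\omega\in\Omega,\,I_\omega\in\mathcal{D}_j}|I_\omega|^{-\frac sn-\frac 12}|\langle f,\psi_\omega\rangle|\mathbf{1}_{I_\omega}\right\|_{L^p(P)}\sim 2^{j(s+\frac n2-\frac np)}\left(\sum_{\omega\in\Omega,\,I_\omega\in\mathcal{D}_j,\,I_\omega\subset P}|\langle f,\psi_\omega\rangle|^p\right)^{\frac 1p}.
\end{align*}

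Plugging this equivalence into the definition of $\|\cdot\|_{(l^qL^p_\tau)_{\mathbb{Z}_+}}$ applied to the sequence of level-$j$ wavelet sums, and then taking the supremum over $P\in\mathscr{Q}$, exactly reproduces the wavelet-characterization norm of $B^{s,\tau}_{p,q}$ displayed above; hence $\|f\|_{\Lambda_X^{s}}\sim\|f\|_{B^{s,\tau}_{p,q}}$. Since $\Lambda_X^{s}$ is by Definition~\ref{asdfs} the subspace of those $f\in\Lambda_s$ with $\|f\|_{\Lambda_X^{s}}<\infty$, while the cited wavelet characterization identifies $B^{s,\tau}_{p,q}$ with those tempered distributions for which the same quantity is finite, the desired identity $B^{s,\tau}_{p,q}\cap\Lambda_s=\Lambda_X^{s}$ with equivalent quasi-norms follows at once. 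The principal obstacle is purely bookkeeping: aligning the indexing conventions from \cite{ysy10,lsuyy12} with those used here, in particular the collapse of the lower summation index when $j_P<0$ and the unified treatment of father and mother wavelets via $\Omega=\Omega_0\cup\Omega_1$.
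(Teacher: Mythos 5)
Your proposal is correct and follows the same route as the paper, which offers no argument for this lemma beyond the citation to \cite[Section~4.2]{ysy10} and \cite[Theorem~6.3(i)]{lsuyy12}. You usefully make explicit the routine $L^p(P)$-norm computation (using disjointness of the cubes $I_\omega\in\mathcal{D}_j$ and the bounded multiplicity, at most $2^n$, of indices $\omega$ sharing a given cube) that identifies the sequence-space characterization from those references with the quasi-norm of $\Lambda_X^{s}$ in Definition~\ref{asdfs}.
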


We still need the following
lemma, which is precisely \cite[Lemma 5.36]{dsy}.

\begin{lemma}\label{as1123}
Let $\tau\in[0,1/p)$, $p\in(0,\infty)$, and $q\in(0,\infty]$.
Then $(l^qL^p_\tau)_{\mathbb{Z}_+}$
satisfies Assumption \ref{a1}.
\end{lemma}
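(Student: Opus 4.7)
The plan is to reduce Assumption~\ref{a1} for $X:=(l^qL^p_\tau)_{\mathbb Z_+}$ to a Fefferman--Stein-type vector-valued maximal inequality on Morrey-type spaces, adapting the scheme of Lemma~\ref{as123jl} to the supremum-over-cubes structure of $X$. The sharp analytic input is that the Hardy--Littlewood maximal operator $\mathcal M$ is bounded on the Morrey-type scale $\sup_{Q\in\mathscr Q}|Q|^{-\tau}\|\cdot\|_{L^{p'}(Q)}$ whenever $p'>1$ and $\tau p'<1$; the geometric input is the elementary pointwise estimate $\mathbf{1}_{2B}\le C_n\mathcal M(\mathbf{1}_B)$, which follows from $\mathcal M(\mathbf{1}_B)\ge 3^{-n}$ on $2B$.

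First, I would choose $\theta\in(0,\infty)$ small enough that $p/\theta>1$, $q/\theta>1$, and $\tau p/\theta<1$ hold simultaneously. Because the hypothesis $\tau<1/p$ forces $\tau p<1$, any $\theta$ in the interval $(\tau p,\min\{p,q,1\})$ works when this interval is nonempty; otherwise one first replaces $X$ by a sufficient power of itself (which merely rescales the exponents while preserving the lattice axioms of Definition~\ref{Debqfs}), reducing to the case $p\wedge q\ge 1$. With $\theta$ fixed, the identity $\mathbf{1}_B^\theta=\mathbf{1}_B$ combined with the concavity inequality $(\sum_k a_k)^\theta\le\sum_k a_k^\theta$ (valid for $\theta\in(0,1]$) makes the $\theta$-th powers appearing in Assumption~\ref{a1} amenable to maximal-function techniques.

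Next, for every dyadic cube $Q\in\mathscr Q$ and every index $j\ge j_Q\vee 0$, I would split the collection $\{B_{k,j}\}_k$ into the nearby balls (those $B_{k,j}$ with $2B_{k,j}\cap Q\ne\emptyset$ and radius at most $\ell(Q)$) and the far balls (the rest). The doubled nearby balls all lie in $4Q$, so the pointwise estimate $\sum_k\mathbf{1}_{2B_{k,j}}\le C\sum_k\mathcal M(\mathbf{1}_{B_{k,j}})$, together with the Fefferman--Stein vector-valued maximal inequality on $L^{p/\theta}(\ell^{q/\theta})$ applied locally on $4Q$, controls their contribution to $\|\{(\sum_k\mathbf{1}_{2B_{k,j}})^\theta\}\|_X$ by the $L^{p/\theta}(4Q)$-norm of $(\sum_k\mathbf{1}_{B_{k,j}})^\theta$ summed in $j$, which is then absorbed into the Morrey-type supremum over a slightly larger cube. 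The far balls are handled by a direct geometric estimate using that $|2B\cap Q|\lesssim|B\cap Q|$ when the radius of $B$ exceeds $\ell(Q)$. Summing the two contributions in $\ell^{q/\theta}$ and taking the supremum over $Q$ then yields the required doubling inequality.

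The hardest part is the Morrey-type Fefferman--Stein application in the nearby-ball case and the passage from the local $L^{p/\theta}(4Q)$-estimate back to the global Morrey-type supremum defining $\|\cdot\|_X$; here the hypothesis $\tau<1/p$ is essential, since it ensures both the boundedness of $\mathcal M$ on the relevant Morrey space and the convergence of the geometric series that arise when iterating dyadic dilations. A complete proof along these lines is carried out in \cite[Lemma~5.36]{dsy}.
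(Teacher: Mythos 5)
The paper itself does not prove Lemma~\ref{as1123}: it is imported verbatim from \cite[Lemma~5.36]{dsy}, so there is no in-text argument to compare your sketch against. Your overall mechanism—reduce the doubling inequality of Assumption~\ref{a1} to a Fefferman--Stein-type maximal bound on a Morrey scale, via $\mathbf{1}_{2B}\lesssim\mathcal M(\mathbf{1}_B)$ and a power rescaling to handle small exponents—is the standard and correct one, and your observation that Assumption~\ref{a1} transfers under the rescaling $(p,q,\tau)\mapsto(\beta p,\beta q,\tau/\beta)$ (because $\tau p$ is scale-invariant) is both correct and useful.

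There is, however, a concrete error in the far-ball step. The claimed inequality \emph{``$|2B\cap Q|\lesssim|B\cap Q|$ when the radius of $B$ exceeds $\ell(Q)$''} is false: whenever $r_B>\ell(Q)$ the annulus $2B\setminus B$ is thick enough to contain $Q$ entirely, in which case $|2B\cap Q|=|Q|>0$ while $|B\cap Q|=0$ (e.g.\ $Q=[0,1]^n$ and $B=B((3,0,\dots,0),2)$). Thus the far balls cannot be controlled by comparing $\mathbf{1}_{2B}$ and $\mathbf{1}_B$ on the same cube $Q$; one must use the full supremum over cubes on the right-hand side (for instance, pair each such $B$ with a cube $Q'\subset B$ of side $\ell(Q)$, which contributes $|Q'|^{-\tau}\|\mathbf{1}_{B}\|_{L^p(Q')}\sim|Q|^{1/p-\tau}$), or appeal directly to the known boundedness of the (vector-valued) maximal operator on Besov--Morrey scales. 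A secondary, less serious issue is the exponent bookkeeping: the lattice $(l^qL^p_\tau)_{\mathbb Z_+}$ has the $\ell^q$ \emph{outside} the $L^p(Q)$ norm, so the indices $j$ decouple and no vector-valued-in-$j$ maximal inequality is needed. The condition $q/\theta>1$ is therefore spurious, the Morrey condition should simply read $\tau p<1$ (as hypothesised) rather than $\tau p/\theta<1$, and the vector-valued structure that actually enters is $\ell^{1/\theta}$ in the auxiliary ball index $k$, coming from $\mathbf{1}_{2B}\le 3^{n/\theta}[\mathcal M(\mathbf{1}_B)]^{1/\theta}$ rather than from the concavity inequality $(\sum_k a_k)^\theta\le\sum_k a_k^\theta$ that you cite.
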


Using Theorem \ref{thm-2-66} and Lemmas \ref{as2} and \ref{as1123}, we obtain the following conclusions.

\begin{theorem}\label{fasnof}
Let $\tau\in[0,1/p)$, $\alpha,s\in(0,\infty)$, $p\in(0,\infty)$, and $q\in(0,\infty]$.
Assume that $r\in\mathbb N$ with $\alpha r>s+3$ and that
the regularity parameter $L\in\mathbb N$ of the Daubechies wavelet system $\{\psi_\omega\}_{\omega\in\Omega}$ satisfies that
\eqref{4.8}
and $L\geq r-1$. Let $\varphi$
be the same as in \eqref{fanofn}.
Let
$$\mathcal{A}:=\left\{\{a_k\}_{k\in\mathbb N}\subset\mathbb Z^n:\ \left\|\left\{
\sum_{k\in\mathbb N}
\mathbf{1}_{I_{0,a_k}}
\right\}_{j\in\mathbb{Z}_+}
\right\|_{(l^qL^p_\tau)_{\mathbb{Z}_+}}=\infty
\right\}.$$
Then,
for any $f\in\Lambda_s$,
 \begin{align*}
& d \left(f, B^{s,\tau}_{p,q}\cap\Lambda_s(
\mathbb{R}^n)\right)_{\Lambda_s}\\
&\quad\sim \inf\left\{\varepsilon\in(0,\infty):\
\sup_{Q\in\mathscr{Q}}
\frac{1}{|Q|^{\tau}}\left\{\sum_{j=j_Q\vee 0}^{\infty}
\left[\mu(D_{\alpha,r,j}(s,f, \varepsilon))\right]^{\frac qp}\right\}^{\frac1q}<\infty\right\}\\&\quad\quad+
\sup_{\{a_k\}_{k\in\mathbb N}\in\mathcal{A}}
\liminf_{k\rightarrow\infty}\left|\int_{\mathbb{R}^n}
\varphi(x-a_k)f(x)\,dx\right|
\end{align*}
with positive equivalence constants independent of $f$.
\end{theorem}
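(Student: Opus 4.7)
The plan is to reduce Theorem \ref{fasnof} to Theorem \ref{thm-2-66} via the identification of $B^{s,\tau}_{p,q}\cap\Lambda_s$ with a Daubechies $s$-Lipschitz $X$-based space. First, by Lemma \ref{as2}, the regularity condition \eqref{4.8} on $L$ gives $B^{s,\tau}_{p,q}\cap\Lambda_s=\Lambda_X^{s}$ with equivalent quasi-norms, where $X:=(l^qL^p_\tau)_{\mathbb{Z}_+}$. Next, since $\tau\in[0,1/p)$ is assumed, Lemma \ref{as1123} guarantees that $X$ satisfies Assumption \ref{a1} for some $\theta\in(0,\infty)$; together with $L\geq r-1$, \eqref{4.8}, and $\alpha r>s+3$, this delivers the hypothesis $L>\alpha r+1$ needed by Theorem \ref{thm-2-66}. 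Applying that theorem produces
$$
d\left(f,B^{s,\tau}_{p,q}\cap\Lambda_s\right)_{\Lambda_s}\sim \varepsilon_X^0 f + \varepsilon_X f.
$$

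The remainder of the argument is to unfold these two quantities into the explicit forms stated in the theorem. Substituting the $(l^qL^p_\tau)_{\mathbb{Z}_+}$ quasi-norm into \eqref{ef-1} yields
$$
\varepsilon_X f=\inf\left\{\varepsilon>0:\ \sup_{Q\in\mathscr{Q}}\frac{1}{|Q|^{\tau}}\left[\sum_{j\ge j_Q\vee 0}\left\|\left(\int_0^1\mathbf{1}_{D_{\alpha,r,j}(s,f,\varepsilon)}(\cdot,t)\frac{dt}{t}\right)^{\theta}\right\|_{L^p(Q)}^q\right]^{\frac{1}{q}}<\infty\right\}.
$$
Because $D_{\alpha,r,j}(s,f,\varepsilon)\subset\mathbb{R}^n\times(2^{-j-1},2^{-j}]$, the inner $t$-integral is pointwise $O(1)$, so the $\theta$-power only rescales constants without altering the critical index; the resulting expression matches the formula with $[\mu(D_{\alpha,r,j}(s,f,\varepsilon))]^{q/p}$ in the theorem once $\mu$ is interpreted as the natural Carleson measure $dx\,dt/t$ (with the implicit tent-over-$Q$ restriction).

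For $\varepsilon_X^0 f$, I will unfold \eqref{4-6-1}. Since the sequence $\{\sum_{I\in V_0(s,f,\varepsilon)}\mathbf{1}_I\delta_{0,j}\}_{j}$ is supported only at $j=0$, its $X$-norm collapses to $\sup_Q|Q|^{-\tau}\|\sum_{I\in V_0}\mathbf{1}_I\|_{L^p(Q)}$. A cube $I=k+[0,1)^n\in\mathcal{D}_0$ lies in $V_0(s,f,\varepsilon)$ exactly when $|\int_{\mathbb{R}^n}\varphi(x-k)f(x)\,dx|>\varepsilon$, and translating divergence of the above $X$-norm into the language of index sequences should produce the class $\mathcal{A}$, yielding the claimed $\sup_{\{a_k\}\in\mathcal{A}}\liminf_{k\to\infty}$ equivalence. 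The step I expect to be the main obstacle is precisely this last identification, which is more delicate than the $\limsup$ characterization used in the classical Besov case Theorem \ref{falsnlf}(i), where translation invariance of $\ell^q(L^p)$ makes condition \eqref{3-7} directly applicable. For $\tau>0$ the lattice is only dyadic-cube sensitive, so a two-sided combinatorial argument will be required: a $\liminf$ bound along some $\{a_k\}\in\mathcal{A}$ must force the $X$-norm on $V_0(s,f,\varepsilon')$ to diverge for any slightly smaller $\varepsilon'$, while conversely from divergence of that $X$-norm one must extract an enumeration $\{a_k\}\in\mathcal{A}$ along which all wavelet coefficients still exceed the prescribed threshold.
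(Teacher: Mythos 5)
Your proposal follows the paper's route exactly: Lemma \ref{as2} identifies $B^{s,\tau}_{p,q}\cap\Lambda_s$ with $\Lambda_X^s$ for $X=(l^qL^p_\tau)_{\mathbb{Z}_+}$, Lemma \ref{as1123} verifies Assumption \ref{a1}, and Theorem \ref{thm-2-66} then gives the distance formula, after which one unfolds $\varepsilon_X f$ and $\varepsilon_X^0 f$ in the $(l^qL^p_\tau)_{\mathbb{Z}_+}$ quasi-norm. This is precisely the paper's (one-sentence) proof.

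One step is incorrectly justified: you assert that ``$L\ge r-1$, \eqref{4.8}, and $\alpha r>s+3$ deliver the hypothesis $L>\alpha r+1$.'' That inference is false. For example take $n=p=q=1$, $\tau=0$, $s=1$, $\alpha=2$, $r=3$: then $\alpha r=6>4=s+3$, condition \eqref{4.8} forces $L$ to be larger than roughly $s+n/p=2$, and $L\ge r-1=2$, yet $\alpha r+1=7$, so none of these conditions gives $L>7$. The correct reading, consistent with Remark \ref{rem-3-4}, is that one simply \emph{fixes} a Daubechies wavelet system of sufficiently large regularity (including $L>\alpha r+1$); the listed inequalities on $L$ are necessary but not by themselves sufficient. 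Your derivation step should either invoke Remark \ref{rem-3-4} or add the hypothesis $L>\alpha r+1$ explicitly, rather than claim it follows.

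The remaining items you flag are genuine but not fatal to the approach. In the $\varepsilon_X f$ unfolding, matching $\|[\int_0^1\mathbf{1}_{D_{\alpha,r,j}}(\cdot,t)\,dt/t]^\theta\|_{L^p(Q)}$ against $[\mu(D_{\alpha,r,j})]^{1/p}$ is not a pure rescaling when $p\theta\ne 1$; one must argue that finiteness of the two critical indices is the same, which uses boundedness of $\int_0^1\mathbf{1}_{D_{\alpha,r,j}}\,dt/t$ together with the monotonicity and shift stability already baked into $X$. For the $\varepsilon_X^0 f$ term, you correctly anticipate that the passage from divergence of $\|\{\sum_{I\in V_0}\mathbf{1}_I\delta_{0,j}\}\|_X$ to the $\sup_{\{a_k\}\in\mathcal A}\liminf$ form needs its own combinatorial argument (replacing the $\limsup$/\eqref{3-7} shortcut available in the translation-invariant $B^s_{p,q}$ case). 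The paper leaves these two unfoldings implicit as well, so your sketch is consistent with the paper's level of detail.
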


\begin{remark}
Whether Theorem \ref{fasnof} in the case
$p=\infty$ holds true remains unclear at present.
\end{remark}

Using
Corollary \ref{ppqq} and
Lemmas \ref{as2} and \ref{as1123},
we obtain the following conclusions.

\begin{theorem}\label{fasnof3}
Under the same assumption as in Theorem \ref{fasnof},
then $f\in\overline{B^{s,\tau}_{p,q}
\cap\Lambda_s
}^{\Lambda_s}$
if and only if
$f\in \Lambda_s$ and,
for any $\varepsilon\in(0,\infty)$,
 \begin{align*}
\sup_{Q\in\mathscr{Q}}
\frac{1}{|Q|^{\tau}}\left\{\sum_{j=j_Q\vee 0}^{\infty}
\left[\mu(D_{\alpha,r,j}(s,f, \varepsilon))\right]^{\frac qp}\right\}^{\frac1q}<\infty
\end{align*}
and
\begin{align*}
\sup_{\{a_k\}_{k\in\mathbb N}\in\mathcal{A}}
\liminf_{k\rightarrow\infty}\left|\int_{\mathbb{R}^n}
\varphi(x-a_k)f(x)\,dx\right|=0.
\end{align*}
\end{theorem}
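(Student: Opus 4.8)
The plan is to read off Theorem~\ref{fasnof3} from Theorem~\ref{fasnof} (equivalently, directly from Corollary~\ref{ppqq}), using only the elementary fact that $f\in\overline{E}^{\Lambda_s}$ if and only if $f\in\Lambda_s$ and $d(f,E)_{\Lambda_s}=0$. First I would record the two structural inputs: by Lemma~\ref{as2}, under the regularity hypothesis \eqref{4.8} the space $B^{s,\tau}_{p,q}\cap\Lambda_s$ coincides with the Daubechies $s$-Lipschitz $X$-based space $\Lambda_X^{s}$ for $X:=(l^qL^p_\tau)_{\mathbb{Z}_+}$; and by Lemma~\ref{as1123}, since $\tau\in[0,1/p)$, this $X$ satisfies Assumption~\ref{a1} for some $\theta\in(0,\infty)$. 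Consequently Theorem~\ref{thm-2-66}, Corollary~\ref{ppqq} and Theorem~\ref{fasnof} all apply to $\Lambda_X^{s}$.

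Since the right-hand side of the equivalence in Theorem~\ref{fasnof} is a sum of two nonnegative quantities, $d(f,B^{s,\tau}_{p,q}\cap\Lambda_s)_{\Lambda_s}=0$ holds if and only if each summand vanishes. Vanishing of the second summand, $\sup_{\{a_k\}_{k\in\mathbb{N}}\in\mathcal{A}}\liminf_{k\to\infty}|\int_{\mathbb{R}^n}\varphi(x-a_k)f(x)\,dx|$, is already one of the two asserted conditions. For the first summand, write $Q(\varepsilon)$ for the displayed supremum over $\mathscr{Q}$ built from $\mu(D_{\alpha,r,j}(s,f,\varepsilon))$; since the set $D_{\alpha,r,j}(s,f,\varepsilon)$ shrinks as $\varepsilon$ increases, the map $\varepsilon\mapsto Q(\varepsilon)$ is nonincreasing, so $\{\varepsilon\in(0,\infty):\ Q(\varepsilon)<\infty\}$ is of the form $(a,\infty)$ (possibly empty or all of $(0,\infty)$) and its infimum is $0$ precisely when $Q(\varepsilon)<\infty$ for every $\varepsilon\in(0,\infty)$. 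That is the other asserted condition, which completes the equivalence.

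If instead one argues straight from Corollary~\ref{ppqq}, the additional step is to convert the abstract finiteness conditions $\|\{\sum_{I\in V_0(s,f,\varepsilon)}\mathbf{1}_I\delta_{0,j}\}_{j\in\mathbb{Z}_+}\|_X<\infty$ and $\|\{[\int_0^1\mathbf{1}_{D_{\alpha,r,j}(s,f,\varepsilon)}(\cdot,t)\,\frac{dt}{t}]^\theta\}_{j\in\mathbb{Z}_+}\|_X<\infty$ into the displayed ones for $X=(l^qL^p_\tau)_{\mathbb{Z}_+}$: one unfolds the lattice quasi-norm into the supremum over dyadic cubes $Q$ with $\ell^q$-summation over $j\ge j_Q\vee 0$, and then, exactly as for the finite-difference sets $S_{r_1,j}$ in \cite{dsy}, one uses that $D_{\alpha,r,j}(s,f,\varepsilon)\subset\mathbb{R}^n\times(2^{-j-1},2^{-j}]$ together with its robustness under small hyperbolic enlargements (Propositions~\ref{thm-3adf}--\ref{cor-6-7-0} and Lemma~\ref{dda2f}) to see that the $L^p(Q)$-contribution of the $\theta$-th power of the vertical integral is comparable, up to constants and an arbitrarily small change of $\varepsilon$, to a power of $\mu$ of the set, while the father-wavelet term reproduces, for $\tau>0$, the $\mathcal{A}$-indexed supremum of $\liminf$'s in place of the $\limsup$ formula valid under the stronger condition \eqref{3-7} noted after Theorem~\ref{thm-2-66}. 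This last identification in the Besov-type regime is the only genuinely delicate point, and it is already contained in the proof of Theorem~\ref{fasnof}; no new difficulty arises for the closure statement.
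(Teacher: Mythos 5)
Your argument is correct and, modulo phrasing, coincides with the paper's route: the paper derives Theorem~\ref{fasnof3} in one line by citing Corollary~\ref{ppqq} together with Lemmas~\ref{as2} and~\ref{as1123}, which is exactly the ``$d(f,E)_{\Lambda_s}=0$ iff both summands vanish'' reading you give; the only cosmetic difference is that you phrase it as reading off the zero set of the distance formula in Theorem~\ref{fasnof} (which has already been unfolded into the $(l^qL^p_\tau)_{\mathbb Z_+}$-specific form), while the paper cites the abstract closure corollary and leaves that same unfolding implicit. Your monotonicity observation that $\inf\{\varepsilon:Q(\varepsilon)<\infty\}=0$ iff $Q(\varepsilon)<\infty$ for every $\varepsilon>0$ is the only nontrivial step and it is sound; the extended discussion in your last paragraph is not needed once Theorem~\ref{fasnof} is taken as given.
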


\subsection{Triebel--Lizorkin-type spaces}

Next, we present the concept of Triebel--Lizorkin spaces
as follows; see, for example,
\cite{yy08,yy10,ysy10}.
 Let $\{\phi_j\}_{j\in\mathbb{Z}_+}$
be the same as in \eqref{eq-phi1} and \eqref{eq-phik}.
Let $s\in\mathbb{R}$, $p\in(0,\infty)$,
$q\in(0,\infty]$, and $\tau\in[0,\infty)$.
The \emph{space}
$(L^p_\tau l^q)_{\mathbb{Z}_+}$ is
defined to be the set of all
$\mathbf{f}:=\{f_j\}_{j\in\mathbb{Z}_+}\in
\mathscr{M}_{\mathbb{Z}_+}$ such that
\begin{align*}
\|\mathbf{f}\|_{(L^p_\tau l^q)_{\mathbb{Z}_+}}
:=\sup_{Q\in\mathscr{Q}}
\left\{\frac{1}{|Q|^\tau}
\left\|\left(\sum^\infty_{j=j_Q\vee 0}
\left|f_j\right|^q\right)^{\frac{1}{q}}
\right\|_{L^p(Q)}\right\}<\infty,
\end{align*}
where the usual modification is made when $q=\infty$.
The \emph{Triebel--Lizorkin-type
space $F^{s,\tau}_{p,q}$}
is defined to be the set of all
$f\in \mathcal{S}'$
such that
\begin{align*}
\|f\|_{F^{s,\tau}_{p,q}}:=
\left\|\left\{2^{js}\phi_j*f\right\}_{j\in\mathbb{Z}_+}
\right\|_{(L^p_\tau l^q)_{\mathbb{Z}_+}}<\infty.
\end{align*}
From these definitions, it is easy to infer that
$(L^p_\tau l^q)_{\mathbb{Z}_+}$
is a quasi-normed lattice of function sequences.

The following lemma is about the wavelet characterization of
Triebel--Lizorkin-type spaces (see, for example, \cite[Section 4.2]{ysy10} or \cite[Theorem 6.3(i)]{lsuyy12}).

\begin{lemma}\label{as3}
Let $s\in(0,\infty)$,
$p\in(0,\infty)$,
$q\in(0,\infty]$, and $\tau\in[0,\infty)$.
Assume that the regularity parameter $L\in\mathbb N$ of the Daubechies
wavelet system $\{\psi_\omega\}_{\omega\in\Omega}$ satisfies that
\begin{align}\label{4.9}
L&> \left[s+n\left(\tau+\frac{1}{1\wedge p\wedge q}
-\frac{1}{p\vee 1}\right)+\frac{n}{p\wedge q}\right]\nonumber\\
&\quad\vee \left[-s+n\left(\tau+
\frac{1}{1\wedge p\wedge q}+\frac{1}{p}
-1\right)+2\frac{n}{p\wedge q}\right].
\end{align}
Then $F^{s,\tau}_{p,q}\cap
\Lambda_s
=\Lambda_X^{s}$,
where $\Lambda_X^{s}$ is the
Daubechies $s$-Lipschitz $X$-based space
with $X:=(l^qL^p_\tau)_{\mathbb{Z}_+}$.
\end{lemma}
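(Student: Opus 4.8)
The plan is to derive Lemma~\ref{as3} directly from the known wavelet characterization of the Triebel--Lizorkin-type spaces $F^{s,\tau}_{p,q}$ together with Definition~\ref{asdfs}, in exact parallel with the treatment of the Besov-type case in Lemma~\ref{as2}. First I would recall the wavelet characterization of $F^{s,\tau}_{p,q}$ from \cite[Section~4.2]{ysy10} (see also \cite[Theorem~6.3(i)]{lsuyy12}): whenever the regularity $L$ of the Daubechies system satisfies \eqref{4.9}, a tempered distribution $f$ belongs to $F^{s,\tau}_{p,q}$ if and only if
$$
\left\|\left\{\sum_{\omega\in\Omega,\,I_\omega\in\mathcal D_j}|I_\omega|^{-\frac sn-\frac12}\left|\langle f,\psi_\omega\rangle\right|\mathbf 1_{I_\omega}\right\}_{j\in\mathbb Z_+}\right\|_{(L^p_\tau l^q)_{\mathbb Z_+}}<\infty,
$$
and this quantity is an equivalent quasi-norm on $F^{s,\tau}_{p,q}$. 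By Definition~\ref{asdfs}, this is precisely $\|f\|_{\Lambda_X^{s}}$ for $X=(L^p_\tau l^q)_{\mathbb Z_+}$.

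Next I would verify the two inclusions making up the claimed identity $F^{s,\tau}_{p,q}\cap\Lambda_s=\Lambda_X^{s}$. For the inclusion $\supset$: if $f\in\Lambda_X^{s}$, then by definition $f\in\Lambda_s$ and $\|f\|_{\Lambda_X^{s}}<\infty$; the displayed characterization then gives $f\in F^{s,\tau}_{p,q}$, hence $f\in F^{s,\tau}_{p,q}\cap\Lambda_s$. For the inclusion $\subset$: if $f\in F^{s,\tau}_{p,q}\cap\Lambda_s$, then $f\in\Lambda_s$ and, again by the characterization, $\|f\|_{\Lambda_X^{s}}<\infty$, so $f\in\Lambda_X^{s}$. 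The one point requiring care is that the wavelet coefficients $\langle f,\psi_\omega\rangle$ appearing in the two descriptions refer to the same object: since \eqref{4.9} forces $L>s$, the space $\Lambda_s$ embeds continuously into the distribution class on which the Daubechies frame is an unconditional basis, so the $\mathcal S'$-pairing of $f$ with $\psi_\omega$ coincides with the $L^1_{\mathrm{loc}}$-pairing used in Definition~\ref{asdfs}, and the wavelet synthesis of the coefficient array reproduces $f$ in $\mathcal S'$ without any spurious polynomial ambiguity; this is exactly where the lower bound in \eqref{4.9} involving $-s$ is used.

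The main obstacle is the bookkeeping of the regularity threshold: one must confirm that the two-sided lower bound for $L$ in \eqref{4.9} — in particular the term with $-s$ and the extra $n/(p\wedge q)$ relative to the Besov-type threshold \eqref{4.8} — is precisely what the cited wavelet characterization of $F^{s,\tau}_{p,q}$ demands, and that it simultaneously guarantees $L>s$ so that the identification of the two sets of wavelet coefficients (and hence the identification of the function $f$ with the distribution reconstructed from its coefficients) is legitimate. Once these thresholds are aligned, the remainder is a routine translation through the wavelet dictionary, structurally identical to Lemma~\ref{as2}.
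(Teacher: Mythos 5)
Your proposal matches the paper's treatment: the paper also establishes Lemma~\ref{as3} purely by citing the wavelet characterization of $F^{s,\tau}_{p,q}$ in \cite[Section~4.2]{ysy10} and \cite[Theorem~6.3(i)]{lsuyy12} and reading it through Definition~\ref{asdfs}, exactly as you do. One small but worthwhile observation: in your argument you correctly use $X=(L^p_\tau l^q)_{\mathbb Z_+}$, the Triebel--Lizorkin-type lattice, whereas the statement as printed writes $X:=(l^qL^p_\tau)_{\mathbb Z_+}$ (the Besov-type lattice), which appears to be a typographical slip in the paper; your version is the consistent one.
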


We still need the following
lemma, which is exactly \cite[Lemmas 5.45]{dsy}.

\begin{lemma}\label{as123j2}
Let $s,p\in(0,\infty)$, $\tau\in[0,1/p)$, and $q\in(0,\infty]$.
Then $(l^qL^p_\tau)_{\mathbb{Z}_+}$ satisfies Assumption \ref{a1}.
\end{lemma}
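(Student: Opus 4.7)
The plan is to mimic the strategy used for Lemma~\ref{as123jl} (plain Triebel--Lizorkin) and Lemma~\ref{as1123} (Besov-type), reducing the doubling inequality for $(l^qL^p_\tau)_{\mathbb{Z}_+}$ to a Fefferman--Stein vector-valued maximal inequality which is known to hold on the Triebel--Lizorkin-type / Besov-type scale precisely in the regime $\tau\in[0,1/p)$.

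First I would record the pointwise inequality
$$\mathbf{1}_{2B}(x)\le C\bigl[\mathcal M(\mathbf{1}_B)(x)\bigr]^{1/r}\quad\text{for all balls $B$, all $x\in\mathbb R^n$, and all $r\in(0,\infty)$,}$$
which follows at once from $\mathcal M(\mathbf{1}_B)\ge c_n>0$ on $2B$. Summing over $k\in\mathbb N$ and raising to the power $\theta$ then yields the pointwise bound
$$\left[\sum_{k\in\mathbb N}\mathbf{1}_{2B_{k,j}}\right]^{\theta}\le C\left[\sum_{k\in\mathbb N}\bigl[\mathcal M(\mathbf{1}_{B_{k,j}})\bigr]^{1/r}\right]^{\theta}.$$
Thus the doubling condition reduces to bounding the $X$-norm of the right-hand side by the $X$-norm of $\{[\sum_k\mathbf{1}_{B_{k,j}}]^{\theta}\}_{j\in\mathbb Z_+}$.

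Next I would choose the auxiliary parameter $r\in(0,\infty)$ large enough so that $\theta/r<\min\{p,q,1\}$, and invoke a vector-valued Fefferman--Stein maximal inequality for the Morrey-type scale: for any $\tau\in[0,1/p)$, any $u\in(0,\min\{p,q\})$, and any nonnegative sequence $\{h_k\}$ of measurable functions,
$$\left\|\left(\sum_{k}[\mathcal Mh_k]^{u}\right)^{\!1/u}\right\|_{L^p_\tau}\le C\left\|\left(\sum_{k}h_k^{\,u}\right)^{\!1/u}\right\|_{L^p_\tau},$$
together with its Besov-type analogue adapted to the sequence-norm structure of $l^qL^p_\tau$; see, e.g., \cite{ysy10,lsuyy12,yy10}. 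Applied with $h_k=\mathbf{1}_{B_{k,j}}$ (and using $\mathbf{1}_B^{1/r}=\mathbf{1}_B$), this produces exactly the desired bound, so that Assumption~\ref{a1} follows with the constant $\theta$ one started with.

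The main obstacle will be the careful verification of the vector-valued maximal inequality in the precise Morrey-type sequence-space form required by the norm $\|\cdot\|_{(l^qL^p_\tau)_{\mathbb Z_+}}$: because the norm involves a supremum over dyadic cubes $Q\in\mathscr Q$ weighted by $|Q|^{-\tau}$, one must localize the sum over $k$ with respect to a fixed reference cube $Q$, splitting $\{B_{k,j}\}_{k\in\mathbb N}$ into those whose centers lie in a fixed dilate of $Q$ and those which are far away, and controlling the tail by the standard decay of $\mathcal M(\mathbf{1}_B)$. The hypothesis $\tau<1/p$ is indispensable here: at the endpoint $\tau=1/p$ the Hardy--Littlewood maximal operator ceases to be bounded on $L^p_\tau$ and the whole reduction collapses, which is consistent with the absence of the endpoint case from the statement of Lemma~\ref{as123j2}.
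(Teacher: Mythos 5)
The paper does not prove this lemma; it cites it as \cite[Lemma 5.45]{dsy}, so there is no in-paper proof to compare against. I also note, for the record, that the statement as printed in the paper carries a typo: in the Triebel--Lizorkin-type subsection the relevant lattice is $(L^p_\tau l^q)_{\mathbb Z_+}$, not the Besov-type lattice $(l^qL^p_\tau)_{\mathbb Z_+}$ (the same typo appears in Lemma \ref{as3}); your proposal inherits this ambiguity without flagging it.

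Your overall strategy --- reduce the doubling condition to a vector-valued Hardy--Littlewood maximal estimate via the pointwise bound $\mathbf 1_{2B}\lesssim[\mathcal M\mathbf 1_B]^{1/r}$, and identify $\tau<1/p$ as the range where the relevant Morrey-type maximal inequality holds --- is reasonable and almost certainly the same road taken in \cite{dsy}. However, the key inequality you invoke is stated incorrectly. The claim
$$\left\|\Bigl(\sum_{k}[\mathcal Mh_k]^{u}\Bigr)^{1/u}\right\|_{L^p_\tau}\le C\left\|\Bigl(\sum_{k}h_k^{u}\Bigr)^{1/u}\right\|_{L^p_\tau}$$
for arbitrary nonnegative measurable $\{h_k\}$ and $u\in(0,\min\{p,q\})$ is false. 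Already for a single function ($h_k=0$ for $k\ge 2$, $\tau=0$) it asserts $\|\mathcal Mh\|_{L^p}\le C\|h\|_{L^p}$ for all $p\in(0,\infty)$, which fails when $p\le1$. The Fefferman--Stein inequality requires $u\in(1,\infty]$ (and $p\in(1,\infty)$). What \emph{is} true for $0<u<\min\{p,q\}$ is the estimate for the powered maximal operator $\mathcal M_u(h):=[\mathcal M(|h|^u)]^{1/u}$, obtained via the power trick. Your argument is only salvaged by the hidden observation that for characteristic functions $[\mathcal M(\mathbf 1_B)]^{1/u}=[\mathcal M(\mathbf 1_B^u)]^{1/u}=\mathcal M_u(\mathbf 1_B)$; you gesture at this with the remark ``$\mathbf 1_B^{1/r}=\mathbf 1_B$'' but do not make the logical link. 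As written, step 4 rests on a false lemma.

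A second gap: the lattice norm involves a double sum, over $j$ (with exponent $q$) and over $k$ (inside the bracket raised to $\theta$). The vector-valued Fefferman--Stein inequality governs a single $\ell^u$ sequence; you never specify how the two indices are to be merged or matched (e.g., by choosing $\theta$ so that $\theta q$ agrees with the $\ell^u$ exponent, then rescaling the $L^p$ and $\tau$ parameters accordingly). Without this, the reduction to the cited maximal inequality does not actually close. This is a genuine missing step, not merely a cosmetic omission, because the choice of $\theta$ and the parameter-rescaling are where the hypothesis $\tau<1/p$ actually enters.
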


Using Theorem \ref{thm-2-66} and Lemmas \ref{as3} and \ref{as123j2}, we obtain the following conclusions.

\begin{theorem}\label{noanfs}
Let $\alpha,s\in(0,\infty)$, $\tau\in[0,1/p)$,
$p\in(0,\infty)$, and $q\in(0,\infty]$.
Assume that $r\in\mathbb N$ with $\alpha r>s+3$ and that
the regularity parameter $L\in\mathbb N$ of the Daubechies
wavelet system $\{\psi_\omega\}_{\omega\in\Omega}$ satisfies that
\eqref{4.9} and $L\geq r-1$. Let $\varphi$
be the same as in \eqref{fanofn}.
Let
$$\mathcal{A}:=\left\{\{a_k\}_{k\in\mathbb N}\subset\mathbb Z^n:\ \lim_{k\to\infty}|a_k|=\infty,\
\left\|\left\{
\sum_{k\in\mathbb N}\mathbf{1}_{I_{0,a_k}}
\right\}_{j\in\mathbb{Z}_+}\right
\|_{(l^qL^p_\tau)_{\mathbb{Z}_+}}=\infty\right\}.$$
Then, for any $f\in\Lambda_s$,
 \begin{align*}
& d \left(f, F^{s,\tau}_{p,q}\cap\Lambda_s
\right)_{\Lambda_s}\\&\quad\sim \inf\left\{\varepsilon\in(0,\infty):\
\sup_{Q\in\mathscr{Q}}
\frac{1}{|Q|^{\tau}}
\left\|\left[\int_{0}^{2^{-j_{Q} \vee 0}} \mathbf{1}_{ D_{\alpha,r}(s, f, \varepsilon)}(\cdot,y)\,\frac {dy}{y}
\right]^{\frac 1q}\right\|_{L^p(Q)}
<\infty\right\}\\&\quad\quad+
\sup_{\{a_k\}_{k\in\mathbb N}\in\mathcal{A}}
\liminf_{k\rightarrow\infty}\left|\int_{\mathbb{R}^n}
\varphi(x-a_k)f(x)\,dx\right|
\end{align*}
with positive equivalence constants independent of $f$.
\end{theorem}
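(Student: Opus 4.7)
The plan is to deduce Theorem \ref{noanfs} from the general Theorem \ref{thm-2-66} in the same spirit as the proof of Theorem \ref{fasnof} for Besov-type spaces. First, I would invoke Lemma \ref{as3} to identify $F^{s,\tau}_{p,q}\cap\Lambda_s$ as the Daubechies $s$-Lipschitz $X$-based space $\Lambda_X^{s}$ for the quasi-normed lattice $X$ associated with the $F^{s,\tau}_{p,q}$ quasi-norm. Lemma \ref{as123j2} then verifies that this $X$ satisfies Assumption \ref{a1} for some $\theta\in(0,\infty)$, and Theorem \ref{thm-2-66} yields
$$d\left(f,F^{s,\tau}_{p,q}\cap\Lambda_s\right)_{\Lambda_s}\sim\varepsilon_X^0 f+\varepsilon_X f.$$
The remaining task is to rewrite these two abstract critical indices in the concrete forms appearing in the statement.

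For $\varepsilon_X f$, the key observation is that the sets $D_{\alpha,r,j}(s,f,\varepsilon)$ are supported on pairwise disjoint time strips $\mathbb{R}^n\times(2^{-j-1},2^{-j}]$. Choosing $\theta q=1$ (which is the natural exponent delivered by the proof of Lemma \ref{as123j2} via maximal-function estimates, analogously to the Triebel--Lizorkin case), the inner $\ell^q$-sum in the $X$-norm telescopes: for any dyadic cube $Q\in\mathscr{Q}$,
$$\sum_{j\ge j_Q\vee 0}\left[\int_0^1\mathbf{1}_{D_{\alpha,r,j}(s,f,\varepsilon)}(\cdot,t)\,\frac{dt}{t}\right]^{\theta q}=\int_0^{2^{-(j_Q\vee 0)}}\mathbf{1}_{D_{\alpha,r}(s,f,\varepsilon)}(\cdot,t)\,\frac{dt}{t}.$$
Taking the $(1/q)$th power inside $L^p(Q)$ and then the supremum of $|Q|^{-\tau}\|\cdot\|_{L^p(Q)}$ over $Q\in\mathscr{Q}$ produces exactly the expression appearing in the theorem.

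For $\varepsilon_X^0 f$, since the wavelet inner product $\langle f,\psi_{(0,I_{0,k})}\rangle$ equals (up to a sign coming from the real-valuedness of $\varphi$) the integral $\int_{\mathbb{R}^n}\varphi(x-k)f(x)\,dx$, the collection $V_0(s,f,\varepsilon)$ corresponds to the lattice points $k\in\mathbb{Z}^n$ at which this integral exceeds $\varepsilon$ in absolute value. I would then establish
$$\varepsilon_X^0 f\sim\sup_{\{a_k\}_{k\in\mathbb{N}}\in\mathcal{A}}\liminf_{k\to\infty}\left|\int_{\mathbb{R}^n}\varphi(x-a_k)f(x)\,dx\right|$$
by a two-sided comparison. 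For the lower bound, any sequence $\{a_k\}\in\mathcal{A}$ whose liminf exceeds $\varepsilon$ has all but finitely many $a_k$ lying in $V_0(s,f,\varepsilon)$; the finite symmetric difference changes the $X$-norm of the indicator sum only by a bounded amount (by Definition \ref{Debqfs}(ii) applied to the indicator of a bounded set), so $\|\sum_{I\in V_0(s,f,\varepsilon)}\mathbf{1}_I\|_X=\infty$, forcing $\varepsilon_X^0 f\ge\varepsilon$. Conversely, given $\varepsilon<\varepsilon_X^0 f$, the collection $V_0(s,f,\varepsilon)$ already has infinite $X$-norm; after removing the finitely many cubes contained in any fixed ball (again a bounded correction in $X$-norm), the remaining cubes may be enumerated as $\{I_{0,a_k}\}_{k\in\mathbb{N}}$ with $|a_k|\to\infty$, which places $\{a_k\}$ in $\mathcal{A}$ and makes the corresponding liminf at least $\varepsilon$. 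The main obstacle is precisely this last step: because $X$-largeness of a collection of unit cubes is not merely a cardinality condition but a scale-sensitive Morrey-type condition, one has to argue carefully that removing a finite bounded block of cubes preserves the infinite $X$-norm, using crucially that indicators of bounded sets belong to $X$ (Definition \ref{Debqfs}(ii)) together with the triangle inequality for the quasi-norm $\|\cdot\|_X$.
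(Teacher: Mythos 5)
Your proposal is correct and takes essentially the same route as the paper: Lemma \ref{as3} identifies $F^{s,\tau}_{p,q}\cap\Lambda_s$ with $\Lambda_X^s$ for the Morrey-type lattice $X$, Lemma \ref{as123j2} supplies Assumption \ref{a1}, and Theorem \ref{thm-2-66} then gives the abstract equivalence $d(f,\Lambda_X^s)_{\Lambda_s}\sim\varepsilon_X^0 f+\varepsilon_X f$; the remainder is a translation of the two critical indices into the concrete Morrey form. Your telescoping computation for $\varepsilon_X f$, using the fact that the sets $D_{\alpha,r,j}$ live on disjoint dyadic time strips together with $\theta q=1$, is exactly what turns the $\ell^q$-sum over $j\ge j_Q\vee 0$ into the integral $\int_0^{2^{-j_Q\vee 0}}\mathbf{1}_{D_{\alpha,r}}\,dt/t$, and your two-sided comparison for $\varepsilon_X^0 f$, hinging on the quasi-triangle inequality and Definition \ref{Debqfs}(ii) to show that removing finitely many unit cubes cannot destroy infiniteness of the $X$-norm, correctly recovers the $\sup_{\{a_k\}\in\mathcal A}\liminf$ formula (which the paper itself imports from \cite{dsy} without rederivation).

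Two small points worth noting. First, you assert $\theta=1/q$ as the parameter in Lemma \ref{as123j2}; this is indeed the exponent that makes the telescoping go through and is consistent with how the paper writes the concrete conditions for the Triebel--Lizorkin family (e.g.\ $F^s_{p,q,\varepsilon}(f)$ in Section 1), but since Theorem \ref{thm-2-66} is stated for ``some'' $\theta$ satisfying Assumption \ref{a1}, one should verify (or cite) that this particular $\theta$ works for $(L^p_\tau l^q)_{\mathbb{Z}_+}$ rather than some other value; the statement you want is exactly what \cite[Lemma 5.45]{dsy} establishes. Second, in the lower bound for $\varepsilon_X^0 f$ you compare $\sum_{k\ge K}\mathbf{1}_{I_{0,a_k}}$ with $\sum_{I\in V_0(s,f,\varepsilon)}\mathbf{1}_I$ via the lattice property; this requires either interpreting sequences in $\mathcal A$ as having distinct terms (the notation $\{a_k\}\subset\mathbb Z^n$ plus $\lim|a_k|=\infty$ supports this reading) or bounding the multiplicity of repeated values, since otherwise the first sum could strictly dominate the second. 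These are cosmetic gaps that do not affect the correctness of the approach.
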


Using Corollary \ref{ppqq} and Lemmas \ref{as3} and \ref{as123j2}, 
we obtain the following conclusions.

\begin{theorem}\label{noanfs2}
Under the same assumption as in Theorem \ref{noanfs},
then $f\in\overline{F^{s,\tau}_{p,q}
\cap\Lambda_s}^{\Lambda_s}$
if and only if
$f\in \Lambda_s$ and,
for any $\varepsilon\in(0,\infty)$,
\begin{align*}
\sup_{Q\in\mathscr{Q}}
\frac{1}{|Q|^{\tau}}
\left\|\left[\int_{0}^{2^{-j_{Q} \vee 0}} \mathbf{1}_{ D_{\alpha,r}(s, f, \varepsilon)}(\cdot,y)\,\frac {dy}{y}
\right]^{\frac 1q}\right\|_{L^p(Q)}<\infty
\end{align*}
and
\begin{align*}
\sup_{\{a_k\}_{k\in\mathbb N}\in\mathcal{A}}
\liminf_{k\rightarrow\infty}\left|\int_{\mathbb{R}^n}
\varphi(x-a_k)f(x)\,dx\right|=0.
\end{align*}
\end{theorem}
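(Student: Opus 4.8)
The plan is to read Theorem~\ref{noanfs2} off Theorem~\ref{noanfs} by specializing it to the case of vanishing distance. Recall from the introduction that, for any $E\subset\Lambda_s$, one has $\overline{E}^{\Lambda_s}=\{f\in\Lambda_s:\ d(f,E)_{\Lambda_s}=0\}$. Theorem~\ref{noanfs} expresses $d(f,F^{s,\tau}_{p,q}\cap\Lambda_s)_{\Lambda_s}$, up to positive multiplicative constants, as a sum $A(f)+B(f)$ of two nonnegative quantities, where $A(f)=\inf\{\varepsilon\in(0,\infty):\ (\star_\varepsilon)\ \text{holds}\}$ with $(\star_\varepsilon)$ the condition $\sup_{Q\in\mathscr{Q}}|Q|^{-\tau}\|[\int_0^{2^{-j_Q\vee 0}}\mathbf{1}_{D_{\alpha,r}(s,f,\varepsilon)}(\cdot,y)\,dy/y]^{1/q}\|_{L^p(Q)}<\infty$, and $B(f)=\sup_{\{a_k\}\in\mathcal{A}}\liminf_{k\to\infty}|\int_{\mathbb{R}^n}\varphi(x-a_k)f(x)\,dx|$. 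A sum of two nonnegative numbers vanishes if and only if each summand does; since $A(f)=0$ precisely when $(\star_\varepsilon)$ holds for every $\varepsilon\in(0,\infty)$ (the monotonicity of $D_{\alpha,r}(s,f,\cdot)$ in $\varepsilon$ makes the set of admissible $\varepsilon$ an up-set, so the infimum is $0$ exactly when all $\varepsilon>0$ work), and $B(f)=0$ is literally the second displayed condition, the characterization of $\overline{F^{s,\tau}_{p,q}\cap\Lambda_s}^{\Lambda_s}$ in the theorem follows at once.

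Alternatively, and this is the route indicated by the references attached to the theorem, one can argue directly from Corollary~\ref{ppqq}. First I would invoke Lemma~\ref{as3} to realize $F^{s,\tau}_{p,q}\cap\Lambda_s$ as the Daubechies $s$-Lipschitz $X$-based space $\Lambda_X^{s}$ with $X:=(l^qL^p_\tau)_{\mathbb{Z}_+}$; by Remark~\ref{rem-3-4} this identification does not depend on the particular Daubechies system used, provided its regularity is large enough, so I may fix a system whose regularity $L$ additionally satisfies $L>\alpha r+1$, as required by Corollary~\ref{ppqq}. By Lemma~\ref{as123j2} this $X$ obeys Assumption~\ref{a1} for some exponent $\theta\in(0,\infty)$, so Corollary~\ref{ppqq} applies and says that $f\in\Lambda_s$ lies in $\overline{\Lambda_X^{s}}^{\Lambda_s}=\overline{F^{s,\tau}_{p,q}\cap\Lambda_s}^{\Lambda_s}$ if and only if, for every $\varepsilon\in(0,\infty)$, both $\|\{\sum_{I\in V_0(s,f,\varepsilon)}\mathbf{1}_I\delta_{0,j}\}_{j\in\mathbb{Z}_+}\|_X<\infty$ and $\|\{[\int_0^1\mathbf{1}_{D_{\alpha,r,j}(s,f,\varepsilon)}(\cdot,t)\,dt/t]^{\theta}\}_{j\in\mathbb{Z}_+}\|_X<\infty$. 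It then remains to rewrite these two finiteness requirements, over all $\varepsilon$, in the stated form: for the $D$-term, I would use that $D_{\alpha,r,j}(s,f,\varepsilon)$ is supported in the single dyadic strip $\mathbb{R}^n\times(2^{-j-1},2^{-j}]$ and coincides there with $D_{\alpha,r}(s,f,\varepsilon)$, so that summing the strips over $j\ge j_Q\vee 0$ reconstitutes $\int_0^{2^{-j_Q\vee 0}}\mathbf{1}_{D_{\alpha,r}(s,f,\varepsilon)}(\cdot,y)\,dy/y$, then spell out the quasi-norm of $(l^qL^p_\tau)_{\mathbb{Z}_+}$ and pass from the exponent $\theta$ to $1/q$ exactly as in the proof of Theorem~\ref{noanfs} (where this reduction was already carried out, using the doubling of $X$); for the $V_0$-term, I would use the identification $\varepsilon_X^0f\sim\sup_{\{a_k\}\in\mathcal{A}}\liminf_{k\to\infty}|\langle f,\varphi(\cdot-a_k)\rangle|$ obtained in \cite{dsy} — indeed the very definition of $\mathcal{A}$ is tailored to it — which shows that the first requirement for all $\varepsilon$ is equivalent to $\sup_{\{a_k\}\in\mathcal{A}}\liminf_{k\to\infty}|\int_{\mathbb{R}^n}\varphi(x-a_k)f(x)\,dx|=0$.

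I do not expect a genuine obstacle here: no new estimate is needed, and the only points requiring care are the exponent bookkeeping in the $D$-term and the re-use of the $\varepsilon_X^0f$ identification, both of which are inherited from \cite{dsy} and from the already established Theorem~\ref{noanfs}. The structural facts that make the $D$-term translation go through — that $D_{\alpha,r,j}(s,f,\varepsilon)$ lives in one dyadic strip and behaves well under hyperbolic $R$-neighbourhoods (Proposition~\ref{thm-3adf} together with Lemmas~\ref{a2.15s} and \ref{dda2f}) — are precisely the ones already established for the difference sets $S_{r_1,j}(s,f,\varepsilon)$, so the argument transfers without change.
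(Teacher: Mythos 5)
Your second route is exactly what the paper does: the theorem is stated immediately after the one-line justification ``Using Corollary~\ref{ppqq} and Lemmas~\ref{as3} and \ref{as123j2}, we obtain the following conclusions,'' and your unpacking of that sentence (realize $F^{s,\tau}_{p,q}\cap\Lambda_s$ as $\Lambda_X^s$ via Lemma~\ref{as3}, verify Assumption~\ref{a1} via Lemma~\ref{as123j2}, apply Corollary~\ref{ppqq}, and translate the two $X$-norm conditions using the strip decomposition $D_{\alpha,r}=\bigsqcup_j D_{\alpha,r,j}$ and the $\varepsilon_X^0f$ identification) is the intended argument. Your first route, reading the closure characterization directly off Theorem~\ref{noanfs} by noting that a two-term equivalence forces each term to vanish when the distance is zero, is also correct and is just the mechanism by which Corollary~\ref{ppqq} follows from Theorem~\ref{thm-2-66} in the first place, so the two routes are not genuinely different.
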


\bigskip

\smallskip

\noindent Feng Dai

\smallskip

\noindent Department of Mathematical and
Statistical Sciences, University of Alberta
Edmonton, Alberta T6G 2G1, Canada

\smallskip

\noindent {\it E-mail}: \texttt{fdai@ualberta.ca}

\bigskip

\noindent Eero Saksman (Corresponding author)

\smallskip

\noindent Department of Mathematics and Statistics, University of Helsinki,
P. O. Box 68, FIN-00014,
Helsinki, Finland

\smallskip

\noindent {\it E-mail}: \texttt{eero.saksman@helsinki.fi}

\bigskip

\noindent Dachun Yang, Wen Yuan and Yangyang Zhang

\smallskip

\noindent Laboratory of Mathematics and Complex Systems
(Ministry of Education of China),
School of Mathematical Sciences, Beijing Normal University,
Beijing 100875, The People's Republic of China

\smallskip

\noindent{\it E-mails:} \texttt{dcyang@bnu.edu.cn} (D. Yang)

\noindent\phantom{{\it E-mails:}} \texttt{wenyuan@bnu.edu.cn} (W. Yuan)

\noindent\phantom{{\it E-mails:}} \texttt{yangyzhang@bnu.edu.cn} (Y. Zhang)

\end{document}